\newcommand{\limto}{{\displaystyle\lim_{\longrightarrow}}}
\newcommand{\rightlim}{\mathop{\limto}}
\newcommand{\leftlim}{\mathop{\displaystyle\lim_{\longleftarrow}}}
\newcommand{\limfromn}{\leftlim\limits_{\raise3pt\hbox{$n$}}}
\newcommand{\limton}{\rightlim\limits_{\raise3pt\hbox{$n$}}}
\newcommand{\rightlimit}[1]{\mathop{\lim\limits_{\longrightarrow}}\limits%
                   _{\raise3pt\hbox{$\scriptstyle #1$}}}
\newcommand{\leftlimit}[1]{\mathop{\lim\limits_{\longleftarrow}}\limits%
                   _{\raise3pt\hbox{$\scriptstyle #1$}}}
\numberwithin{equation}{section}
\newcommand{\rar}[1]{\stackrel{#1}{\longrightarrow}}
\newcommand{\isom}{\rar{\simeq}}
\newcommand{\into}{\hookrightarrow}
\newcommand{\al}{\alpha}
\newcommand{\be}{\beta}
\newcommand{\ga}{\gamma}
\newcommand{\Ga}{\Gamma}
\newcommand{\De}{\Delta}
\newcommand{\la}{\lambda}
\newcommand{\eps}{\epsilon}
\newcommand{\sg}{\sigma}
\newcommand{\te}{\theta}
\newcommand{\vp}{\varphi}
\newcommand{\bA}{{\mathbb A}}
\newcommand{\bF}{{\mathbb F}}
\newcommand{\bG}{{\mathbb G}}
\newcommand{\bN}{{\mathbb N}}
\newcommand{\bQ}{{\mathbb Q}}
\newcommand{\bZ}{{\mathbb Z}}
\newcommand{\cE}{{\mathcal E}}
\newcommand{\cF}{{\mathcal F}}
\newcommand{\cH}{{\mathcal H}}
\newcommand{\cJ}{{\mathcal J}}
\newcommand{\cL}{{\mathcal L}}
\newcommand{\cM}{{\mathcal M}}
\newcommand{\cO}{{\mathcal O}}
\newcommand{\cR}{{\mathcal R}}
\newcommand{\cW}{{\mathcal W}}
\newcommand{\sM}{{\mathscr M}}
\newcommand{\sN}{{\mathscr N}}
\newcommand{\fP}{{\mathfrak P}}
\newcommand{\fp}{{\mathfrak p}}
\newcommand{\abs}[1]{\lvert #1\rvert}
\newcommand{\End}{\operatorname{End}}
\newcommand{\Hom}{\operatorname{Hom}}
\newcommand{\Aut}{\operatorname{Aut}}
\newcommand{\Spec}{\operatorname{Spec}}
\newcommand{\id}{\operatorname{id}}
\newcommand{\pr}{\mathrm{pr}}
\newcommand{\ad}{\operatorname{ad}}
\newcommand{\Ind}{\operatorname{Ind}}
\newcommand{\tr}{\operatorname{tr}}
\newcommand{\Tr}{\operatorname{Tr}}
\newcommand{\Nm}{\operatorname{Nm}}
\newcommand{\Nrd}{\operatorname{Nrd}}
\newcommand{\tens}{\otimes}
\newcommand{\st}{\,\big\vert\,}
\newcommand{\sbr}{\smallbreak}
\newcommand{\mbr}{\medbreak}
\newcommand{\cJt}{\tilde{\cJ}}
\newcommand{\Jt}{\tilde{J}}
\newcommand{\FF}{\mathbb{F}}
\newtheorem{thm}{Theorem}[section]
\newtheorem{cor}[thm]{Corollary}
\newtheorem{lem}[thm]{Lemma}
\newtheorem{prop}[thm]{Proposition}
\newtheorem*{thma}{Theorem A}
\newtheorem*{thmb}{Theorem B}
\newtheorem*{thmc}{Theorem C}
\theoremstyle{remark}
\newtheorem{rem}[thm]{Remark}
\newtheorem{rems}[thm]{Remarks}
\newtheorem{example}[thm]{Example}
\newcommand{\Fr}{\operatorname{Fr}}
\newcommand{\Gal}{\operatorname{Gal}}
\newcommand{\Lie}{\operatorname{Lie}}
\newcommand{\Nilp}{\operatorname{Nilp}}
\newcommand{\diag}{\operatorname{diag}}
\newcommand{\ql}{\overline{\bQ}_\ell}
\newcommand{\qls}{\overline{\bQ}_\ell^\times}
\newcommand{\Mod}{\!-\!\operatorname{mod}}
\newcommand{\bfq}{\overline{\bF}_q}
\newcommand{\fqn}{\bF_{q^n}}
\newcommand{\rec}{\operatorname{rec}}
\newcommand{\floor}[1]{\lfloor #1\rfloor}
\newcommand{\ceil}[1]{\lceil #1\rceil}
\newcommand{\QQ}{{\overline{\mathbb{Q}}}}
\newcommand{\MM}{{\mathcal M}}
\newcommand{\from}{\colon}
\newcommand{\C}{C}
\newcommand{\powerseries}[1]{\llbracket #1 \rrbracket}
\newtheorem{defn}[thm]{Definition}
\newtheorem{rmk}[thm]{Remark}
\newcommand{\Adic}{\operatorname{Adic}}
\newcommand{\Sets}{\operatorname{Sets}}
\newcommand{\Spf}{\operatorname{Spf}}
\newcommand{\Nil}{\operatorname{Nil}}
\newcommand{\ExtRig}{\operatorname{ExtRig}}
\newcommand{\QuasiLog}{\operatorname{QuasiLog}}
\newcommand{\qlog}{\operatorname{qlog}}
\newcommand{\Spa}{\operatorname{Spa}}
\newcommand{\CAff}{\operatorname{CAff}}
\newcommand{\perf}{{\operatorname{perf}}}
\newcommand{\Cont}{\operatorname{Cont}}
\newcommand{\ab}{{ab}}
\newcommand{\nr}{{nr}}
\newcommand{\gp}{\fp}
\newcommand{\gP}{\fP}
\newcommand{\U}{\mathbf{U}}
\newcommand{\Z}{\mathbb{Z}}
\newcommand{\Q}{\mathbb{Q}}
\newcommand{\injects}{\hookrightarrow}
\newcommand{\set}[1]{\left\{ #1 \right\}}
\newcommand{\tatealgebra}[1]{\left< #1 \right>}
\begin{document}

\title[Maximal varieties and the LLC]{Maximal varieties and the local Langlands correspondence for $GL(n)$}

\author[M.~Boyarchenko and J.~Weinstein]{Mitya Boyarchenko and Jared Weinstein}

\begin{abstract} The cohomology of the Lubin-Tate tower is known to realize the local Langlands correspondence for $GL(n)$ over a nonarchimedean local field.  In this article we make progress towards a purely local proof of this fact.  To wit, we find a family of open affinoid subsets of Lubin-Tate space at infinite level, whose cohomology realizes the local Langlands correspondence for a broad class of supercuspidals (those whose Weil parameters are induced from an unramified degree $n$ extension).  A key role is played by a certain variety $X$, defined over a finite field, which is ``maximal" in the sense that the number of rational points of $X$ is the largest possible among varieties with the same Betti numbers as $X$.  The variety $X$ is derived from a certain unipotent algebraic group, in an analogous manner as Deligne-Lusztig varieties are derived from reductive algebraic groups.
\end{abstract}

\maketitle

\setcounter{tocdepth}{1}

\tableofcontents


\section*{Introduction}

Let $K$ be a nonarchimedean local field with ring of integers $\cO_K$ and residue field $\bF_q$, and let $n\geq 1$ be an integer.  The local Langlands correspondence and the Jacquet-Langlands correspondence for $GL_n(K)$ are both realized in the $\ell$-adic cohomology of the Lubin-Tate tower $\MM_{H_0,\infty}=\varprojlim\MM_{H_0,m}$, where $\MM_{H_0,m}$ is the rigid analytic space parameterizing deformations of a fixed one-dimensional formal $\cO_K$-module $H_0$ of height $n$ over $\overline{k}$ together with a Drinfeld level $m$ structure.  (For the precise statement, see the introduction to~\cite{HarrisTaylor:LLC}.)   At present, this fact can only be proved using global methods.  A program initiated by the second author in~\cite{WeinsteinGoodReduction} and ~\cite{WeinsteinSemistableModels} aims to obtain a purely local proof by first constructing a sufficiently nice model of $\MM_{H_0,m}$, and then computing the cohomology of $\MM_{H_0,\infty}$ using the nearby cycles complex on the special fiber of this model.

\mbr

This idea has roots in the work of T. Yoshida~\cite{yoshida}, who found an open affinoid in $\MM_{H_0,1}$ whose reduction turned out to be a certain Deligne-Lusztig variety for the group $GL_n$ over $k$.  Using this affinoid, Yoshida showed by purely local methods that the local Langlands correspondence for {\em depth zero} supercuspidal representations of $GL_n(K)$ is realized in the cohomology of $\MM_{H_0,1}$.   Our work is concerned with a large class of supercuspidals of positive depth.  Instead of working with any particular layer $\MM_{H_0,m}$ of the tower, we work directly with $\MM_{H_0,\infty}$, which carries the structure of a perfectoid space (see \cite{WeinsteinSemistableModels}, or \cite{ScholzeWeinstein} for the case of general Rapoport-Zink spaces).  The following theorem does not require any global methods.

\begin{thma}  Let $C$ be the completion of an algebraic closure of $K$.  Let $m\geq 1$ be an integer.  There exists an open affinoid subset $V\subset \MM_{H_0,\infty,C}$, depending on $m$, which is invariant under the action of $GL_n(K)\times D^\times\times\cW_K$ (here $D/K$ is the central division algebra of invariant $1/n$ and $\cW_K$ is the Weil group of $K$), having the following property.  The reduction $\overline{V}$ of $V$ is a scheme over $\overline{\bF}_q$ which also admits an action of $GL_n(K)\times D^\times\times \cW_K$.  For every irreducible admissible representation $\pi$ of $GL_n(K)$ with $\overline{\bQ}_\ell$ coefficients, the following are equivalent:
\begin{enumerate}
\item $\Hom_{GL_n(K)}\left(\pi, H^{n-1}_c(\overline{V},\overline{\bQ}_\ell)\right) \neq 0$.
\item Up to twisting by a one-dimensional character, the Weil parameter of $\pi$ takes the form $\Ind_{L/K} \theta$, where $L/K$ is the unramified extension of degree $n$, and $\theta\from L^\times\to\overline{\bQ}_\ell^\times$ is a character of conductor $m+1$, whose conductor cannot be lowered through twisting by a character of the form $\chi\circ N_{L/L'}$, where $L'\subset L$ is a proper subextension and $N_{L/L'}\from L^\times\to (L')^{\times}$ is the norm map.  (This condition implies that $\pi$ is supercuspidal.)
\end{enumerate}
If these conditions hold then we have a $D^\times\times \cW_K$-linear isomorphism
\[\Hom_{GL_n(K)}\left(\pi,H^{n-1}_c(\overline{V},\overline{\bQ}_\ell)\right)\isom\check{\pi}'\otimes\sigma^{\sharp}(\pi),\]
where $\pi'$ corresponds to $\pi$ under the local Jacquet-Langlands correspondence, $\check{\pi}'$ is the contragredient of $\pi'$ and $\pi\mapsto \sigma^\sharp(\pi)$ is a certain normalization (see Theorem C in \S\ref{ss:Theorem-C}) of the local Langlands correspondence.
\end{thma}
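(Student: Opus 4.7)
The plan is to generalize Yoshida's strategy from level one to arbitrary depth $m$, but crucially working at infinite level inside the perfectoid space $\MM_{H_0,\infty,C}$ rather than at any finite level $\MM_{H_0,m}$. First I would produce explicit coordinates on $\MM_{H_0,\infty,C}$, using the period map and the results of \cite{WeinsteinSemistableModels} or \cite{ScholzeWeinstein}, and then carve out $V$ as the locus where a certain coordinate lies in a small disc whose radius depends on $m$. Concretely, one expects $V$ to be defined by an inequality on the universal quasi-logarithm parameters that forces the deformation to be in the same formal isomorphism class as a very specific (CM by $\cO_L$) lift, perturbed by a parameter of the prescribed depth. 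The $GL_n(K) \times D^\times \times \cW_K$-invariance would be checked by using the fact that the group action permutes such CM points and scales the perturbation in a controlled way; the choice of radius is what makes $V$ a disjoint union of translates of a single connected component and hence open affinoid rather than just a Zariski-open.

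The second step is to identify the reduction $\overline{V}$. Here I expect the leading-order calculation of the formal-group law, combined with the perfectoid nature of the space, to yield a scheme over $\overline{\bF}_q$ that fibers over a discrete set (indexing the CM points in the orbit) with each fiber being the variety $X$ promised in the abstract, attached to a unipotent algebraic group $U$ over $\overline{\bF}_q$. The geometry of $X$ should mirror the Deligne-Lusztig construction: one takes a Frobenius-stable subgroup $U \subset G$ (with $G$ built from $\cO_L/\gp^{m+1}$) and forms $\{u \in U \mid u^{-1} F(u) \in \text{something}\}$, endowed with the natural actions. The stabilizer calculation should then match the three-fold action on the geometric side with that coming from the stabilizer of the CM point in $GL_n(K) \times D^\times \times \cW_K$.

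The cohomological core is to compute $H^{n-1}_c(X,\overline{\bQ}_\ell)$ as a representation of the stabilizer. The approach would be a unipotent analogue of the Deligne-Lusztig character formula: because $X$ is maximal in the sense of the abstract (its point count attains the Weil bound given its Betti numbers), all the cohomology is concentrated in middle degree with a single Frobenius weight, so it suffices to decompose $H^{n-1}_c$ as a virtual representation under the torus $T_L = (\cO_L/\gp^{m+1})^\times$. Each character $\theta$ of $T_L$ of conductor exactly $m+1$, non-degenerate in the sense of non-factorization through a norm to a subfield $L'$, should contribute an irreducible isotypic component; characters failing this non-degeneracy would contribute zero by a parabolic induction / Mackey-type argument inside $U$, explaining the regularity hypothesis in (2).

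Finally, I would match the answer with the local Langlands and Jacquet-Langlands correspondences. The $GL_n(K)$-representation obtained by compactly inducing the $\theta$-isotypic from the stabilizer should, by the Howe/Bushnell-Henniart construction of tame supercuspidals of $GL_n(K)$, be exactly $\pi$ with Weil parameter $\Ind_{L/K}\theta$; similarly on the $D^\times$-side one recovers $\check{\pi}'$, and the $\cW_K$-action comes out as $\sigma^\sharp(\pi)$ once the normalization in Theorem C is taken into account. The hardest step, I expect, is not the cohomology computation per se but the identification of the reduction $\overline{V}$ with the variety $X$: this requires a careful analysis of the formal $\cO_K$-module law at infinite level, a genuine change of coordinates adapted to the depth $m$, and verification that no higher-order terms spoil the maximality. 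Once that geometric identification is in hand, the character computation and the comparison with the Langlands/Jacquet-Langlands parametrization should follow from the explicit theory of tame supercuspidals and the established properties of these correspondences.
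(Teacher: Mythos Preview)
Your overall architecture matches the paper closely: construct an affinoid $\mathcal{Z}$ near a CM point with stabilizer $\mathcal{J}$, identify its reduction with (the perfection of) a Deligne--Lusztig-type variety $X$ for a unipotent group, compute $H^\bullet_c(X,\overline{\bQ}_\ell)$, and match against the explicit tame supercuspidal constructions via Henniart's results. You are also right that the reduction calculation is delicate and depends on a careful change of coordinates adapted to the depth~$m$.

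However, there is a genuine gap in your proposed treatment of the cohomology of $X$. You write that ``because $X$ is maximal \dots\ all the cohomology is concentrated in middle degree with a single Frobenius weight.'' This is false: maximality only says that $\Fr_Q$ acts on $H^i_c$ by the scalar $(-1)^i Q^{i/2}$; it places no constraint on which degrees are nonzero. In fact $X$ has nonvanishing $H^i_c$ for every $i$ from $n-1$ up to $2n-2$. The correct mechanism is different: the cohomology is governed by the action of the finite unipotent group $U=\mathbf{U}(\bF_{q^n})$ (not a torus $T_L$), and one decomposes by central characters $\psi$ of $Z(\bF_{q^n})\cong\bF_{q^n}$. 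For each $\psi$ there is a \emph{unique} irreducible $\rho_\psi$ of $U$ occurring, and it sits in degree $n+n_1-2$ where $q^{n/n_1}$ is the conductor of $\psi$. Thus the ``degenerate'' characters (those with nontrivial $\Gal(\bF_{q^n}/\bF_q)$-stabilizer, corresponding to $\theta$ that factor through a norm) do not vanish---they land in higher cohomological degree. Your proposed Mackey/parabolic-induction vanishing argument would not produce this, and a torus-based virtual character computation alone does not separate the degrees.

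The paper's actual computation of $H^\bullet_c(X)$ is considerably more involved than you anticipate: it proceeds by identifying the $\rho$-isotypic part with $H^\bullet_c(Y,\cE_\rho)$ for a local system on $Y\cong\bA^{n-1}$ via the Lang torsor, then running an inductive calculation with Artin--Schreier sheaves on affine spaces (using linear fibrations $\bA^d\to\bA^{d-1}$, proper base change, and the projection formula), together with a separate analysis of a ``reduced norm'' morphism $N:\mathbf{U}\to\bG_a$. So while your outline is sound, the cohomological step needs a different and more substantial argument than the one you sketch.
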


In other words, the degree $n-1$ cohomology of the scheme $\overline{V}$ manifests the Jacquet-Langlands and local Langlands correspondences in its middle cohomology for all supercuspidals of the type described in the theorem.  $\overline{V}$ is not of finite type, but it is closely related to a smooth affine variety $X$ over $\bF_{q^n}$ of dimension $n-1$.  To wit, $\overline{V}$ is the inverse limit of a tower of schemes, each of which is isomorphic to a disjoint union of copies of the perfection of $X\otimes\overline{\bF}_q$.  (The perfection of a scheme in characteristic $p$ is the inverse limit of the scheme under the absolute Frobenius endomorphism.)

\mbr

The variety $X$ is rather interesting in its own right.  It is derived from a certain unipotent group $\U$ over $\bF_{q^n}$ in a manner which resembles certain constructions in the Deligne-Lusztig theory for reductive groups over finite fields (see, e.g., \cite[Def.~1.17(ii)]{deligne-lusztig}).   In fact $X$ is the preimage under the Lang map $x\mapsto \Fr_{q^n}(x)x^{-1}$ of a certain subvariety $Y\subset \U$.  (Here $\Fr_{q^n}$ is the $q^n$th power Frobenius map.  See \S\ref{theunipotentgroup} for the definitions of $\U$ and $Y$.)  Then $X$ admits an action of $\U(\bF_{q^n})$ by right multiplication.  In the course of proving Theorem A we give a complete description of the $\ell$-adic cohomology of $X$.  The theorem below gives a summary of our results.

\begin{thmb}  As a representation of $\U(\bF_{q^n})$, the space $\bigoplus_{i\geq 0} H^i_c(X\otimes\overline{\bF}_q,\overline{\bQ}_\ell)$ decomposes into a direct sum of irreducible representations, each occurring with multiplicity one.  Furthermore, for each $i$, $\Fr_{q^n}$ acts on $H^i_c(X\otimes\overline{\bF}_q,\overline{\bQ}_\ell)$ as the scalar $(-1)^{i-1}q^{ni/2}$.
\end{thmb}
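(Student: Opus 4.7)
The plan is to exploit the fact that $X = L^{-1}(Y)$ is an \'etale right torsor over $Y$ under the finite group $\U(\bF_{q^n})$, where $L(x) = \Fr_{q^n}(x) x^{-1}$ is the Lang isogeny of $\U$. This yields a $\U(\bF_{q^n}) \times \langle \Fr_{q^n}\rangle$-equivariant decomposition
\[
H^*_c(X \otimes \overline{\bF}_q, \overline{\bQ}_\ell) \;\cong\; \bigoplus_{\rho} \rho \otimes H^*_c(Y \otimes \overline{\bF}_q, \cF_\rho),
\]
where $\rho$ runs over the irreducible $\overline{\bQ}_\ell$-representations of $\U(\bF_{q^n})$ and $\cF_\rho$ is the $\ell$-adic sheaf on $Y$ obtained from $\rho$ via the torsor. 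The multiplicity-one claim then reduces to showing that $\sum_i\dim H^i_c(Y,\cF_\rho) = 1$ for every $\rho$, and the Frobenius claim reduces to identifying the degree where this one-dimensional contribution sits and the scalar by which Frobenius acts.

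To compute $H^*_c(Y,\cF_\rho)$, I would invoke the Kirillov orbit method for the unipotent group $\U(\bF_{q^n})$, in the geometric incarnation via character sheaves on unipotent groups developed by the first author with Drinfeld. This should parametrize the irreducibles of $\U(\bF_{q^n})$ by coadjoint orbits and realize each $\cF_\rho$ as the restriction to $Y$ of a character sheaf on $\U$, up to shift and Tate twist. Using the explicit description of $Y \subset \U$ from \S\ref{theunipotentgroup}, the task becomes the hypercohomology computation of the restriction of a character sheaf to $Y$. The expected mechanism is that $Y$ is chosen so as to be transverse to the strata supporting character sheaves, so that Artin vanishing furnishes an upper bound while a non-abelian Fourier--Deligne non-vanishing furnishes the lower bound, pinning the cohomology down to exactly one dimension in a degree determined by the dimension of the coadjoint orbit attached to $\rho$.

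For the Frobenius eigenvalues, the plan is to verify that $X$ is \emph{maximal}: compute the counts $\#X(\bF_{q^{nk}})$ for all $k \geq 1$ --- via the torsor relation together with character-theoretic bookkeeping on $\U(\bF_{q^n})$ --- and show they saturate in absolute value the Deligne bound $\sum_i \dim H^i_c(X,\overline{\bQ}_\ell) \cdot q^{nki/2}$. By the purity theorem of Weil~II, saturation forces each $H^i_c$ to be pure of weight $i$ with $\Fr_{q^n}$ acting on it by a single scalar of modulus $q^{ni/2}$; the sign is then forced by the requirement that the contributions align consistently for every $k$, yielding the claimed value $(-1)^{i-1}q^{ni/2}$. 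The main obstacle is the geometric input in the second step: because $\U$ is genuinely non-abelian, the classical abelian Fourier--Deligne transform must be replaced by its Heisenberg-type non-abelian counterpart, and verifying that $Y$ meets each character sheaf stratum with exactly the right multiplicity is the delicate point on which the whole argument hinges.
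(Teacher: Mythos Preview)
Your Lang-torsor decomposition is correct and is exactly what the paper uses (Corollary~\ref{c:hom-rep-into-cohomology}). But the very next sentence contains a genuine error: the multiplicity-one statement does \emph{not} reduce to $\sum_i\dim H^i_c(Y,\cF_\rho)=1$ for every irreducible $\rho$, and in fact that claim is false. Theorem~B only asserts that each irreducible that occurs does so once; it does not say every irreducible of $\U(\bF_{q^n})$ occurs. The sharper Theorem~\ref{t:cohomology-of-X} shows that for each central character $\psi$ there is a \emph{single} irreducible $\rho_\psi$ (explicitly constructed) that appears, while every other irreducible with the same central character has $H^\bullet_c(Y,\cF_\rho)=0$. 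For $n\ge 3$ the group $\U(\bF_{q^n})$ has many more irreducibles than central characters, so your proposed transversality picture, which would force every character sheaf to contribute one dimension, cannot be right. The heart of the argument is precisely to separate the $\rho$ that contribute from those that do not, and your outline has no mechanism for this.

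The paper's route is quite different from character-sheaf machinery. It first builds $\rho_\psi$ by hand as an induced representation from an explicit subgroup $H_m(\bF_{q^n})$ (Proposition~\ref{p:construction-rho-psi-new}), shows $\rho_\psi$ occurs by a positivity argument with trace functions (Proposition~\ref{p:rho-psi-appears-new}), and then bounds the total contribution from the central character $\psi$ by computing $\Hom(\xi_\psi,H^\bullet_c(X,\ql))$ for a larger induced representation $\xi_\psi$ containing every irreducible with that central character. That last computation (Proposition~\ref{p:how-xi-psi-appears-new}) is done by an explicit inductive reduction along linear fibrations $\bA^{d}\to\bA^{d-2}$ of affine spaces carrying Artin--Schreier sheaves (Lemma~\ref{l:key-induction-new}), tracking Tate twists at each step. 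This gives the degree and the Frobenius scalar directly, without any appeal to point counts or maximality; indeed, maximality of $X$ is a \emph{consequence} of the theorem, not an input. Your plan to deduce the Frobenius eigenvalues from point counts would be circular unless you already knew the Betti numbers and which $\rho$ contribute in which degree, which is exactly the content you are trying to establish.
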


We remark that in the context of the theorem, if $q^{ni/2}$ is not an integer then $H^i_c(X\otimes\overline{\bF}_q,\overline{\bQ}_\ell)=0$. A more precise version of Theorem B is Theorem \ref{t:cohomology-of-X}, whose proof occupies Part 2 of the article.

\mbr

The scalar $(-1)^{i-1}q^{ni/2}$ is significant because it implies that $X$ is a {\em maximal variety} in the following sense.  Let $S$ be any scheme of finite type over a finite field $\bF_Q$.  It follows from~\cite{De80}, Thm. 3.3.1, that for each $i$ and every eigenvalue $\alpha$ of $\Fr_Q$ acting on $H^i_c(S,\QQ_\ell)$, there exists an integer $m\leq i$ such that all complex conjugates of $\alpha$ have absolute value $Q^{m/2}$.  So the Grothendieck-Lefschetz trace formula
\[ \#S(\bF_Q) = \sum_{i\in\bZ} (-1)^i\tr\left(\Fr_Q,\; H_c^i(S,\QQ_\ell)\right) \]
implies the following bound on the number of rational points of $S$:
\[ \#S(\bF_Q) \leq \sum_{i\in\bZ} Q^{i/2}\dim H_c^i(S,\QQ_\ell). \]
This bound is achieved if and only if $\Fr_Q$ acts on $H^i_c(S,\QQ_\ell)$ via the scalar $(-1)^iQ^{i/2}$ for each $i$, in which case the scheme $S$ is called {\em maximal}.  (There are plenty of references in the literature to ``maximal curves" over finite fields:  these are smooth projective curves which attain the Hasse-Weil bound on the number of rational points.  As far as we know, our definition of maximality for arbitrary schemes over a finite field is new.)  Theorem B implies that $X$ is a maximal variety over $\bF_Q$.  We remark that if $n=2$ then $X$ is a disjoint union of $q$ copies of the ``Hermitian curve" $y^q+y=x^{q+1}$, which has long been known to be maximal over $\bF_{q^2}$.

\subsection*{Acknowledgements} We are grateful to Vladimir Drinfeld for suggesting an idea that allowed us to significantly clarify our proof of Proposition \ref{p:reduced-norm-key}, and to Guy Henniart for teaching us about the methods used in his article \cite{Henniart-JLC-I}, which we adopted for our proof of Theorem A.

\section{Outline of the paper}
Part 1 investigates the geometry of $\mathcal{M}_{H_0,\infty}$, the Lubin-Tate space at infinite level.  While this is too large to be a rigid space, it is possible to formulate $\mathcal{M}_{H_0,\infty}$ as a moduli problem on the category of adic spaces (Defn. \ref{MH0definition}) which turns out to be representable.  In \S\ref{s:LT-tower} we review a result from \cite{WeinsteinSemistableModels} which furnishes a linear-algebra description of $\mathcal{M}_{H_0,\infty}$.  We begin with the formal $\cO_K$-module $H_0/\overline{\FF}_q$ of dimension 1 and height $n$.  Let $H$ be any lift of $H_0$ to $\cO_{\breve{K}}$, where $\breve{K}$ is the completion of the maximal unramified extension of $K$.  We consider the ``universal cover" $\tilde{H}=\varprojlim H$ (inverse limit with respect to multiplication by a uniformizer of $K$) as a $K$-vector space object in the category of formal schemes over $\cO_{\breve{K}}$.  Then $\tilde{H}$ does not depend on the choice of lift $H$, and as a formal scheme we have $\tilde{H}\isom \Spf\cO_{\breve{K}}\powerseries{T^{1/q^\infty}}$.  Passing to the generic fiber, one has $\tilde{H}_{\eta}$, whose underlying space is a ``perfectoid open ball" over $\breve{K}$.  The level structure on the universal deformation of $H_0$ over $\cM_{H_0,\infty}$ induces a morphism $\cM_{H_0,\infty}\to \tilde{H}^n_{\eta}$.  Then Thm. \ref{LTdiagram} shows that $\cM_{H_0,\infty}$ is equal to the locally closed subspace of $\tilde{H}^n_{\eta}$ cut out by a certain explicit determinant condition.  If $C$ is the completion of an algebraic closure of $K$, then the base change $\cM_{H_0,\infty,C}$ is a perfectoid space admitting an action of the triple product group $GL_n(K)\times D^\times\times \cW_K$.

In \S\ref{s:affinoid-LT-tower}, we construct a special open affinoid subset of $\cM_{H_0,\infty,C}$ which plays a role in Theorem A.  First we study the CM points of $\cM_{H_0,\infty}$ in \S\ref{CMpoints}.  Let $x\in\cM_{H_0,\infty}(C)$ be a CM point which corresponds to a deformation of $H_0$ with endomorphisms by $L$ (where $L$ is as in Theorem A).  In \S\ref{affinoiddefinition} we identify a descending sequence of open affinoid neighborhoods $\mathcal{Z}_{x,1}\supset \mathcal{Z}_{x,2}\supset\cdots$ of $x$ in $\cM_{H_0,\infty,C}$.  Fix $x$ and $m$, and set $\mathcal{Z}=\mathcal{Z}_{x,m}$.  The main theorem of Part 1 is Thm. \ref{existenceofaffinoid}, which shows that the special fiber $\overline{\mathcal{Z}}$ of $\mathcal{Z}$ is a union of copies of the perfection of the variety $X$ described in the introduction.  We also compute the stabilizer $\mathcal{J}$ of $\mathcal{Z}$ in $GL_n(K)\times D^\times\times \cW_K$, along with its action on $\overline{\mathcal{Z}}$;  this action is induced from an action of $\mathcal{J}$ on the variety $X$ itself.

This being done, we let $V\subset\cM_{H_0,\infty,C}$ be the union of the translates of $\mathcal{Z}$ under $GL_n(K)\times D^\times\times \cW_K$.  We
find that the representation of $GL_n(K)\times D^\times\times \cW_K$ on $H^{n-1}_c(\overline{V},\ql)$ is (modulo a small issue involving twists) isomorphic to the induced representation
$\Ind_{\cJ}^{GL_n(K)\times D^\times\times\cW_K}H^{n-1}_c(X\tens\bfq,\ql)$.  To prove Theorem A, we must therefore calculate the cohomology of $X$, and also show that this induced representation realizes the local Langlands correspondences.  These are the aims of Parts 2 and 3, respectively.

Part 2 is devoted to the proof of a more precise version of Theorem B, which we state as Theorem \ref{t:cohomology-of-X}. Namely, we obtain an explicit description of the irreducible representations of the finite group $U=\U(\fqn)$ that appear in $H^\bullet_c(X\tens\bfq,\ql)$. In particular, we show that for every character $\psi$ of the center of $U$, there exists a unique irreducible representation of $U$ that is both a summand of $H^\bullet_c(X\tens\bfq,\ql)$ and has central character $\psi$. \S\ref{s:DL-formulation} is devoted to setting up the notation for the statement and the proof of Theorem \ref{t:cohomology-of-X}. \S\ref{s:reduced-norm} establishes some properties of the ``reduced norm map" $\U\to\mathbb{G}_a$, which is a geometric version of the reduced norm map $D^\times\to K^\times$ and of the usual determinant $GL_n(K)\to K^\times$.  The heart of the proof of the main theorem is in \S\ref{s:proof-Thm-B}.  A step-by-step outline of the argument can be found in \S\ref{ss:outline-new}. Since the proof of Theorem \ref{t:cohomology-of-X} is somewhat long and complicated, let us summarize the key underlying ideas. First, using the definition of $X$, it is not hard to show that for every representation $\rho$ of $U$ over $\ql$, the ``$\rho$-isotypic component'' of $H^\bullet_c(X\tens\bfq,\ql)$ can be naturally identified with $H^\bullet_c(Y\tens\bfq,\cE_{\rho})$, where $\cE_\rho$ is the local system on $\U$ associated to $\rho$ (see Corollary \ref{c:hom-rep-into-cohomology}). One then reduces the proof of Theorem \ref{t:cohomology-of-X} to the calculation of $H^\bullet_c(Y\tens\bfq,\cE_{\rho})$ for certain representations $\rho$ that can be induced from $1$-dimensional representations of groups of $\bF_{q^n}$-points of connected subgroups of $\U$. Using the methods of \cite[\S2]{DLtheory}, one identifies $H^\bullet_c(Y\tens\bfq,\cE_{\rho})$ with the cohomology of certain rank $1$ local systems on affine spaces. Finally, the latter turn out to be amenable to an inductive calculation using certain linear fibrations of affine spaces $\bA^d\rar{}\bA^{d-1}$, the proper base change theorem and the projection formula.

Finally, Part 3 connects the first two parts by using Theorem B to prove Theorem A. We note that Theorem A is concerned with the Lubin-Tate tower of $K$ and certain special cases of the local Langlands and Jacquet-Langlands correspondences, while Theorem B is only concerned with the action of the finite group $U=\U(\fqn)$ on the cohomology of the variety $X$ over a finite field, which on the surface is unrelated to the local Langlands correspondence. A bridge between the two results is provided by Theorem C (\S\ref{ss:Theorem-C}), in which we prove that the induced representation $\Ind_{\cJ}^{GL_n(K)\times D^\times\times\cW_K}H^{n-1}_c(X\tens\bfq,\ql)$ realizes certain special cases of the local Langlands and Jacquet-Langlands correspondences. Theorem C is proved in \S\ref{s:proof-Thm-C}; we make heavy use of the methods developed by Henniart in \cite{Henniart-MathNachr1992,Henniart-JLC-I}, along with Theorem B. The article concludes with \S\ref{s:proof-Thm-A}, where we prove Theorem A by combining Theorem C with the main results of Part 1.

\part{Affinoids in the Lubin-Tate tower}


\section{The Lubin-Tate tower at infinite level}\label{s:LT-tower}
As before, let $K$ be a non-archimedean local field with uniformizer $\varpi$ and residue field $\bF_q$.  Let $n\geq 1$, and let $H_0/\overline{\bF}_q$ be a one-dimensional formal $\cO_K$-module of height $n$.  Then $H_0$ is unique up to isomorphism, and $D=\End H_0\otimes K$ is the central division algebra over $K$ of invariant $1/n$.

The Lubin-Tate tower is a projective system of analytic spaces $\cM_{H_0,m}$ ($m\geq 0$) which parametrize deformations of $H_0$ with level $\varpi^m$ structure.   In \S\ref{infinitelevel} we review the construction of an analytic space $\cM_{H_0,\infty}$, which is (in a sense) the inverse limit of the $\cM_{H_0,m}$.  For now let us only describe the points of $\cM_{H_0,\infty}$.  For a complete valued extension field $E/K$, $\cM_{H_0,\infty}(E)$ is the set of isogeny classes of triples $(H,\rho,\phi)$, where $H/\cO_{E}$ is a formal $\cO_K$-module, $\rho\from H_0\otimes\cO_{E}/\varpi\to H\otimes\cO_{E}/\varpi$ is a quasi-isogeny and $\phi\from K^n\tilde{\to} V(H)(E)$ is a basis for the rational Tate module of $H$.  The pair $(H,\rho)$ is a {\em deformation} of $H_0$ to $\cO_{E}$, whereas $\phi$ is a {\em level structure}.  The group $GL_n(K)\times D^\times$ acts on $\cM_{H_0,\infty}$, by operating on $\phi$ and $\rho$, respectively.

The goal of this section is to give a precise description of $\cM_{H_0,\infty}$ as an adic space, as in \cite{WeinsteinSemistableModels}.   On the one hand, $\cM_{H_0,\infty}$ represents a moduli problem for adic spaces which generalizes the one given in the previous paragraph.  On the other hand, it turns out that  $\cM_{H_0,\infty}$ has a convenient linear algebra description, which goes as follows.  Let $M(H_0)$ be the Dieudonn\'e module of $H_0$.  Then the top exterior power $\wedge^n M(H_0)$ is the Diedonn\'e module of a formal $\cO_K$-module of height 1 and dimension 1, which we call $\wedge H_0$.   Recall that $\tilde{H}$ means the universal cover of any lift of $H_0$ to $\cO_{\breve{K}}$, and similarly for $\widetilde{\wedge H}$.   Thm. \ref{logcommutes} furnishes a natural $K$-alternating map $\delta\from \tilde{H}^n\to \widetilde{\wedge H}$ (which even has an explicit presentation, cf. the proof of Thm. \ref{logcommutes}).  By classical Lubin-Tate theory, $\wedge H_0$ has a unique lift $\wedge H$, which shows that $\cM_{\wedge H_0,\infty}$ may be identified with the locally closed subspace of $\widetilde{\wedge H}_{\eta}$ which parametrizes nonzero sequences of torsion elements.  Finally, Thm. \ref{LTdiagram} shows that $\cM_{H_0,\infty}$ is the preimage of $\cM_{\wedge H_0,\infty}$ under $\delta\from \tilde{H}^n_{\eta}\to\widetilde{\wedge H}_{\eta}$.

\subsection{Formal $\cO_K$-modules:  definitions}
We rely heavily on the notions of formal $\cO_K$-modules and formal $\cO_K$-module laws.  These are reviewed below.


Let $A$ be an $\cO_K$-algebra.  A $1$-dimensional {\em formal $\cO_K$-module law} over $A$ is a collection of power series $H(X,Y)\in A\powerseries{X,Y}$ and $[a]_H(X)\in A\powerseries{X}$ ($a\in\cO_K$) satisfying the usual constraints.  The addition law $H(X,Y)$ will be written $X+_HY$, and the entire package will simply be called $H$.  A homomorphism $f\from H\to H'$ between $1$-dimensional formal $\cO_K$-module laws over $A$ is a power series $f(X)\in A\powerseries{X}$ without constant term for which $f(X+_HY)=f(X)+_{H'}f(Y)$ and $f([a]_H(X))=[a]_{H'}(f(X))$.

For a 1-dimensional formal $\cO_K$-module law $H/A$, the Lie algebra $\Lie H$ is the free $A$-module spanned by the symbol $d/dX$.  A homomorphism $f\from H\to H'$ induces a homomorphism of $A$-modules $\Lie H\to \Lie H'$ which is simply multiplication by $f'(0)$.

Formal $\cO_K$-module laws of higher dimension are defined similarly;  if $H$ has dimension $d$ then $\Lie H$ is a free $A$-module of rank $d$.

It will be useful to present a more functorial description of formal $\cO_K$-module laws.  For this we need the notion of an adic ring.

\begin{defn} A topological ring $A$ is {\em adic} if there exists an ideal $I\subset R$ such that $A$ is separated and complete for the $I$-adic topology.   Such an $I$ is called an {\em ideal of definition} for $A$.  If $A$ is an adic ring, an {\em adic $A$-algebra} $R$ is an adic ring together with a continuous homomorphism $A\to R$.
\end{defn}

For an adic ring $A$, let $\Adic_A$ be the category of adic $R$-algebras.
We often consider covariant functors $\mathcal{F}\from\Adic_A\to \Sets$.
If $\mathcal{F}$ is representable by an adic $A$-algebra $R$, we will often confuse $\mathcal{F}$ with the affine formal scheme $\Spf R$.

A basic example is the functor $A\mapsto \Nil(A)$ which assigns to an adic $R$-algebra the set of topologically nilpotent elements of $A$.  Then $\Nil=\Spf A\powerseries{T}$.


Let $A$ be an adic $\cO_K$-algebra, and let $H$ be a $d$-dimensional formal $\cO_K$-module law over $A$.  Let $\cO_K\Mod$ be the category of $\cO_K$-modules.  Then $H$ determines a functor $\Adic_A\to\cO_K\Mod$, which will also be called $H$.  For an object $R$ of $\Adic_A$, $H(R)$ is the set $\Nil(R)$ with $\cO_K$-module structure determined by the operations in $H$.  The composition of $H\from\Adic_A\to\cO_K\Mod$ with the forgetful functor $\cO_K\Mod\to\Sets$ is representable by $\Spf A\powerseries{X_1,\dots,X_d}$.

By a {\em formal $\cO_K$-module} over $A$ we will mean a functor $H\from \Adic_A\to\cO_K\Mod$ which is isomorphic to the functor induced by some formal $\cO_K$-module law.  Then $\Lie H$ may be defined as the kernel of $H(A[X]/X^2)\to H(A)$.  An isomorphism between $H$ and the functor induced by a formal $\cO_K$-module law will be called a coordinate on $H$.  A choice of coordinate on $H$ gives a basis for the free $A$-module $\Lie H$.




\subsection{Formal $\cO_K$-modules over $\overline{\bF}_q$}   These are easily classified.  A 1-dimensional formal $\cO_K$-module over a perfect field $k$ containing $\bF_q$ is either isomorphic to $\hat{\mathbb{G}}_a$, or else there exists a maximal integer $n\geq 1$ for which (with respect to some choice of coordinate on $H$) $[\varpi]_H(X)$ is a power series in $X^{q^n}$.  In the latter case, $H$ is {\em $\varpi$-divisible}, and $n$ is the {\em height} of $H$.  If $k$ is assumed to be algebraically closed (for instance if $k=\overline{\bF}_q$), then there even exists a coordinate on $H$ for which $[\varpi]_H(X)=X^{q^n}$.

If $A$ is a local $\cO_K$-algebra whose residue field $k$ is perfect, we will refer to the height of a formal $\cO_K$-module $H/A$ as the height of $H\otimes k$ (if this exists).

\subsection{Logarithms}
Let $A$ be an adic  $\cO_K$-algebra.  For a 1-dimensional formal $\cO_K$-module law $H$ over $A$, we have the corresponding logarithm series $\log_H(T)\in (A\otimes K)\powerseries{T}$.  This is the unique power series of the form $T+c_2T^2+\dots$ which furnishes an isomorphism between $H\otimes (A\otimes K)$ and the formal additive group $\hat{\mathbb{G}}_a$.  If $A$ is $\cO_K$-flat, then $\log_H$ determines $H$.

For each $n\geq 1$, there is a particularly convenient formal $\cO_K$-module law $H$ for which
\[ \log_H(T) = \sum_{i=0}^\infty \frac{T^{q^{in}}}{\varpi^i}. \]
We call this $H$ the {\em standard formal $\cO_K$-module} of height $n$.  It is obtained by setting $v_n=1$ and $v_j=0$ (for all $j\neq n$) in Hazewinkel's universal $p$-typical formal $\cO_K$-module over $\cO_K[v_1,v_2,\dots]$ (see \cite{GrossHopkins}, \S13).
Although $H$ has a model over $\cO_K$, we will take its base ring to be $\cO_{\breve{K}}$, where $\breve{K}$ is the completion of the maximal unramified extension of $K$.  It is easy to check that $H\otimes \overline{\bF}_q$ has height $n$ in the sense of the previous subsection.

If $H$ is a general formal $\cO_K$-module over $A$, then the logarithm $\log_H$ is an isomorphism between $H\otimes A[1/\varpi]$ and the additive formal $\cO_K$-module $\Lie H\otimes \hat{\mathbb{G}}_a$.

\subsection{Additive extensions and the Dieudonn\'e module}
Let $H$ be a formal $\cO_K$-module of height $n$ over a local $\cO_K$-algebra $A$.

\begin{defn} A {\em {rigidified additive extension}} of $H$ is an exact sequence $0\to\hat{\mathbb{G}}_a\to E\to H\to 0$ of formal $\cO_K$-modules equipped with a splitting $\Lie H\to \Lie E$ of Lie algebras.  The group of isomorphism classes of rigidified additive extensions of $H$ is denoted $\ExtRig(H,\hat{\mathbb{G}}_a)$.  
\end{defn}
To give a splitting $\Lie H\to \Lie E$ is equivalent to giving an invariant differential $\omega_E$ on $E$ whose pull-back under $\hat{\mathbb{G}}_a\to E$ is the canonical differential $dT$ on $\hat{\mathbb{G}}_a$.

There is a {\em universal additive extension} (see \cite{GrossHopkins}, \S 11)
\[ 0\to V\to E\to H\to 0,\]
with $V$ isomorphic to $(\hat{\mathbb{G}}_a)^{n-1}$.  Then $\ExtRig(H,\hat{\mathbb{G}}_a)$ is dual to $\Lie E$.

Rigidified additive extensions of $H$ can be constructed using special power series called {\em quasilogarithms}.  For a power series $g(T)\in (A\otimes K)\powerseries{T}$, we let
\begin{eqnarray*}
\Delta g(X,Y)=g(X+_HY)-g(X)-g(Y)\\
\delta_a g(X)= g([a]_H(X))-ag(X),\text{ $a\in \cO_K$}.
\end{eqnarray*}
Let $\delta g$ denote the collection of power series $\set{\Delta g,\delta_a g}$.  We say $\delta g$ is integral if $\Delta g$ and $\delta_a g$ lie in $A\powerseries{T}$.

\begin{defn}
A {\em quasilogarithm} for $H$ is a power series $g(T)\in (A\otimes K)\powerseries{T}$ without constant coefficient for which $\delta g$ and $dg$ (the derivative) are both integral.  Define the module of quasilogarithms as the $A$-module
\[ \QuasiLog(H)=\frac{\set{g(T)\in (A\otimes K)\powerseries{T}\biggm\vert g(0)=0,\delta g\text{ and } dg \text{ integral}}}{\set{g(T)\in A\powerseries{T}\biggm\vert g(0)=0}}\]
\end{defn}

If $g(T)$ is a quasilogarithm for $H$, we may define a two-dimensional formal $\cO_K$-module law $E$ by
\begin{eqnarray*}
(X,X')+_E(Y,Y')&=&(X+Y+\Delta g(X',Y'), X'+_H Y') \\
\left[a\right]_E(X,X')&=&(aX+\delta_ag(X'), X').
\end{eqnarray*}
Then $0\to \hat{\mathbb{G}}_a\to E\to H\to 0$ is an additive extension of $H$.  We define a differential $\omega_E$ on $E$ by the formula
\[ \omega_E = dX'+dg(X). \]
Then the pull-back of $\omega_E$ to $\hat{\mathbb{G}}_a$ is $dT$, so that $E$ and $\omega_E$ define a rigidified additive extension of $H$.  If $g(T)$ lies in $A\powerseries{T}$, then $E$ is isomorphic to the trivial extension $\hat{\mathbb{G}}_a\oplus H$ via the isomorphism $(X+g(X'),X')$.
We therefore have a map $\QuasiLog(H)\to\ExtRig(H,\hat{\mathbb{G}}_a)$.

\begin{prop}[\cite{GrossHopkins}, Prop. 8.5]  The map $\QuasiLog(H)\to \ExtRig(H,\hat{\mathbb{G}}_a)$ is an isomorphism of $A$-modules.
\end{prop}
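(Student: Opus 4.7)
The plan is to show the map $g \mapsto (E_g, \omega_{E_g})$ is an $A$-linear bijection by constructing an explicit inverse. $A$-linearity and well-definedness of the map follow from the formulas: the Baer sum of extensions corresponds to addition of quasilogarithms, and if $g(T) \in A\powerseries{T}$ with $g(0) = 0$, then the substitution $X \mapsto X + g(X')$ is an integral automorphism of the formal scheme $\hat{\mathbb{G}}_a \times H$ transporting the split rigidified extension to $(E_g, \omega_{E_g})$, so $g$ represents the zero class in $\ExtRig(H, \hat{\mathbb{G}}_a)$.

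For injectivity, suppose $(E_g, \omega_{E_g})$ is isomorphic to the split rigidified extension via some $\phi$. Any extension isomorphism must fix the $\hat{\mathbb{G}}_a$-subgroup and induce the identity on $H$, hence has the form $\phi(X, X') = (X + h(X'), X')$ with $h \in A\powerseries{T}$ and $h(0) = 0$ (integrality being forced by $\phi$ being defined over $A$). Compatibility with the rigidifications translates to $dh = dg$, so $h = g$ (both vanish at the origin), and therefore $g = h \in A\powerseries{T}$ represents the zero class in $\QuasiLog(H)$.

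For surjectivity --- the main content of the proposition --- given a rigidified extension $(E, \omega_E)$, first choose a splitting $E \cong \hat{\mathbb{G}}_a \times H$ of formal schemes over $A$ (possible since both sides are formal spectra of power series rings). In these coordinates the formal $\cO_K$-module operations on $E$ take the form
\[
(X, X') +_E (Y, Y') = (X + Y + \phi(X', Y'), X' +_H Y'), \qquad [a]_E(X, X') = (aX + \psi_a(X'), X'),
\]
for some integral $\phi \in A\powerseries{X', Y'}$ and $\psi_a \in A\powerseries{X'}$; analyzing the translation-invariance of $\omega_E$ together with the normalization condition along $\hat{\mathbb{G}}_a$ forces $\omega_E = dX + \beta(X') dX'$ for some integral $\beta \in A\powerseries{X'}$. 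Define $g(T) = \int_0^T \beta(s)\, ds \in (A \otimes K)\powerseries{T}$. Integrality of $dg$ is immediate, while the identities $\phi = \pm\Delta g$ and $\psi_a = \pm\delta_a g$ (the sign depending on conventions) follow by integrating the functional equations $\phi_1(X', c') + \beta(X' +_H c') \partial_1(X' +_H c') = \beta(X')$ (and its analogue for $\psi_a$) that express the translation-invariance and $\cO_K$-linearity of $\omega_E$, via the change of variables $u = s +_H c'$ in the integral. Thus $g \in \QuasiLog(H)$ and $(E_g, \omega_{E_g}) \cong (E, \omega_E)$, completing surjectivity.

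The principal obstacle is the construction of $g$: its primitive requires division by positive integers, so $g$ inherently lies in $(A \otimes K)\powerseries{T}$ rather than in $A\powerseries{T}$ --- precisely why $\QuasiLog(H)$ is defined as a quotient of power series over $A \otimes K$ by those over $A$. The underlying conceptual point is that a rigidification uniquely determines a splitting of the extension on the level of Lie algebras, and hence (after inverting $\varpi$) a splitting of the extension as formal groups; the associated splitting power series is the desired quasilogarithm, and its integrality properties are exactly those demanded by the definition of $\QuasiLog(H)$.
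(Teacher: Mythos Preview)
The paper does not give its own proof of this proposition; it is simply cited from \cite{GrossHopkins}, Prop.~8.5. Your argument is the standard one and is essentially correct in the characteristic-zero setting (the setting of the Gross--Hopkins reference): one chooses a formal-scheme splitting $E\cong\hat{\mathbb{G}}_a\times H$ compatible with the extension maps, reads off the invariant differential $\omega_E=dX+\beta(X')\,dX'$ (the normalization $\alpha\equiv 1$ indeed follows from invariance under $\hat{\mathbb{G}}_a$-translations together with the pullback condition), and integrates $\beta$ to obtain the quasilogarithm. The translation- and $\cO_K$-invariance of $\omega_E$ then give $\Delta g$ and $\delta_a g$ equal (up to sign) to the integral cocycles defining the group law on $E$, as you indicate.

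One caveat: your surjectivity step hinges on forming $g(T)=\int_0^T\beta(s)\,ds$, which requires dividing by positive integers. If $K$ has equal characteristic $p$, then $A\otimes K=A[1/\varpi]$ does not contain $1/p$, and this integration is not available (nor does $dh=dg$ imply $h=g$ in the injectivity step, since $T^p$ has zero derivative). So your argument, as written, covers the mixed-characteristic case; in equal characteristic one needs a different construction of $g$ from the extension data.
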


For the standard formal $\cO_K$-module, the quasilogarithms can be written down explicitly.

\begin{lem}[\cite{GrossHopkins}, Prop. 13.8] \label{Qlogbasis} Let $H$ be the standard formal $\cO_K$-module law.  A basis for $\QuasiLog(H)$ is given by
\[\log_H(T),\frac{1}{\varpi}\log_H(T^q),\dots,\frac{1}{\varpi}\log_H(T^{q^{n-1}}).\]
\end{lem}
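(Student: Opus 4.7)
The strategy is to exhibit the listed elements as quasilogarithms, verify their $\cO_{\breve{K}}$-linear independence in $\QuasiLog(H)$, and conclude by a rank count. By the preceding proposition, $\QuasiLog(H) \cong \ExtRig(H,\hat{\mathbb{G}}_a) = (\Lie E)^*$, where $E$ is the universal additive extension of $H$. Since the kernel $V \cong \hat{\mathbb{G}}_a^{n-1}$ has dimension $n-1$ and $H$ has dimension $1$, the extension $E$ has dimension $n$, so $\Lie E$ and hence $\QuasiLog(H)$ are free of rank $n$ over $\cO_{\breve{K}}$. It therefore suffices to produce $n$ linearly independent quasilogarithms.

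The main step is to verify that $g_0(T) := \log_H(T)$ and $g_j(T) := \varpi^{-1}\log_H(T^{q^j})$ for $1 \le j \le n-1$ are quasilogarithms. For $g_0$ this is immediate: $\Delta g_0 = 0$ and $\delta_a g_0 = 0$ because $\log_H$ is a formal $\cO_K$-module homomorphism to $\hat{\mathbb{G}}_a$ over $\cO_{\breve{K}} \otimes K$, and $dg_0 = dT$ is manifestly integral. For $g_j$ with $j \ge 1$, the key input is the functional equation
\[ \log_H(T) - \varpi^{-1}\log_H(T^{q^n}) = T, \]
which follows at once from the explicit expansion $\log_H(T) = \sum_{i \ge 0}\varpi^{-i}T^{q^{ni}}$. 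Substituting $T^{q^j}$ yields the recursion $g_j(T) = \varpi^{-1}T^{q^j} + \varpi^{-1}g_j(T^{q^n})$, which I would iterate to express $\Delta g_j(X,Y)$ and $\delta_a g_j(T)$ as $\varpi$-adically convergent sums. The resulting integrality amounts to congruences of the form $F(X,Y)^{q^{ni+j}} \equiv X^{q^{ni+j}} +_H Y^{q^{ni+j}}$ and $[a]_H(T)^{q^{ni+j}} \equiv [a]_H(T^{q^{ni+j}})$ modulo $\varpi^{i+1}\cO_{\breve{K}}\powerseries{\cdots}$, where $F(X,Y) = X +_H Y$; these are precisely what Hazewinkel's functional equation integrality lemma delivers for the $p$-typical logarithm $\log_H$. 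Integrality of $dg_j$ is then a direct estimate on the coefficients $\varpi^{-i-1}q^{ni+j}$.

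Linear independence is a routine coefficient comparison. If $\sum_{j=0}^{n-1}c_j g_j(T) \in \cO_{\breve{K}}\powerseries{T}$, then the coefficient of $T^{q^j}$ for $1 \le j \le n-1$ equals $\varpi^{-1}c_j$ and the coefficient of $T^{q^{n}}$ equals $\varpi^{-1}c_0$, forcing every $c_j$ into $\varpi\cO_{\breve{K}}$; continuing with coefficients of $T^{q^{ni+j}}$ shows $c_j \in \varpi^{k}\cO_{\breve{K}}$ for every $k \ge 0$, hence $c_j = 0$. Combined with the rank count above, the classes $[g_0],\ldots,[g_{n-1}]$ must form a basis. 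The main obstacle is the verification of integrality: this is where the specific shape of the standard logarithm $\log_H$ is essential and where Hazewinkel's functional equation machinery does all the heavy lifting, while the remaining steps are a coefficient comparison and a dimension count.
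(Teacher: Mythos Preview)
The paper does not prove this lemma at all; it simply cites \cite{GrossHopkins}, Prop.~13.8. So there is no in-paper proof to compare against, and your task is to supply an independent argument.

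Your outline has a genuine gap in the final step. Over a discrete valuation ring such as $\cO_{\breve{K}}$, producing $n$ linearly independent elements of a free module of rank $n$ does \emph{not} imply that they form a basis: think of $(1,0)$ and $(0,\varpi)$ inside $\cO_{\breve{K}}^2$. Your coefficient-comparison argument correctly shows that the classes $[g_0],\dots,[g_{n-1}]$ are $\cO_{\breve{K}}$-linearly independent in $\QuasiLog(H)$, but it says nothing about whether they \emph{span}. The rank count from $\QuasiLog(H)\cong(\Lie E)^*$ only tells you that the submodule they generate has finite index; you still owe an argument that this index is $1$. One route is to prove surjectivity directly: given an arbitrary quasilogarithm $g$, use the integrality of $dg$ to confine the non-integral coefficients of $g$ to $p$-power degrees, and then use the integrality of $\delta_\varpi g(T)=g([\varpi]_H(T))-\varpi g(T)$ together with the functional equation $\varpi\log_H(T)=\varpi T+\log_H(T^{q^n})$ to recursively reduce $g$ modulo integral series to a combination of the $g_j$. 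Alternatively, show that the images of the $[g_j]$ in $\QuasiLog(H)/\varpi\QuasiLog(H)$ are linearly independent over the residue field and invoke Nakayama. Either way, this is the substantive part of the lemma and your sketch skips it. (Your verification that each $g_j$ is a quasilogarithm is essentially fine, though the appeal to Hazewinkel's lemma could be made more explicit.)
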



Let $H_0=H\otimes\overline{\bF}_q$.
We will write $M(H_0)=\ExtRig(H,\hat{\mathbb{G}}_a)$:
this is the {\em Dieudonn\'e module} of $H_0$.
$M(H_0)$ does not depend on the choice of lift $H$.  In general, if $A\to A'$ is a surjection of local $\cO_K$-algebras whose kernel has $\cO_K$-divided powers, and $H/A$ is a formal $\cO_K$-module, then $\ExtRig(H,\hat{\mathbb{G}}_a)$ only depends on $H\otimes A'$ in a functorial sense.

Lemma \ref{Qlogbasis} gives a privileged basis for the rational Dieudonn\'e module $M(H_0)\otimes K$, corresponding to the quasilogarithms \[\log_H(T),\log_H(T^q),\dots,\log_H(T^{q^{n-1}}).\]  We call this the standard basis of $M(H_0)\otimes K$.

\subsection{The universal cover} Let $A$ be an adic $\cO_K$-algebra, and let $H$ be a $\varpi$-divisible formal $\cO_K$-module over $A$.  We define the {\em universal cover} $\tilde{H}$ as the functor from $\Adic_A$ to $K$-vector spaces, defined by
\[ \tilde{H}(R) = \varprojlim H(R), \]
where the inverse limit is taken with respect to multiplication by $\varpi$.

\begin{lem}\label{H0tilde}
Let $H_0$ be a 1-dimensional $\varpi$-divisible formal $\cO_K$-module over $\overline{\bF}_q$.   Then $\tilde{H}_0$ is isomorphic to $\Spf \overline{\bF}_q\powerseries{T^{1/q^\infty}}$, where $\overline{\bF}_q\powerseries{T^{1/q^\infty}}$ is defined as the $T$-adic completion of the ring $\overline{\bF}_q[T^{1/q^\infty}]$.
\end{lem}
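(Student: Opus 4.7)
The plan is to compute $\tilde{H}_0(R)$ explicitly for each adic $\overline{\bF}_q$-algebra $R$ and match it with the $R$-points of $\Spf\overline{\bF}_q\powerseries{T^{1/q^\infty}}$.

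First I would exploit the fact, recalled earlier in the text, that a $1$-dimensional $\varpi$-divisible formal $\cO_K$-module of height $n$ over an algebraically closed field admits a coordinate in which $[\varpi]_{H_0}(T)=T^{q^n}$. In this coordinate $H_0$ is represented by the affine formal scheme $\Spf\overline{\bF}_q\powerseries{T}$ with functor of points $H_0(R)=\Nil(R)$, and multiplication by $\varpi$ is the iterated Frobenius $x\mapsto x^{q^n}$. Consequently
\[
\tilde{H}_0(R)\;=\;\varprojlim_{x\,\mapsto\, x^{q^n}}\Nil(R).
\]

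Next I would unwind the right-hand side. A continuous $\overline{\bF}_q$-algebra homomorphism $\overline{\bF}_q\powerseries{T^{1/q^\infty}}\to R$ is determined by the images $t_j\in\Nil(R)$ of $T^{1/q^j}$ for $j\ge 0$, subject only to $t_{j+1}^q=t_j$, so
\[
\Spf\overline{\bF}_q\powerseries{T^{1/q^\infty}}(R)\;=\;\varprojlim_{x\,\mapsto\, x^q}\Nil(R).
\]

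The remaining and essentially only substantive step is to produce a natural bijection between the two inverse limits. In one direction I would send a compatible sequence $(t_j)_{j\ge 0}$ on the right to its $n$-th subsequence $(x_i)_{i\ge 0}:=(t_{ni})_{i\ge 0}$ on the left; this is automatically compatible since $t_{n(i+1)}^{q^n}=t_{ni}$. For the inverse direction, starting from $(x_i)_{i\ge 0}$ with $x_{i+1}^{q^n}=x_i$, for each $j\ge 0$ I would write $j=nk-r$ with $k\ge 0$ and $0\le r<n$ uniquely determined, and set $t_j:=x_k^{q^r}$. A short case analysis according to whether $r<n-1$ or $r=n-1$ verifies $t_{j+1}^q=t_j$, and it is immediate that the two assignments are mutually inverse. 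Both constructions are manifestly natural in $R$, so together they define an isomorphism of set-valued functors on $\Adic_{\overline{\bF}_q}$, which is the asserted isomorphism of affine formal schemes. The only real obstacle in the whole argument is the indexing bookkeeping in this last step; no deeper input is needed beyond the existence of the normal-form coordinate on $H_0$ and the standard description of $\Spf$ of a completed perfection as an inverse limit under Frobenius.
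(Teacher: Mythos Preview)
Your proof is correct and follows essentially the same approach as the paper's: choose a coordinate with $[\varpi]_{H_0}(T)=T^{q^n}$, identify $\tilde{H}_0(R)$ with $\varprojlim_{x\mapsto x^{q^n}}\Nil(R)$, and then recognize this as the functor of points of $\Spf\overline{\bF}_q\powerseries{T^{1/q^\infty}}$. The only difference is that the paper simply asserts that the inverse limit along $T\mapsto T^{q^n}$ is $\Spf\overline{\bF}_q\powerseries{T^{1/q^\infty}}$, whereas you make the cofinality of the $q^n$-power system in the $q$-power system explicit via your indexing bijection.
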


\begin{proof}  Since $H_0$ is a $\varpi$-divisible formal $\cO_K$-module over an algebraically closed field containing $\bF_q$, we may choose the coordinate on $H_0$ in such a way that $[\varpi]_{H_0}(T)=T^{q^n}$, where $n$ is the height of $H_0$.  Then for an adic $\overline{\bF}_q$-algebra $R$ we have $\tilde{H}_0(R)=\varprojlim \Nil(R)$, where the limit is taken with respect to the maps $x\mapsto x^{q^n}$.  Thus $\tilde{H}_0$ is the inverse limit of the affine formal schemes $\varprojlim \Spf\overline{\bF}_q\powerseries{T}$ with respect to the maps $T\mapsto T^{q^n}$, and this is exactly $\Spf\overline{\bF}_q\powerseries{T^{1/q^\infty}}$.
\end{proof}

For an adic $\cO_K$-algebra $A$, we will use the notation $\Nil^\flat$ for the functor:
\[ \Nil^{\flat}(R)=\varprojlim_{x\mapsto x^q} \Nil(R). \]
This functor is representable by the formal scheme $\Spf A\powerseries{T^{1/q^\infty}}$, where $A\powerseries{T^{1/q^\infty}}$ is the completion of $A[T^{1/q^\infty}]$ with respect to the ideal generated by $I$ and $T$ (where $I$ is an ideal of definition for $A$).

\begin{lem}
\label{Nilcrystalline}
Let $A$ be an adic $\bZ_p$-algebra with ideal of definition $I$.  For an adic $A$-algebra $R$, the reduction map $\Nil^{\flat}(R)\to\Nil^{\flat}(R/I)$ is an isomorphism.
\end{lem}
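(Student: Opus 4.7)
The lemma is an instance of the rigidity principle from tilting theory: passing to the inverse limit under Frobenius eliminates the information contained in nilpotent thickenings. My plan is to reduce the entire argument to a single contraction estimate for iterated Frobenius, then deduce injectivity and surjectivity from it. The key observation is that, since $A$ is an adic $\bZ_p$-algebra, continuity of the structure map $\bZ_p \to A$ forces $p \in \sqrt{I}$, so $p^N \in I$ for some $N \geq 1$. I will then establish the estimate: for any $y, y' \in R$ with $y \equiv y' \pmod{IR}$, one has $y^{q^j} \equiv (y')^{q^j} \pmod{I^{c(j)} R}$ for some sequence $c(j) \to \infty$. The proof proceeds by expanding $(y + \epsilon)^q = y^q + \epsilon^q + p \cdot u$ (using $q = p^f$ and $p \mid \binom{q}{i}$ for $0 < i < q$) with $u \in \epsilon R$, then iterating and exploiting $p^N \in I$ to absorb the leftover $p$-factor into a positive power of $I$.

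Injectivity will follow immediately: given $(x_i), (x_i') \in \Nil^\flat(R)$ with the same image in $\Nil^\flat(R/I)$, applying the estimate to $y = x_{i+j}$, $y' = x_{i+j}'$ (which agree modulo $IR$) yields
\[ x_i = x_{i+j}^{q^j} \equiv (x_{i+j}')^{q^j} = x_i' \pmod{I^{c(j)} R} \]
for every $j$. Since $R$ is $I$-adically separated (as the continuous map $A \to R$ places $IR$ inside the radical of an ideal of definition of $R$), this forces $x_i = x_i'$.

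For surjectivity, given $(\bar{x}_i) \in \Nil^\flat(R/I)$, I will pick topologically nilpotent lifts $y_i \in R$ of $\bar{x}_i$, which is possible because each $\bar{x}_i$ is topologically nilpotent and $IR$ sits inside the ideal of definition of $R$. Then $y_{i+1}^q \equiv y_i \pmod{IR}$, so the contraction estimate (applied to $a = y_{i+m}^q$ and $b = y_{i+m-1}$) implies that the sequence $(y_{i+m}^{q^m})_{m \geq 0}$ is Cauchy in $R$. By completeness it converges to some $x_i \in \Nil(R)$, which by continuity of $x \mapsto x^q$ satisfies $x_{i+1}^q = x_i$ and by construction reduces to $\bar{x}_i$ modulo $IR$, yielding the desired lift. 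The main obstacle will be the careful bookkeeping in the Frobenius contraction estimate, specifically tracking how $c(j)$ grows as $p$-power terms get absorbed into powers of $I$ via $p^N \in I$; once that is in hand, both halves of the bijection follow formally from $I$-adic separatedness and completeness of $R$.
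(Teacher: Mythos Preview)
Your proposal is correct and follows essentially the same approach as the paper: construct the inverse to reduction by choosing arbitrary lifts $y_i$ of the $\bar{x}_i$ and setting $z_i = \lim_{m\to\infty} y_{i+m}^{q^m}$, using the Frobenius contraction estimate to guarantee convergence. Your treatment is considerably more thorough than the paper's four-line sketch (which only writes down the inverse map and leaves all verifications to the reader); in particular you make the contraction estimate explicit, handle $p^N\in I$ directly rather than reducing to $p\in I$, and prove injectivity separately.
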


\begin{proof} The inverse map is as follows.  We may assume that $p\in I$. Let $(x_0,x_1,\dots)$ be an element of $\Nil^{\flat}(R/I)$.  For $i=0,1,\dots$, let $y_i\in R$ be any lift of $x_i$.  Put
\[ z_i = \lim_{n\to\infty} y_{n+i}^{q^n}. \]
Then $(z_0,z_1,\dots)$ lies in $\Nil^{\flat}(R/I)$ and lifts $(x_0,x_1,\dots)$
\end{proof}


\begin{lem}  \label{crystalline}  Let $A$ be an adic $\cO_K$-algebra admitting an ideal of definition $I$ for which $A/I=\overline{\bF}_q$.  Let $H$ and $H'$ be two 1-dimensional $\varpi$-divisible formal $\cO_K$-modules over $A$, and let $H_0$, $H_0'$ be their reductions modulo $I$.
\begin{enumerate}
\item For every object $R$ of $\Adic_A$, the natural reduction map $\tilde{H}(R)\to\tilde{H}_0(R/I)$ is an isomorphism.
\item There is an isomorphism of functors $\tilde{H}\isom\Nil^{\flat}$ (after forgetting the $K$-vector space structure on $\tilde{H}$).  Thus $\tilde{H}$ is representable by $\Spf A\powerseries{T^{1/q^\infty}}$.
\item Morphisms $H_0\to H_0'$ of $\varpi$-divisible formal $\cO_K$-modules over $A/I$ lift naturally to morphisms $\tilde{H}\to\tilde{H}'$ over $A$.
\end{enumerate}
\end{lem}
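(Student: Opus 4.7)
The plan is to prove (1) by a limiting argument based on a key estimate for $[\varpi]_H$, and then obtain (2) and (3) as formal consequences of (1), Lemma \ref{H0tilde} and Lemma \ref{Nilcrystalline}. The decisive input is that $A/I=\overline{\bF}_q$ forces $\varpi\in I$, so $\varpi$ is topologically nilpotent in $A$ and in every adic $A$-algebra $R$. Choose a coordinate on $H$ lifting one on $H_0$ for which $[\varpi]_{H_0}(T)=T^{q^n}$; then every coefficient of $[\varpi]_H(T)$ except that of $T^{q^n}$ lies in $I$, and a term-by-term monomial check yields the \emph{key estimate}
\[ t\in I^mR \;\Longrightarrow\; [\varpi]_H(t)\in I^{m+1}R, \]
which on iteration gives $[\varpi^k]_H(t)\in I^{m+k}R$ whenever $t\in I^mR$.

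For part (1), pick arbitrary lifts $\tilde y_i\in H(R)$ of $\bar y_i$ for a given $(\bar y_i)\in\tilde H_0(R/I)$, and set $z_i:=\lim_{k\to\infty}[\varpi^k]_H(\tilde y_{i+k})$. Since $[\varpi]_H(\tilde y_{i+k+1})$ and $\tilde y_{i+k}$ both reduce modulo $IR$ to $\bar y_{i+k}$, their formal-group difference lies in $IR$, so the homomorphism property of $[\varpi^k]_H$ combined with the key estimate gives
\[ [\varpi^{k+1}]_H(\tilde y_{i+k+1})-_H[\varpi^k]_H(\tilde y_{i+k})=[\varpi^k]_H\bigl([\varpi]_H(\tilde y_{i+k+1})-_H \tilde y_{i+k}\bigr)\in I^{k+1}R. \]
Using the universal factorization $a-_H b=(a-b)\cdot G_H(a,b)$ with $G_H(0,0)=1$, whose value is a unit in $R$ whenever $a,b\in\Nil(R)$, the ring difference $[\varpi^{k+1}]_H(\tilde y_{i+k+1})-[\varpi^k]_H(\tilde y_{i+k})$ also lies in $I^{k+1}R$. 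Hence the sequence is Cauchy in $R$; the limit $z_i$ exists in $\Nil(R)$, lifts $\bar y_i$, and satisfies $[\varpi]_H(z_{i+1})=z_i$ by continuity of $[\varpi]_H$. For uniqueness, two lifts $z,z'$ of the same $(\bar y_i)$ satisfy $z_i-_H z'_i=[\varpi^k]_H(z_{i+k}-_H z'_{i+k})\in I^{k+1}R$ for every $k$, forcing $z_i=z'_i$ by separatedness.

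Parts (2) and (3) then follow formally. For (2), the composition
\[ \tilde H(R)\isom\tilde H_0(R/I)\isom\Nil^\flat(R/I)\isom\Nil^\flat(R) \]
uses (1), Lemma \ref{H0tilde} (after reindexing a $[\varpi]_{H_0}$-compatible sequence into a $q$-compatible one by inserting Frobenius iterates), and Lemma \ref{Nilcrystalline}; this identifies $\tilde H$ with $\Spf A\powerseries{T^{1/q^\infty}}$. For (3), any morphism $H_0\to H_0'$ induces by functoriality of the inverse limit a morphism $\tilde H_0\to\tilde H_0'$ of $K$-vector space functors; composing with the isomorphisms of (1) applied to $H$ and $H'$ produces the required morphism $\tilde H\to\tilde H'$, which is $K$-linear because the source map is and because (1) is compatible with the inverse-limit structure. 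The main obstacle is the convergence argument in (1); once the key estimate and the power-series identity $a-_H b=(a-b)G_H(a,b)$ are isolated, the remainder of the argument is routine repackaging.
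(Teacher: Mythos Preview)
Your proof is correct and follows essentially the same approach as the paper: the inverse map in (1) is given by the same limiting formula $z_i=\lim_k[\varpi^k]_H(\tilde y_{i+k})$, and (2)--(3) are derived by the same chain of isomorphisms $\tilde H(R)\isom\tilde H_0(R/I)\isom\Nil^\flat(R/I)\isom\Nil^\flat(R)$. You supply more detail than the paper does on why the limit in (1) actually converges (the key estimate and the factorization $a-_Hb=(a-b)G_H(a,b)$), but the underlying idea is identical.
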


\begin{rmk}  The restriction to the case of 1-dimensional formal modules is simply for ease of notation.
\end{rmk}

\begin{proof} Part (1) is similar to Lemma \ref{Nilcrystalline}:  if $(x_0,x_1,\dots)\in \tilde{H}_0(R/I)$, let $y_i$ be an arbitrary lift of $x_i$ for $i\geq 0$, and then let
\[ z_i = \lim_{n\to\infty} [\varpi^n]_H(y_{n+i}). \]
Then $(z_0,z_1,\dots)$ is the unique lift of $(x_0,x_1,\dots)$ to $\tilde{H}(R)$.

Part (2) follows from Lemma \ref{Nilcrystalline}:  For an adic $A$-algebra $R$, we have \[\tilde{H}(R)\isom\tilde{H}_0(R/I)\isom\Nil^{\flat}(R/I)\isom \Nil^{\flat}(R). \]

For part (3), let $f_0\from H_0\to H_0'$ be a morphism;  this induces a morphism $\tilde{f}_0\from \tilde{H}_0\to \tilde{H}_0'$.  The required morphism $\tilde{f}\from \tilde{H}\to\tilde{H}'$ is the composite map
\[
\xymatrix{
 \tilde{H}(R)\ar[r]^{\sim}& \tilde{H}_0(R/I)\ar[r]^{\tilde{f}_0} &
 \tilde{H}_0'(R/I) \ar[r]^{\sim} & \tilde{H}'(R).
 }
\]
\end{proof}

\subsection{Some calculations in the universal cover of the standard formal $\cO_K$-module.}
\label{calculationsHtilde}
It will be useful to make the isomorphism $\tilde{H}\isom \Nil^{\flat}$ explicit in the case that $H$ is the standard formal $\cO_K$-module.  It is
\begin{eqnarray*}
\tilde{H}(R)&\to& \Nil^{\flat}(R) \\
(x_1,x_2,\dots)&\mapsto& (y,y^{1/q},\dots),
\end{eqnarray*}
where
\[ y^{1/q^i} = \lim_{m\to\infty} x_m^{q^{mn-i}} \]
for $i=0,1,\dots$.  We will write $\lambda\from \tilde{H}\to \Nil^{\flat}$ to refer to this isomorphism, and $\lambda_{i}\from \tilde{H}\to \Nil$ for its projection onto the $i$th component.

$\End H=\cO_L$ is the ring of integers in the unramified extension $L/K$ of degree $n$.  Indeed, if $\alpha$ is a root of $T^{q^n}-T$ in $\cO_L$, then there is a corresponding endomorphism of $H$ given on the level of coordinates by $[\alpha]_H(T)=\alpha T$. (Note that $\log_H\alpha T=\alpha\log_H T$, so this does actually define an endomorphism.)  On the other hand, if $H_0$ is the reduction of $H$ modulo $\varpi$, then $\End H_0=\cO_D$ is generated over $\cO_K$ by $\cO_L$ and the Frobenius endomorphism $\Pi$ (which sends $T$ to $T^q$).  By Lemma \ref{crystalline}, $\Pi$ lifts to an automorphism of the universal cover $\tilde{H}$.  We have
\[ \lambda_i(\Pi x)=\lambda_i(x)^q \]
for $x$ any section of $\tilde{H}$ and any $i=0,1,\dots$.

\begin{lem}\label{doubletailedsum} Let $H$ be the standard formal $\cO_K$-module, and let $R$ be a $\varpi$-torsion-free  $\cO_{\breve{K}}$-algebra which is complete for the $\varpi$-adic topology.  We have a commutative diagram
\[
\xymatrix{
(x_0,x_1,\dots)\in \ar[d] & \tilde{H}(R) \ar[rr]^{\lambda} \ar[dr]_{\log_H} && \Nil^{\flat}(R) \ar[dl] & \ni (y,y^{1/q},\dots) \ar[d] \\
\sum_{i=0}^\infty \frac{x_0^{q^{ni}}}{\varpi^i} & & R[1/p] & & \sum_{i=-\infty}^{\infty} \frac{y^{q^{ni}}}{\varpi^i}
}
\]
\end{lem}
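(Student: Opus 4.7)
The plan is to verify that each of the two displayed sums converges $\varpi$-adically in $R[1/\varpi]$ and represents the common value $\log_H(x_0)$, where $\log_H(T) = \sum_{i \geq 0} T^{q^{ni}}/\varpi^i$ is the explicit logarithm of the standard formal $\cO_K$-module $H$ furnished by Lemma \ref{Qlogbasis}.

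The left triangle is essentially a matter of unwinding definitions: the map $\log_H$ on $\tilde{H}(R)$ is induced by applying the logarithm power series to the first coordinate $x_0$. To see that this is well-defined, I would use that $x_0 = [\varpi^m]_H(x_m)$ for every $m \geq 0$ together with the identity $\log_H \circ [\varpi^m]_H = \varpi^m \cdot \log_H$ of formal power series, which gives
\[
\log_H(x_0) \;=\; \varpi^m \log_H(x_m) \;=\; \sum_{i \geq 0} \frac{x_m^{q^{ni}}}{\varpi^{i-m}}.
\]
For $m$ large, the summands become $\varpi$-adically small in $R[1/\varpi]$, since $x_m$ is topologically nilpotent; evaluating the identity at $m = 0$ recovers the left displayed formula.

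For the right triangle, reindexing the above via $j = i - m$ gives
\[
\log_H(x_0) \;=\; \sum_{j \geq -m} \frac{x_m^{q^{n(m+j)}}}{\varpi^j}, \qquad m \geq 0,
\]
and I would then pass to the limit $m \to \infty$. By the explicit description of $\lambda$ recalled in \S\ref{calculationsHtilde}, for each fixed $j \in \bZ$ one has $y^{q^{nj}} = \lim_{m \to \infty} x_m^{q^{n(m+j)}}$ in the $\varpi$-adic topology of $R$: for $j < 0$ this is literally the defining formula $y^{1/q^{n|j|}} = \lim_m x_m^{q^{mn - n|j|}}$ for $\lambda_{n|j|}$, while for $j \geq 0$ it is obtained by raising the $j = 0$ case to the $q^{nj}$-th power. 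Passing to the limit term-by-term then yields the right displayed formula, whose negative-index tail converges because $y^{1/q^{n|j|}} \varpi^{|j|}$ is $\varpi$-adically small for $|j|$ large, and whose positive-index tail converges for the same reason as in the first formula, observing that $y \equiv x_0 \pmod{\varpi R}$ as a direct consequence of the explicit formula for $\lambda_0$.

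The main obstacle is justifying the term-by-term passage to the limit, that is, establishing a uniform $\varpi$-adic estimate on $x_m^{q^{n(m+j)}} - y^{q^{nj}}$ strong enough to dominate the denominators $\varpi^j$ as $j$ ranges over the lower tail $-m \leq j < 0$. This should follow from careful bookkeeping using the relations $x_\ell = [\varpi]_H(x_{\ell+1})$ together with the fact that $[\varpi]_H(T) - T^{q^n} \in \varpi R\powerseries{T}$ for the standard module: each application contributes an extra factor of $\varpi$ to the discrepancy between consecutive approximants, and raising to high $q$-th powers further amplifies the divisibility via the large $p$-adic valuation of binomial coefficients of the form $\binom{q^{nk}}{r}$. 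I expect this uniform-convergence argument, while not conceptually deep, to be the computational heart of the proof.
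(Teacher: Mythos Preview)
Your proposal is correct and follows the same underlying idea as the paper: exploit the functional equation $\log_H\circ[\varpi^m]_H=\varpi^m\log_H$ and pass to the limit $m\to\infty$. The paper, however, runs the argument in the opposite direction. Rather than writing $\log_H(x_0)=\varpi^m\log_H(x_m)$ and then pushing each summand $x_m^{q^{n(m+j)}}$ to $y^{q^{nj}}$ term by term, it starts from the \emph{inverse} formula $x_0=\lim_m[\varpi^m]_H(y^{1/q^{mn}})$ (implicit in the proof of Lemma~\ref{crystalline}) and computes
\[
\varpi^m\log_H\bigl(y^{1/q^{mn}}\bigr)=\sum_{j\geq -m}\frac{y^{q^{nj}}}{\varpi^j}
\]
directly in terms of $y$. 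This is already a partial sum of the target doubly-infinite series, so the limit $m\to\infty$ is just the convergence of the tail $\sum_{k>m}\varpi^k y^{1/q^{nk}}\to 0$, which is immediate. In particular, the ``main obstacle'' you isolate---uniform control on $x_m^{q^{n(m+j)}}-y^{q^{nj}}$ against the denominators $\varpi^j$---never arises in the paper's setup. (The paper does quietly use that $\log_H$ commutes with the limit defining $x_0$, but that is a simpler estimate than the one you anticipate.) Your route works; the paper's is shorter.
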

\begin{proof} If the sequence $(y,y^{1/q},\dots)\in\varprojlim \Nil(R)$, corresponds to $(x_0,x_1,\dots)\in \tilde{H}(R)$, then
\[
x_0 = \lim_{m\to\infty} [\varpi^{m}]_{H}(y^{1/q^{mn}}).
\]
Taking logarithms, we get
\begin{eqnarray*}
\log_H(x_0)
&=&\lim_{m\to\infty} \varpi^{m}\log_H(y^{1/q^{mn}}) \\
&=&\lim_{m\to\infty}\sum_{i=0}^{\infty} \frac{y^{q^{n(i-m)}}}{\varpi^{i-m}} \\
&=&\sum_{i=-\infty}^{\infty} \frac{y^{q^{ni}}}{\varpi^i}
\end{eqnarray*}
as required.
\end{proof}

This calculation appears in \cite{FarguesFontaine}, \S7.

\subsection{The quasilogarithm map} \label{quasilogmap} Let $H_0$ be a 1-dimensional $\varpi$-divisible formal $\cO_K$-module over $\overline{\bF}_q$.  Let $H$ be any lift of $H_0$ to $\cO_{\breve{K}}$.  We will describe a functorial map of $K$-vector spaces $\qlog_H\from \tilde{H}(A)\to M(H_0)\otimes (A\otimes K)$, where $A$ is any adic $\cO_{\breve{K}}$-algebra.  The universal cover $\tilde{H}$ does not depend on the choice of lift $H$, and neither will $\qlog_H$.

Let \[ 0\to V\to E\to H\to 0\]
be the universal additive extension of $H$, so that $M(H_0)=\Lie E$.  Let $x\in\tilde{H}(A)$ be represented by the sequence $(x_1,x_2,\dots)$.  Lift this arbitrarily to a sequence $(y_1,y_2,\dots)$ of elements of $E(A)$, and then define $y\in E(A)$ by
\[ y=\lim_{n\to\infty} \varpi^ny_n. \]
Then $y$ does not depend on the choices made, and we may define $\qlog_H(x)=\log_{E}(y)\in (\Lie E)\otimes (A\otimes K) $.

It will be useful to make this map explicit when $H$ is the standard formal $\cO_K$-module.  The following lemma follows from combining the above construction with Lemma \ref{doubletailedsum}.

\begin{lem} \label{quasiloglemma} Let $H$ be the standard formal $\cO_K$-module.  Let $A$ be a $\varpi$-torsion-free adic $\cO_{\breve{K}}$-algebra.  Let $x\in\tilde{H}(A)$.
\begin{enumerate}
\item With respect to the standard basis of $M(H_0)\otimes K$, the coordinates of $\qlog_H(x)$ are given by
\[ \qlog_H(x)=(\log_H(x),\log_H(\Pi x),\dots,\log_H(\Pi^{n-1} x)) \]
\item Suppose that $x\in\tilde{H}(A)$ corresponds to $(y,y^{1/q},\dots)\in \varprojlim \Nil(A)$.  Then
\[ \log_H(\Pi^j x) = \sum_{i\in \bZ}\frac{y^{q^{ni+j}}}{\varpi^i}. \]
\end{enumerate}
\end{lem}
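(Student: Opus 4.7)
My plan is to deduce both statements by combining the construction of $\qlog_H$ from \S\ref{quasilogmap} with Lemma \ref{doubletailedsum} and the description of the $\Pi$-action on $\tilde H$ recorded in \S\ref{calculationsHtilde}.

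Part (2) will be essentially a direct application of Lemma \ref{doubletailedsum}. From the identity $\lambda_i(\Pi x)=\lambda_i(x)^q$ recalled in \S\ref{calculationsHtilde}, iteration gives $\lambda_0(\Pi^j x)=y^{q^j}$ when $\lambda_0(x)=y$. Substituting $\Pi^j x$ for $x$ in Lemma \ref{doubletailedsum} then yields
\[ \log_H(\Pi^j x)=\sum_{i\in\bZ}\frac{(y^{q^j})^{q^{ni}}}{\varpi^i}=\sum_{i\in\bZ}\frac{y^{q^{ni+j}}}{\varpi^i}, \]
which is the asserted formula.

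For part (1) I will unwind the definition of $\qlog_H$ one basis vector at a time. The standard basis of $M(H_0)\otimes K$ corresponds via Lemma \ref{Qlogbasis} to the quasilogarithms $g_j(T)=\log_H(T^{q^j})$ for $j=0,\ldots,n-1$, each of which produces a two-dimensional rigidified extension $E_{g_j}$ whose rational logarithm, read off directly from the explicit addition and $\cO_K$-action formulas preceding Lemma \ref{Qlogbasis}, is $\log_{E_{g_j}}(X,X')=(X+g_j(X'),\log_H(X'))$. Applying the construction of $\qlog_H$ through $E_{g_j}$---lifting the sequence $(x_1,x_2,\ldots)$ representing $x$ to a sequence $(y_{j,k})$ in $E_{g_j}(A)$ and forming $y_j=\lim_{k\to\infty}\varpi^k y_{j,k}$---a bookkeeping computation with $[\varpi]_{E_{g_j}}(X,X')=(\varpi X+\delta_\varpi g_j(X'),[\varpi]_H X')$ shows that the $\hat{\bG}_a$-component of $\log_{E_{g_j}}(y_j)$ is $\lim_{k\to\infty}\varpi^k g_j(x_k)=\lim_{k\to\infty}\varpi^k\log_H(x_k^{q^j})$. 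Since $(\Pi^j x_1,\Pi^j x_2,\ldots)$ represents $\Pi^j x\in\tilde H(A)$, this limit is precisely the double-tailed expression that Lemma \ref{doubletailedsum} identifies with $\log_H(\Pi^j x)$, giving the $j$-th coordinate of $\qlog_H(x)$ as claimed.

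The main obstacle is the bookkeeping in part (1): rigorously identifying the $\hat{\bG}_a$-component of $\log_{E_{g_j}}(y_j)$ with $\lim_{k\to\infty}\varpi^k g_j(x_k)$ requires a careful induction using the iterated formula for $[\varpi^k]_{E_{g_j}}$, essentially parallel to the derivation of Lemma \ref{doubletailedsum}. Once that identification is in place, the convergence of the limit and its equality with the asserted double-tailed sum are both immediate from Lemma \ref{doubletailedsum} applied to $\Pi^j x$, which is part (2).
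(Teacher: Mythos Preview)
This is exactly the paper's approach---indeed the paper gives no proof beyond the one-line remark that the lemma ``follows from combining the above construction with Lemma \ref{doubletailedsum}''---and your treatment of part (2) via $\lambda_0(\Pi^j x)=y^{q^j}$ is clean. For part (1) your outline is right, but two details should be tightened: the asserted formula $\log_{E_{g_j}}(X,X')=(X+g_j(X'),\log_H(X'))$ is not a homomorphism (additivity would force $g_j(X'+_H Y')=g_j(X')+g_j(Y')$, which fails for $j\geq 1$ since $T\mapsto T^{q^j}$ is not an endomorphism of $H$ over $\cO_{\breve K}$; the correct first component has the form $X-g_j(X')+c\log_H(X')$), and the sequence $(x_k^{q^j})_k$ does not literally lie in $\tilde H(A)$ but only agrees with the components of $\Pi^j x$ modulo $\varpi$---which is precisely what you need to conclude $\varpi^k\log_H(x_k^{q^j})\to\log_H(\Pi^j x)$.
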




\subsection{Formal schemes and adic spaces} This section is a review of \cite{ScholzeWeinstein}, \S2.2.  Let $L$ be a complete nonarchimedean local field ring of integers $\cO$ and residue field $\kappa$.  Let $\varpi\in \cO$ be any element with $\abs{\varpi}<1$.   There is a {\em generic fiber functor} $M\mapsto M_{\eta}$ from sufficiently nice formal schemes $M$ over $\cO$ to rigid-analytic spaces over $L$;  see for instance the discussion in \cite{deJongCrystallineDieudonneModuleTheory}, \S7.1.  Here ``sufficiently nice" means that $M$ is covered by affine formal schemes $\Spf A$, where $A$ is a noetherian adic $\cO$-algebra for which $A\otimes \kappa$ is finitely generated over $\kappa$.  A typical example is $M=\Spf \cO\powerseries{T}$, in which case $M_{\eta}$ is the 1-dimensional open unit ball.

In the sequel we will need to work with formal schemes which are not locally noetherian, and so we need a more flexible generic fiber functor.  This is provided by Huber's theory of adic spaces, cf. \cite{HuberAdicSpaces}.  If $A$ is a (not necessarily Noetherian) topological $\cO$-algebra admitting a finitely generated ideal of definition, then we have the topological space $\Spa(A,A)$, which comes equipped with a presheaf of topological rings.  As a set, $\Spa(A,A)$ consists of those continuous valuations on $A$ which are bounded by 1.  In the theory developed in \cite{HuberAdicSpaces}, $\Spa(A,A)$ is not considered an adic space unless its structure presheaf is a sheaf, but in \cite{ScholzeWeinstein} there is a Yoneda-style generalization of the notion of adic space which does not require this condition.

The association $\Spf A\mapsto \Spa(A,A)$ extends to a fully faithful functor $M\mapsto M^{\ad}$ from formal schemes over $\Spf(\cO)$ which locally admit a finitely generated ideal of definition to adic spaces over $\Spa(\cO,\cO)$.  It then makes sense to define the {\em adic generic fiber}
\[M_{\eta}=M^{\ad}\otimes_{\Spa(\cO,\cO)} \Spa(L,\cO), \]
an adic space over $L$.  (This is denoted $M_{\eta}^{\ad}$ in \cite{ScholzeWeinstein}).

Let $\Nilp_{\cO}$ denote the category of $\cO$-algebras in which $\varpi$ is nilpotent.  A formal scheme $M$ represents a functor on $\Nilp_{\cO}$.  If $M$ locally admits a finitely generated ideal of definition, then $M_{\eta}$ has a functorial interpretation as well, and it will be useful to relate the functorial interpretations of $M$ and $M_{\eta}$.  Let $\CAff_{L,\cO}$ denote the category of affinoid $(L,\cO)$-algebras $(R,R^+)$ (in the sense of Huber) for which $R^+$ is $\varpi$-adically complete.  $M_{\eta}$ is determined by its functor of points $M_{\eta}\from \CAff_{L,\cO}\to \Sets$.

\begin{prop}[\cite{ScholzeWeinstein}, Prop. 2.2.2]  The functor $M_{\eta}\from \CAff_{L,\cO}\to\Sets$ is the sheafification of
\[ (R,R^+)\mapsto \varinjlim_{R_0\subset R^+} M(R_0)=\varinjlim_{R_0\subset R^+}\varprojlim_m M(R_0/\varpi^m), \]
where the injective limit is over open and bounded $\cO$-subalgebras $R_0\subset R^+$.
\end{prop}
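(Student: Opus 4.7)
The plan is to construct a natural transformation from the presheaf
\[
\cP\colon (R,R^+)\longmapsto \varinjlim_{R_0\subset R^+} M(R_0)
\]
to the functor $M_\eta$, and then to verify that $M_\eta$ (which is a sheaf by the very construction of adic spaces) is the sheafification of $\cP$ with respect to this map.  By a standard argument, using that $M$ admits an open cover by affines $\Spf A_i$ and that any morphism $\Spa(R,R^+)\to M_\eta$ factors, after passing to a sufficiently fine rational cover of $\Spa(R,R^+)$, through the adic generic fiber of some such affine, the problem reduces to the case $M=\Spf A$ where $A$ is a topological $\cO$-algebra admitting a finitely generated ideal of definition.

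In this affine case, morphisms $\Spa(R,R^+)\to\Spa(A,A)$ over $\Spa(\cO,\cO)$ correspond bijectively, by the definition of adic spaces, to continuous $\cO$-algebra homomorphisms $\varphi\colon A\to R^+$, and base change to $\Spa(L,\cO)$ does not alter this description since $R$ is already an $L$-algebra.  Given such a $\varphi$, I would take $R_0\subset R^+$ to be the $\varpi$-adic closure of the $\cO$-subalgebra generated by $\varphi(A)$ and $\varpi R^+$.  Then $R_0$ is $\varpi$-adically complete (as a closed subring of the complete ring $R^+$), bounded (as a subring of the bounded $R^+$), and open in $R^+$ (since it contains $\varpi R^+$).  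The map $\varphi$ then factors as $A\to R_0\injects R^+$, yielding a well-defined element of $M(R_0)=\varprojlim_m M(R_0/\varpi^m)$; conversely, any element of the colimit $\varinjlim_{R_0}M(R_0)$ gives back a continuous map $A\to R^+$.  These constructions are readily seen to be mutually inverse, so $\cP$ and $M_\eta$ agree on $(R,R^+)$-sections in the affine case.

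The main technical obstacle is the identification in the affine case of morphisms of adic spaces $\Spa(R,R^+)\to\Spa(A,A)$ with continuous $\cO$-algebra homomorphisms $A\to R^+$, together with the verification that the constructed subring $R_0$ does lie in the inductive system of open bounded $\cO$-subalgebras of $R^+$; the hypothesis that $A$ admits a finitely generated ideal of definition is what keeps the continuity conditions manageable here.  Once this is in place, the affine case is complete, and the general case follows by combining it with the reduction above and the sheaf property of $M_\eta$:  any section of $M_\eta$ over $\Spa(R,R^+)$ admits a rational cover on which it lifts to $\cP$, which is exactly the universal property of sheafification.
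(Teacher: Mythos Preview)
The paper does not give its own proof of this statement: it is quoted verbatim as \cite[Prop.~2.2.2]{ScholzeWeinstein} and used as a black box. So there is no argument in the paper to compare your sketch against.

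That said, your outline is along the right lines and matches the standard proof in the cited reference: reduce to the affine case $M=\Spf A$, identify $(\Spf A)_\eta(R,R^+)$ with continuous $\cO$-algebra maps $A\to R^+$, and show that any such map lands in some open bounded $\cO$-subalgebra $R_0\subset R^+$; then sheafification only enters when gluing over an affine cover of a general $M$. One point worth tightening is your claim that the $\varpi$-adic closure $R_0$ of the subring generated by $\varphi(A)$ and $\varpi R^+$ is $\varpi$-adically complete: a closed subring of a $\varpi$-adically complete ring is complete for the \emph{subspace} topology, which is not automatically the $\varpi$-adic topology. Here it works because $\varpi R^+\subset R_0$ forces $\varpi^{n+1}R^+\subset\varpi^n R_0$, so the two filtrations are cofinal; you should say this explicitly. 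Also, the identification of morphisms $\Spa(R,R^+)\to\Spa(A,A)$ with continuous maps $A\to R^+$ uses the Yoneda-style framework of \cite{ScholzeWeinstein} (since the structure presheaf on $\Spa(A,A)$ need not be a sheaf), so you should cite that rather than appealing to ``the definition of adic spaces.''
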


\subsection{The Lubin-Tate space without level structure}
\label{nolevel}   Let $H_0/\overline{\bF}_q$ be the (unique) formal $\cO_K$-module of dimension 1 and height $n$.  We consider the functor \[M_{H_0}\from \Nilp_{\cO_{\breve{K}}}\to \Sets\] be the functor which assigns to $A$ the set of isomorphism classes of pairs $(H,\rho)$, where $H$ is a formal $\cO_K$-module over $A$ and
\[ \rho\from H_0\otimes_{\overline{\bF}_q} A/\varpi\to H\otimes_{A}A/\varpi \]
is a quasi-isogeny.  Such a pair $(H,\rho)$ will be called a {\em deformation of $H_0$ to $A$}.

Then $M_{H_0}$ is representable by a formal scheme, which is isomorphic to a disjoint union of $\bZ$ copies of $\Spf \cO_{\breve{L}}\powerseries{u_1,\dots,u_{n-1}}$, parametrized by the height of the quasi-isogeny $\rho$.

Accordingly, the generic fiber $\cM_{H_0}=M_{H_0,\eta}$ is a disjoint union of open balls of dimension $n-1$.  By the above characterization of adic generic fibers, we have the following moduli interpretation of $\cM_{H_0}$.  Let $(R,R^+)$ be an object of $\CAff_{\breve{K},\cO_{\breve{K}}}$, and let $X=\Spa(R,R^+)$.  Then an element of $\cM_{H_0}(R,R^+)$ corresponds to a cover of $X$ by open subsets $U_i=\Spa(R_i,R_i^+)$, open and bounded $\cO_{\breve{L}}$-subalgebras $R_{i,0}\subset R_i^+$, and pairs $(H_i,\rho_i)$ over $R_{i,0}$, satisfying the obvious compatibility condition.  Such a family of $(H_i,\rho_i)$ will be collectively referred to as a deformation of $H_0$ over $(R,R^+)$.   In a slight abuse of notation we will write such family simply as $(H,\rho)$.

\subsection{The Lubin-Tate space at infinite level}
\label{infinitelevel}
We review some recent results from \cite{WeinsteinSemistableModels} and \cite{ScholzeWeinstein} concerning moduli of $p$-divisible groups with infinite level structures.  Suppose as usual that $H_0$ is a one-dimensional formal $\cO_K$-module over $\overline{\bF}_q$ of height $n$.  If $(R,R^+)$ is an object of $\CAff_{\breve{K},\cO_{\breve{K}}}$, and $(H,\rho)$ is a deformation of $H_0$ to $(R,R^+)$, then (as above) $(H,\rho)$ corresponds to a family of pairs $(H_i,\rho_i)$ defined over a covering of $\Spa(R,R^+)$.  For each $i$, we have the Tate module $T(H_i)=\varprojlim H_i[\varpi^m]$, an affine group scheme.  Passing to adic generic fibers, we have for each $i$ an adic space $T(H_i)_{\eta}$ over $\breve{K}$;  these glue together to form an adic space $T(H)_{\eta}$ which carries the structure of an abelian group.

If $x=\Spa(L,L^+)$ is a point of $\Spa(R,R^+)$ with $L$ algebraically closed, then $T(H)^{\ad}(L,L^+)$ is a free $\cO_K$-module of rank $n$.

\begin{defn}\label{MH0definition} Let $\cM_{H_0,\infty}$ be the functor on $\CAff_{\breve{K}}$-algebras which assigns to $(R,R^+)$ the set of triples $(H,\rho,\phi)$, where $(H,\rho)$ is a deformation of $H_0$ to $(R,R^+)$, and $\phi\from \cO_K^n\to T(H)^{\ad}_{\eta}(R,R^+)$ is a morphism of $\bZ_p$-modules which is an isomorphism at every point $x=\Spa(L,L^+)\in\Spa(R,R^+)$.
\end{defn}



Let $H$ be any lift of $H_0$ to $\cO_{\breve{K}}$.  The main result of \cite{ScholzeWeinstein}, \S6.3 is that there is an alternate linear-alegbra description of $\cM_{H_0,\infty}$ has nothing to do with deformations of $H_0$.  Recall from \S\ref{quasilogmap} that we have for every adic $\cO_{\breve{K}}$-algebra $A$ a map $\qlog_H\from \tilde{H}(A)\to M(H_0)\otimes (A\otimes K)$ which does not depend on the choice of lift $H$.   From this we get a morphism of adic spaces $\qlog_H\from \tilde{H}_{\eta}\to M(H_0)\otimes \mathbb{G}_a$ (where $\mathbb{G}_a$ is to be interpreted as the adic space version of the additive group).

\begin{thm}  Let $(\cM_{H_0,\infty})'$ be the functor on $\CAff_{\breve{K}}$ which assigns to $(R,R^+)$ the set of $n$-tuples $(s_1,\dots,s_n)\in\tilde{H}^{\ad}_{\eta}(R,R^+)$ such that the following conditions are satisfied:
\begin{enumerate}
\item The matrix $(\qlog(s_1),\dots,\qlog(s_n))\in (M(H_0)\otimes R)^n$ is of rank exactly $n-1$.
\item For all points $x$ of $\Spa(R,R^+)$, the vectors $s_1(x)$,$\dots$,$s_n(x)$ are $K$-linearly independent.
\end{enumerate}
Then $\cM_{H_0,\infty}$ and $(\cM_{H_0,\infty})'$ are isomorphic.  (In particular $(\cM_{H_0,\infty})'$ does not depend on the choice of $H$.)
\end{thm}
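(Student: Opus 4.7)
The plan is to construct a natural transformation $\Phi\colon \cM_{H_0,\infty}\to (\cM_{H_0,\infty})'$, then produce an inverse via the classification theorem of \cite{ScholzeWeinstein}. For the forward direction, given $(R,R^+)\in\CAff_{\breve{K},\cO_{\breve{K}}}$ and a triple $(H,\rho,\phi)\in\cM_{H_0,\infty}(R,R^+)$, I would use the level structure $\phi\colon \cO_K^n\to T(H)^{\ad}_\eta(R,R^+)$ to extract $n$ sections $\phi(e_1),\ldots,\phi(e_n)$ of the rational Tate module. Since $T(H)$ sits naturally inside $\tilde{H}$ as its $\varpi^\infty$-torsion subspace, these yield sections $s_i\in\tilde{H}^{\ad}_\eta(R,R^+)$. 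The statement of the theorem refers to the universal cover of a fixed lift of $H_0$, so to interpret the $s_i$ in those terms I would invoke Lemma \ref{crystalline}(3): the quasi-isogeny $\rho$ lifts canonically to an identification between the universal cover of our deformation $H$ and the universal cover of a fixed lift of $H_0$, reflecting the fact that $\tilde{H}$ depends only on $H_0$ and not on the choice of lift.

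Next, I would verify conditions (1) and (2). Condition (2) is immediate: at a geometric point $x=\Spa(L,L^+)$, the non-degeneracy of $\phi$ means that $s_1(x),\ldots,s_n(x)$ form an $\cO_K$-basis of $T(H)(L,L^+)$, and so are $K$-linearly independent in $\tilde{H}_\eta(L,L^+)$. For condition (1), the essential observation is that quasilogs of Tate module sections land in the Hodge filtration $\Lie V\subset M(H_0)\otimes R$ coming from the universal additive extension $0\to V\to E\to H\to 0$. Indeed, for $s\in T(H)$ represented by a sequence $(x_i)_{i\ge 1}$ with $x_i\in H[\varpi^i]$, lifting each $x_i$ to $y_i\in E$ yields $\varpi^i y_i\in V$ (because $\varpi^i x_i=0$), so the limit $y=\lim\varpi^i y_i$ defining $\qlog(s)$ in \S\ref{quasilogmap} already lies in $V$, and hence $\qlog(s)=\log_V(y)\in\Lie V\otimes R$. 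Since $\Lie V$ has rank $n-1$, the matrix $(\qlog(s_1),\ldots,\qlog(s_n))$ has rank at most $n-1$; equality at each geometric point is the standard surjectivity of the Hodge-Tate period map $T(H)\otimes C\to\Lie V\otimes C$ for a non-degenerate level structure.

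For the inverse, given $(s_1,\ldots,s_n)$ satisfying (1) and (2), I would form the cokernel of $(\qlog(s_1),\ldots,\qlog(s_n))\colon R^n\to M(H_0)\otimes R$, which by condition (1) is a line bundle on $\Spa(R,R^+)$. The resulting quotient surjection encodes a Hodge filtration on the Dieudonn\'e module, and together with the Tate module data $(s_1,\ldots,s_n)$ it provides exactly the linear-algebraic input of the Scholze-Weinstein classification of $p$-divisible groups (\cite{ScholzeWeinstein}, \S6.3), which reconstructs a deformation $(H,\rho,\phi)$. The main obstacle lies precisely here: producing a deformation of $H_0$ from its Hodge-theoretic data is the deep content of the Scholze-Weinstein theorem, and extending the bijection uniformly from perfectoid affinoids (where that theorem is cleanest) to arbitrary $(R,R^+)\in\CAff_{\breve{K},\cO_{\breve{K}}}$ requires a descent argument using that both functors are sheaves for the analytic topology and behave well under pullback along perfectoid covers. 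The final step would be to check that the two constructions are mutually inverse, which reduces to tracing the tautological identification between the linear-algebraic and $p$-divisible realizations at each geometric point.
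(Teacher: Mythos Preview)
The paper does not supply its own proof of this theorem: it is stated without proof, attributed to \cite{ScholzeWeinstein}, \S6.3 (see the sentence immediately preceding the theorem). Your outline is a reasonable sketch of the argument one finds there, and your identification of the essential content---that the inverse direction is precisely the Scholze--Weinstein classification of $p$-divisible groups by their Hodge--Tate data---is accurate. The forward direction you give (that $\qlog$ of a torsion element lands in $\Lie V\subset\Lie E$, forcing the rank bound) is correct and is essentially how the map to the period domain is constructed in \emph{loc.\ cit.}
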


Let $\wedge H$ be the one-dimensional formal $\cO_K$-module over $\cO_{\breve{K}}$ of height 1 whose Dieudonn\'e module is the top exterior power $\wedge^nM(H_0)$.  This is the formal $\cO_K$-module whose logarithm is
\[ \log_{\wedge H}(T)=\sum_{i=0}^{\infty} (-1)^{(n-1)i}\frac{T^{q^i}}{\varpi^i}. \]
Passing to the universal cover, if $(x_0,x_1,\dots)\in\widetilde{\wedge H}(R)$ corresponds to $(y,y^{1/q},\dots)\in \Nil^{\flat}(R)$, then
\begin{equation}
\label{logwedgeH}
\log_{\wedge H}x_0 = \sum_{i=-\infty}^{\infty} (-1)^{(n-1)i}\frac{y^{q^i}}{\varpi^i}.
\end{equation}

\begin{thm}
\label{logcommutes}
There exists a $K$-alternating map $\delta\from \tilde{H}^n\to\widetilde{\wedge H}$, such that the diagram
\[
\xymatrix{
\tilde{H}^n_{\eta} \ar[r]^{\delta} \ar[d]_{\qlog_H\times\cdots\times\qlog_H} & \widetilde{\wedge H}_{\eta} \ar[d]^{\log_{\wedge H}} \\
M(H_0)^n\otimes\mathbb{G}_a \ar[r]_{\det} & M(\wedge H_0)\otimes\mathbb{G}_a
}
\]
commutes.
\end{thm}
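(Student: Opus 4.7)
The strategy is to define $\delta$ by an explicit formula in the flat coordinates $\lambda$ of \S\ref{calculationsHtilde}, and then to verify commutativity by matching term-by-term expansions of the doubly-infinite series in \S\ref{quasilogmap}. I would work throughout with the standard formal $\cO_K$-module $H$, for which Lemma \ref{quasiloglemma} and equation \eqref{logwedgeH} give fully explicit formulas. By Lemma \ref{crystalline}, constructing the morphism $\delta \colon \tilde H^n \to \widetilde{\wedge H}$ of formal schemes reduces to constructing it modulo $\varpi$ as a morphism $(\Nil^\flat)^n \to \Nil^\flat$; the lift to $\cO_{\breve K}$ is then automatic.

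The formula for $\delta$ in flat coordinates is forced by the desired commutativity. Expanding $\det(\log_H(\Pi^{j-1}s_k))_{j,k}$ via Lemma \ref{quasiloglemma} and collecting by the total power of $\varpi^{-1}$ gives
\[ \det\bigl(\log_H(\Pi^{j-1}s_k)\bigr)_{j,k} = \sum_{M \in \bZ} \frac{1}{\varpi^M} \sum_{i_1+\cdots+i_n = M} \det\bigl(y_k^{q^{n i_j + j - 1}}\bigr)_{j,k}, \]
while \eqref{logwedgeH} gives $\log_{\wedge H}(u) = \sum_{M} (-1)^{(n-1)M} u^{q^M}/\varpi^M$ for $u \in \Nil^\flat$ the leading $\lambda$-coordinate of an element of $\widetilde{\wedge H}$. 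Matching coefficients of $\varpi^{-M}$ forces the component $u^{q^M}$ (for $M \geq 0$, and $u^{1/q^{|M|}}$ for $M < 0$) of $\delta(s_1,\ldots,s_n)$ to equal $(-1)^{(n-1)M}$ times the inner Moore-type sum. Each term $\prod_j y_{\sigma(j)}^{q^{n i_j + j - 1}}$ is topologically nilpotent, and the products tend to zero in the $\varpi$-adic topology as $\max |i_j| \to \infty$, so the sum converges.

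Three things then remain to check. First, the components $(u^{q^M})_{M\in\bZ}$ must fit into a single element of $\Nil^\flat$, i.e., satisfy $(u^{q^{M-1}})^q = u^{q^M}$ in characteristic $p$. After raising the $(M-1)$-coefficient to the $q$-th power, one reindexes by $(i_1,\ldots,i_n) \mapsto (i_n+1,\, i_1, \ldots, i_{n-1})$, which is a bijection between index tuples of sum $M-1$ and sum $M$; the effect on the Moore-type determinant is precisely a cyclic shift of its $n$ rows, contributing the sign $(-1)^{n-1}$, which converts $(-1)^{(n-1)(M-1)}$ into $(-1)^{(n-1)M}$ as needed. Second, $\delta$ is $K$-alternating: alternation under transposition of two $s_k$'s comes from the determinant; $K$-multilinearity follows from $K$-linearity of $\qlog_H$ and $\log_{\wedge H}$ together with the injectivity of $\log_{\wedge H}$ on the generic fiber, which transfers the alternating $K$-multilinearity of $\det \circ (\qlog_H)^n$ back to $\delta$. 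Third, the commutative diagram holds on the generic fiber by the very construction of $\delta$.

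The main combinatorial obstacle is the reindexing argument that verifies the $\Nil^\flat$-compatibility in characteristic $p$: one must check that the cyclic shift on the rows of the Moore-type determinant matches the shift of indices and produces the sign $(-1)^{n-1}$ per single step, accumulating to the factor $(-1)^{(n-1)M}$ appearing in \eqref{logwedgeH}. This sign ultimately reflects the action of $\Pi$ on $\wedge^n M(H_0)$, so the matching of signs is a manifestation of the Frobenius structure of the rational Dieudonn\'e module; care is also needed with the convergence of the doubly-infinite series in the relevant $\varpi$-adic topology to ensure that formal manipulations are legitimate.
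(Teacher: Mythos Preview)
Your overall strategy—reverse-engineering the $\Nil^\flat$ coordinates of $\delta$ from the desired commutativity and verifying $q$-power compatibility combinatorially—is close in spirit to the paper's explicit construction, and your cyclic reindexing argument in characteristic $p$ is correct. There are, however, two genuine gaps.

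First, $\log_{\wedge H}$ is \emph{not} injective on $\widetilde{\wedge H}_\eta$: its kernel is the rational Tate module $V(\wedge H)$, a one-dimensional $K$-vector space. So you cannot transfer $K$-multilinearity from $\det\circ(\qlog_H\times\cdots\times\qlog_H)$ back to $\delta$ by injectivity. The paper instead works with a map $\delta_0\colon\tilde H^n\to\wedge H$ and forms, for instance, the additivity obstruction
\[
\partial=\delta_0(s_1+_H s_1',s_2,\ldots)-_{\wedge H}\delta_0(s_1,\ldots)-_{\wedge H}\delta_0(s_1',\ldots).
\]
Commutativity gives $\log_{\wedge H}(\partial)=0$, so $\partial$ lands in the torsion $(\wedge H)[\varpi^\infty]$. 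On the generic fiber this torsion is \emph{discrete}, while $\tilde H^{n+1}_\eta$ is \emph{connected}; hence $\partial_\eta$ is constant, and as $\partial$ vanishes at the origin, $\partial\equiv 0$. The same connectedness argument handles $\cO_K$-linearity in each slot. This step is essential and cannot be replaced by an injectivity claim.

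Second, your formula for $u^{q^M}$ satisfies $(u^{q^{M-1}})^q=u^{q^M}$ only modulo $\varpi$, so over $\cO_{\breve K}$ these elements do not assemble into an element of $\Nil^\flat$. Lifting from characteristic $p$ via Lemma~\ref{Nilcrystalline} does produce a well-defined $\delta$, but the $\Nil^\flat$ coordinates of the lift are then no longer given by your explicit formula over $\cO_{\breve K}$, and commutativity on the generic fiber is not immediate ``by construction''—the matching of $\varpi^{-M}$-coefficients you performed used the naive additive formula, not the lifted one. The paper sidesteps this by defining $\delta_0\colon\tilde H^n\to\wedge H$ directly over $\cO_{\breve K}$ as a \emph{formal-group-law} sum
\[
\delta_0(s_1,\dots,s_n)=(\wedge H)\!\sum_{(a_1,\dots,a_n)} \varepsilon(a)\,x_1^{q^{a_1}}\cdots x_n^{q^{a_n}},
\]
ranging over integer tuples with $\sum a_i=n(n-1)/2$ and pairwise distinct residues modulo $n$. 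Because $\log_{\wedge H}$ is a homomorphism from $(\wedge H,+_{\wedge H})$ to $(\bG_a,+)$, applying it converts this into an honest sum in $R[1/p]$, from which the determinant identity is read off directly. Only after $\cO_K$-multilinearity is established are the higher components defined, via $\delta_i(s_1,\ldots,s_n)=\delta_0(\varpi^{-i}s_1,s_2,\ldots,s_n)$.
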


\begin{proof} The morphism $\delta$ is constructed using in \cite{ScholzeWeinstein}, \S6.4, at least in the case of $K=\bQ_p$, using an interpretation of $\tilde{H}$ in terms of $p$-adic Hodge theory.  The general case adds no real additional complication.

Alternatively, we can give an explicit description of $\delta$, and in any case we will need such a description for the calculations that follow.  Assume that $H$ is the standard formal $\cO_K$-module of height $n$.  This entails no loss of generality,
 since neither $\tilde{H}$ nor the quasilogarithm map depends on the choice of lift.  First define a morphism of formal schemes $\delta_0\from \tilde{H}^n\to\wedge H$ as follows.  Suppose a section $(s_1,\dots,s_n)$ of $\tilde{H}^n$ is given, for which the corresponding section of $(\Nil^\flat)^n$ is $(x_1,\dots,x_n)$ (this means that the $x_i$ are topologically nilpotent elements with distinguished $q$th power roots).  We set
\[ \delta_0(s_1,\dots,s_n)=(\wedge H)\sum_{(a_1,\dots,a_n)} \varepsilon(a_1,\dots,a_n)x_1^{q^{a_1}}x_2^{q^{a_2}}\cdots x_n^{q^{a_n}},\]
where
\begin{itemize}
\item the sum ranges over $n$-tuples $(a_1,\dots,a_n)$ such that $a_1+\dots+a_n=n(n-1)/2$, and such that each $a_i$ occupies a distict residue class modulo $n$,
\item $\varepsilon(a_1,\dots,a_n)$ is the sign of the permutation $i\mapsto a_{i+1}\pmod{n}$ of the set $\set{0,1,\dots,n-1}$, and
\item the symbol $(\wedge H)\sum$ means that the sum is carried out using the operation $+_{\wedge H}$.
\end{itemize}
Then we have
\begin{eqnarray*}
\log_{\wedge H}(\delta_0(s_1,\dots,s_n))
&=&\sum_{(a_1,\dots,a_n)} \varepsilon(\underline{a})\log_{\wedge H}(x_1^{q^{a_1}}\cdots x_n^{q^{a_n}})\\
&=&\sum_{(a_1,\dots,a_n)} \varepsilon(\underline{a}) \sum_{m\in\bZ}(-1)^{(n-1)m}\frac{x_1^{q^{a_1+m}}\cdots x_n^{q^{a_n+m}}}{\varpi^m},
\end{eqnarray*}
and it is not difficult to see that this is the same as
\[\det\left(\sum_{m\in\bZ} \frac{x_i^{q^{mn+j}}}{\varpi^m}\right)_{1\leq i\leq n,\;0\leq j\leq n-1},\]
which in turn equals $\det\qlog_H(s_1,\dots,s_n)$ by Lemma \ref{quasiloglemma}.  Thus we have shown that the diagram of adic spaces
\[
\xymatrix{
\tilde{H}^n_{\eta} \ar[r]^{\delta_0} \ar[d]_{\qlog_H} & \wedge H_{\eta} \ar[d]^{\log_{\wedge H}} \\
M(H_0)^n\otimes \mathbb{G}_a \ar[r]_{\det} & M(\wedge H_0) \otimes \mathbb{G}_a.
}
\]
commutes.  We claim that $\delta_0$ is $\cO_K$-multilinear and alternating.  This will follow from the same property of $\det\from M(H_0)^n\to M(\wedge H_0)$.  For instance, if $s_1,s_1',s_2,\dots,s_n$ are sections of $\tilde{H}$ over an affinoid algebra $(R,R^+)$, define an element \[\partial=\partial(s_1,s_1',s_2,\dots,s_n)\in (\wedge H)(R^+)\] by
\[\partial=\delta_0(s_1+s_1',s_2,\dots,s_n)-\delta_0(s_1,s_2,\dots,s_n)-\delta_0(s_1',s_2,\dots,s_n). \]
(here the operations are taking place in $\wedge H$).  Then the commutativity of the above diagram shows that $\log_{\wedge H}(\partial)=0$.  The kernel of $\log_{\wedge H}$ is the torsion $H[\varpi^\infty]$.  Thus the morphism of adic spaces $\partial_{\eta}\from \tilde{H}^{n+1}\to(\wedge H)_{\eta}$ factors through $H[\varpi^\infty]_{\eta}$.  But $\tilde{H}^{n+1}_{\eta}$ is connected and
$H[\varpi^\infty]_{\eta}$ is discrete, so $\partial_{\eta}$ must be constant.  Since obviously $\partial(0,\dots,0)=0$, we have $\partial_{\eta}=0$ identially.  This implies that $\partial=0$, since $\partial$ can be recovered from $\partial_{\eta}$ by looking at the induced morphism on integral global sections.  A similar argument can be applied to show that $\delta_0(s_1,\dots,s_n)$ is $\cO_K$-multilinear and alternating.

We may then define morphisms $\delta_i\from\tilde{H}^n\to\wedge H$ by (for instance) $\delta_i(s_1,\dots,s_n)=\delta_i(\varpi^{-i}s_1,\dots,s_n)$.  Then $\delta=(\delta_0,\delta_1,\dots)$ defines the required morphism $\delta\from \tilde{H}^n \to\widetilde{\wedge H}$.
\end{proof}

The morphism $\delta\from \tilde{H}^n\to\widetilde{\wedge H}$ corresponds to a morphism $\Delta\from (\Nil^{\flat})^n\to \Nil^{\flat}$, in such a way that the diagram
\[
\xymatrix{
\tilde{H}^n \ar[r]^{\delta} \ar[d] & \widetilde{\wedge H} \ar[d] \\
(\Nil^{\flat})^n \ar[r]_{\Delta} & \Nil^{\flat}
}
\]
commutes.  The morphism $\Delta$ corresponds to an element $\Delta(X_1,\dots,X_n)$ of \\ $\cO_{\breve{K}}\powerseries{X_1^{1/q^\infty},\dots,X_n^{1/q^\infty}}$, which comes equipped with a family of $q$th power roots.  It will be helpful to have a first-order approximation of $\Delta$.
\begin{lem}\label{Deltaestimate} We have
\[
\Delta(X_1,\dots,X_n)\equiv \det(X_i^{q^j})_{1\leq i\leq n,\;0\leq j\leq n-1}
\]
modulo terms of higher degree in $\cO_{\breve{K}}\powerseries{X_1^{1/q^\infty},\dots,X_n^{1/q^\infty}}$.
\end{lem}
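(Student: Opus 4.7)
The plan is to identify the lowest-order terms of $\Delta$ by appealing to the explicit construction of $\delta_0$ given in the proof of Theorem~\ref{logcommutes}, and then to transfer this information to $\Delta$ using the commutative diagram of that same theorem.

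First I would use the explicit formula
\[
\delta_0(X_1,\ldots,X_n) \;=\; (\wedge H)\!\!\sum_{\vec a}\varepsilon(\vec a)\,X_1^{q^{a_1}}\cdots X_n^{q^{a_n}},
\]
where the sum is over tuples with $\sum a_i=n(n-1)/2$ and distinct residues modulo $n$.  The minimum total $X$-degree among these monomials is $(q^n-1)/(q-1)=1+q+\cdots+q^{n-1}$, achieved precisely when $(a_1,\ldots,a_n)$ is a permutation of $(0,1,\ldots,n-1)$.  Since $+_{\wedge H}$ agrees with ordinary addition modulo higher-degree terms, the degree-$(q^n-1)/(q-1)$ part of $\delta_0$ is exactly
\[
\sum_{\sigma\in S_n}\mathrm{sgn}(\sigma)\prod_{i=1}^n X_i^{q^{\sigma(i)-1}} \;=\; \det(X_i^{q^j})_{1\le i\le n,\,0\le j\le n-1}.
\]

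Next, I would transfer this congruence from $\delta_0$ to $\Delta$ using the two-tailed form of the commutative diagram in Theorem~\ref{logcommutes}.  Combining that diagram with equation~\eqref{logwedgeH} and Lemma~\ref{quasiloglemma}(2) produces the identity
\[
\sum_{i\in\bZ}(-1)^{(n-1)i}\,\frac{\Delta^{q^i}}{\varpi^i} \;=\; \det\!\left(\sum_{k\in\bZ}\frac{X_i^{q^{nk+j}}}{\varpi^k}\right)_{\!1\le i\le n,\,0\le j\le n-1}.
\]
Expanding the determinant on the right as $\sum_{\sigma}\mathrm{sgn}(\sigma)\sum_{\vec k\in\bZ^n}\prod_i X_i^{q^{nk_i+\sigma(i)}}/\varpi^{\sum k_i}$ and isolating terms of total $X$-degree exactly $(q^n-1)/(q-1)$, I observe that the only integer choice of $\vec k$ producing monomials of this degree is $\vec k=0$ (any other assignment breaks the balance of exponents, since $0\le\sigma(i)\le n-1$ and the shifts by $nk_i$ move outside the range $\{q^0,\ldots,q^{n-1}\}$).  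Hence the right-hand side contributes exactly $\det(X_i^{q^j})$ at this degree.  On the left, only the $i=0$ summand can contribute at degree $(q^n-1)/(q-1)$: positive $i$ raises the leading $X$-degree to $q^i(q^n-1)/(q-1)>d_0$, and negative $i$ carries a factor of $\varpi^{|i|}$ that pushes its contribution into strictly higher order.  Matching these components yields $\Delta\equiv\det(X_i^{q^j})$ modulo higher-degree terms.

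The main obstacle is the careful bookkeeping of the $i<0$ summands on the left-hand side.  These take the form $(-1)^{(n-1)i}\varpi^{|i|}\Delta^{1/q^{|i|}}$ and, although their leading $X$-degree $(q^n-1)/((q-1)q^{|i|})$ is \emph{lower} than $d_0=(q^n-1)/(q-1)$, their explicit $\varpi^{|i|}$ factors combined with an induction on the $X$-degree filtration show that they contribute only as higher-order corrections in the $(\varpi, X_1^{1/q^\infty},\ldots,X_n^{1/q^\infty})$-adic topology of $\cO_{\breve{K}}\powerseries{X_1^{1/q^\infty},\ldots,X_n^{1/q^\infty}}$.  This inductive step is routine given the explicit form of the two-tailed expansion on the right, but is where the real work of the proof lies.
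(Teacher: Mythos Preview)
Your first paragraph is the paper's entire proof. The paper's one-sentence argument just refers to ``the explicit description of $\Delta$ given in the proof of Thm.~\ref{logcommutes}'', meaning the formula $\delta_0=(\wedge H)\sum_{\vec a}\varepsilon(\vec a)\,x_1^{q^{a_1}}\cdots x_n^{q^{a_n}}$, and leaves the reader to observe exactly what you observe: the minimal total degree $1+q+\cdots+q^{n-1}$ is attained only when $(a_1,\dots,a_n)$ is a permutation of $(0,\dots,n-1)$, and $+_{\wedge H}$ agrees with ordinary addition modulo higher order.

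Your second and third paragraphs supply a step the paper suppresses: the passage from $\delta_0$ (valued in $\wedge H$ via its standard coordinate) to $\Delta$ (valued in $\Nil^\flat$). The paper regards this as immediate from the commutative square just before the lemma. Your route through the two-tailed logarithm identity is valid in principle but heavier than needed, and the ``routine induction'' for the $i<0$ tail is not actually carried out; note that the terms $\varpi^{|i|}\Delta^{1/q^{|i|}}$ involve \emph{higher}-degree coefficients of $\Delta$, so any such induction must run downward in $X$-degree and terminate by appealing to integrality of coefficients. A shorter justification is that the isomorphism $\widetilde{\wedge H}\cong\Nil^\flat$ of Lemma~\ref{crystalline} is built by lifting from $\overline{\bF}_q$, where it becomes the tautological identification once one chooses a coordinate in which $[\varpi]_{\wedge H_0}$ is a $q$th-power map; hence $\Delta$ and $\delta_0$ agree up to a unit modulo $\varpi$, and the $\varpi$-divisible discrepancy is ``higher order'' for the purposes of the application in Lemma~\ref{tauformula}.
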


\begin{proof} This follows from the explicit description of $\Delta$ given in the proof of Thm. \ref{logcommutes}.
\end{proof}

The following theorem gives a complete description of the space $\mathcal{M}_{H_0,\infty}$ in terms of the morphism $\delta$.

\begin{thm}[\cite{ScholzeWeinstein}, Thm. 6.4.1] \label{LTdiagram} There is a cartesian diagram
\[
\xymatrix{
\mathcal{M}_{H_0,\infty} \ar[r]^{\delta}\ar[d]  & \mathcal{M}_{\wedge H_0,\infty} \ar[d] \\
\tilde{H}_{\eta}^n \ar[r]_{\delta} & \widetilde{\wedge H}_{\eta} .
}
\]
\end{thm}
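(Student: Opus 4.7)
The plan is to verify the cartesian square by identifying both corners with explicitly described sub-functors of $\tilde H_\eta^n$ and $\widetilde{\wedge H}_\eta$, and then using Theorem \ref{logcommutes} to translate one condition into the other. By the preceding theorem, $\cM_{H_0,\infty}$ is the sub-functor $(\cM_{H_0,\infty})' \subset \tilde H_\eta^n$ cut out by (1) the quasilogarithm matrix having rank exactly $n-1$, and (2) pointwise $K$-linear independence of the $s_i$. Applying the same description to the height-one formal $\cO_K$-module $\wedge H_0$, the image $\cM_{\wedge H_0,\infty}$ is the sub-functor of $\widetilde{\wedge H}_\eta$ consisting of sections $t$ satisfying $\log_{\wedge H}(t)=0$ (the rank-$0$ condition) together with $t(x)\neq 0$ at every geometric point. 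So I must show: for $(R,R^+) \in \CAff_{\breve K, \cO_{\breve K}}$ and $(s_1,\dots,s_n) \in \tilde H_\eta^n(R,R^+)$, the tuple satisfies (1) and (2) if and only if $\delta(\vec s)$ satisfies the analogous pair of conditions.

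The torsion half of this equivalence follows immediately from Theorem \ref{logcommutes}, which gives
\[
\log_{\wedge H}\bigl(\delta(s_1,\dots,s_n)\bigr) = \det\bigl(\qlog_H(s_1),\dots,\qlog_H(s_n)\bigr).
\]
Hence $\log_{\wedge H}(\delta(\vec s))=0$ holds exactly when the quasilog matrix has rank at most $n-1$. What remains is the pointwise equivalence at each geometric point $x = \Spa(L,L^+)$ with $L$ algebraically closed: $\delta(\vec s)(x) \neq 0$ (given $\det\qlog = 0$) should be equivalent to the conjunction of rank exactly $n-1$ and $K$-linear independence of the $s_i(x)$.

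To establish this, I would work inside the $K$-vector space $W = \tilde H(L,L^+)$ and exploit that $\delta$ restricts to a $K$-alternating form $W^n \to \widetilde{\wedge H}(L,L^+)$. One direction is easy: $K$-linear dependence of the $s_i(x)$ forces $\delta(\vec s)(x)=0$ by the alternating property. For the converse, I would use the explicit coordinate description of $\delta$ from the proof of Theorem \ref{logcommutes}: under the isomorphism $\tilde H \cong \Nil^\flat$ given by $\lambda$, $\delta$ is represented by the power series $\Delta(X_1,\dots,X_n)$, and Lemma \ref{Deltaestimate} gives the leading-order approximation $\Delta \equiv \det(X_i^{q^j}) \pmod{\text{higher degree}}$. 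Vanishing of the Moore-type determinant $\det(y_i^{q^j})$ at $x$, where the $y_i$ are the tilted coordinates of $s_i(x)$, must then be matched to either a $K$-linear relation among the $s_i(x)$ or to a drop in rank of the quasilog matrix below $n-1$; this uses the relation between $\lambda$ and $\qlog$ given by Lemma \ref{quasiloglemma}. The main obstacle, in my view, is reconciling the $K$-linear structure on $W$ with the $L$-linear rank of the quasilog matrix, since $K$-linear dependence in $W$ and $L$-linear dependence in $M(H_0)\otimes L$ interact subtly through the exact sequence $0 \to V(H)(L) \to W \xrightarrow{\qlog} M(H_0)\otimes L \to 0$; it is precisely here that the explicit Moore-determinant formula, combined with the known structure of $V(H)(L)$ as an $n$-dimensional $K$-subspace, does the decisive work.
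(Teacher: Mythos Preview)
The paper does not prove this theorem at all: it is stated with a citation to \cite{ScholzeWeinstein}, Thm.~6.4.1, and no argument is given. So there is nothing in the paper to compare your approach against; the only question is whether your sketch stands on its own.

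Your reduction to a pointwise criterion at geometric points is the natural move, and the equivalence $\log_{\wedge H}(\delta(\vec s))=0 \Leftrightarrow \operatorname{rank}\le n-1$ via Theorem~\ref{logcommutes} is fine. The gap is in the final step. You propose to detect the vanishing of $\delta(\vec s)(x)$ using Lemma~\ref{Deltaestimate}, i.e.\ via the Moore-type leading term $\det(y_i^{q^j})$. But that lemma only says $\Delta \equiv \det(X_i^{q^j})$ modulo \emph{higher-degree terms in a power-series ring}; at a specific geometric point $x$ the higher-order terms contribute, and vanishing of the leading term tells you nothing about vanishing of $\Delta$ itself. So the Moore determinant does not compute $\delta(\vec s)(x)$, and the link you assert between ``$\delta(\vec s)(x)=0$'' and ``$\det(y_i^{q^j})=0$'' is simply not there. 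You acknowledge an obstacle in reconciling $K$-linear dependence in $\tilde H(L,L^+)$ with $L$-linear rank of the quasilog matrix, but the sentence ``it is precisely here that the explicit Moore-determinant formula \dots\ does the decisive work'' is a hope, not an argument.

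The actual proof in \cite{ScholzeWeinstein} does not proceed via such an elementary power-series estimate; it passes through $p$-adic Hodge theory (modifications of vector bundles on the Fargues--Fontaine curve, or equivalently the Dieudonn\'e-theoretic description of $\tilde H$), which is what makes the pointwise equivalence work. If you want a self-contained argument, you would need to analyze the exact sequence $0 \to V(H)(L) \to \tilde H(L,L^+) \to M(H_0)\otimes L$ more carefully and show directly that $\delta$ is nondegenerate on the torsion part $V(H)(L)^n$ in the appropriate sense; the leading-term approximation is not the right tool for this.
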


We remark that $\mathcal{M}_{\wedge H_0,\infty}$ is the disjoint union of $\bZ$ copies of the one-point space $\Spa(\hat{K}^{ab},\cO_{\hat{K}^{ab}})$, where $\hat{K}^{ab}$ is the completion of the maximal abelian extension of $K$.  We have that $\mathcal{M}_{\wedge H_0,\infty}$ is a locally closed subspace of $\widetilde{\wedge H}_{\eta}$, and therefore $\mathcal{M}_{H_0,\infty}$ is a localy closed subspace of $\tilde{H}^n_{\eta}$.

\subsection{The action of $GL_n(K)\times D^\times\times \cW_K$}
\label{groupactions} Let $C$ be the completion of an algebraic closure of $K$, and let $\mathcal{M}_{H_0,\infty,C}$ be the base change of $\mathcal{M}_{H_0,\infty}$ to $C$.  We remark that $\mathcal{M}_{H_0,\infty,C}$ is a perfectoid space in the sense of \cite{ScholzePerfectoidSpaces}.  We define a right action of $GL_n(K)\times D^\times\times \cW_K$ on $\mathcal{M}_{H_0,\infty,C}$ (which becomes a left action on cohomology).

The action of $GL_n(K)\times D^\times$ is easy enough to define in terms of the moduli problem represented by $\mathcal{M}_{H_0,\infty}$ (and doesn't require base changing to $C$).  We describe this action in terms of the description of $\mathcal{M}_{H_0,\infty}$ in Thm. \ref{LTdiagram}.  If $x=(x_1,\dots,x_n)$ is a section of $\tilde{H}^n$, and if $(g,b)\in GL_n(K)\times D^\times$, then we set $x^{(g,b)}=g^T(b^{-1}x_1,\dots,b^{-1}x_n)$, where the $T$ denotes transpose.  This action preserves $\mathcal{M}_{H_0,\infty}$.

We now turn to the action of $\cW_K$.  First we define an action of $\cW_K$ on $\tilde{H}^n_{\eta,C}$.  Suppose $w\in \cW_K$.  Let us write $\Phi$ for the Frobenius automorphism of $\breve{K}$ which induces the $q$th power map on the residue field.  Then $w$ is an automorphism of $C$ which induces $\Phi^m$ on $\breve{K}$ for some $m\in\Z$.  We get a morphism of formal schemes
\[ 1\otimes w\from \tilde{H}\hat{\otimes}_{\cO_{\breve{K}}}\cO_C\to\tilde{H}\hat{\otimes}_{\cO_{\breve{K}},\Phi^m}\cO_C\]
which induces a morphism of adic spaces over $C$:
\[ 1\otimes w\from \tilde{H}_{\eta,C}\to \tilde{H}^{(q^m)}_{\eta,C}:=(\tilde{H}\hat{\otimes}_{\cO_{\breve{K}},\Phi^m}\cO_C)_{\eta} \]
On the other hand, we have the absolute Frobenius morphism of formal schemes $\varphi\from H_0\to H_0^{(q^m)}=H_0\otimes_{\overline{\mathbb{F}}_q,\Fr_q^m} \overline{\mathbb{F}}_q$,
which induces an isomorphism $\varphi\from \tilde{H}_0\to\tilde{H}_0^{(q^m)}$,
which in turn induces an isomorphism of adic spaces $\varphi\from \tilde{H}_{\eta,C}\to\tilde{H}_{\eta,C}^{(q^m)}$.  We define an automorphism $x\mapsto x^{w^{-1}}$ of $\tilde{H}_{\eta,C}$ as the composition of $1\otimes w$ with $\Phi$.  This induces an automorphism of $\tilde{H}_{\eta,C}^n$ which preserves $\mathcal{M}_{H_0,\infty,C}$.  Note that the action of $\cW_K$ on $\mathcal{M}_{H_0,\infty,C}$ is $C$-semilinear, and that it commutes with the action of $GL_n(K)\times D^\times$.

\section{A special affinoid in the Lubin-Tate tower at infinite level}\label{s:affinoid-LT-tower}

\subsection{CM points}
\label{CMpoints}
Let $L/K$ be an extension of degree $n$, with uniformizer $\varpi_L$ and residue field $\bF_{q^n}$.  Let $C$ be the completion of a separable closure of $L$.   A deformation $H$ of $H_0$ over $\cO_C$ has {\em CM by $\cO_L$} if there exists a $K$-linear isomorphism $L\to \End G\otimes K$.  Equivalently, $H$ has CM by $L$ if it is isogenous to a formal $\cO_L$-module (necessarily of height 1).

If $H/\cO_C$ is has CM by $\cO_L$, and $\phi$ is a level structure on $H$, we get a triple $(H,\rho,\phi)$ defining an $C$-point of $\cM_{H_0,\infty}$.   Points of $\mathcal{M}_{H_0,\infty}$ constructed in this matter will be called {\em CM points} (or points with CM by $L$).

Let $x$ be a point of $\cM_{H_0,\infty}$ with CM by $L$ which corresponds to the triple $(H,\iota,\phi)$.  Then $x$ induces embeddings $i_1\from L\to M_n(K)$ and $i_2\from L\to D$, characterized by the commutativity of the diagrams
\[
\xymatrix{
K^n \ar[d]_{i_1(\alpha)} \ar[r]^{\phi} & V(H) \ar[d]^{\alpha} \\
K^n \ar[r]_{\phi} & V(H)
}
\]
and
\[
\xymatrix{
H_0 \ar[d]_{i_2(\alpha)} \ar[r]^{\iota} & H\otimes \bF_{q^n} \ar[d]^{\alpha} \\
H_0 \ar[r]_{\iota} & H\otimes\bF_{q^n}
}
\]
for $\alpha\in K$.  At the risk of minor confusion, from this point forward we will usually suppress $i_1$, $i_2$ from the notation and instead think of $L$ as a subfield of $M_n(K)$ and $D$.  Let $\Delta\from L\to M_n(K)\times D$ be the diagonal embedding.  The group $GL_n(K)\times D^\times$ acts transitively on the set of CM points;  the stabilizer of our fixed CM point $x$ is $\Delta(L^\times)$.

By Lubin-Tate theory, points of $\cM_{H_0,\infty}$ with CM by $L$ are defined over the completion of the maximal abelian extension of $L$.  That is, these points are fixed by the commutator $[\cW_L,\cW_L]$.  Recall that the relative Weil group $\cW_{L/K}$ is the quotient of $\cW_K$ by the closure of $[\cW_L,\cW_L]$.  If $x$ has CM by $L$, and $w\in \cW_K$, then $x^{w^{-1}}$ also has CM by $L$, and therefore there exists a pair $(g,b)\in GL_n(K)\times D^\times$ for which $x^{(g,b)}=x^{w^{-1}}$.  Then $w\mapsto L^\times (g,b)$ is a well-defined injective map $j\from \cW_{L/K}\to L^\times\backslash(GL_n(K)\times D^\times)$.

Recall also that there is an exact sequence
\[
\xymatrix{
1\ar[r] & L^\times \ar[r]^{\rec_L} & \cW_{L/K} \ar[r] & \Gal(L/K) \ar[r] & 1
}
\]
corresponding to the canonical class in $H^2(\Gal(L/K),L^\times)$ (cf. \cite{TateNumberTheoreticBackground}).

\begin{lem}  \label{LubinTateReciprocity} For all $\alpha\in L^\times$, we have $j(\rec_L\alpha)=L^\times(1,\alpha)$.
\end{lem}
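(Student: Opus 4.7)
The plan is to reduce the lemma to the classical reciprocity law for a one-dimensional formal $\cO_L$-module, applied to the formal module underlying the CM point $x$. Since $x$ has CM by $L$, the deformation $H$ is isogenous over $\cO_C$ to a formal $\cO_L$-module $H'$ of height one; replacing $(H,\iota,\phi)$ by an isogenous triple, I may assume $H$ is itself a formal $\cO_L$-module. Under this identification, $V(H)$ is naturally a one-dimensional $L$-vector space, and the commutative squares from \S\ref{CMpoints} defining $i_1$ and $i_2$ say precisely that $\phi\from K^n\iso V(H)$ is $L$-linear when the source carries the $L$-structure given by $i_1$, and that $i_2(\alpha)$ for $\alpha\in L^\times$ is the reduction modulo $\varpi$ of the endomorphism $\alpha\in\cO_L=\End H$.

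Next I would analyze the two actions. By classical Lubin-Tate theory, the $\cW_K$-action on $V(H)\cong L$ factors through $\cW_L^{\ab}$, and in the normalization of $\rec_L$ sending uniformizers to geometric Frobenii, $\rec_L(\alpha)$ acts on $V(H)$ as multiplication by $\alpha^{-1}$. Writing $w=\rec_L(\alpha)$, the triple $x^{w^{-1}}$ therefore has underlying formal $\cO_L$-module $H$ with rigidification $\rho$ essentially unchanged (since the Frobenius part of $w$ is cancelled by the twist $\varphi$ in the definition of the $\cW_K$-action in \S\ref{groupactions}), but with level structure post-composed with multiplication by $\alpha$ on $V(H)$; by $L$-linearity of $\phi$, this equals $\phi\circ i_1(\alpha)$.

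On the other hand, the action of $(1,\alpha)\in GL_n(K)\times D^\times$ sends $(H,\rho,\phi)$ to $(H,\rho\circ i_2(\alpha)^{-1},\phi)$, and since $i_2(\alpha)$ lifts via the CM structure to an actual endomorphism of $H$, applying the quasi-isogeny $\alpha\from H\to H$ converts this to the equivalent triple $(H,\rho,\phi\circ i_1(\alpha))$, where $i_1(\alpha)$ again arises as the action of $\alpha$ on $V(H)$ transported to $K^n$ via $\phi$. Comparing the two expressions yields $x^{w^{-1}}=x^{(1,\alpha)}$ in $\cM_{H_0,\infty}(C)$, and hence $j(\rec_L\alpha)=L^\times(1,\alpha)$ as claimed. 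The main obstacle is verifying that the sign conventions of $\rec_L$, of the $\cW_K$-action defined in \S\ref{groupactions} via $\varphi\circ(1\otimes w)$, and of the transpose appearing in $x^{(g,b)}=g^T(b^{-1}x_1,\ldots,b^{-1}x_n)$ all line up to produce $\alpha$ on the nose rather than $\alpha^{-1}$.
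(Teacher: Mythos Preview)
Your approach is the same as the paper's in spirit: both reduce to classical Lubin-Tate reciprocity for the height-one formal $\cO_L$-module underlying the CM point $x$. Your step 4 (transporting the $(1,\alpha)$-action through the quasi-isogeny $\alpha$ to get $(H,\rho,\phi\circ i_1(\alpha))$) is a clean way to phrase what the paper does implicitly via the identity $x^{(u,1)}=x^{(1,u^{-1})}$.

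The gap is in step 3. Your claim that under $x\mapsto x^{w^{-1}}$ the ``rigidification $\rho$ [is] essentially unchanged (since the Frobenius part of $w$ is cancelled by the twist $\varphi$ in the definition of the $\cW_K$-action)'' is not correct when $\alpha$ has nonzero valuation. The morphism $\varphi$ in \S\ref{groupactions} does not cancel the effect on the rigidification; it is precisely what introduces it. Concretely, for $\alpha=\varpi_L$ the paper computes $x^{\rec_L(\varpi_L)}=(H,\iota\circ\Fr_{q^n}^{-1},\phi)$: the rigidification changes and the level structure does not, exactly the opposite of what you asserted. What is true is that \emph{after applying the quasi-isogeny $\varpi_L$} (whose reduction is $\Fr_{q^n}$), this triple becomes equivalent to one with the original $\iota$ and with $\phi$ replaced by $\phi\circ i_1(\varpi_L)$; but that is a separate step, not a consequence of the definition of the $\cW_K$-action.

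The paper avoids this pitfall by splitting the verification into the unit case (where $\rec_L(u)$ lies in inertia, the rigidification genuinely does not move, and the action is purely on torsion points) and the uniformizer case (where one computes directly that $\rec_L(\varpi_L)$ acts as geometric Frobenius on $L^{\nr}$ and trivially on $L_\infty$, yielding the Frobenius-twisted rigidification). To repair your argument, you should either make this same split, or else give a uniform argument that carefully tracks both the rigidification and the level structure through the $\cW_K$-action and then normalizes via the quasi-isogeny $\alpha$ as you did in step 4. Your closing caveat about sign conventions undersells the issue: it is not just a sign, but the structural question of where the Frobenius twist lands in the moduli description.
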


\begin{proof}  This is tantamount to the statement that $x^{\rec_L\alpha}=x^{(1,\alpha^{-1})}$, and will follow from classical Lubin-Tate theory.

By replacing $x$ with a translate, we may assume that $H/\cO_L$ admits endomorphisms by all of $\cO_K$, and that $\phi$ maps $\cO_K^n$ isomorphically onto $T(H)$.

In~\cite{LubinTate}, the main theorem shows that the maximal abelian extension $L^{\ab}/L$ is the compositum of $L^{\nr}$, the maximal unramified extension, with $L_\infty$, the field obtained by adjoining the roots of all iterates $[\varpi_L^n]_{H}$ to $L$.   Write $\alpha=\varpi_L^mu$, with $u\in\cO_L^\times$.
In the notation of~\cite{LubinTate}, the Artin symbol $(\alpha,L^{\ab}/L)$ restricts to the $m$th power of the (arithmetic) ${q^n}$th power Frobenius $\Fr_{q^n}$ on $L^{\nr}$ and sends a root $\xi$ of $[\varpi^n_L]_{H}$ to $[u^{-1}]_{H}(\xi)$.
But $\rec_L$ sends a uniformizer to a geometric Frobenius, so $\rec_L(\alpha)=(\alpha^{-1},L^{\ab}/L)$ as elements of $\cW_L^{\ab}$.
Thus for a unit $u\in \cO_L^\times$, $x^{\rec_L(u)}$ is represented by the triple $(H,\iota,\phi)^{\rec_L(u)}=(H,\iota,\phi\circ u)=x^{(u,1)}=x^{(1,u^{-1})}$ as claimed.
Finally, since $\rec_L(\varpi_L)$ acts as geometric Frobenius on $L^{\nr}$ and as the identity on $L_\infty$, we have  $x^{\rec_L(\varpi_L)}=(H,\iota,\phi)^{\rec_L(\varpi_L)}=(H,\iota\circ\Fr_{q^n}^{-1},\phi)$.
Since $[\varpi_L]_{G}$ reduces to $\Fr_{q^n}$ modulo $\varpi_L$, we have $x^{\rec_L(\varpi_L)}=x^{(1,\varpi^{-1}_L)}$.  This completes the proof of the claim.
\end{proof}

Let $\mathcal{N}_1$ and $\mathcal{N}_2$ be the normalizers of $L^\times$ in $GL_n(K)$ and $D^\times$, respectively.  Then both $\mathcal{N}_1$ and $\mathcal{N}_2$ are extensions of $\Gal(L/K)$ by $L^\times$.  Let $\mathcal{N}\subset GL_n(K)\times D^\times$ be the pullback in the diagram
\[
\xymatrix{
\mathcal{N} \ar[r] \ar[d]& \mathcal{N}_2 \ar[d] \\
\mathcal{N}_1 \ar[r]& \Gal(L/K).
}
\]
Then $\mathcal{N}$ is also an extension of $\Gal(L/K)$ by $L^\times$.  A pair $(g,b)\in GL_n(K)\times D^\times$ belongs to $\mathcal{N}$ if and only if there exists $\sigma\in \Gal(L/K)$ such that for all $\alpha\in L^\times$ we have $g^{-1}\alpha g = b^{-1}\alpha b = \alpha^{\sigma}$.

\begin{prop}\label{jisomorphism} The map $j\from \cW_K\to L^\times\backslash (GL_n(K)\times D^\times)$ factors through an isomorphism of groups $\cW_{L/K}\to L^\times\backslash \mathcal{N}$.
\end{prop}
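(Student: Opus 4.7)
The plan is to exhibit $j$ as a morphism of two short exact sequences of groups,
\[
\xymatrix{
1 \ar[r] & L^\times \ar[r]^{\rec_L} \ar@{=}[d] & \cW_{L/K} \ar[r] \ar[d]^{j} & \Gal(L/K) \ar[r] \ar[d]^{\bar{j}} & 1 \\
1 \ar[r] & L^\times \ar[r] & L^\times\backslash\mathcal{N} \ar[r] & \Gal(L/K) \ar[r] & 1,
}
\]
in which the left vertical is the identity and the induced right vertical $\bar{j}$ will turn out to be an isomorphism; the five lemma then finishes the proof. The only ingredients needed are Lemma \ref{LubinTateReciprocity}, the injectivity of $j$ asserted in the preceding text, and the commutativity of the $\cW_K$- and $(GL_n(K)\times D^\times)$-actions recorded in \S\ref{groupactions}.

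First I set up the bottom sequence. Since $L$ is a maximal commutative $K$-subalgebra of both $M_n(K)$ and of $D$, the kernel of $\mathcal{N}\to\Gal(L/K)$ equals $L^\times\times L^\times$, and Skolem--Noether applied to these two central simple $K$-algebras makes this map surjective. The diagonal $\Delta(L^\times)$ is normal in $\mathcal{N}$: for $(g,b)\in\mathcal{N}$ with associated $\sigma\in\Gal(L/K)$, conjugation sends $(\alpha,\alpha)$ to $(\alpha^{\sigma^{-1}},\alpha^{\sigma^{-1}})\in\Delta(L^\times)$. Passing to the quotient identifies $(L^\times\times L^\times)/\Delta(L^\times)$ with $L^\times$ via $\alpha\leftrightarrow L^\times(1,\alpha)$, yielding the displayed extension.

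Next I verify that $j$ is a group homomorphism whose image lies in $L^\times\backslash\mathcal{N}$. For the homomorphism property, if $x^{w_i^{-1}} = x^{(g_i,b_i)}$ for $i=1,2$, then the commutativity of the two actions gives
\[
x^{(w_1w_2)^{-1}} = (x^{w_1^{-1}})^{w_2^{-1}} = (x^{(g_1,b_1)})^{w_2^{-1}} = (x^{w_2^{-1}})^{(g_1,b_1)} = x^{(g_1g_2,\,b_1b_2)},
\]
so $j(w_1w_2) = j(w_1)j(w_2)$. For the image claim, the stabilizer of $x$ is $\Delta(L^\times)$, and by the commutativity of the actions so is the stabilizer of $x^{w^{-1}}$; since $x^{w^{-1}} = x^{(g,b)}$ and the stabilizer of $x^{(g,b)}$ under a right action equals $(g,b)^{-1}\Delta(L^\times)(g,b)$, the pair $(g,b)$ normalizes $\Delta(L^\times)$, so $(g,b)\in\mathcal{N}$.

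It remains to verify the hypotheses of the five lemma. Commutativity of the left square and the equality $j|_{L^\times} = \id$ are exactly the content of Lemma \ref{LubinTateReciprocity}, since $j(\rec_L\alpha) = L^\times(1,\alpha)$ corresponds to $\alpha$ under the identification above. For $\bar{j}$: suppose $\bar{j}(\sigma_0)=1$ and let $w\in\cW_{L/K}$ lift $\sigma_0$. Then $j(w)$ lies in the kernel $L^\times\subset L^\times\backslash\mathcal{N}$, so by Lemma \ref{LubinTateReciprocity} we can write $j(w) = j(\rec_L\alpha)$ for some $\alpha\in L^\times$; the injectivity of $j$ then forces $w=\rec_L\alpha$, hence $\sigma_0=1$. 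Thus $\bar{j}$ is injective, and since $\Gal(L/K)$ is finite it is an isomorphism. The five lemma now yields that $j\from\cW_{L/K}\to L^\times\backslash\mathcal{N}$ is a group isomorphism. The step most apt to hide a subtle pitfall is the image claim $(g,b)\in\mathcal{N}$; it rests crucially on the commutativity of the two actions (so that stabilizers are preserved along $\cW_K$-orbits), rather than on any direct analysis of how $\cW_K$ twists the CM structure of $x$.
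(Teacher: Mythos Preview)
Your approach is essentially the same as the paper's: both verify that $j$ is a homomorphism landing in $L^\times\backslash\mathcal{N}$ and then conclude by the five lemma applied to the same pair of short exact sequences. The one genuine difference is your argument for $(g,b)\in\mathcal{N}$: the paper does a direct calculation with Lemma~\ref{LubinTateReciprocity} to show $b^{-1}\alpha b=\alpha^\sigma$ (and similarly for $g$), whereas you observe that the $\cW_K$-action preserves the stabilizer $\Delta(L^\times)$ and hence $(g,b)$ normalizes $\Delta(L^\times)$. Your route is slicker and conceptually cleaner, since the normalizer of $\Delta(L^\times)$ in $GL_n(K)\times D^\times$ is exactly $\mathcal{N}$; the paper's route has the minor advantage of directly identifying which $\sigma$ corresponds to $w$ (namely the image of $w$ in $\Gal(L/K)$), though this is not strictly needed.

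One small slip: in your chain
\[
x^{(w_1w_2)^{-1}} = (x^{w_1^{-1}})^{w_2^{-1}} = (x^{(g_1,b_1)})^{w_2^{-1}} = (x^{w_2^{-1}})^{(g_1,b_1)} = x^{(g_1g_2,\,b_1b_2)},
\]
the first and last equalities are both wrong for a right action: one has $x^{(w_1w_2)^{-1}}=x^{w_2^{-1}w_1^{-1}}=(x^{w_2^{-1}})^{w_1^{-1}}$, and at the end $(x^{w_2^{-1}})^{(g_1,b_1)}=(x^{(g_2,b_2)})^{(g_1,b_1)}=x^{(g_2g_1,b_2b_1)}$. The two index swaps cancel, so your conclusion $j(w_1w_2)=j(w_1)j(w_2)$ is correct, but you should rewrite the display (compare the paper's computation $x^{j(w)j(w')}=\dotsb=x^{j(ww')}$, which keeps the order straight).
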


\begin{proof} Let $w\in \cW_K$, and let $j(w)=L^\times(g,b)$, so that $x^w=x^{(g,b)^{-1}}$.  We first claim that $(g,b)\in\mathcal{N}$.

Let $\sigma$ be the image of $w$ in $\Gal(L/K)$, and let $\alpha\in L^\times$ be arbitrary.  Repeatedly using the fact that the actions of $GL_n(K)\times D^\times$ and $\cW_K$ on $\mathcal{M}_{H_0,\infty}(C)$ commute, we have by Lemma \ref{LubinTateReciprocity}
\begin{eqnarray*}
x^{(1,b^{-1}\alpha b)}
&=&x^{(g,b)^{-1}(1,\alpha)(g,b)}\\
&=& x^{w(1,\alpha)(g,b)} \\
&=& x^{(1,\alpha)w(g,b)}\\
&=& x^{\rec_L(\alpha)w(g,b)}\\
&=& x^{(g,b)\rec_L(\alpha)w}\\
&=& x^{w^{-1}\rec_L(\alpha)w }\\
&=& x^{\rec_L(\alpha^\sigma)}\\
&=& x^{(1,\alpha^{\sigma})},
\end{eqnarray*}
so that $b^{-1}\alpha b=\alpha^{\sigma}$.  A similar calculation shows that $g^{-1}\alpha g=\alpha^{\sigma}$.   Thus $(g,b)\in \mathcal{N}$.  One sees from the calculation
\[
x^{j(w)j(w')}=x^{w^{-1}j(w')}=x^{j(w')w^{-1}}=x^{(w')^{-1}w^{-1}}=x^{(ww')^{-1}}=x^{j(ww')}
\]
that $j$ factors through a group homomorphism $\cW_K\to L^\times\backslash\mathcal{N}$.  The restriction of this homomorphism to $\cW_L$ factors through $\cW_L^{\ab}$, so in fact $j$ factors through a homomorphism $\cW_{L/K}\to L^\times\backslash\mathcal{N}$, which we temporarily call $\tilde{j}$.  From the diagram
\[
\xymatrix{
1 \ar[r] & \cW_L^{\ab} \ar[d]_{\rec_L^{-1}} \ar[r] & \cW_{L/K}  \ar[r] \ar[d]^{\tilde{j}} & \Gal(L/K) \ar[d]^{=} \ar[r] & 1 \\
1 \ar[r] &  L^\times \ar[r]^{(1,\alpha)} &  \mathcal{N}/L^{\times} \ar[r] & \Gal(L/K) \ar[r] & 1
}
\]
we see that $\tilde{j}\from \cW_{L/K}\to L^\times\backslash\mathcal{N}$ is an isomorphism.
\end{proof}

Henceforth we will use the letter $j$ for the homomorphism $\cW_K\to L^\times\backslash\mathcal{N}$ which induces the isomorphism $\cW_{L/K}\to L^\times\backslash\mathcal{N}$ of Prop. \ref{jisomorphism}.  Then $j$ is characterized by the property that
\begin{equation}
\label{jproperty}
x^{j(w)}=x^{w^{-1}}
\end{equation}
for all $w\in \cW_K$.


\begin{prop}
\label{stabx}
Let $\mathcal{S}$ be the stabilizer of $x$ in $GL_n(K)\times D^\times\times \cW_K$.  Then $\mathcal{S}$ is the set of triples $(g,b,w)$, where $(g,b)\in\mathcal{N}$ is a lift of $j(w)\in L^\times\backslash\mathcal{N}$.
\end{prop}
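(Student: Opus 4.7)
The plan is to deduce this directly from Proposition \ref{jisomorphism} together with the characterizing property \eqref{jproperty} of $j$, by exploiting the fact that the actions of $GL_n(K)\times D^\times$ and $\cW_K$ on $\mathcal{M}_{H_0,\infty,C}$ commute and that the stabilizer of $x$ in $GL_n(K)\times D^\times$ alone is $\Delta(L^\times)\subset \mathcal{N}$.

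First I would unpack what $(g,b,w)\in \mathcal{S}$ means: since the two factor actions commute, $(g,b,w)$ stabilizes $x$ precisely when $x^{(g,b)}=x^{w^{-1}}$. By \eqref{jproperty} the right-hand side equals $x^{j(w)}$, so the condition is $x^{(g,b)}=x^{(g_0,b_0)}$ for any chosen representative $(g_0,b_0)\in\mathcal{N}$ of the coset $j(w)\in L^\times\backslash\mathcal{N}$.

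For the forward inclusion, I would fix such a representative $(g_0,b_0)$. Then $(g,b)(g_0,b_0)^{-1}$ fixes $x$, so it lies in the $(GL_n(K)\times D^\times)$-stabilizer of $x$, which is $\Delta(L^\times)$. Therefore $(g,b)\in\Delta(L^\times)\cdot (g_0,b_0)\subset \mathcal{N}$, and $(g,b)$ represents the same coset $j(w)$ in $L^\times\backslash\mathcal{N}$ as $(g_0,b_0)$, which is exactly what it means for $(g,b)$ to be a lift of $j(w)$. Conversely, if $(g,b)\in\mathcal{N}$ is a lift of $j(w)$, write $(g,b)=\Delta(\alpha)(g_0,b_0)$ with $\alpha\in L^\times$; then $x^{(g,b)}=x^{(g_0,b_0)}=x^{j(w)}=x^{w^{-1}}$, so $x^{(g,b)w}=x$ and $(g,b,w)\in\mathcal{S}$.

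No step is really an obstacle here: the entire content has already been packaged into Proposition \ref{jisomorphism} and equation \eqref{jproperty}. The only minor point to watch is bookkeeping with right actions and the identification $L^\times\backslash\mathcal{N}$ (as opposed to $\mathcal{N}/L^\times$), which is consistent because $\Delta(L^\times)$ is central in $\mathcal{N}$ modulo the obvious abuse (it is normal, at any rate), so left and right cosets coincide and "a lift of $j(w)$" is unambiguous.
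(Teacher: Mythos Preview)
Your proof is correct and follows essentially the same argument as the paper: reduce the stabilizer condition to $x^{(g,b)}=x^{w^{-1}}=x^{j(w)}$ via \eqref{jproperty}, then use that the $GL_n(K)\times D^\times$-stabilizer of $x$ is $\Delta(L^\times)$ to conclude that $(g,b)$ differs from a chosen lift of $j(w)$ by an element of $\Delta(L^\times)$. The paper's version is slightly terser (and handles the converse by the phrase ``we have already seen''), but the content is identical.
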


\begin{proof} We have already seen that these elements fix $x$.  Suppose $(g,b,w)$ fixes $x$, so that $x^{(g,b)}=x^{w^{-1}}=x^{j(w)}$.  Then $(g,b)j(w)^{-1}\in GL_n(K)\times D^\times$ fixes $x$, and so must lie in the diagonally embedded $L^\times$.
\end{proof}

\subsection{Linking orders}
We continue to assume that $L/K$ is a separable extension of degree $n$ and that $x\in\mathcal{M}_{H_0,\infty}(C)$ is a point with CM by $L$.  Then $x$ determines
$K$-linear embeddings $L\injects M_n(K)$ and $L\injects D$.  As before, let $\Delta\from L\injects M_n(K)\times D$ be the diagonal embedding.  Finally, let $m\geq 0$ be an integer.  In this section we define a $\Delta(\cO_L)$-order $\cL=\cL_{x,m}$ inside of $M_n(K)\times D$ which plays an important role in our analysis.  Much of the material in this section is taken from \cite{WeinsteinSemistableModels}, \S3.3.

The CM point $x$ determines a deformation $H_0$ of $H$ to $\cO_\C$, and a basis for the free $\cO_K$-module $TH=\varprojlim H[\varpi^n](\cO_\C)$.  We may then identify $M_n(K)$ with the algebra of $K$-linear endomorphisms of $VH=TH\otimes K$.  Let $\mathfrak{A}\subset M_n(K)$ be the $\cO_L$-subalgebra of elements which send $\gp_L^iTH$ into $\gp_L^iTH$ for each $i\in\bZ$.  Let $\gP\subset \mathfrak{A}$ be the ideal of elements which send $\gp_L^iTH$ into $\gp_L^{i+1}TH$ for each $m\in\bZ$;  then $\gP$ is the double-sided ideal generated by $\varpi_L$.

We have a $K$-linear pairing $M_n(K)\times M_n(K)\to K$ given by $(a,b)\mapsto \tr(ab)$.
With respect to this pairing we may write $M_n(K)=L\oplus C_1$, where $C_1$ is a left and right $L$-vector space of dimension $n-1$.
Let $\pr_1\from M_n(K)\to L$ be the projection onto the first factor.  Similarly, we can write $D=L\oplus C_2$;  let $\pr_2\from D\to L$ be the projection onto the first factor.

Let $r$ be the largest integer such that $\pr_1(\gP^r)\subset\gp_L^m$, and let
\[ P_{1,m}=\set{a\in\gP^r\biggm\vert \pr_1(a)\in \gp_L^m}, \]
an $\cO_L$-submodule of $\mathfrak{A}$.  By definition of $r$, we have $P^2_{1,m}\subset P_{1,m}$, so that $1+P_{1,m}$ is an open subgroup of $\mathfrak{A}^\times$ containing $1+\gp_L^m$.

We define similar structures for the division algebra $D$.  Let $\gP_D\subset\cO_D$ be the maximal double-sided ideal.  Let $r'$ be the largest integer such that $\pr_2(\gP^{2r'}_D)\subset \gp_L^m$, and let
\[ P_{2,m}=\set{b\in\gP_D^{r'}\biggm\vert \pr_2(b)\in \gP_L^m}.\]
Then $P_{2,m}^2\subset P_{2,m}$, and $1+P_{2,m}$ is an open subgroup of $\cO_D^\times$ which contains $1+\gp_L^m$.

Now we consider structures within the product $M_n(K)\times D$.
Let
\[
\cL = \Delta(\cO_L)+(P_{1,m}\times P_{2,m}).\]
Then $\cL$ is a $\Delta(\cO_L)$-order in $M_n(K)\times D$.  We also define a two-sided ideal $\gP\subset\cL$ by
\[ \gP=\Delta(\gP_L)+(P_{1,m+1}\times P_{2,m+1}). \]
Then $\mathcal{R}=\cL/\gP$ is a finite-dimensional algebra over the residue field $\cO_L/\gp_L$.

\subsection{Description of the linking order in the case of $L/K$ unramified}\label{ss:linking-orders-unramified-case} 
From now on we impose the assumption that $L/K$ is unramified.  It will be helpful to have a completely explicit description of $\cL$ in the case.  We will assume that the CM point $x$ corresponds to the standard formal $\cO_K$-module $H$, which has CM by the full ring of integers in $L$.  This ensures that $M_n(\cO_K)\cap L=\cO_L$ and $\cO_D\cap L=\cO_L$.  Let $C_1^\circ=C_1\cap M_n(\cO_K)$ and $C_2^\circ=C_2\cap \cO_D$.

Then the linking order is
\[ \cL=\Delta(\cO_L)+(\gp_L^m\times\gp_L^m)+\left(\gp_L^{\ceil{m/2}}C_1^\circ\times\gp_L^{\ceil{(m-1)/2}}C_2^\circ\right),\]
and its ideal $\gP$ is
\[ \gP=\Delta(\gp_L)+\left(\gp_L^{m+1}\times\gp_L^{m+1}\right)+\left(\gp_L^{\ceil{(m+1)/2}}C_1^\circ\times \gp_L^{\ceil{m/2}}C_2^\circ\right).\]

\begin{lem}
\label{Sring}
The quotient $S=\mathcal{L}/\gP$ is an $\bF_{q^n}$-algebra of dimension $n+1$.  It admits a basis $1,e_1,\dots,e_n$.  Multiplication in $S$ is determined by the following rules.
\begin{itemize}
\item $e_i\cdot a = a^{q^i}\cdot e_i$, $a\in \bF_{q^n}$
\item If $m=1$, then
\[
e_ie_j=
\begin{cases}
e_{i+j}, & i+j\leq n\\
0,& i+j>n
\end{cases}
\]
\item If $m\geq 2$, then
\[
e_ie_j=
\begin{cases}
e_n, & i+j=n\\
0,& i+j\neq n.
\end{cases}
\]
\end{itemize}
\end{lem}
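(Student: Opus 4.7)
The plan is to exploit the natural $\Gal(L/K)$-grading on $M_n(K)$ and $D$ to give explicit descriptions of $\cL$ and $\gP$, then read off the quotient piece by piece. Since $L/K$ is unramified, the Frobenius $\phi \in \Gal(L/K)$ preserves $\cO_L$, and hence $\sigma_i := \phi^i$ lies in $M_n(\cO_K) = \End_{\cO_K}(\cO_L)$. This yields a graded decomposition $M_n(K) = \bigoplus_{i=0}^{n-1} L \sigma_i$ with relations $\sigma_i \ell = \phi^i(\ell) \sigma_i$ and $\sigma_i \sigma_j = \sigma_{(i+j)\bmod n}$, and $M_n(\cO_K) = \bigoplus \cO_L \sigma_i$. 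Together with the standard crossed-product presentation $D = \bigoplus_{i=0}^{n-1} L\Pi^i$ (with $\Pi\ell = \phi(\ell)\Pi$, $\Pi^n = \varpi$), this gives a $\Z/n$-grading on $M_n(K) \times D$ whose $i$th piece is $L\sigma_i \times L\Pi^i$. Since the matrix trace on $M_n(K)$ and the reduced trace on $D$ both restrict to $\Tr_{L/K}$ on $L$, the graded pieces of index $i\ne 0$ are exactly $C_1^\circ$ and $C_2^\circ$ respectively, and both $\cL$ and $\gP$ are visibly homogeneous for the grading.

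I would then compute the graded pieces of $S = \cL/\gP$ separately. The $i=0$ part of $\cL$ consists of pairs $(a,b) \in \cO_L \times \cO_L$ with $a - b \in \gp_L^m$, and the $i=0$ part of $\gP$ adds the conditions $a \in \gp_L$ and $a-b \in \gp_L^{m+1}$; the map $(a,b) \mapsto (a \bmod \gp_L,\,(a-b) \bmod \gp_L^{m+1})$ then identifies this piece of $S$ with $\bF_{q^n} \oplus \bF_{q^n}$, with generators $1$ (the class of $\Delta(1)$) and $e_n$ (the class of $(\varpi^m,0)$). For $1 \le i \le n-1$, the $i$th part of $\cL$ is $\gp_L^{\ceil{m/2}}\sigma_i \times \gp_L^{\ceil{(m-1)/2}}\Pi^i$, and passing to $\gP$ bumps both exponents by one; a parity check shows that exactly one of the two factor-quotients is nonzero, so this piece of $S$ is $1$-dimensional over $\bF_{q^n}$, spanned by the class $e_i$ of $(\varpi^{m/2}\sigma_i, 0)$ when $m$ is even and of $(0, \varpi^{(m-1)/2}\Pi^i)$ when $m$ is odd. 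Summing, $\dim_{\bF_{q^n}} S = 2 + (n-1) = n+1$.

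For the multiplication rules, the relation $e_i\cdot a = a^{q^i}\cdot e_i$ for $a \in \bF_{q^n}$ is immediate from the commutation laws $\sigma_i\ell = \phi^i(\ell)\sigma_i$ and $\Pi^i \ell = \phi^i(\ell)\Pi^i$, combined with the fact that $\phi$ reduces to $x \mapsto x^q$ on $\bF_{q^n}$. For the products $e_i e_j$, the case $m=1$ uses $e_i = [(0,\Pi^i)]$: here $e_i e_j = [(0,\Pi^{i+j})]$, which equals $e_{i+j}$ when $i+j < n$, equals $[(0,\varpi)]$ when $i+j = n$ (and this agrees with $e_n$ up to sign via $\Delta(\varpi) \in \gP$), and lies in $0 \times \gp_L C_2^\circ \subset \gP$ when $i+j > n$. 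For $m \ge 2$, the product $e_ie_j$ carries an extra factor $\varpi^m$ (even case) or $\varpi^{m-1}$ (odd case); the inequalities $m \ge \ceil{(m+1)/2}$ and $m-1 \ge \ceil{m/2}$ (both valid precisely when $m \ge 2$) show that this extra valuation places the product in $\gP$ unless the index $i+j \bmod n$ is zero, in which case the product reduces to $[(\pm\varpi^m,0)] = \pm e_n$ using $\Delta(\varpi^m) \in \gP$.

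The main obstacle is not conceptual but bookkeeping: tracking the parity of $m$ and choosing consistent signs for the $e_i$ and for $e_n$ so that the multiplication table comes out as stated (with $+e_n$ rather than $\pm e_n$). This can be handled by rescaling $e_n$ and, if needed, the $e_i$'s for odd $m$ by appropriate units in $\bF_{q^n}$; once this is done, the three displayed multiplication rules follow immediately from the computations sketched above.
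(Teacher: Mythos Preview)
Your approach is essentially the same as the paper's: both use the $\Gal(L/K)$-grading on $M_n(K)\times D$ via the Frobenius-like elements (the paper's $s^i$ are your $\sigma_i$) and $\Pi^i$, and both define $e_i$ for $1\le i\le n-1$ as the class of $(\varpi^{m/2}s^i,0)$ or $(0,\varpi^{(m-1)/2}\Pi^i)$ according to the parity of $m$. The only cosmetic difference is that the paper also makes the choice of $e_n$ parity-dependent (taking $(0,\varpi^m)$ for odd $m$), which eliminates the sign adjustment you defer to the end; your proposal is otherwise a correct and more detailed version of the paper's terse ``simple calculation.''
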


\begin{proof}  This is a simple calculation.  We will only explain how to construct the elements $e_1,\dots,e_n$.  An interesting feature is that the roles of $M_n(K)$ and $D$ alternate based on the parity of $m$.

Let $s\in GL_n(\cO_K)$ be an element of order $n$ in the normalizer of $L^\times$ such that conjugation by $s$ effects the Frobenius automorphism of $L/K$.  Then the $\cO_L$-module $C_1^\circ$ is spanned by $s,s^2,\dots,s^{n-1}$.  Similarly, we have an element $\Pi\in\cO_D$ coming from the Frobenius endomorphism of $H_0$;  we have that $\Pi^n=\varpi$, conjugation by $\Pi$ effects the Frobenius automorphism on $L/K$, and $C_2^\circ$ is spanned over $\cO_L$ by $\Pi,\Pi^2,\dots,\Pi^{n-1}$.

If $m$ is even, then $e_i$ is the image of $(\varpi^{m/2}s^i,0)$ for $i=1,\dots,n-1$, and $e_n$ is the image of $(\varpi^m,0)$.  If $m$ is odd, then $e_i$ is the image of $(0,\varpi^{(m-1)/2}\Pi^i)$ for $i=1,\dots,n-1$, and $e_n$ is the image of $(0,\varpi^m)$.
\end{proof}

\subsection{The unipotent group $\U$, and the variety $X$}\label{theunipotentgroup}

Let $\U_0$ be the affine group variety over $\bF_q$ whose points over an $\bF_q$-algebra $R$ are formal expressions $1+\alpha_1e_1+\dots+\alpha_ne_n$, with $\alpha_i\in R$.  The group operation is determined by similar rules as in Lemma~\ref{Sring}.  (Thus $\U_0$ depends on $q$, $n$, and $m$, although the dependence on $m$ is determined by whether or not $m=1$.)  Lemma \ref{Sring} shows there is an isomorphism between $\U_0(\bF_{q^n})$ and the $p$-Sylow subgroup of $S^\times$.

\mbr

Let $Y_0\subset \U_0$ be defined by the equation $a_n=0$, so that $Y_0\cong \mathbb{A}^{n-1}_{\bF_q}$.
Write $L_{q^n}\from \U_0\to\U_0$ for the Lang map $g\mapsto \Fr_{q^n}(g)\cdot g^{-1}$, where $\Fr_{q^n}$ is the $q^n$-power Frobenius map.  Put $X_0=L_{q^n}^{-1}(Y_0)$.
Let $\U=\U_0\tens_{\bF_q}\fqn$, $X=X_0\otimes_{\bF_q}\bF_{q^n}$ and $Y=Y_0\otimes_{\bF_q}\bF_{q^n}$.  The group $U=\U(\bF_{q^n})$ acts on $X$ by right multiplication and the map $X\to Y$ induced by $L_{q^n}$ makes $X$ an \'etale $\U(\bF_{q^n})$-torsor over $Y$.
In particular, we obtain an action of $U$ on $H_c^{\bullet}(X_{\bfq},\overline{\bQ}_\ell):=\bigoplus_{i\in\bZ} H^i_c(X_{\bfq},\overline{\bQ}_\ell)$, where $X_{\bfq}\overset{\text{def}}{=}X\tens_{\fqn}\bfq=X_0\tens_{\bF_q}\bfq$.

We can give explicit formulas for the variety $X$.  If $m=1$, then $X/\bF_{q^n}$ is the $(n-1)$-dimensional hypersurface in the variables $a_1,\dots,a_n$ with equation
\[
\label{hyp}
\det\left(\begin{matrix}
a_1^{q^n}-a_1 & a_2^{q^n}-a_2 & a_3^{q^n}-a_3 & \cdots & a_{n-1}^{q^n} - a_{n-1} & a_n^{q^n}-a_n
\\
1 & a_1^q & a_2^q & \cdots & a_{n-2}^q & a_{n-1}^q
\\
0 & 1 & a_1^{q^2} & \cdots & a_{n-3}^{q^2} & a_{n-2}^{q^2}
\\
\vdots & & & \ddots & & \vdots
\\
0 & 0 & 0 & \cdots& 1 & a_1^{q^{n-1}} \end{matrix}\right)=0.
\]
If $m\geq 2$, then the equation is
\[
\label{hyp2}
\det\left(\begin{matrix}

a_1^{q^n}-a_1 & a_2^{q^n}-a_2 & a_3^{q^n}-a_3 & \cdots & a_{n-1}^{q^n} - a_{n-1} & a_n^{q^n}-a_n
\\
1 & 0 & 0 & \cdots & 0 & a_{n-1}^q
\\
0 & 1 & 0 & \cdots & 0 & a_{n-2}^{q^2}
\\
\vdots & & & \ddots & & \vdots
\\
0 & 0 & 0 & \cdots& 1 & a_1^{q^{n-1}} \end{matrix}\right)=0.
\]

If instead we use parameters $b_1,\dots,b_n$ on $\mathbf{U}$ defined by $g^{-1}=1+b_1e_1+\dots+b_ne_n$, then $X$ is defined by
\begin{equation}
\label{hyp3}
\det
\begin{pmatrix}
b_n^{q^n}-b_n & b_{n-1}^{q^n}-b_{n-1} & \cdots & b_2^{q^n}-b_2 &
b_1^{q^n} - b_1 \\
b_1^q & 1 & \cdots & 0 & 0 \\
b_2^{q^2} & b_1^{q^2} & \cdots & 0 & 0 \\
\vdots & &  \ddots &  & \vdots\\
b_{n-1}^{q^{n-1}} & b_{n-2}^{q^{n-1}} & \cdots & b_1^{q^{n-1}} & 1
\end{pmatrix}
=0
\end{equation}
if $m=1$, and
\begin{equation}
\label{hyp4}
\det
\begin{pmatrix}
b_n^{q^n}-b_n & b_{n-1}^{q^n}-b_{n-1} & \cdots & b_2^{q^n}-b_2 &
b_1^{q^n} - b_1 \\
b_1^q & 1 & \cdots & 0 & 0 \\
b_2^{q^2} & 0 & \cdots & 0 & 0 \\
\vdots & &  \ddots &  & \vdots\\
b_{n-1}^{q^{n-1}} & 0 & \cdots & 0 & 1
\end{pmatrix}
=0
\end{equation}
if $m\geq 2$.


\subsection{The norm morphism}\label{ss:reduced-norm-first-definition}
Consider the map $\sN:GL_n(K)\times D^\times\to K^\times$ given by $(g,b)\mapsto \det(g)\Nrd_{D/K}(b)^{-1}$, where $\Nrd_{D/K}$ is the reduced norm homomorphism. If $\cL$ is the linking order from \S\ref{ss:linking-orders-unramified-case}, then $\sN$ takes the subgroup $\cL^\times\subset GL_n(K)\times D^\times$ into $1+\gp_K^m\subset K^\times$ and induces a homomorphism $S^\times\to (1+\gp_K^m)/(1+\gp_K^{m+1})$, where $S=\mathcal{L}/\gP$ is the quotient appearing in Lemma \ref{Sring}. In particular, we obtain a homomorphism $N:U\to\bF_q\subset\fqn$, which is invariant under the conjugation action of $S^\times$ on $U$ and restricts to the trace map $\Tr_{\fqn/\bF_q}:\fqn\to\bF_q$ on the subgroup $\{1+a_n e_n\st a_n\in\fqn\} \cong \fqn$ of $U$.

\mbr

In \S\ref{s:reduced-norm} we prove that the map $N$ can be extended to a morphism of $\fqn$-varieties
\begin{equation}
\label{NU}
N\from \U\to \mathbb{G}_a,
\end{equation}
which is not a homomorphism of algebraic groups, but which has the properties
\begin{itemize}
\item $N(gh)=N(g)+N(h)$ for $g\in\U$ and $h\in U=\U(\bF_{q^n})$.
\item Let $\pr_n\from\U\to\mathbb{G}_a$ be the projection onto the final coordinate $e_n$;  then $\pr_n(L_{q^n}(g))=N(g)^q-N(g)$.
\end{itemize}

The variety $X$ constructed in \S\ref{theunipotentgroup} is not connected. Since $X$ is defined by $\pr_n(L_{q^n}(g))=0$, the second property of $N$ shows that $X$ is the disjoint union of closed subvarieties with equations $N(g)=a$, as $a$ runs through $\bF_q$.  In other words, we have a cartesian diagram of affine varieties over $\fqn$:
\[
\xymatrix{
X \ar[r] \ar[d] & \bF_q \ar[d] \\
\mathbf{U} \ar[r]_N & \mathbb{A}^1
}
\]
(Here $\bF_q$ is to be interpreted as a disjoint union of $q$ points.)  In Remark \ref{r:connected-components-new} below we will see that the fibers of $X$ over the points of $\bF_q$ are geometrically connected.


\subsection{The main result of the section}
\label{main-result-part-1}
Recall from Prop. \ref{stabx} that the stabilizer $x$ in $GL_n(K)\times D^\times\times \cW_K$ is the group $\mathcal{S}$ of triples $(g,b,w)$, where $(g,b)\in\mathcal{N}$ is a lift of $j(w)\in L^\times\backslash \mathcal{N}$.  Let $\mathcal{J}\subset GL_n(K)\times D^\times\times \cW_K$ be the subgroup generated by $\mathcal{L}^\times\times\set{1}$ and $\mathcal{S}$.

The main result of the section concerns an affinoid subset of $\mathcal{M}_{H_0,\infty,C}$ which happens to be $\mathcal{J}$-invariant.  To state it precisely, we need to pin down a certain Frobenius element.  As in the proof of Lemma \ref{Sring}, we have an element $s\in GL_n(K)$ of order $n$ and a uniformizer $\Pi\in\cO_D$, such that conjugation by the pair $(s,\Pi)$ effects the Frobenius automorphism on
$\Delta(L^\times)$.  Thus $(s,\Pi)\in \mathcal{N}$.  By Prop. \ref{jisomorphism}, there exists an element $\Phi\in \cW_K$ for which $j(\Phi)=L^\times(s,\Pi)$.  Note that $\Phi$ is an arithmetic Frobenius element, and $(s,\Pi,\Phi)\in\mathcal{S}$. Then $\mathcal{J}$ is generated by the following subgroups and elements:
\begin{enumerate}
\item The pro-$p$-Sylow subgroup of $\mathcal{L}^\times$, this being the preimage of the group $U=\mathbf{U}(\fqn)$ under the reduction map  $\mathcal{L}^\times\to (\mathcal{L}/\mathfrak{P})^\times$,
\item $(\alpha,\alpha,1)$, where $\alpha\in \cO_L^\times$,
\item $(1,\alpha,\rec_L(\alpha))$, where $\alpha\in L^\times$, and
\item $(s,\Pi,\Phi)$.
\end{enumerate}

Consider the map
\begin{eqnarray*}
\chi\from GL_n(K)\times D^{\times}\times \cW_K &\to& K^\times \\
(g,b,w)&\mapsto & (\det g)(\Nrd_{D/K}(b))^{-1}\rec_K^{-1}(w)^{-1}.
\end{eqnarray*}
We have $\chi(\mathcal{J})=1+\gp_K^m$.

The remainder of the section is devoted to the following theorem.

\begin{thm} \label{existenceofaffinoid}
There exists a $\mathcal{J}$-invariant affinoid subset $\mathcal{Z}\subset \cM_{H_0,\infty,\C}$ whose reduction is characterized by the following cartesian diagram of affine schemes over $\overline{\bF}_q$:
\[
\xymatrix{
\overline{\mathcal{Z}} \ar[r] \ar[d] & 1+\gp_K^m \ar[d] \\
X^{\perf}_{\overline{\FF}_q} \ar[r]_N & \bF_q
}
\]
Here $1+\gp_K^m$ is to be interpreted as the affine scheme $\Spec \Cont(1+\gp_K^m,\overline{\bF}_q)$, and similarly with $\bF_q\isom (1+\gp_K^m)/(1+\gp_K^{m+1})$.  The group $\mathcal{J}$ acts on all objects in this diagram.  We describe first the action of $\mathcal{J}$ on $X^{\perf}_{\overline{\FF}_q}$.   The action of the pro-$p$-Sylow subgroup of $\mathcal{L}^\times\times\set{1}$ factors through the right multiplication action of $U$ on $X$.  If $\alpha\in \cO_L^\times$,
then $(\alpha,\alpha,1)$ acts through
\[(a_1,\dots,a_n)\mapsto (\overline{\alpha}^{q-1}a_1,\dots,\overline{\alpha}^{q^{n-1}-1}a_{n-1},a_n), \]
where $\overline{\alpha}$ is the image of $\alpha$ in $\cO_L/(\varpi)=\bF_{q^n}$.
Elements of $\mathcal{S}$ of the form $(1,\alpha,\rec_L(\alpha))$ with $\alpha\in L^\times$ act trivially.  Finally, $(s,\Pi,\varphi)$ acts on $X^{\perf}_{\overline{\FF}_q}$ as the inverse of the arithmetic Frobenius map (the variables are fixed but scalars get raised to the $1/q$th power).

The action of $\mathcal{J}$ on $1+\gp_K^m$ is through $\chi\from \mathcal{J}\to 1+\gp_K^m$, and similarly for the quotient $\bF_q\approx (1+\gp_K^m)/(1+\gp_K^{m+1})$.

All arrows in the diagram are equivariant for the action of $\mathcal{J}$.
\end{thm}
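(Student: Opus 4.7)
The plan is to construct $\cZ$ explicitly as a ``tubular neighborhood'' of the CM point $x$ inside $\cM_{H_0,\infty,C}$ cut out by radius conditions on the coordinates $(s_1,\ldots,s_n) \in \tilde H^n_\eta$, and then to read off its reduction by feeding the first-order approximation of Lemma~\ref{Deltaestimate} into the determinant condition of Theorem~\ref{LTdiagram}. First, fix coordinates adapted to $x$: since $H$ is the standard formal $\cO_K$-module and $L/K$ is unramified of degree $n$, the Tate module $TH$ is free of rank one over $\cO_L$, and choosing a generator $\zeta$ together with a Teichm\"uller lift $\alpha\in\cO_L$ of a generator of $\bF_{q^n}/\bF_q$ presents $x$ as the $n$-tuple $s_i^\circ = \alpha^{i-1}\zeta$ for $i=1,\ldots,n$. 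Via the isomorphism $\tilde H \cong \Nil^\flat$ of Lemma~\ref{crystalline}(2), each $s_i^\circ$ is encoded by an element $y_i^\circ\in \Nil^\flat(\cO_C)$, and the $\cO_L$-action, implemented by $\Pi$ up to units (cf.\ \S\ref{calculationsHtilde}), makes the $y_i^\circ$ completely explicit.

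Using the formal group law on $\tilde H$, write a general nearby $n$-tuple as $s_i = s_i^\circ +_{\tilde H} \epsilon_i$, and define $\cZ = \cZ_{x,m}$ as the affinoid cut out by valuation inequalities $v(\lambda_0(\epsilon_i)) \geq r_i$, where the radii $r_i$ are determined by $m$ with a case split on parity mirroring Lemma~\ref{Sring}. The radii must be chosen so that after a rescaling $\epsilon_i = \varpi^{\star}\,a_i$ (with new coordinates $a_i\in\Nil^\flat$), Lemma~\ref{Deltaestimate} guarantees that the determinant condition $\delta(s_1,\ldots,s_n)\in \cM_{\wedge H_0,\infty}$ truncates modulo the maximal ideal to exactly the Moore-type determinant equation cutting out $X$ in \S\ref{theunipotentgroup}. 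Because $\cM_{H_0,\infty,C}$ is perfectoid, passing to the reduction automatically perfects everything, producing $X^{\perf}_{\bfq}$ rather than $X$ itself.

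To match the cartesian diagram, observe that the $\bZ$-indexed decomposition of $\cM_{\wedge H_0,\infty}$ is detected, via $\log_{\wedge H}$ applied to $\delta(s_1,\ldots,s_n)$, by the component labeled by an element of $1+\gp_K^m$. On the reduction this condition coincides, up to the normalization of \S\ref{ss:reduced-norm-first-definition}, with the reduced norm morphism $N\colon\U\to\mathbb{G}_a$, since $\sN=\det\cdot\Nrd^{-1}$ is the global counterpart of $N$. Pulling back the quotient $1+\gp_K^m\twoheadrightarrow\bF_q$ under $N$ then produces exactly the cartesian square in the statement.

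Finally, verify $\cJ$-invariance generator by generator as listed in \S\ref{main-result-part-1}: the pro-$p$-Sylow subgroup of $\cL^\times$ acts through its reduction $U\subset (\cL/\gP)^\times$ by right multiplication on Tate level structures, which matches right multiplication on $X$ by Lemma~\ref{Sring}; the diagonal elements $(\alpha,\alpha,1)$ with $\alpha\in\cO_L^\times$ scale $\zeta$ and thereby induce the Frobenius-twisted diagonal scaling on the $a_i$; the elements $(1,\alpha,\rec_L(\alpha))$ fix $x$ by Lemma~\ref{LubinTateReciprocity} and preserve the $\epsilon_i$, hence act trivially on $\overline{\cZ}$; and $(s,\Pi,\Phi)$ combines the shift-of-basis from conjugation by $(s,\Pi)$ with the semilinear $\Phi$ on $C$, yielding the inverse arithmetic Frobenius on $\overline{\cZ}\subset X^{\perf}_{\bfq}$. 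Compatibility with $\chi$ on the $1+\gp_K^m$ factor is then a direct unwinding of the definition $\chi=\det\cdot\Nrd^{-1}\cdot\rec_K^{-1}$. The main obstacle will be the bookkeeping of the radii $r_i$ and the powers $\varpi^{\star}$: they must be tuned so that the $a_i$ survive as non-trivial coordinates on $\overline{\cZ}$ while every higher-order term in $\Delta(X_1,\ldots,X_n) - \det(X_i^{q^j})$ dies in the reduction, and this is precisely where the parity case split in Lemma~\ref{Sring} forces the two different explicit equations for $X$ recorded in \S\ref{theunipotentgroup}.
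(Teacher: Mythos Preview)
Your outline has the right architecture, but there is a genuine gap in the core computation.  You propose to read off the equation for $\overline{\cZ}$ by feeding Lemma~\ref{Deltaestimate} into the determinant condition.  Lemma~\ref{Deltaestimate} is the first-order expansion of $\Delta$ \emph{at the origin} of $\tilde H^n$: it says $\Delta(X_1,\ldots,X_n)\equiv\det(X_i^{q^j})$ modulo higher-degree terms in the power series ring.  But the CM point $x$ is not at the origin (indeed $\delta(x)=t\neq 0$), and your substitution $s_i=s_i^\circ+_{\tilde H}\epsilon_i$ places you in a neighborhood of $x$, where the higher-degree terms in $\Delta$ are comparable to the leading ones and cannot simply be discarded.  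The equations for $X$ in \S\ref{theunipotentgroup} are not Moore determinants: the first row is $a_i^{q^n}-a_i$, and these difference terms do not come from truncating $\det(X_i^{q^j})$.

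What the paper actually uses is not the first-order approximation of $\delta$ but the \emph{exact} identity $\log_{\wedge H}\delta=\det\qlog_H$ of Theorem~\ref{logcommutes}, together with the explicit congruences for $\log_H\Pi^jA_i$ given by Lemma~\ref{quasiloglemma} and the relation $\xi^{q^n}\equiv\varpi\xi$.  It is these congruences (for instance $\log_HA_0\equiv\varpi^m\xi(Z_n-Z_n^{q^n})$) that produce the first-row terms $Z_i^{q^n}-Z_i$ in the reduction, and hence the precise determinantal equation $f=0$ defining $X$.  Once one has $\log_{\wedge H}\delta\equiv\varpi^m\tau f$ modulo smaller terms, one recovers $\delta$ itself by applying $\exp_{\wedge H}$ and subtracting the constant $t$; writing the result as $\tau g$, one finds $g-g^q=f$ in $\overline{\bF}_q[Z_1^{1/q^\infty},\ldots,Z_n^{1/q^\infty}]$, and it is \emph{this relation} together with the uniqueness clause of Proposition~\ref{p:reduced-norm-key}(a) that identifies the reduction of $\delta$ with $-N$.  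Your sentence ``on the reduction this condition coincides\ldots with $N$'' elides exactly this step, which is where the cartesian diagram over $N\colon\mathbf U\to\mathbb A^1$ is actually established.

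Your plan for $\cJ$-invariance and for the action on generators is essentially correct in spirit, though the paper's choice of affinoid via the $\beta$-inequalities (using Lemma~\ref{betavector} and the relation $\beta=\beta^{\phi^r}s^r$) is adapted to make that verification transparent; with your naive $v(\lambda_0(\epsilon_i))\geq r_i$ definition, showing invariance under $(s,\Pi,\Phi)$ is less direct.
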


The following corollary reduces the study of the cohomology of $\overline{\mathcal{Z}}$ to the study of the cohomology of the (finite-type) variety $X$:

\begin{cor}  \label{from-X-to-Z} As a representation of $\mathcal{J}$ we have
\[ H^\bullet_c(\overline{\mathcal{Z}},\overline{\bQ}_\ell) = \bigoplus_\psi H^\bullet_c(X_{\overline{\bF}_q},\overline{\bQ}_\ell) \otimes (\psi\circ\chi),\]
where $\psi$ runs over characters of $1+\gp_K^m$.
\end{cor}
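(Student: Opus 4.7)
The plan is to exploit the cartesian diagram of Theorem~\ref{existenceofaffinoid}, which exhibits $\overline{\mathcal{Z}}$ as the pro-\'etale $(1+\gp_K^{m+1})$-torsor over $X^{\perf}_{\overline{\bF}_q}$ obtained by pulling back the torsor $1+\gp_K^m\to\bF_q$ along $N\colon X^{\perf}_{\overline{\bF}_q}\to\bF_q$. I would first realize $\overline{\mathcal{Z}}=\varprojlim_k\overline{\mathcal{Z}}_k$ as an inverse limit of finite-type schemes $\overline{\mathcal{Z}}_k:=X^{\perf}_{\overline{\bF}_q}\times_{\bF_q}G_k$, with $G_k=(1+\gp_K^m)/(1+\gp_K^{m+k})$, and use a set-theoretic section $s\colon\bF_q\to 1+\gp_K^m$ of the reduction to trivialize the torsor at each finite level via
\[
\overline{\mathcal{Z}}_k\xrightarrow{\sim}X^{\perf}_{\overline{\bF}_q}\times H_k,\qquad (x,y)\mapsto\bigl(x,\,y\cdot s(N(x))^{-1}\bigr),
\]
where $H_k:=(1+\gp_K^{m+1})/(1+\gp_K^{m+k})$ is a constant finite $\overline{\bF}_q$-scheme.

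With this trivialization, the K\"unneth formula gives $H^i_c(\overline{\mathcal{Z}}_k,\ql)\cong H^i_c(X_{\overline{\bF}_q},\ql)\otimes\ql[H_k]$, and passing to the direct limit in $k$ yields $H^i_c(\overline{\mathcal{Z}},\ql)\cong H^i_c(X_{\overline{\bF}_q},\ql)\otimes\Cont(1+\gp_K^{m+1},\ql)$. Pontryagin duality decomposes $\Cont(1+\gp_K^{m+1},\ql)$ as a direct sum of one-dimensional character spaces indexed by $\widehat{1+\gp_K^{m+1}}$. To re-index in terms of characters of the larger group $1+\gp_K^m$ as required, I would exploit the $\bF_q$-grading $X=\bigsqcup_{a\in\bF_q}X_a$ coming from $N$: each extension of a character $\psi_0\in\widehat{1+\gp_K^{m+1}}$ to a character $\psi\in\widehat{1+\gp_K^m}$ is parametrized by a character of the quotient $\bF_q$, which after Fourier on the $\bF_q$-grading of $H^\bullet_c(X_{\overline{\bF}_q},\ql)$ converts the $q$-fold ambiguity into the enlarged indexing set $\widehat{1+\gp_K^m}$ appearing on the right-hand side.

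The main obstacle will be verifying $\mathcal{J}$-equivariance, since the chosen section $s$ is not $\mathcal{J}$-stable and so the trivialization distorts the $\mathcal{J}$-action by a correction term. The resolution uses the equivariance $N(j\cdot x)=\overline{\chi}(j)+N(x)$ from the bottom row of the cartesian diagram (where $\overline{\chi}$ denotes $\chi$ followed by the reduction $1+\gp_K^m\to\bF_q$), together with the fact that $\mathcal{J}$ acts on $1+\gp_K^m$ by translation through $\chi$: on the $\psi$-isotypic component the correction collapses to scaling by $\psi(\chi(j))$, so each summand $H^\bullet_c(X_{\overline{\bF}_q},\ql)\otimes(\psi\circ\chi)$ inherits the natural $\mathcal{J}$-action on $H^\bullet_c(X_{\overline{\bF}_q},\ql)$ twisted by the promised character, as claimed.
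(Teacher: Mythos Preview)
Your approach is sound and runs parallel to the paper's, with both writing $\overline{\mathcal{Z}}$ as an inverse limit of finite-level fibre products $\overline{\mathcal{Z}}_k=X^{\perf}_{\overline{\bF}_q}\times_{\bF_q} G_k$ and commuting cohomology with the limit via SGA4. The difference lies in how the finite-level cohomology is computed: the paper writes $H^\bullet_c(\overline{\mathcal{Z}}_k,\ql)=H^\bullet_c(X^{\perf}_{\overline{\bF}_q},\ql)\otimes_{\ql[\bF_q]}\ql[G_k]$ directly from the fibre-product structure (both factors being modules over the function algebra on $\bF_q$ via their structure maps) and then decomposes $\ql[G_k]$ into characters of $1+\gp_K^m$, whereas you first trivialize the $H_k$-torsor via a section $s$ and apply K\"unneth. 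The paper's route keeps $\mathcal{J}$-equivariance manifest throughout, with no section-dependent correction terms to track; yours makes the underlying vector-space isomorphism more explicit at the cost of the bookkeeping you describe in your last paragraph. One caution on your re-indexing step: Fourier over the $\bF_q$-grading of $X$ naturally produces summands of the form $H^\bullet_c(X_{\overline{\bF}_q},\ql)^{(\overline\psi)}$ (isotypic pieces for characters $\overline\psi$ of $\bF_q$), not full copies of $H^\bullet_c(X_{\overline{\bF}_q},\ql)$, so matching the stated formula literally still requires the further observation that $H^\bullet_c(X_{\overline{\bF}_q},\ql)$ is $\mathcal{J}$-isomorphic to its own twist by any character of $\bF_q$ pulled back along $\overline\chi$; this follows from the $\mathcal{J}$-equivariance of $N:X^{\perf}_{\overline{\bF}_q}\to\bF_q$, since multiplication by $\eta\circ N$ intertwines the action with its $(\eta\circ\overline\chi)$-twist.
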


\begin{proof}[Proof of Cor. \ref{from-X-to-Z}]  The description of $\overline{\mathcal{Z}}$ in Thm. \ref{existenceofaffinoid} shows that it is an inverse limit of schemes of finite type:
\[ \overline{\mathcal{Z}}=\varprojlim_{r} X^{\perf}_{\overline{\FF}_q}\times_{(1+\gp_K^m)/(1+\gp_K^{m+1})} (1+\gp_K^m)/(1+\gp_K^{m+r}). \]
The transition maps in this inverse system are all affine, as are the schemes themselves.  Formation of cohomology therefore commutes with the limit (see \cite{SGA4}, VII.5.8), and we find
\begin{eqnarray*}
H^{*}_c(\overline{\mathcal{Z}},\overline{\bQ}_\ell)
&=& \lim_{r\to\infty}H^\bullet_c\left(X^{\perf}_{\overline{\FF}_q}\times_{(1+\gp_K^m)/(1+\gp_K^{m+1})} (1+\gp_K^m)/(1+\gp_K^{m+r}),\overline{\bQ}_\ell\right) \\
&=& \lim_{r\to\infty} H^{*}_c(X^{\perf}_{\overline{\FF}_q},\overline{\bQ}_\ell) \otimes_{\overline{\bQ}_\ell[(1+\gp_K^m)/(1+\gp_K^{m+1})]} \overline{\bQ}_\ell[(1+\gp_K^m)/(1+\gp_K^{m+r})]
\end{eqnarray*}
This is an isomorphism of representations of $\mathcal{J}$;  recall that the action of $\mathcal{J}$ on $1+\gp_K^m$ is through the norm map $\chi$.  The representation $\overline{\bQ}_\ell[(1+\gp_K)^m/(1+\gp_K^{m+r})]$ breaks up as the direct sum of characters $\psi$ of $1+\gp_K^m$ of conductor $m+r$.  In the direct limit we get
\[ H^\bullet_c(\overline{\mathcal{Z}},\overline{\bQ}_\ell)=\bigoplus_{\psi} H^\bullet_c(X^{\perf}_{\overline{\FF}_q},\overline{\bQ}_\ell)\otimes (\psi\circ\chi),\]
where $\psi$ runs over characters of $1+\gp_K^m$.

Finally, we observe that $X^{\perf}_{\overline{\FF}_q}$ is itself an inverse limit of schemes, namely $X^{\perf}_{\overline{\FF}_q}=\varprojlim X_{\overline{\FF}_q}$ along the $q$th power Frobenius maps.  Since the Frobenius map induces an isomorphism on cohomology we find a natural isomorphism $H^\bullet_c(X^{\perf}_{\overline{\FF}_q},\overline{\bQ}_\ell)=H^\bullet_c(X,\overline{\bQ}_\ell)$.
\end{proof}

\subsection{Definition of the special affinoid} \label{affinoiddefinition} Let $H$ be the standard formal $\cO_K$-module over $\cO_{\breve{K}}$.  Let $y$ be a primitive element of the Tate module $TH(\cO_\C)$.  Let $(\xi,\xi^{1/q},\dots)$ be the corresponding element of $\Nil^{\flat}(\cO_\C)$.  Then $\xi$ is characterized by the properties
\[ \sum_{i=-\infty}^{\infty} \frac{\xi^{q^{in}}}{\varpi^i} = 0,\; \abs{\xi}^{q^n-1}=\abs{\varpi}. \]
Let $L/K$ be the unramified extension of degree $n$.  Recall that $\End H=\cO_L$.   Let $\alpha_1,\dots,\alpha_n$ be a basis for $\cO_L/\cO_K$.
Then $(\alpha_1y,\dots,\alpha_ny)$ is an $n$-tuple of elements of $\tilde{H}(\cO_\C)$ representing a point $x\in\cM_{H_0,\infty}(\C)$ with CM by $L$.

 Let $t=\delta(\alpha_1y,\dots,\alpha_ny)\in\widetilde{\wedge H}(\cO_\C)$, so that $t$ represents a point of $\cM_{\wedge H_0,\infty}(\C)$.
 Suppose $(\tau,\tau^{1/q},\dots)\in\Nil^{\flat}(\cO_C)$ corresponds to $t$, so that
 \[ \sum_{i=-\infty}^{\infty} (-1)^{i(n-1)}\frac{\tau^{q^i}}{\varpi^i}=0,\; \abs{\tau}^{q-1}=\abs{\varpi}.\]
\begin{lem}
\label{tauformula}
We have $\tau=\det(\alpha_i^{q^j})\xi^{1+q+\dots+q^{n-1}}$ plus smaller terms.
\end{lem}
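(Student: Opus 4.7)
The approach is to compute $\tau$ directly via the power series $\Delta$ from Lemma~\ref{Deltaestimate}. Under the isomorphism $\tilde{H}\cong\Nil^\flat$ of Lemma~\ref{crystalline}, the morphism $\delta$ corresponds to $\Delta\colon(\Nil^\flat)^n\to\Nil^\flat$, so $\tau$ equals the first $\Nil^\flat$-coordinate of $\Delta(\alpha_1 y,\dots,\alpha_n y)$. Writing $\zeta_i\in\cO_C$ for the first $\Nil^\flat$-coordinate of $\alpha_i y\in\tilde{H}(\cO_C)$, this gives $\tau=\Delta(\zeta_1,\dots,\zeta_n)$, which by Lemma~\ref{Deltaestimate} equals $\det(\zeta_i^{q^j})_{1\le i\le n,\,0\le j\le n-1}$ plus terms of strictly larger valuation.

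Next I would identify $\zeta_i$. Assume first that $\alpha_1,\dots,\alpha_n$ are Teichm\"uller lifts of an $\bF_q$-basis of $\bF_{q^n}$; for such $\alpha$, \S\ref{calculationsHtilde} gives $[\alpha]_H(T)=\alpha T$ exactly. Writing $y=(y_m)_{m\ge 0}\in TH\subset\tilde{H}(\cO_C)$ with $y_m\in H[\varpi^m]$, the element $\alpha_i y$ is the coherent sequence $(\alpha_i y_m)_m$, and therefore
\[ \zeta_i \;=\; \lim_{m\to\infty}(\alpha_i y_m)^{q^{mn}} \;=\; \lim_{m\to\infty}\alpha_i^{q^{mn}}\,y_m^{q^{mn}} \;=\; \alpha_i\,\xi, \]
using $\alpha_i^{q^{mn}}=\alpha_i$ for Teichm\"uller $\alpha_i$ together with the identity $\xi=\lim_m y_m^{q^{mn}}$ from \S\ref{calculationsHtilde}.

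Plugging $\zeta_i=\alpha_i\xi$ into $\det(\zeta_i^{q^j})$ and pulling $\xi^{q^j}$ out of the $j$-th column yields
\[ \det\bigl((\alpha_i\xi)^{q^j}\bigr) \;=\; \det(\alpha_i^{q^j})\cdot\xi^{1+q+\cdots+q^{n-1}}, \]
the claimed leading term. It remains to check that all corrections have strictly larger valuation. The higher-order terms in $\Delta$, per the explicit expansion of $\delta_0$ in the proof of Theorem~\ref{logcommutes}, correspond to integer tuples $(a_1,\dots,a_n)$ with distinct residues modulo $n$ and $\sum a_i=n(n-1)/2$ other than permutations of $(0,1,\dots,n-1)$; writing any such tuple as $a_i=r_i+nk_i$ with $\{r_i\}=\{0,\dots,n-1\}$ and $\sum k_i=0$, a short convexity argument via elementary swaps shows $\sum q^{a_i}>(q^n-1)/(q-1)$ strictly, yielding strictly larger valuation in $\xi$ upon evaluation.

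The main obstacle is extending the computation of $\zeta_i$ to a general basis of $\cO_L/\cO_K$. Writing $\alpha_i=\tilde\alpha_i+\varpi\beta_i$ with $\tilde\alpha_i$ the Teichm\"uller lift of the image of $\alpha_i$ in $\bF_{q^n}$, the non-Archimedean estimate $(1+\epsilon)^{q^{mn}}\to 1$ in $\cO_{\breve K}$ for any $\epsilon$ of positive valuation---which follows from $v(q^{mn})=mnf\cdot v(p)\to\infty$ where $q=p^f$---shows that the same limit computation gives $\zeta_i=\tilde\alpha_i\xi$ for non-Teichm\"uller $\alpha_i$ as well. Since $\det(\alpha_i^{q^j})-\det(\tilde\alpha_i^{q^j})\in\varpi\cdot\cO_L$, replacing $\tilde\alpha_i$ with $\alpha_i$ in the final expression introduces only a correction of valuation at least $v(\varpi)+v(\varpi)/(q-1)$, strictly greater than the leading-term valuation $v(\varpi)/(q-1)$, and is therefore absorbed into the ``smaller terms.''
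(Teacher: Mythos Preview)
Your approach coincides with the paper's: both amount to plugging into Lemma~\ref{Deltaestimate}, and the paper's one-line ``Follows from Lemma~\ref{Deltaestimate}'' leaves implicit the computation of the $\zeta_i$ and the estimate on the higher-degree monomials, which you supply. Your treatment of the Teichm\"uller case and your bound on the non-leading monomials in $\Delta$ (indeed, for a non-permutation tuple one has $\max a_i\ge n$, so $\sum q^{a_i}>q^n>(q^n-1)/(q-1)$) are both fine.

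There is, however, a gap in your passage to general $\alpha_i$. You compute $\zeta_i=\lim_m(\alpha_i y_m)^{q^{mn}}$ and then factor out $\alpha_i^{q^{mn}}$, but the symbol $\alpha_i y$ denotes the action of $\alpha_i\in\cO_L=\End H$ on $\tilde H$, so the $m$-th component is $[\alpha_i]_H(y_m)$, \emph{not} the scalar product $\alpha_i\cdot y_m$. These coincide only for Teichm\"uller $\alpha_i$ (this is precisely the content of the remark in \S\ref{calculationsHtilde}). For general $\alpha_i$ one checks from $\log_H$ that
\[
[\alpha_i]_H(T)=\alpha_i T+\frac{\alpha_i-\alpha_i^{q^n}}{\varpi}\,T^{q^n}+\cdots,
\]
so your $(1+\epsilon)^{q^{mn}}\to 1$ estimate does not apply: the relevant correction involves $y_m^{q^n-1}$, whose valuation tends to $0$. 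In particular the exact equality $\zeta_i=\tilde\alpha_i\xi$ is false in general.

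The repair is easy and does not change your strategy. One option is to note that Teichm\"uller lifts of an $\bF_q$-basis of $\bF_{q^n}$ already form an $\cO_K$-basis of $\cO_L$ (Nakayama), so one may simply assume the $\alpha_i$ are Teichm\"uller; then your argument goes through verbatim and your final paragraph absorbs the difference $\det(\alpha_i^{q^j})-\det(\tilde\alpha_i^{q^j})\in\varpi\cO_L$. Alternatively, keep general $\alpha_i$ and only claim $\zeta_i=\tilde\alpha_i\xi$ \emph{plus smaller terms}: write $\alpha_i y=\tilde\alpha_i y+_{\tilde H}\varpi\beta_i y$, use that multiplication by $\varpi$ on $\tilde H$ becomes the $q^n$-th power on $\Nil^\flat$ (so $\lambda_0(\varpi\beta_i y)$ has valuation $\ge q^n v(\xi)$), and use that the formal group law on $\Nil^\flat$ agrees with ordinary addition to leading order.
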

\begin{proof} Follows from Lemma \ref{Deltaestimate}.
\end{proof}

For a row vector $\beta=(\beta_1,\dots,\beta_n)\in D^n$, there is the linear functional $\tilde{H}^n\to\tilde{H}$ which sends $y=(y_1,\dots,y_n)$ to $\sum_i \beta_i y_i$;  we write this as $\beta\cdot y$.  If $a\in M_n(K)$ we can write $\beta a$ for the matrix product $(\beta_1,\dots,\beta_n)a$.  Similarly if $b\in D$ we can write $\beta b$ for $(\beta_1 b,\dots,\beta_n b)$.  Finally if $\zeta=(a,b)\in M_n(K)\times D$, we write $\beta\zeta=\beta a - \beta b$.

\begin{lem}[\cite{WeinsteinSemistableModels}, Lemma 4.2.1]
\label{betavector}
There is a nonzero vector $\beta=(\beta_1,\dots,\beta_n)\in L^n$ such that for all $\alpha\in L$ we have $\beta\Delta(\alpha)=0$.
This $\beta$ is unique up to scaling by an element of $L^\times$.  For all $\zeta\in M_n(K)\times D$ we have
\[ (\beta\zeta)\cdot x=(\pr(\zeta)\beta)\cdot x,\;\text{all $\zeta\in M_n(K)\times D$.}\]
\end{lem}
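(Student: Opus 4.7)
My plan is to split the statement into the existence/uniqueness of $\beta$ and the displayed identity, and to reduce the latter to a vanishing statement that can be verified by base change to $L$.

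For existence and uniqueness, I would unwind $\beta\Delta(\alpha)=0$: since $\Delta(\alpha)=(\alpha,\alpha)$, the condition reads $\beta\alpha=\alpha\beta$, where on the left $\alpha\in L\hookrightarrow M_n(K)$ acts on the row vector $\beta$ by matrix multiplication and on the right by coordinatewise scalar multiplication. This is precisely the statement that $v\mapsto \sum_i\beta_iv_i$ defines an $L$-linear functional $K^n\to L$, when $K^n$ is equipped with its $L$-module structure coming from the embedding $L\hookrightarrow M_n(K)$ determined by $x$. That embedding makes $K^n$ into a free $L$-module of rank one (since $[L:K]=n=\dim_K K^n$), so $\Hom_L(K^n,L)$ is one-dimensional over $L$, yielding existence and uniqueness up to an $L^\times$-scalar.

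For the displayed identity, the natural candidate is $\pr(\zeta):=\pr_1(a)-\pr_2(b)\in L$ for $\zeta=(a,b)$. Using the trace-form decompositions $a=\pr_1(a)+a_\perp$ with $a_\perp\in C_1$ and $b=\pr_2(b)+b_\perp$ with $b_\perp\in C_2$, the contribution of the $L$-parts already matches $(\pr(\zeta)\beta)\cdot x$ thanks to the eigenvector property of part~(1), after which the identity reduces to the pair of vanishing claims $(\beta a_\perp)\cdot x=0$ for $a_\perp\in C_1$ and $(\beta b_\perp)\cdot x=0$ for $b_\perp\in C_2$.

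The main obstacle is this last vanishing, and I would address it by base change to $L$. One has $M_n(K)\otimes_K L\cong M_n(L)$ and $D\otimes_K L\cong M_n(L)$ (the latter because the unramified degree-$n$ extension $L/K$ splits $D$). In each, the embedded copy of $L$ becomes a maximal torus which, after a change of basis, is the diagonal $\prod_{\sigma\in\Gal(L/K)} L\subset M_n(L)$, while $C_1$ and $C_2$ correspond to the strictly off-diagonal matrices. The element $\beta\in L^n$ lies in the $\mathrm{id}$-eigenspace of this torus, and the pairing $v\mapsto v\cdot x$ factors as projection onto that eigenspace followed by the $L$-isomorphism $L\xrightarrow{\sim} L\cdot y\subset V(H)$. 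Since off-diagonal elements move the $\mathrm{id}$-eigenspace into its complement, both vanishing claims follow; on the $D$-side the verification reduces cleanly to the identity $\sum_j\sigma(\beta_j)\tau(\alpha_j)=0$ for $\sigma\neq\tau$, which is itself exactly the defining condition on $\beta$ from part~(1) restated after base change.
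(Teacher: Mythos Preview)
The paper does not prove this lemma: it is quoted verbatim from \cite{WeinsteinSemistableModels}, Lemma 4.2.1, and no argument is supplied here. So there is nothing in the present paper to compare your proposal against.

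Your outline is the natural one and is essentially correct. The existence and uniqueness via $\Hom_L(K^n,L)\cong L$ is clean and matches how the paper actually uses $\beta$ afterwards (the explicit $\beta$ chosen right after the lemma satisfies $\sum_i\beta_i\alpha_i^{\varphi^j}=\delta_{j0}$, i.e.\ it is the dual basis to $(\alpha_i)$ in the appropriate sense). The reduction of the displayed identity to the two vanishing statements $(\beta a_\perp)\cdot x=0$ for $a_\perp\in C_1$ and $(\beta b_\perp)\cdot x=0$ for $b_\perp\in C_2$ is also right.

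One place where you should supply more detail is the final step. Your assertion that ``the pairing $v\mapsto v\cdot x$ factors as projection onto the $\mathrm{id}$-eigenspace'' and that the $D$-side vanishing ``is exactly the defining condition on $\beta$ restated after base change'' both rest on correctly matching up \emph{two} eigenspace decompositions: the one in which $\beta$ sits (coming from $\beta\,i_1(\alpha)=\alpha\beta$) and the one through which the pairing $v\mapsto\sum_i v_i\alpha_i\, y$ factors. Because of the transpose in the group action $x^{(g,b)}=g^T(b^{-1}x)$, the embedding $i_1$ and the pairing interact via dual eigenspaces rather than the same ones, and it is easy to get this wrong. Concretely, what one needs is that the condition on $\beta$ is equivalent to $\sum_i\beta_i\,\sigma(\alpha_i)=0$ for all $\sigma\neq\mathrm{id}$; once this is established, both vanishing claims follow immediately (for $C_1$ use the generators $s^j$ together with the relation $\beta=\beta^{\varphi^r}s^r$, and for $C_2$ use $\Pi^j$ and the commutation $\Pi^j\alpha_i=\varphi^j(\alpha_i)\Pi^j$). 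Your base-change picture is the right conceptual explanation for why this identity holds, but writing it out explicitly would close the gap.
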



Let us adopt the following convention regarding continuous valuations $\abs{\;}$ on the ring $R$:  for a section $v\in \tilde{H}(R)$, $\abs{v}$ shall mean $\abs{\lambda_0(v)}$, where we recall from \S\ref{calculationsHtilde} that $\lambda_0(v)\in R$ is the $0$th coordinate of $\lambda(v)\in \Nil^{\flat}(R)$.

Let $R$ be the adic $\cO_\C$-algebra which represents $\tilde{H}^n_{\cO_\C}$.  Thus we have $n$ universal elements $X_1,\dots,X_n\in \tilde{H}(R)$.  Let $X=(X_1,\dots,X_n)\in\tilde{H}^n(R)$.   Choose a nonzero vector $\beta$ as in Lemma \ref{betavector}.    The following definition appears in \cite{WeinsteinSemistableModels}, \S4.2.
Define a subset $\mathcal{Y}\subset \tilde{H}^n_{\eta,C}$ by the inequalities
\begin{equation}
\label{defY1}
\abs{\beta\cdot(X-x)} \leq \abs{\varpi_L^m\beta\cdot x}
\end{equation}
and
\begin{equation}
\label{defY2}
\abs{(\beta \zeta)\cdot X}\leq \abs{\varpi_L^m\beta\cdot x},\text{ all $\zeta\in \cL$}.
\end{equation}

Then we define $\mathcal{Z}=\mathcal{Y}\cap \mathcal{M}_{H_0,\infty,\C}$.  This is the special affinoid of Thm. \ref{existenceofaffinoid}.  The next two sections are devoted to the proof of that theorem.

\subsection{The special affinoid:  Case of $m\geq 2$ even}
We have $x=(\alpha_1 y,\alpha_2 y,\dots,\alpha_n y)$ for a basis $\alpha_1,\dots,\alpha_n$ of $\cO_L/\cO_K$. Let $\varphi$ be the Frobenius automorphism of $L/K$.  If $\alpha=(\alpha_1,\dots,\alpha_n)^T$ is the column vector whose components are the $\alpha_i$, then there exists a unique matrix $s\in GL_n(\cO_K)$ for which $\alpha^{\varphi}=s\alpha$.  Then $s$ has order $n$, and conjugation by $s$ induces $\varphi$ on the embedded subfield $L$ of $M_n(K)$.

Let $\beta_1,\dots,\beta_n\in \cO_L$ be elements such that
\[ \sum_i \beta_i\alpha_i^{\varphi^j} = \delta_{j0} \]
for $j=0,1,\dots,n-1$.  Then $\beta=(\beta_1,\dots,\beta_n)$ is an element of the sort described in Lemma \ref{betavector}, and $\beta\cdot x = y$.  We remark that $\alpha$ and $\beta$ are related by the identity
\[
\begin{pmatrix}
\alpha_1 & \alpha_1^\varphi & \cdots & \alpha_1^{\varphi^{n-1}} \\
\alpha_2 & \alpha_2^\varphi & \cdots & \alpha_2^{\varphi^{n-1}} \\
\vdots   &            & \ddots & \cdots     \\
\alpha_n & \alpha_n^\varphi & \cdots & \alpha_n^{\varphi^{n-1}}
\end{pmatrix}^{-1}
=
\begin{pmatrix}
\beta_1 & \beta_2 & \cdots & \beta_n \\
\beta_1^{\varphi} & \beta_2^{\varphi} & \cdots & \beta_n^{\varphi} \\
\vdots &  & \ddots & \vdots\\
\beta_1^{\varphi^{n-1}} & \beta_2^{\varphi^{n-1}} & \cdots & \beta_n^{\varphi^{n-1}}.
\end{pmatrix}.
\]
We also remark that
\begin{equation}
\label{betas}
\beta  = \beta^{\phi^r}s^r,\; r\in\Z
\end{equation}

Recall that $C_1$ is the complement of $L$ in $M_n(K)$ under the trace pairing, and $C_1^\circ=C_1\cap M_n(\cO_K)$.  Then $C_1^\circ$ is the $\cO_L$-submodule of $M_n(\cO_K)$ spanned by $s,s^2,\dots,s^{n-1}$.
Define elements $A_0,\dots,A_{n-1}\in\tilde{H}(R)$ by
\[ A_i=\beta s^{-i}\cdot X,\;i=0,\dots,n-1.\]
Then the $X$s and $A$s are related by
\begin{equation}
\label{equation-X-A}
\begin{pmatrix}
X_1 \\
X_2 \\
\vdots \\
X_n
\end{pmatrix}
=
\begin{pmatrix}
\alpha_1 & \alpha_1^\varphi & \cdots & \alpha_1^{\varphi^{n-1}} \\
\alpha_2 & \alpha_2^\varphi & \cdots & \alpha_2^{\varphi^{n-1}} \\
\vdots   &            & \ddots & \cdots     \\
\alpha_n & \alpha_n^\varphi & \cdots & \alpha_n^{\varphi^{n-1}}
\end{pmatrix}
\begin{pmatrix}
A_0 \\
A_1 \\
\vdots \\
A_{n-1}
\end{pmatrix}
\end{equation}

If we define elements $Y_1,\dots,Y_n\in \tilde{H}(R)$ by setting
\begin{eqnarray*}
Y_i&=&\varpi^{-m/2}\beta s^{-r}\cdot X,\; i=1,\dots,n-1\\
Y_n&=&\varpi^{-m}\beta \cdot (X-x),
\end{eqnarray*}
then each $Y_i$ satisfies $\abs{Y_i}\leq \abs{y}$ for all $\abs{\;}\in \mathcal{Y}$.  The relationship between the $Ys$ and the $As$ is
\begin{eqnarray*}
A_0 &=& x+\varpi^m Y_n\\
A_1 &=& \varpi^{m/2} Y_1\\
&\vdots& \\
A_{n-1} &=& \varpi^{m/2} Y_{n-1}.
\end{eqnarray*}

Write $\lambda_0(Y_i)=\xi Z_i$, so that $Z_i\in R\otimes C$.  We have $\mathcal{Y}=\Spa(S,S^+)$, where
\[ S^+=\cO_{\C}\tatealgebra{Z_1^{1/q^\infty},\dots,Z_n^{1/q^\infty}}. \]
The reduction of $\mathcal{Y}$ is $\Spec\overline{\bF}_q[Z_1^{1/q^\infty},\dots,Z_n^{1/q^\infty}]=\mathbf{A}^{n,\perf}_{\overline{\bF}_q}$, this being the perfection of affine $n$-space over $\overline{\bF}_q$.

\begin{thm}  \label{approximationfordelta} Let $\wedge\mathcal{Y}\subset \wedge \tilde{H}_{\eta,C}$ be the image of $\mathcal{Y}$ under $\delta$.  There is a diagram
\[
\xymatrix{
\overline{\mathcal{Y}} \ar[r] \ar[d]_{\sim} & \overline{\wedge \mathcal{Y}} \ar[d]^{\sim} \\
\mathbf{U}^{\perf}_{\overline{\bF}_q} \ar[r]_{N} & \mathbf{A}^{1,\perf}_{\overline{\bF}_q}
}
\]
in which the vertical arrows are isomorphisms, and the lower horizontal map is induced from the morphism $N\from \mathbf{U}\to\mathbb{A}^1$ of Eq. \eqref{NU}.  This diagram commutes up to sign.
\end{thm}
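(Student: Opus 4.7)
The plan is to prove the theorem by explicit computation on the reduction $\overline{\mathcal{Y}}$, exploiting Theorem \ref{logcommutes} and in particular the first-order approximation $\Delta(X_1, \ldots, X_n) \equiv \det(X_i^{q^j})$ from Lemma \ref{Deltaestimate}. The goal is to evaluate $\lambda_0(\delta(X))$ explicitly modulo the maximal ideal of $\cO_C$.

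First, I would express $\lambda_0(X_i)$ in terms of the coordinates $Z_1, \ldots, Z_n$ on $\mathcal{Y}$. Via the decomposition $X_i = \sum_j \alpha_i^{\varphi^j} A_j$ from \eqref{equation-X-A}, together with $A_0 = y + \varpi^m Y_n$, $A_j = \varpi^{m/2} Y_j$ for $j \geq 1$, and $\lambda_0(Y_j) = \xi Z_j$, $\lambda_0(y) = \xi$, one obtains
\[
\lambda_0(X_i) = \alpha_i \xi + \varpi^{m/2} \xi \sum_{j=1}^{n-1} \alpha_i^{\varphi^j} Z_j + \varpi^m \alpha_i \xi Z_n + (\text{higher-order corrections}).
\]
The higher-order corrections are controlled via the $\cO_L$-module structure on $\tilde{H}$ (with $\alpha \in \cO_L$ acting on $\lambda_0$ by its Teichm\"uller image modulo $\mathfrak{m}_C$) together with the defining identity $\sum_i \xi^{q^{ni}}/\varpi^i = 0$ for $\xi$ and Lemma \ref{doubletailedsum}.

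Next, I would substitute into $\det(\lambda_0(X_i)^{q^j})$ and expand by multilinearity in the rows. The leading contribution $\det((\alpha_i\xi)^{q^j}) = \det(\alpha_i^{q^j})\,\xi^{1+q+\cdots+q^{n-1}}$ equals $\tau$ up to lower-order terms by Lemma \ref{tauformula}, giving the base value at the CM point. The corrections that mix in one $\varpi^{m/2}$-column (coming from the linear-in-$Z_j$ perturbations of $\lambda_0(X_i)$) should cancel modulo $\mathfrak{m}_C$ thanks to the defining relations for $\xi$, while those mixing two such columns yield a nontrivial quadratic form in the $Z_j$'s. After dividing through by $\tau$ and setting up a coordinate $W$ on $\overline{\wedge\mathcal{Y}}$ by solving $\log_{\wedge H}(\omega) = \det\qlog_H(X)$ modulo $\mathfrak{m}_C$ (using Eq.\ \eqref{logwedgeH}), one extracts an explicit polynomial $P(Z_1, \ldots, Z_n)$ describing the reduction of the map $\overline{\mathcal{Y}} \to \overline{\wedge\mathcal{Y}}$.

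Finally, I would match $P$ with the norm morphism $N \colon \mathbf{U}^{\perf}_{\overline{\bF}_q} \to \mathbf{A}^{1,\perf}_{\overline{\bF}_q}$ under the natural identification $Z_i \leftrightarrow a_i$: the quadratic terms $Z_j Z_{n-j}^{q^j}$ in $P$ correspond exactly to the nonzero products $e_j\cdot e_{n-j} = e_n$ in the ring $\mathcal{L}/\mathfrak{P}$ of Lemma \ref{Sring} (case $m\geq 2$), which by \S\ref{ss:reduced-norm-first-definition} is precisely the structural data defining $N$. The overall sign ambiguity reflects the alternating structure of $\delta$ versus the sign conventions for $N$. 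The hardest step will be the second paragraph above: the exact cancellation of the linear terms in the expansion of the determinant, together with the combinatorial identification of the coefficients of the surviving quadratic monomials with those arising from the multiplication in $\mathcal{L}/\mathfrak{P}$, both require careful bookkeeping of Frobenius twists, signs, and valuations.
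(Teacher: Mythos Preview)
Your approach deviates from the paper's in two significant ways, the second of which is a genuine gap.

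First, the paper does not compute $\lambda_0(\delta(X))$ via the Moore-determinant approximation $\Delta \approx \det(X_i^{q^j})$ from Lemma~\ref{Deltaestimate}; that lemma is used only to pin down $\tau$ (Lemma~\ref{tauformula}). Instead the paper works with the \emph{exact} identity $\log_{\wedge H}\delta(X)=\det\qlog_H(X_1,\ldots,X_n)$ of Theorem~\ref{logcommutes}, and estimates each entry $\log_H(\Pi^j A_i)$ of the $\qlog$ matrix directly using Lemma~\ref{quasiloglemma} and the change of variables $A_0=y+\varpi^mY_n$, $A_i=\varpi^{m/2}Y_i$. This avoids the problem you never address: ``higher degree'' in $\cO_{\breve{K}}\powerseries{X^{1/q^\infty}}$ does not automatically mean ``smaller valuation on $\mathcal{Y}$'', so the discarded terms in your $\Delta$-approximation are not obviously negligible at order $\varpi^m$.

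Second, and more importantly, your final matching step is not how the identification with $N$ goes. The determinant computation does \emph{not} produce a polynomial that can be matched with $N$ term by term via the multiplication table of $\cL/\gP$. What the paper obtains is $\det\qlog_H(X)\equiv\varpi^m\tau\,f(Z_1,\ldots,Z_n)$ modulo smaller terms, where $f$ is precisely the polynomial cutting out the hypersurface $X$ from \S\ref{theunipotentgroup}; in particular $f$ contains terms like $Z_n-Z_n^{q^n}$ and is not the quadratic expression you anticipate. Writing the scaled coordinate on $\wedge\mathcal{Y}$ as $\delta'_m=\tau g$, the shape of $\log_{\wedge H}$ (Eq.~\eqref{logwedgeH}) turns $\log_{\wedge H}\delta\equiv\varpi^m\tau f$ into the Artin--Schreier equation $g-g^q=f$ in $\overline{\bF}_q[Z_1^{1/q^\infty},\ldots,Z_n^{1/q^\infty}]$. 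It is then the \emph{defining} property of the norm morphism (Eq.~\eqref{NU}, equivalently Proposition~\ref{p:reduced-norm-key}(a)), namely $\pr_n(L_{q^n}(g))=N(g)^q-N(g)$, that forces $g=-N$. Your proposal never invokes this Artin--Schreier characterization, so the crucial link between the determinant $f$ and the map $N$ is missing.
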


\begin{proof}
By Prop. \ref{logcommutes}, the morphism $\delta\from \tilde{H}^n\to\widetilde{\wedge H}$ is characterized by the property that $\log_{\wedge H}\delta(X_1,\dots,X_n)=\det\qlog_H(X_1,\dots,X_n)$.  To prove the theorem we will undertake an analysis of $\qlog_H(X_1,\dots,X_n)$, in terms of the integral coordinates $Z_1,\dots,Z_n$ on $\mathcal{Y}$.

In the ring $S^+$ we have the congruences
\begin{eqnarray*}
\log_H A_0 &\equiv& \varpi^m \xi(Z_n-Z_n^{q^n}) \\
\log_H\varpi A_0 &\equiv& \xi^q \\
&\vdots&\\
\log_H\varpi^{n-1} A_0 &\equiv &  \xi^{q^{n-1}},
\end{eqnarray*}
and, for $i=1,\dots,n-1$,
\begin{eqnarray*}
\log_H A_i &\equiv& \xi(Z_i^{q^n}-Z_i) \\
\log_H \varpi A_i &\equiv& \xi^q Z_i^q \\
&\vdots&\\
\log_H \varpi^{n-1}A_i&\equiv& \varpi^{m/2}\xi^{q^{n-1}}Z_i^{q^{n-1}}.
\end{eqnarray*}
Here $a\equiv b$ is taken to mean ``moduli smaller terms."  We have used the fact that $\xi^{q^n}\equiv \varpi\xi$.

By Lemma \ref{quasiloglemma}, the coordinates of $\qlog_H(X_i)$ with respect to the standard basis of $M(H_0)\otimes K$ are given by
\[ \qlog_H(X_i)=\left(\log_H(X_i),\log_H(\varpi X_i),\dots,\log_H(\varpi^{n-1} X_i)\right).\]
This gives us an expression for the matrix $\qlog_H(X_i)_{i=1,\dots,n}\in M(H_0)^n\otimes S$ in terms of the variables $A_0,\dots,A_{n-1}$:
\[
(\qlog(X_i))_i=\left(\alpha_i^{q^j}\right)
\begin{pmatrix}
\log_HA_0 &  \log_H \varpi A_{n-1} & \cdots & \log_H\varpi^{n-1}A_1 \\
\log_HA_1 & \log_H\varpi A_0     & \cdots & \varpi \log_H\varpi^{n-1} A_2 \\
\vdots & \vdots & \ddots & \vdots \\
\log_HA_{n-1} & \log_H\varpi A_{n-2} & \cdots & \log_H\varpi^{n-1} A_0
\end{pmatrix}
\]
Now take determinants.  We apply the preceding congruences for $\log_H\varpi^iA_j$ together with Lemma \ref{tauformula} to find that $\det\qlog_H(X_i)$ equals
\[
\varpi^m\tau\det\begin{pmatrix}
Z_n-Z_n^{q^n} & Z_{n-1}^q & Z_{n-2}^{q^2} & \cdots & Z_2^{q^{n-2}} & Z_1^{q^n} \\
Z_1-Z_1^{q^n} & 1         & 0             & \cdots & 0             & 0         \\
Z_2-Z_2^{q^n} & 0         & 1             & \cdots & 0             & 0         \\
\vdots        &           &               & \ddots &               & \vdots    \\
Z_{n-1}-Z_{n-1}^{q^n} & 0 & 0             & \cdots & 0             & 1
\end{pmatrix}
\]
plus smaller terms in $S^+$.  Let $f(Z_1,\dots,Z_n)$ denote the determinant appearing in the above equation.  Remarkably, the equation $f=0$ cuts out the variety $X$ defined in \S\ref{theunipotentgroup}.   Since $\log_{\wedge H}\delta(X_1,\dots,X_n)=\det\qlog_H(X_1,\dots,X_n)$, we have
\[ \log_{\wedge H}\delta(X_1,\dots,X_n)\equiv \varpi^m\tau f(Z_1,\dots,Z_n) \]
modulo smaller terms in $S^+$.

Let $\delta=(\delta_0,\delta_1,\dots)=\delta(X_1,\dots,X_n)\in\widetilde{\wedge H}(S^+)$;  we intend to use the above congruence to give an approximation to $\delta$ in terms of $Z_1,\dots,Z_n$.  Let $\exp_{\wedge H}(T)\in K\powerseries{T}$ be the exponential series of $\wedge H$;  this series belongs to $\tau\cO_{\breve{K}}\tatealgebra{T/\tau}$.  Since $\log_{\wedge H}\delta\in \varpi^m\tau S^+$, we have that $\log_{\wedge H}\delta_m\in\tau S^+$, and therefore $\exp_{\wedge H}\log_{\wedge H}\delta_m$ converges to an element $\delta'_m\in H(S^+)$, and we have $
\log_{\wedge H}(\delta_m-\delta'_m)=0$.  Thus $\delta_m-\delta'_m$ belongs to $T(\wedge H)(S^+)=T(\wedge H)(\cO_C)$ (since $\Spec S^+$ is connected).  The homomorphism $S^+\to \cO_C$ carrying each $Z_i$ to $0$ takes $\delta_m-\delta'_m$ to $t_m$, so that in fact $\delta_m=t_m+\delta'_m$.  Let $\delta'=\delta-t\in\tilde{H}(S^+)$;  then
\[\delta_0=\varpi^m\delta_m'. \]
We have $\delta_m'=\tau g(Z_1,\dots,Z_n)$ for some $g\in S^+=\cO_{\C}\powerseries{Z_1^{1/q^\infty},\dots,Z_n^{1/q^\infty}}$ without constant terms.


Taking logarithms gives
\[\log_{\wedge H}\delta \equiv \varpi^m\tau (g - g^q) \equiv \varpi^m\tau f \]
modulo smaller terms in $S^+$.
We can conclude from this that $g-g^q=f$ as elements of $\overline{\bF}_q[Z_1^{1/q^\infty},\dots,Z_n^{1/q^\infty}]$.  This means that (in this ring) $g=-N$.
\end{proof}

With Thm. \ref{approximationfordelta}, we can return to the proof of Thm. \ref{existenceofaffinoid}.  By Thm. \ref{LTdiagram}, there is a cartesian diagram
\[
\xymatrix{
\mathcal{M}_{H_0,\infty,\C} \ar[r]^{\delta}\ar[d]  & \mathcal{M}_{\wedge H_0,\infty,\C} \ar[d] \\
\tilde{H}_{\eta,C}^n \ar[r]_{\delta} & \widetilde{\wedge H}_{\eta,C}
}.
\]
Let $\mathcal{Z}$ be the preimage of $\mathcal{Y}$ under $\mathcal{M}_{H_0,\infty,\C}\to\tilde{H}^n_{\eta,C}$.  Every object in the above diagram has a corresponding affinoid subset in the following diagram:
\[
\xymatrix{
\mathcal{Z} \ar[r]^{\delta} \ar[d] & \wedge\mathcal{Z} \ar[d] \\
\mathcal{Y} \ar[r]_{\delta}        & \wedge\mathcal{Y}
}
\]
Here $\wedge\mathcal{Z}\subset \mathcal{M}_{\wedge H,\infty,\C}$ is (set-theoretically) the set of translates of $t\in T(\wedge H)(\cO_C)$ by $1+\gp_K^m$.  Now take reductions of all objects to get a diagram of affine schemes over $\overline{\bF}_q$:
\[
\xymatrix{
\overline{\mathcal{Z}} \ar[r]^{\delta} \ar[d] & \overline{\wedge\mathcal{Z}} \ar[d] \\
\overline{\mathcal{Y}} \ar[r]_{\delta}        & \overline{\wedge\mathcal{Y}}
}
\]
(This diagram is also cartesian, because the fiber product of $\overline{\mathcal{Y}}$ and $\overline{\wedge\mathcal{Z}}$ over $\overline{\wedge\mathcal{Y}}$ is reduced.)  The scheme $\overline{\wedge\mathcal{Z}}$ is isomorphic to the ``constant scheme" $1+\gp_K^m$ (meaning $\Spec\Cont(1+\gp_K^m,\overline{\bF}_p)$).
Now apply Thm. \ref{approximationfordelta}:  the bottom arrow is isomorphic to $N\from \mathbf{U}^{\perf}\to\mathbb{A}^{1,\perf}$.  Thus we have a cartesian diagram of schemes over $\overline{\bF}_q$:
\[
\xymatrix{
\overline{\mathcal{Z}} \ar[r] \ar[d] & 1+\gp_K^m \ar[d]  \\
\mathbf{U}^{\perf}\ar[r]_N & \mathbb{A}^{1,\perf}
}
\]
On the other hand, the variety $X$ appears in a cartesian diagram
\[
\xymatrix{
X \ar[r] \ar[d] & \bF_q \ar[d] \\
\mathbf{U} \ar[r]_N & \mathbb{A}^1
}
\]
Taking the perfection of second diagram and combining it with the first gives a cartesian diagram
\[
\xymatrix{
\overline{\mathcal{Z}} \ar[r] \ar[d] & 1+\gp_K^m \ar[d] \\
X^{\perf}_{\overline{\FF}_q} \ar[r] & \bF_q,
}
\]
thus establishing the first part of Thm. \ref{existenceofaffinoid}.

We now establish that the group $\mathcal{J}\subset GL_n(K)\times D^\times\times \cW_K$ stabilizes $\mathcal{Z}$ and induces the action on $\overline{\mathcal{Z}}$ described in Thm. \ref{existenceofaffinoid}.    Since $\mathcal{Z}=\mathcal{Y}\cap \mathcal{M}_{H_0,\infty,C}$, we can just show that $\mathcal{J}$ preserves $\mathcal{Y}$, and that it induces the correct action on $\overline{\mathcal{Y}}\isom \mathbb{A}_{\overline{\mathbb{F}}_q}^{n,\text{perf}}$.

\begin{prop} The action of $\mathcal{J}$ on $\tilde{H}^n_{\eta,C}$ preserves $\mathcal{Y}$.
\end{prop}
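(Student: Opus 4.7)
Since $\mathcal{Y}$ is cut out by the inequalities (\ref{defY1}) and (\ref{defY2}), and since $\mathcal{J}$ is generated by the four families of elements listed in \S\ref{main-result-part-1}, it suffices to verify that each generator preserves both inequalities; the inverses are then also accounted for, as each of the four families is closed under inversion.

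I would separate the verification according to whether the generator lies in $\mathcal{S}$ (families (2), (3), (4)) or in the pro-$p$-Sylow of $\mathcal{L}^\times \times \{1\}$ (family (1)). For $\sigma \in \mathcal{S}$, Proposition \ref{stabx} guarantees $x^\sigma = x$, so inequality (\ref{defY1}) rewrites as $|\beta \cdot (X - x)^\sigma| \leq |\varpi_L^m \beta \cdot x|$. I would then show that $|\beta \cdot v^\sigma| = |\beta \cdot v|$ for any $v \in \tilde{H}^n$ by treating the three kinds of elements of $\mathcal{S}$ separately. For the diagonal $\Delta(\alpha)$ with $\alpha \in \cO_L^\times$, the identity $\beta \Delta(\alpha) = 0$ from Lemma \ref{betavector} makes the action absorb into an automorphism of $\tilde{H}^n$ preserving $|\cdot|$. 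For $(1, \alpha, \rec_L(\alpha))$ with $\alpha \in L^\times$, the Weil-group action is $C$-semilinear and preserves $|\cdot|$ while the endomorphism by $\alpha \in \cO_L^\times$ has absolute value $1$. For $(s, \Pi, \Phi)$, the relation $\beta = \beta^{\phi^r} s^r$ from equation (\ref{betas}), combined with $\lambda_i(\Pi w) = \lambda_i(w)^q$ from \S\ref{calculationsHtilde}, ensures that the matrix part of the action and the Frobenius part cancel in absolute value. For inequality (\ref{defY2}) the same reasoning applies after noting that $\mathcal{S}$ normalizes $\mathcal{L}$ (by Proposition \ref{jisomorphism} together with the explicit description of $\mathcal{L}$), so the family $\{(\beta \zeta) \cdot X \st \zeta \in \mathcal{L}\}$ is carried into an equivalent family of bounds.

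For the generators of family (1), $\sigma = (g_0, b_0)$ in the pro-$p$-Sylow of $\mathcal{L}^\times$, I would write $\sigma = \Delta(1) + \mu$ with $\mu \in \mathfrak{P}$ and use the explicit action formula to compute $\beta \cdot X^\sigma = \beta \cdot X + (\beta \mu^\ast) \cdot X + \cdots$, where $\mu^\ast \in \mathfrak{P}$ arises from a geometric series expansion of $\sigma^{-1}$. By the valuation estimates defining $\mathfrak{P}$ and inequality (\ref{defY2}) applied with $\zeta = \mu^\ast$, the correction has absolute value at most $|\varpi_L^m \beta \cdot x|$, so (\ref{defY1}) is preserved. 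The analogous computation, combined with $\mathcal{L} \cdot \mathcal{L} \subset \mathcal{L}$ (so that $\zeta \sigma \in \mathcal{L}$ for $\zeta \in \mathcal{L}$ and $\sigma \in \mathcal{L}^\times$), handles (\ref{defY2}).

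The main obstacle is the careful bookkeeping needed to track the interplay between the matrix-algebra action of $M_n(K)$ and the (generally non-commuting) endomorphism action of $D$ on $\tilde{H}^n$. Once this is in place, the proof reduces to a direct computation exploiting the explicit structure of $\mathcal{L}$ from \S\ref{ss:linking-orders-unramified-case}, Lemma \ref{betavector}, and the valuation estimates defining $\mathfrak{P}$.
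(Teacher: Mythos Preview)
Your plan is essentially the same as the paper's: split into the $\mathcal{L}^\times$-part and the $\mathcal{S}$-part, use $x^\sigma=x$ together with the identity $\beta=\beta^{\phi^r}s^r$ from \eqref{betas} for the latter, and handle \eqref{defY2} by observing that $\mathcal{S}$ normalizes $\mathcal{L}$. The only organizational difference is that the paper cites \cite[Lemma~4.2.2]{WeinsteinSemistableModels} for the $\mathcal{L}^\times$-part (rather than sketching the direct estimate you propose) and treats $\mathcal{S}$ via a single parametrization $(s^r,\Pi^r u,w)$ with $u\in\cO_D^\times$ instead of your case split (2)--(4); note in your case (3) that $\alpha$ ranges over $L^\times$, not just $\cO_L^\times$, so for $\alpha$ a non-unit you must also check that the valuation change from the $D^\times$-action is exactly cancelled by the Frobenius part of the Weil-group action.
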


\begin{proof} Recall that $\mathcal{J}$ is the subgroup of $GL_n(K)\times D^\times \times \cW_K$ generated by $\mathcal{L}^\times\times\set{1}$ and $\mathcal{S}$.  The statement that $\Delta(L)^\times\mathcal{L}^\times\times\set{1}$ stabilizes $\mathcal{Y}$ is Lemma 4.2.2 of \cite{WeinsteinSemistableModels}.

It remains to show that $\mathcal{Y}$ is stabilized by $\mathcal{S}$. Before doing so it will be helfpul to work through the action of the Frobenius element $\Phi$ on $H_{\cO_C}^n$, following \S\ref{groupactions}.  Recall that $R$ is the adic $\cO_C$-algebra which represents $H^n_{\cO_C}$, so that we have a universal element $X=(X_1,\dots,X_n)\in\tilde{H}^n(R)$.  The right action of $\cW_K$ on $H^n_{\cO_C}$ induces a left action on $R$ and therefore on $\tilde{H}^n(R)$.  Tracing through the definitions, we see that the Frobenius element $\Phi$ carries each $X_i$ onto $\Pi^{-1}X_i$.  Generally, if $w\in \cW_K$ induces the $q^r$th power Frobenius on the residue field, then $w$ carries $X_i$ onto $\Pi^{-r}X_i$.

To show that $\mathcal{S}$ stabilizes $\mathcal{Y}$, it is enough to consider triples of the form $(s^r,\Pi^ru,w)$,
where $u\in \cO_D^\times$ and $w\in \cW_K$ induces the $q^r$th power Frobenius on the residue field.
Suppose the
inequalities in Eqs. \eqref{defY1} and \eqref{defY2} hold at a point $\abs{\;}$ of $\tilde{H}^n_{\eta,C}$.  We claim they hold at the translate $\abs{\;}^{(s^r,\Pi^ru,w)}$ as well.
Start with Eq. \eqref{defY1}.  Since $(s^r,\Pi^ru,w)$ fixes $x$, it follows that $x^w=s^ru^{-1}x$.
We have
\begin{eqnarray*}
\abs{\beta\cdot (X-x)}^{(s^r,\Pi^ru,w)}&=&
\abs{\beta^{\phi^r}\cdot (s^r)^Tu^{-1}(X-x)}\\
&=&\abs{\beta^{\phi^r}s^ru^{-1} \cdot (X-x)}\\
&=&\abs{\beta u^{-1}\cdot (X-x)} \; \text{by Eq. \eqref{betas}}\\
&=&\abs{\beta \cdot (X-x)}.
\end{eqnarray*}
(The last step is justified because, being a unit, $u^{-1}$ is congruent to an element of $\FF_{q^n}^\times$ modulo $\Pi$.)   This establishes the claim for Eq. \eqref{defY1}.   The argument for Eq. \eqref{defY2} is similar.
\end{proof}

\begin{lem} \label{stabilizesorigin} The action of $\mathcal{J}$ on the identity element  of $\overline{\mathcal{Y}}\isom \mathbf{U}^{\text{perf}}$ is as described by Thm. \ref{existenceofaffinoid}.
\end{lem}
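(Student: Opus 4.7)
The plan is to pin down the identity point of $\overline{\mathcal{Y}}\isom\U^{\perf}$ first and then verify the prescribed action generator by generator. Under the isomorphism constructed in Thm.~\ref{approximationfordelta}, the identity $1_{\U}$ of $\U^{\perf}$ corresponds to the point where $\bar Z_1=\cdots=\bar Z_n=0$; unwinding the definitions of $Z_i$, $Y_i$, and $A_i$, one sees this is precisely the reduction of the CM point $x$ itself. Indeed, at $X = x$ we have $\beta\cdot(X-x) = 0$ so $Y_n = 0$, and the vanishing $\beta s^{-i}\cdot x = 0$ for $1\le i\le n-1$ (which follows from Lemma \ref{betavector} applied with $\zeta = (s^{-i},0)$, since $s^{-i}\in C_1^\circ$ for such $i$) gives $Y_i = 0$ for $i<n$.

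For generators of types (2), (3), and (4), Prop.~\ref{stabx} tells us they fix $x$ as a point of $\mathcal{M}_{H_0,\infty,C}$, hence they fix the identity of $\U^{\perf}$. This is consistent with the prescribed action at the identity in Thm.~\ref{existenceofaffinoid}: the diagonal scaling $(\bar\alpha^{q-1},\ldots,\bar\alpha^{q^{n-1}-1},1)$ fixes the origin, the trivial action certainly does, and the inverse arithmetic Frobenius fixes the $\bF_q$-rational origin. So for these three families it suffices to observe that they preserve the distinguished point, which we already have.

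For an element $(g,b,1)$ in the pro-$p$-Sylow subgroup of $\mathcal{L}^\times\times\{1\}$, the image $x^{(g,b)}$ will not equal $x$ in general, so the main content is to compute its $\bar Z_i$-coordinates. I would read these off directly from the definition of $Y_i$: they equal $\varpi^{-m/2}\beta s^{-i}\cdot g^T(b^{-1}x)$ for $i<n$ and $\varpi^{-m}\beta\cdot(g^T(b^{-1}x)-x)$ for $i=n$. Writing $(g,b) = \Delta(\gamma) + \zeta$ with $\gamma\in\cO_L$ and $\zeta\in P_{1,m}\times P_{2,m}$, and using Lemma \ref{betavector} to eliminate the $\Delta(\gamma)$-contribution (which gives back $x$), the remainder is controlled by the image of $\zeta$ in the quotient $S = \mathcal{L}/\mathfrak{P}$. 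Invoking the explicit parity-dependent representatives of $e_1,\ldots,e_n$ in $\mathcal{L}$ recorded in the proof of Lemma \ref{Sring}, a direct expansion identifies the resulting point of $\U^{\perf}(\overline{\bF}_q)$ with the reduction $u\in U$ of $(g,b)$, which is exactly the image of the identity under right multiplication by $u$.

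The main technical obstacle will be matching the powers of $\varpi$ appearing in the definitions of the $Y_i$ against those implicit in the presentation of the $e_j$; in particular the cases $m$ even versus $m$ odd must be treated separately, since the lifts of $e_j$ to $\mathcal{L}$ come from the $M_n(K)$-factor or the $D$-factor according to the parity of $m$. The asymmetric normalization factors $\varpi^{-m/2}$ (for $i<n$) and $\varpi^{-m}$ (for $i=n$) in the definitions of $Y_i$ are precisely what allow this matching to go through uniformly, so the bulk of the work will be a careful accounting exercise rather than a fundamentally new argument.
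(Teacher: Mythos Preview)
Your proposal is correct and follows essentially the same approach as the paper: identify the identity of $\U^{\perf}$ with the reduction of the CM point $x$, observe that generators of types (2)--(4) lie in $\mathcal{S}$ and hence fix $x$, and for the pro-$p$-Sylow of $\mathcal{L}^\times$ compute the coordinates $Y_i(x^{(g,b)})$ directly using Lemma~\ref{betavector}. The paper carries out the last step only for $g\in 1+P_{1,m}$ in the case $m$ even (writing $g=1+\varpi^{m/2}a_1s+\cdots+\varpi^m a_n$ and computing $Y_i(x^g)=a_i y$), remarking that the other generators are simpler; your outline is slightly more uniform in treating both parities simultaneously via the lifts of $e_j$ from Lemma~\ref{Sring}, but the underlying computation is the same.
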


\begin{proof} The reduction of the CM point $x\in \mathcal{Y}$ corresponds to the identity element of $\mathbf{U}^{\text{perf}}$.

Recall the coordinates $Y_1,\dots,Y_n$, defined by $Y_i=\varpi^{-m/2}\beta s^{-i}\cdot X$ for $i=1,\dots,n-1$ and $Y_n=\varpi^{-m}\beta\cdot (X-x)$.  We verify the claim for those elements of $\mathcal{J}$ belonging to $1+P_{1,m}$;  the calculation for the other generators of $\mathcal{J}$ is simpler.  Let $1+\overline{a}_1\tau+\dots+\overline{a}_n\tau^n$ be an element of $U$, and let $g=1+\varpi^{m/2}a_1s+\dots+\varpi^{m/2}a_{n-1}s^{n-1}+\varpi^ma_n$ be a lift of it to $1+P_{1,m}\subset GL_n(K)$.  For $i=1,\dots,n-1$ we have (using Lemma \ref{betavector})
\begin{eqnarray*}
Y_i(x^g) &=& \varpi^{-m/2}\beta s^{-i}\cdot x^g \\
&=& \varpi^{-m/2}\beta s^{-i} g \cdot x \\
&=& \varpi^{-m/2}\pr_1(s^{-i}g)(\beta\cdot x) \\
&=& a_iy,
\end{eqnarray*}
and similarly $Y_n(x^g)=a_ny$.  From this we see that $Z_i(x^g)=a_i$ for $i=1,\dots,n$.  This shows that the reduction of $x^g$ in $\overline{Y}$ is the point $1+a_1\tau+\dots+a_n\tau^n$ as required.
\end{proof}

Let $\rho\from\mathcal{J}\to \Aut \mathbb{A}^{n,\text{perf}}_{\overline{\mathbb{F}}_q}$ be the action of $\mathcal{J}$ on the reduction of the affinoid $\mathcal{Y}\isom \mathbb{A}^{n,\text{perf}}_{\overline{\mathbb{F}}_q}$.
We can now show that $\rho$ agrees with the action described in Thm. \ref{existenceofaffinoid}.  That theorem describes an action $\rho'\from\mathcal{J}\to\Aut \mathbb{A}^{n,\text{perf}}_{\overline{\mathbb{F}}_q}$.  The preceding lemma shows that for all $g\in \mathcal{J}$, $\rho'(g)\rho(g)^{-1}$ fixes the origin in $\mathbb{A}^{n,\text{perf}}_{\overline{\mathbb{F}}_q}$ and all of its translates, which is to say it fixes the subset $\mathbb{A}^n(\mathbb{F}_{q^n})\subset\mathbb{A}^n(\mathbb{F}_{q^n})$.  We also know that since $\rho$ and $\rho'$ are both $\overline{\mathbb{F}}_q$-semilinear in the same way, so that $\rho'(g)\rho(g)^{-1}$ is actually $\overline{\mathbb{F}}_q$-linear.   Finally, we know that $g$ descends to an automorphism of the Lubin-Tate tower at some finite level, which means that $\rho(g)$ descends to an automorphism of $\mathbb{A}^n_{\overline{\FF}_q}$.  These considerations show that $\rho'(g)=\rho(g)$.

\subsection{The special affinoid:  Case of $m\geq 1$ odd}
Once again, define elements $A_0,\dots,A_{n-1}\in\tilde{H}(R)$ by the system of equations
\[
X_i=
\alpha_i A_0 + \alpha_i^q A_1 + \dots + \alpha_i^{q^{n-1}}A_{n-1}, \]
($i=1,\dots,n$).  Now define elements $Y_1,\dots,Y_n\in \tilde{H}(R)$ by setting
\begin{eqnarray*}
A_0 &=& x+\varpi^m Y_n\\
A_1 &=& \varpi^{(m-1)/2} \Pi Y_1\\
&\vdots& \\
A_{n-1} &=& \varpi^{(m-1)/2} \Pi^{n-1} Y_{n-1}.
\end{eqnarray*}

Now consider the elements $\log_H\varpi^iA_j$.  These lie in $S^+$, and in that ring we have the congruences
\begin{eqnarray*}
\log_H A_0 &\equiv& \varpi^m \xi(Z_n^{q^n}-Z_n) \\
\log_H\Pi A_0 &\equiv& \varpi^{(m-1)/2} \xi^q \\
&\vdots&\\
\log_H\Pi^{n-1} A_0 &\equiv &\varpi^{(m-1)/2} \xi^{q^{n-1}}
\end{eqnarray*}
and, for $i=1,\dots,n-1$,
\begin{eqnarray*}
\log_H A_i &\equiv& \varpi^{(m-1)/2} \xi^{q^i}Z_i^{q^i} \\
\log_H \Pi A_i&\equiv & \varpi^{(m-1)/2} \xi^{q^{i+1}}Z_i^{q^{i+1}} \\
&\vdots&\\
\log_H \Pi^{n-i} A_i &\equiv & \varpi^{(m+1)/2} \xi (Z_i-Z_i^{q^n})\\
&\vdots& \\
\log_H \Pi^{n-1}A_i &\equiv & \varpi^{(m+1)/2} \xi^{q^{i-1}}Z_i^{q^{i-1}}.
\end{eqnarray*}
%
The determinant of $(\qlog(X_i))$ equals
{\footnotesize
\[
\tau
\det
\begin{pmatrix}
\varpi^{m}(Z_n^{q^n}-Z_n) & \varpi^{\frac{m+1}{2}}(Z_{n-1}^{q^n}-Z_{n-1}) &\cdots
& \varpi^{\frac{m+1}{2}}(Z_{2}^{q^n}-Z_{2}) & \varpi^{\frac{m+1}{2}}(Z_1^{q^n}-Z_1) \\
\varpi^{\frac{m-1}{2}}Z_1^q & 1 & \cdots & \varpi^{\frac{m+1}{2}}Z_{3}^q & \varpi^{\frac{m+1}{2}}Z_{2}^q \\
\varpi^{\frac{m-1}{2}}Z_{2}^{q^2} & \varpi^{\frac{m-1}{2}}Z_1^{q^2} & \cdots & \varpi^{\frac{m+1}{2}}Z_{4}^{q^2} &
\varpi^{\frac{m+1}{2}}Z_{3}^{q^2} \\
\vdots & &\ddots & & \vdots \\
\varpi^{\frac{m-1}{2}}Z_{n-1}^{q^{n-1}} & \varpi^{\frac{m-1}{2}}Z_{n-2}^{q^{n-1}}
& \cdots & \varpi^{\frac{m-1}{2}}Z_1^{q^{n-1}} & 1
\end{pmatrix}
\]
}
plus smaller terms.  Up to smaller terms (and up to sign), this latter determinant equals $\varpi^m\tau $ times
\begin{equation}
\label{bigdet3}
\det
\begin{pmatrix}
Z_n^{q^n}-Z_n & Z_{n-1}^{q^n}-Z_{n-1} & \cdots & Z_2^{q^n}-Z_2 &
Z_1^{q^n} - Z_1 \\
Z_1^q & 1 & \cdots & 0 & 0 \\
Z_2^{q^2} & Z_1^{q^2} & \cdots & 0 & 0 \\
\vdots & &  \ddots &  & \vdots\\
Z_{n-1}^{q^{n-1}} & Z_{n-2}^{q^{n-1}} & \cdots & Z_1^{q^{n-1}} & 1
\end{pmatrix}
\end{equation}
if $m=1$, and
\begin{equation}
\label{bigdet4}
\det
\begin{pmatrix}
Z_n^{q^n}-Z_n & Z_{n-1}^{q^n}-Z_{n-1} & \cdots & Z_2^{q^n}-Z_2 &
Z_1^{q^n} - Z_1 \\
Z_1^q & 1 & \cdots & 0 & 0 \\
Z_2^{q^2} & 0 & \cdots & 0 & 0 \\
\vdots & &  \ddots &  & \vdots\\
Z_{n-1}^{q^{n-1}} & 0 & \cdots & 0 & 1
\end{pmatrix}
\end{equation}
if $m\geq 3$.  Let $f(z_1,\dots,z_n)\in\bF_{q^n}[z_1,\dots,z_n]$ be the polynomial defined in Eqs. \eqref{bigdet3} and \eqref{bigdet4}.  We have the congruence
\begin{equation}
\label{detqlog}
\det(\qlog(X_1),\dots,\qlog(X_n))\equiv \varpi^m\tau f(Z_1,\dots,Z_n)
\end{equation}
modulo smaller terms in $S^+$.  Once again, $f=0$ cuts out the variety $X$ (see Eqs. \eqref{hyp3} and \eqref{hyp4}.)  We now proceed exactly as in the case of $m$ even to complete the proof of Thm. \ref{existenceofaffinoid}.


\part{Deligne-Lusztig theory for certain unipotent groups}

\section{Formulation of the results}\label{s:DL-formulation}

\subsection{Overview} This part can be read essentially independently of the rest of the article. In it we formulate and prove a more precise version of Theorem B stated in the Introduction. We use the methods developed in \cite[\S2]{DLtheory}. A special case of Theorem B is also proved in the preprint \cite{SpecialCasesCorrespondences}, to which we refer the reader who would first like to see how our approach works in a simpler setting. However, for the purpose of proving Theorem A, the full strength of Theorem B is needed, and the arguments that appear in the current part of the article do not rely on \emph{op.~cit.}

\subsection{Notation} Throughout this part of the article, we fix prime numbers $p\neq\ell$, a power $q$ of $p$ and an integer $n\geq 2$. We will freely use the formalism of $\ell$-adic cohomology with compact supports and the standard notation and terminology of that theory. The only nonstandard notation we employ is as follows.

\begin{rems}\label{rems:ell-adic-cohomology}
\begin{enumerate}[(1)]
\item If $X$ is a scheme of finite type over $\bF_q$ and $\cF$ is a (constructible) $\ell$-adic sheaf, we write $H^i_c(X,\cF)$ in place of $R^i\pr_!(\cF)$, where $\pr:X\to\Spec(\bF_q)$ is the structure morphism. With this convention, $H^i_c(X,\cF)$ is an $\ell$-adic sheaf on $\Spec(\bF_q)$, i.e., a continuous finite dimensional representation of $\Gal(\overline{\bF}_q/\bF_q)$ over $\ql$. The underlying vector space of $H^i_c(X,\cF)$ is equal to the compactly supported cohomology $H^i_c(X\tens_{\bF_q}\bfq,\cF)$, and the action of the canonical generator of $\Gal(\bfq/\bF_q)$ on $H^i_c(X,\cF)$ will be denoted by $\Fr_q$.
 \sbr
\item The above conventions apply in particular to the case where $\cF=\ql$ is the constant $\ql$-local system of rank $1$ on $X$.
 \sbr
\item Our normalization of $\Fr_q$ is such that the Tate twist $\ql(1)$ on $\Spec(\bF_q)$ corresponds to a $1$-dimensional vector space over $\ql$ on which $\Fr_q$ acts as $q^{-1}$. So, for example, $H^2_c(\bA^1,\ql)=\ql(-1)$ and $H^i_c(\bA^1,\ql)=0$ for $i\neq 2$, where $\bA^1$ is the affine line over $\bF_q$.
\end{enumerate}
\end{rems}

\subsection{Additive characters of $\bF_{q^n}$}\label{ss:additive-characters} Given a character $\psi:\bF_{q^n}\to\qls$, there is a unique integer $1\leq m\leq n$ (which divides $n$) such that $\psi$ factors through the trace map $\Tr_{\bF_{q^n}/\bF_{q^m}}:\bF_{q^n}\to\bF_{q^m}$ and does not factor through the trace map $\bF_{q^n}\to\bF_{q^k}$ for any $1\leq k<m$. We call $q^m$ the \emph{conductor} of $\psi$. Since $\Tr_{\bF_{q^n}/\bF_{q^m}}$ is surjective, we can write $\psi=\psi_1\circ\Tr_{\bF_{q^n}/\bF_{q^m}}$ for a \emph{unique} character $\psi_1:\bF_{q^m}\to\qls$.

\subsection{Definitions}\label{ss:definitions-U-n-q} In \S\ref{theunipotentgroup} we introduced a unipotent group $\U$ over $\fqn$, a hyperplane $Y\subset\U$ and a smooth hypersurface $X=L_{q^n}^{-1}(Y)\subset\U$, where $L_{q^n}:\U\to\U$ is the Lang morphism $g\mapsto\Fr_{q^n}(g)\cdot g^{-1}$. The definition of $\U$ depends on whether $m=1$ or $m\geq 2$, where $m$ is the positive integer appearing in the formulation of Theorem A from the Introduction. For the sake of brevity, we will treat both cases simultaneously. Since we will need to vary $n$ and $q$ in what follows, we will modify the notation $\U$ to make the dependence on $n$ and $q$ more explicit.

\mbr

We first introduce a (noncommutative) ring object $\cR$ in the category of affine $\fqn$-schemes defined as follows. If $B$ is a commutative $\fqn$-algebra, then $\cR(B)$ is the ring consisting of all formal expressions $a_0+a_1\cdot e_1+\dotsc+a_n\cdot e_n$, which are added in the obvious way and multiplied according to the following rules.

\subsubsection{Case 1}\label{sss:Case1} This case corresponds to the case where $m=1$ in Theorem A:
\begin{itemize}
\item $e_i\cdot a=a^{q^i}\cdot e_i$ for all $1\leq i\leq n$ and all $a\in B$;
 \sbr
\item for all $i,j\geq 1$,
\[
e_i\cdot e_j =
\begin{cases}
e_{i+j} & \text{if } i+j\leq n, \\
0 & \text{otherwise}.
\end{cases}
\]
\end{itemize}

\subsubsection{Case 2}\label{sss:Case2} This case corresponds to the case where $m\geq 2$ in Theorem A:
\begin{itemize}
\item $e_i\cdot a=a^{q^i}\cdot e_i$ for all $1\leq i\leq n$ and all $a\in B$;
 \sbr
\item for all $i,j\geq 1$,
\[
e_i\cdot e_j =
\begin{cases}
e_n & \text{if } i+j=n, \\
0 & \text{otherwise}.
\end{cases}
\]
\end{itemize}

\begin{rem}
In the remainder of this part of the article, the letter $m$ will be used as an auxiliary index, independent of its meaning in Theorem A.
\end{rem}

In both cases the multiplicative group $\cR^\times\subset\cR$ is given by $a_0\neq 0$, and we let $U^{n,q}\subset\cR^\times$ denote the subgroup defined by $a_0=1$. Then $U^{n,q}$ is the unipotent group that was denoted by $\U$ in \S\ref{s:affinoid-LT-tower}. We write $Y\subset U^{n,q}$ for the subvariety defined by $a_n=0$ and we put $X=L_{q^n}^{-1}(Y)$. The finite group $U^{n,q}(\fqn)$ acts on $X$ by right translation, so we obtain a representation of $U^{n,q}(\fqn)$ on $H^i_c(X,\ql)$ (cf.~Remarks \ref{rems:ell-adic-cohomology}) for each $i\in\bZ$, which commutes with the action of $\Fr_{q^n}$.

\begin{rem}\label{r:rings-are-obviously-equal}
By construction, the ring $\cR(\fqn)$ of $\fqn$-valued points of $\cR$ can be identified with the quotient ring $S=\mathcal{L}/\gP$ considered in Lemma \ref{Sring}.
\end{rem}

\begin{rem}\label{r:central-characters-new}
If $Z\subset U^{n,q}$ consists of expressions of the form $1+a_n e_n$, then $Z$ is the center of $U^{n,q}$ and $Z(\bF_{q^n})$ is the center of $U^{n,q}(\bF_{q^n})$. We have $Z\cong\bG_a$, and we often tacitly identify the two groups. In particular, every irreducible representation of $U^{n,q}(\bF_{q^n})$ over $\ql$ has a central character $\bF_{q^n}\to\qls$.
\end{rem}

\begin{rem}\label{r:reduced-norm-new}
We recall from \S\ref{ss:reduced-norm-first-definition} that there is a natural group homomorphism $U^{n,q}(\fqn)\to\bF_q$ (there it was denoted simply by $N$). In this part we will denote it by $\Nm^{n,q}$ and refer to it as the \emph{reduced norm map}. An alternative approach to defining $\Nm^{n,q}$, which is independent of Part 1, can be found in \S\ref{s:reduced-norm}. In particular, Proposition \ref{p:reduced-norm-key} shows that $\Nm^{n,q}$ depends only on whether $m=1$ or $m\geq 2$ in Theorem A, which is not obvious from the original definition.

\mbr

This map plays the following role in the study of representations of $U^{n,q}(\bF_{q^n})$. The restriction of $\Nm^{n,q}$ to $Z(\bF_{q^n})=\bF_{q^n}$ is equal to the trace map $\Tr_{\bF_{q^n}/\bF_q}$. In particular, given a character $\psi:Z(\bF_{q^n})\to\qls$ with conductor $q$, we obtain a preferred extension of $\psi$ to a character of $U^{n,q}(\bF_{q^n})$. Namely, if $\psi=\psi_1\circ\Tr_{\bF_{q^n}/\bF_q}$, where $\psi_1:\bF_q\to\qls$, then $\psi_1\circ\Nm^{n,q}:U^{n,q}(\bF_{q^n})\to\qls$ extends $\psi$.
\end{rem}

\begin{rem}\label{r:inclusion-new}
Suppose that $n=m\cdot n_1$, where $m,n_1\in\bN$, and put $q_1=q^m$, so that $q_1^{n_1}=q^n$. We can consider the unipotent group $U^{n_1,q_1}$ over $\bF_{q^n}$. To avoid confusion, let us temporarily denote its elements by $1+b_1 e'_1+\dotsb + b_{n_1} e'_{n_1}$. We can naturally embed $U^{n_1,q_1}$ as a subgroup of $U^{n,q}$ via the map
\[
1+b_1 e'_1+b_2 e'_2+\dotsb+b_{n_1}e'_{n_1} \longmapsto 1 + b_1 e_m + b_2 e_{2m} + \dotsb + b_{n_1} e_{n}.
\]
From now on we identify $U^{n_1,q_1}$ with its image under this embedding. In particular, we view $U^{n_1,q_1}(\bF_{q^n})$ as the subgroup of $U^{n,q}(\bF_{q^n})$ consisting of all elements of the form $1+\sum_{m\mid j}a_j e_j$, where each $a_j\in\bF_{q^n}$.
\end{rem}

\subsection{A more precise version of Theorem B}

\begin{thm}\label{t:cohomology-of-X}
Fix an arbitrary character $\psi:\bF_{q^n}\rar{}\qls$.
 \sbr
\begin{enumerate}[$($a$)$]
\item There is a unique $($up to isomorphism$)$ irreducible representation $\rho_\psi$ of $U^{n,q}(\bF_{q^n})$ that has central character $\psi$ and occurs in \[H^\bullet_c(X,\ql):=\bigoplus_{i\in\bZ} H^i_c(X,\ql).\] Moreover, the multiplicity of $\rho_\psi$ in $H^\bullet_c(X,\ql)$ as a representation of $U^{n,q}(\bF_{q^n})$ is equal to $1$.
 \sbr
\item Let $\psi$ have conductor $q^m$, so that $n=mn_1$ for some $n_1\in\bN$. Then $\rho_\psi$ occurs in $H^{n+n_1-2}_c(X,\ql)$, and $\Fr_{q^n}$ acts on it via the scalar $(-1)^{n-n_1}\cdot q^{n(n+n_1-2)/2}$.
 \sbr
\item The representation $\rho_\psi$ can be constructed as follows. Write $\psi=\psi_1\circ\Tr_{\bF_{q^n}/\bF_{q_1}}$ for a unique character $\psi_1:\bF_{q_1}\rar{}\qls$, where $q_1=q^m$ as in Remark \ref{r:inclusion-new}. Put
    \[
    H_m = \Bigl\{ 1 + \sum_{\substack{j\leq n/2 \\ m\mid j}} a_j e_j + \sum_{n/2<j\leq n} a_j e_j \Bigr\} \subset U^{n,q},
    \]
    a connected subgroup. The projection $\nu_m:H_m\rar{}U^{n_1,q_1}$ obtained by discarding all summands $a_j e_j$ with $m\nmid j$ $($cf.~Remark \ref{r:inclusion-new}$)$ is a group homomorphism, and $\widetilde{\psi}:=\psi_1\circ\Nm^{n_1,q_1}\circ\nu_m$ is a character of $H_m(\bF_{q^n})$ that extends $\psi:Z(\bF_{q^n})\rar{}\qls$ $($see Remark \ref{r:reduced-norm-new}$)$. With this notation:
    \begin{itemize}
    \item if $m$ is odd \underline{or} $n_1$ is even, then $\rho_\psi\cong\Ind_{H_m(\bF_{q^n})}^{U^{n,q}(\bF_{q^n})}(\widetilde{\psi})$;
    \item if $m$ is even \underline{and} $n_1$ is odd, then $\Ind_{H_m(\bF_{q^n})}^{U^{n,q}(\bF_{q^n})}(\widetilde{\psi})$ is isomorphic to a direct sum of $q^{n/2}$ copies of $\rho_\psi$. Moreover, in this case, if $\Ga_m\subset U^{n,q}(\bF_{q^n})$ is the subgroup consisting of all elements of the form $h+a_{n/2}e_{n/2}$, where $h\in H_m(\bF_{q^n})$ and $a_{n/2}\in\bF_{q^{n/2}}$, then $\widetilde{\psi}$ can be extended to a character of $\Ga_m$, and if $\chi:\Ga_m\rar{}\qls$ is any such extension, then $\rho_\psi\cong\Ind_{\Ga_m}^{U^{n,q}(\bF_{q^n})}(\chi)$.
    \end{itemize}
\end{enumerate}
\end{thm}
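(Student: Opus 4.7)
\medskip

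\noindent\textbf{Proof plan for Theorem \ref{t:cohomology-of-X}.}

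The starting point is the étale $U$-torsor $L_{q^n}:X\to Y$, where $U=U^{n,q}(\bF_{q^n})$, which was already recorded in \S\ref{theunipotentgroup}. By a general sheaf-theoretic Frobenius reciprocity, for any finite-dimensional representation $\rho$ of $U$ over $\ql$ one has a canonical isomorphism
\[
\Hom_U\!\bigl(\rho,\,H^\bullet_c(X,\ql)\bigr)\;\cong\;H^\bullet_c(Y,\cE_\rho),
\]
where $\cE_\rho$ is the local system on $Y$ associated to $\rho$ (this is presumably the content of the Corollary \ref{c:hom-rep-into-cohomology} referenced in the outline). So the problem of decomposing $H^\bullet_c(X,\ql)$ as a $U$-module is converted into the problem of computing $H^\bullet_c(Y,\cE_\rho)$ for enough representations $\rho$. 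The first main step is to reduce to representations $\rho$ that are induced from one-dimensional characters of subgroups of $\fqn$-points of \emph{connected} algebraic subgroups of $\U^{n,q}$; the construction in part~(c) of the theorem provides precisely the candidates, with $H_m$ designed to be connected and to carry the character $\widetilde{\psi}=\psi_1\circ\Nm^{n_1,q_1}\circ\nu_m$ extending the prescribed central character $\psi$.

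For such an induced representation $\rho=\Ind_{H_m(\fqn)}^{U}(\widetilde\psi)$, the associated local system $\cE_\rho$ is of the form $\pi_!\cL_{\widetilde\psi}$, where $\cL_{\widetilde\psi}$ is a rank-$1$ local system on a torsor over a quotient of $H_m$, and $\pi$ is the natural morphism to $Y$. By proper base change applied to the Cartesian square relating $X$, $Y$, $H_m$ and the relevant orbit space, the computation of $H^\bullet_c(Y,\cE_\rho)$ is transformed into the computation of $H^\bullet_c(W,\cM)$, where $W$ is an affine space (in fact $W$ should be isomorphic to an affine space of dimension $n+n_1-2$, which is where the cohomological degree in part (b) comes from) and $\cM$ is a rank-$1$ local system obtained by pulling back $\cL_{\widetilde\psi}$ along an explicit morphism. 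This is the analogue of the reduction performed in \cite[\S2]{DLtheory}.

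Next, the computation of $H^\bullet_c(W,\cM)$ is carried out inductively. I would write $W$ as an iterated linear fibration $W=W_d\to W_{d-1}\to\dotsb\to W_0=\Spec\fqn$ with each $W_i\to W_{i-1}$ a trivial $\bA^1$-bundle, and use the projection formula and proper base change to push $\cM$ down one variable at a time. On each $\bA^1$-fiber the restriction of $\cM$ is either trivial (contributing a factor of $\ql(-1)[-2]$, with Frobenius acting by $q^n$ on $\fqn$-cohomology) or a nontrivial Artin--Schreier-type sheaf (contributing zero cohomology). The hypothesis that $\psi$ has conductor exactly $q^m$ ensures that ``enough'' fibers are nontrivial so that the computation collapses to a single cohomological degree and yields a $1$-dimensional answer. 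Tracking Frobenius through the Tate twists produces the scalar $(-1)^{n-n_1}q^{n(n+n_1-2)/2}$ (the sign keeping track of the parity of the number of Artin--Schreier collapses).

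The main obstacle is the case $m$ even and $n_1$ odd, where the induced representation $\Ind_{H_m(\fqn)}^{U}(\widetilde\psi)$ fails to be irreducible: it has dimension $q^{n/2}$ times the expected dimension of $\rho_\psi$. Geometrically this shows up as $H^\bullet_c(Y,\cE_\rho)$ being $q^{n/2}$-dimensional rather than $1$-dimensional. To resolve this I would introduce the intermediate subgroup $\Ga_m$ as in (c), verify that $\widetilde\psi$ extends to $\Ga_m$ (using that $[\Ga_m,\Ga_m]\subset\ker\widetilde\psi$, which will be a direct check using the multiplication rules of \S\ref{sss:Case1}--\ref{sss:Case2} in each parity case), and show by an explicit Mackey-style computation that $\Ind_{\Ga_m}^{U}(\chi)$ is irreducible for any extension $\chi$. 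Combined with the inductive cohomological computation above this gives the matching dimension $q^{n/2}$ between $\Ind_{H_m(\fqn)}^{U}(\widetilde\psi)$ and $q^{n/2}\cdot\Ind_{\Ga_m}^U(\chi)$, and identifies both with the $\psi$-isotypic part of $H^\bullet_c(X,\ql)$, establishing the uniqueness of $\rho_\psi$ and the multiplicity-one statement in part (a).
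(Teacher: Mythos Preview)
Your outline captures the general flavor of the argument---reduce to rank-one local systems on affine spaces and peel off variables via fibrations---but there is a genuine gap in how you obtain the \emph{uniqueness} assertion in part~(a).

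Computing $\Hom_U\bigl(\Ind_{H_m(\fqn)}^{U}(\widetilde\psi),\,H^\bullet_c(X,\ql)\bigr)$ and finding it to be $1$-dimensional (or $q^{n/2}$-dimensional) tells you exactly the multiplicity of $\rho_\psi$ in $H^\bullet_c(X,\ql)$, but it says nothing about \emph{other} irreducibles with central character~$\psi$. Your claim that this computation ``identifies [the induced representation] with the $\psi$-isotypic part'' is a non sequitur: by Frobenius reciprocity you are computing the $\widetilde\psi$-isotypic part of the restriction to $H_m(\fqn)$, and there is no reason a priori why every irreducible of $U$ with central character $\psi$ should contain $\widetilde\psi$ upon restriction to $H_m(\fqn)$.

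The paper closes this gap by introducing a \emph{smaller} subgroup $H_m^-\subset H_m$ and proving (Lemma~\ref{l:representation-contains-new}) that \emph{every} irreducible of $U$ with central character $\psi$ contains the simpler character $\psi\circ\pr_n$ upon restriction to $H_m^-(\fqn)$. Hence the representation $\xi_\psi=\Ind_{H_m^-(\fqn)}^{U}(\psi\circ\pr_n)$ contains every such irreducible as a summand, and computing $\Hom_U(\xi_\psi,H^\bullet_c(X,\ql))$ gives an honest upper bound on the whole $\psi$-isotypic contribution. The fibration-and-projection computation you describe is then carried out for $\xi_\psi$ (not for $\Ind_{H_m}^U(\widetilde\psi)$), which is also technically easier since $\psi\circ\pr_n$ is linear in the coordinates whereas $\widetilde\psi$ involves the reduced norm map $\Nm^{n_1,q_1}$. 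A separate nonvanishing argument (via the Grothendieck--Lefschetz trace formula, not a direct cohomology computation) shows that $\rho_\psi$ actually appears; a short multiplicity count then squeezes $d_1=1$ and uniqueness out of the inequalities $d_3\geq d_1 d_2$ and $d_2\geq d_3$.
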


\begin{rem}\label{r:connected-components-new}
By construction, $X$ is a finite \'etale cover of $Y\cong\bA^{n-1}$, so all connected components of $X\tens_{\fqn}\bfq$ are irreducible and smooth of dimension $n-1$. Theorem \ref{t:cohomology-of-X} implies that the top compactly supported cohomology $H^{2n-2}_c(X,\ql)$ has dimension $q$ (indeed, as a representation of $U^{n,q}(\fqn)$ it is the direct sum of $1$-dimensional representations of the form $\psi_1\circ\Nm^{n,q}$, where $\psi_1$ ranges over all characters $\bF_q\to\qls$), and $\Fr_{q^n}$ acts on it via the scalar $q^{n(n-1)/2}$. Hence $X$ has $q$ connected components, which are geometrically irreducible and smooth of dimension $n-1$.

\mbr

Let us give an explicit description of these components. In \S\ref{s:reduced-norm} below we introduce a morphism $N^{n,q}:U^{n,q}\rar{}\bG_a$, which extends the reduced norm map $\Nm^{n,q}$ in the sense that $N^{n,q}:U^{n,q}(\fqn)\rar{}\fqn$ has image in $\bF_q$ and is equal to $\Nm^{n,q}$. By Proposition \ref{p:reduced-norm-key},
$X$ can be described as the subvariety of $U^{n,q}\tens_{\bF_q}\fqn$ defined by the equation $N^{n,q}(g)^q=N^{n,q}(g)$. Hence the connected components of $X$ are precisely the subvarieties given by $N^{n,q}(g)=c$ as $c$ ranges over the points of $\bF_q\subset\bG_a$.
\end{rem}

\section{Properties of the reduced norm map}\label{s:reduced-norm}

\subsection{Summary}\label{ss:properties-reduced-norm-new} In this section we will extend the reduced norm map mentioned in Remark \ref{r:reduced-norm-new} to a morphism of $\fqn$-varieties $N^{n,q}:U^{n,q}\to\bG_a$ (it was denoted simply by $N$ in \S\ref{ss:reduced-norm-first-definition}) and establish some properties of $N^{n,q}$. The main result is

\begin{prop}\label{p:reduced-norm-key}
\begin{enumerate}[$($a$)$]
\item There is a unique morphism $N^{n,q}:U^{n,q}\to\bG_a$ of $\fqn$-schemes such that $N^{n,q}(1)=0$ and $\pr_n(L_{q^n}(g))=N^{n,q}(g)^q-N^{n,q}(g)$ for all $g\in U^{n,q}$, where $\pr_n:U^{n,q}\to\bG_a$ denotes the projection onto the last coordinate $e_n$.
 \sbr
\item If $g\in U^{n,q}$ and $h\in U^{n,q}(\fqn)$, then $N^{n,q}(gh)=N^{n,q}(g)+N^{n,q}(h)$.
 \sbr
\item We have $N^{n,q}(g)=\Nm^{n,q}(g)$ for all $g\in U^{n,q}(\fqn)$.
\end{enumerate}
\end{prop}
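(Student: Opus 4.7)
The uniqueness in (a) is immediate by an Artin--Schreier argument: if $N_1,N_2\from U^{n,q}\to\bG_a$ both satisfy the stated condition, then $(N_1-N_2)^q=N_1-N_2$, so $N_1-N_2$ factors through the closed subscheme $\bF_q\subset\bG_a$; geometric connectedness of $U^{n,q}\cong\bA^n$ together with $(N_1-N_2)(1)=0$ then forces $N_1=N_2$.

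For the existence part of (a), the plan is to compute $F:=\pr_n\circ L_{q^n}\in\cO(U^{n,q})$ explicitly and exhibit a polynomial $N^{n,q}$ with $(N^{n,q})^q-N^{n,q}=F$. Writing $g=1+a_1e_1+\dotsb+a_ne_n$ and solving $g\cdot g^{-1}=1$ inductively for $g^{-1}=1+b_1e_1+\dotsb+b_ne_n$ using the multiplication rules of $\cR$, one obtains in Case 1
\[
F=(a_n^{q^n}-a_n)+\sum_{\substack{i+j=n \\ i,j\geq 1}}(a_i^{q^n}-a_i)\,b_j^{q^i}.
\]
The identity $a_i^{q^n}-a_i=L_q(a_i+a_i^q+\dotsb+a_i^{q^{n-1}})$ handles the first summand, and the triangular dependence of each $b_j$ on $a_1,\dotsc,a_{j-1}$ should let one recognize every cross term as $L_q$ of an explicit monomial expression (for instance, when $n=2$ this gives $F=L_q(a_2+a_2^q-a_1^{q+1})$). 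Case 2 is parallel but simpler, since the multiplication rules force most cross terms to vanish. I expect the main technical obstacle to be organizing these cancellations uniformly in $n$; a more conceptual alternative is to identify the $\bF_q$-torsor $F^*L_q\to U^{n,q}$ with the one obtained from the Lang cover $L_{q^n}\from U^{n,q}\to U^{n,q}$ by pushout along $\Nm^{n,q}\from U^{n,q}(\bF_{q^n})\to\bF_q$, so that $N^{n,q}$ arises as a section.

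For (b), fix $h\in U^{n,q}(\bF_{q^n})$. Since $\Fr_{q^n}(h)=h$, one has $L_{q^n}(gh)=\Fr_{q^n}(g)\Fr_{q^n}(h)h^{-1}g^{-1}=L_{q^n}(g)$, and trivially $L_{q^n}(h)=1$. Defining $M(g):=N^{n,q}(gh)-N^{n,q}(g)-N^{n,q}(h)$, part (a) yields $M^q-M=\pr_n(L_{q^n}(gh))-\pr_n(L_{q^n}(g))-\pr_n(L_{q^n}(h))=0$, so $M$ factors through $\bF_q\subset\bG_a$; since $M(1)=N^{n,q}(h)-0-N^{n,q}(h)=0$ and $U^{n,q}$ is connected, $M\equiv 0$.

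For (c): if $g\in U^{n,q}(\bF_{q^n})$ then $L_{q^n}(g)=1$, whence $N^{n,q}(g)^q=N^{n,q}(g)$ and $N^{n,q}(g)\in\bF_q$; combined with (b) this shows $N^{n,q}|_{U^{n,q}(\bF_{q^n})}$ is a group homomorphism to $\bF_q$. Restricting to the center $Z\cong\bG_a$, a direct calculation gives $\pr_n(L_{q^n}(1+ae_n))=a^{q^n}-a=L_q(a+a^q+\dotsb+a^{q^{n-1}})$, so uniqueness forces $N^{n,q}(1+ae_n)=a+a^q+\dotsb+a^{q^{n-1}}$, which on $\bF_{q^n}$-points is $\Tr_{\bF_{q^n}/\bF_q}(a)$—matching $\Nm^{n,q}|_{Z(\bF_{q^n})}$ by Remark \ref{r:reduced-norm-new}. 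To conclude $N^{n,q}=\Nm^{n,q}$ on all of $U^{n,q}(\bF_{q^n})$, the cleanest route is to compare the explicit formula for $N^{n,q}$ obtained in (a) with the reduced-norm definition in \S\ref{ss:reduced-norm-first-definition}; for example, in Case 1 with $n=2$, the polynomial $a_2+a_2^q-a_1^{q+1}$ reduces on $\bF_{q^2}$-points to $\Tr_{\bF_{q^2}/\bF_q}(a_2)-\Nm_{\bF_{q^2}/\bF_q}(a_1)$, which matches the reduced-norm contribution coming from the embedding of $S^\times$ into $GL_2(K)\times D^\times$ modulo $1+\gp_K^{m+1}$.
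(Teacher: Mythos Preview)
Your uniqueness argument in (a) and your proof of (b) are correct and essentially identical to the paper's.

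For the existence part of (a), however, you have only a plan, not a proof. Your formula
\[
F=(a_n^{q^n}-a_n)+\sum_{\substack{i+j=n \\ i,j\geq 1}}(a_i^{q^n}-a_i)\,b_j^{q^i}
\]
is correct, but ``the triangular dependence of each $b_j$ should let one recognize every cross term as $L_q$ of an explicit monomial expression'' is an assertion, not an argument, and you yourself flag the difficulty of organizing these cancellations uniformly in $n$. The paper sidesteps this entirely by first passing to the normal form $g=(1-b_1e_1)\cdots(1-b_ne_n)$ and then using descending induction on $k$ for elements of the shape $(1-b_ke_k)\cdots(1-b_ne_n)$. At each step one peels off the leftmost factor $(1-b_ke_k)$ and its inverse from $L_{q^n}(g)$; the resulting $e_n$-contributions beyond those handled by the inductive hypothesis are sums of terms of the form $A-A^{q^k}$ for explicit monomials $A$, which are visibly in the image of $L_q$. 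This factored form is what makes the induction clean; working in the $(a_i)$-coordinates as you do forces you to untangle all the $b_j$'s simultaneously.

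The gap in (c) is more serious. You correctly observe that $N^{n,q}\bigl\lvert_{U^{n,q}(\fqn)}$ is a homomorphism into $\bF_q$ and agrees with $\Nm^{n,q}$ on the center, but ``compare the explicit formula for $N^{n,q}$ with the reduced-norm definition in \S\ref{ss:reduced-norm-first-definition}'' is not a proof: the original $\Nm^{n,q}$ is defined via $\det\cdot\Nrd^{-1}$ on the linking order, and there is no evident way to match this with your polynomial formula for general $n$. The paper's key idea (attributed to Drinfeld) is a \emph{uniqueness} characterization: $\Nm^{n,q}$ is the unique homomorphism $U^{n,q}(\fqn)\to\bF_q$ that is invariant under the $\fqn^\times$-conjugation action \eqref{e:conjugation-action-multiplicative-group} and restricts to $\Tr_{\fqn/\bF_q}$ on the center. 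One then verifies directly, using the explicit formula for $N^{n,q}$ on the one-parameter subgroups $\{1-be_j\}$, that $N^{n,q}$ is also $\fqn^\times$-invariant on rational points. You have the trace condition but are missing both the invariance check and the uniqueness lemma that makes it decisive.
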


\begin{example}\label{ex:reduced-norm-n=2}
If $n=2$, the two cases considered in \S\ref{sss:Case1} and \S\ref{sss:Case2} become the same. For $g=1+a_1e_1+a_2e_2\in U^{2,q}$, we have $\Fr_{q^2}(g)=1+a_1^{q^2}e_1+a_2^{q^2}e_2$ and $g^{-1}=1-a_1e_1+(a_1^{1+q}-a_2)e_2$, so that $\pr_2(L_{q^2}(g)) = a_2^{q^2}+a_1^{1+q}-a_2-a_1^{q^2+q}$ and hence $N^{2,q}(g)=a_2+a_2^q-a_1^{1+q}$. For higher $n$, it is possible to give a formula for $N^{n,q}(g)$ as the determinant of a certain matrix whose entries are given explicitly in terms of the coefficients in the expansion $g=1+a_1e_1+\dotsc+a_ne_n$, but we do not find this formula to be useful and prefer to work exclusively in terms of the axiomatic characterization given in Proposition \ref{p:reduced-norm-key}.
\end{example}

\begin{cor}\label{c:reduced-norm-and-trace-new}
If $g\in U^{n,q}(\overline{\bF}_{q^n})$ is such that $\pr_n(L_{q^n}(g))\in \bF_{q^n}$, then
\[
\Tr_{\bF_{q^n}/\bF_q}\bigl(\pr_n(L_{q^n}(g))\bigr) = N^{n,q}(\Fr_{q^n}(g))-N^{n,q}(g).
\]
\end{cor}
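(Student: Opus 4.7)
The plan is to reduce the identity directly to the defining property of $N^{n,q}$ in part (a) of Proposition \ref{p:reduced-norm-key}, plus the fact that $N^{n,q}$ is a morphism of $\bF_{q^n}$-schemes and hence commutes with the absolute Frobenius $\Fr_{q^n}$.

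First, I would set $a := N^{n,q}(g) \in \overline{\bF}_{q^n}$ and let $b := \pr_n(L_{q^n}(g))$. By Proposition \ref{p:reduced-norm-key}(a), we have $b = a^q - a$. The hypothesis of the corollary says $b \in \bF_{q^n}$, but note that $a$ itself need not lie in $\bF_{q^n}$.

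Next I would observe that, since $N^{n,q}$ is a morphism of $\bF_{q^n}$-schemes, it commutes with the $q^n$-power Frobenius endomorphism on points valued in any $\bF_{q^n}$-algebra; therefore
\[
N^{n,q}(\Fr_{q^n}(g)) = \Fr_{q^n}(N^{n,q}(g)) = a^{q^n}.
\]
Hence the right-hand side of the claimed identity equals $a^{q^n} - a$.

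For the left-hand side, since $b \in \bF_{q^n}$, the trace can be computed as the sum of $\Fr_q$-conjugates, and the terms telescope in characteristic $p$:
\[
\Tr_{\bF_{q^n}/\bF_q}(b) = \sum_{i=0}^{n-1} b^{q^i} = \sum_{i=0}^{n-1}\bigl(a^{q^{i+1}} - a^{q^i}\bigr) = a^{q^n} - a,
\]
which matches what we just computed for the right-hand side. There is no serious obstacle: the only mildly subtle point is that the intermediate terms $a^{q^i}$ lie only in $\overline{\bF}_{q^n}$, not in $\bF_{q^n}$, but this is harmless since all differences and the final answer lie in $\bF_{q^n}$ by hypothesis.
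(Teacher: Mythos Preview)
Your proof is correct and follows essentially the same route as the paper: both use Proposition~\ref{p:reduced-norm-key}(a) to write $\pr_n(L_{q^n}(g))=N^{n,q}(g)^q-N^{n,q}(g)$, telescope the trace sum to $N^{n,q}(g)^{q^n}-N^{n,q}(g)$, and then identify $N^{n,q}(g)^{q^n}$ with $N^{n,q}(\Fr_{q^n}(g))$ via compatibility of $N^{n,q}$ with Frobenius.
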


\begin{proof}
By the proposition, $\pr_n(L_{q^n}(g))=N^{n,q}(g)^q-N^{n,q}(g)$, whence
\[
\Tr_{\bF_{q^n}/\bF_q}\bigl(\pr_n(L_{q^n}(g))\bigr) = \sum_{i=0}^{n-1} \pr_n(L_{q^n}(g))^{q^i} = N^{n,q}(g)^{q^n}-N^{n,q}(g).
\]
But $N^{n,q}(g)^{q^n}=\Fr_{q^n}(N^{n,q}(g))=N^{n,q}(\Fr_{q^n}(g))$, completing the proof.
\end{proof}

\subsection{Proof of Proposition \ref{p:reduced-norm-key}(a)} We begin by proving the uniqueness of $N^{n,q}$. Suppose that $N_1:U^{n,q}\to\bG_a$ is another morphism with the same properties as $N^{n,q}$. Then $\bigl(N^{n,q}(g)-N_1(g)\bigr)^q = N^{n,q}(g)-N_1(g)$ for all $g\in U^{n,q}$, which means that the image of $N^{n,q}-N_1:U^{n,q}\to\bG_a$ is contained in the discrete subset $\bF_q\subset\bG_a$. Since $U^{n,q}$ is connected, $N^{n,q}-N_1$ is constant. Since $N^{n,q}(1)=N_1(1)$, we have $N^{n,q}\equiv N_1$.

\mbr

To prove the existence of $N^{n,q}$ we use

\begin{lem}\label{l:normal-form}
Every element of $U^{n,q}$ can be written uniquely as
\[
1+a_1e_1+a_2e_2+\dotsb+a_ne_n = (1-b_1e_1)\cdot(1-b_2e_2)\cdot\dotsc\cdot(1-b_ne_n).
\]
The maps relating each of the $n$-tuples $(a_i)$ and $(b_j)$ to the other one are polynomial functions with coefficients in $\bF_p$.
\end{lem}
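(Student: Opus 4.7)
The plan is to expand the product on the right-hand side directly, using the multiplication rules in $\cR$, and to observe an upper-triangular structure that makes the resulting polynomial map between affine $n$-spaces invertible.

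First, I would distribute: $(1-b_1e_1)(1-b_2e_2)\cdots(1-b_ne_n)$ is the sum of $2^n$ terms, each of the form $(-b_{j_1}e_{j_1})(-b_{j_2}e_{j_2})\cdots(-b_{j_k}e_{j_k})$ for some $1\leq j_1<j_2<\dots<j_k\leq n$ (the empty product contributing $1$). Using the commutation rule $e_j\cdot b=b^{q^j}e_j$ to move all scalars to the left, each such term becomes
\[
(-1)^k\, b_{j_1}\,b_{j_2}^{q^{j_1}}\,b_{j_3}^{q^{j_1+j_2}}\cdots b_{j_k}^{q^{j_1+\dots+j_{k-1}}}\cdot e_{j_1}e_{j_2}\cdots e_{j_k}.
\]
In both Case \ref{sss:Case1} and Case \ref{sss:Case2} the product $e_{j_1}\cdots e_{j_k}$ is either $0$ or equals $e_m$ for some $1\leq m\leq n$; when it is nonzero, one automatically has $m=j_1+\dots+j_k$.

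Next, I would read off the coefficient $a_i$ of $e_i$ in this expansion. The only contribution with $k=1$ is $-b_i$ (from $j_1=i$). Every contribution with $k\geq 2$ comes from a strictly increasing tuple $j_1<\dots<j_k$ with $j_l\geq 1$ and $j_1+\dots+j_k=i$, which forces $j_k\leq i-1$; therefore every such contribution is a monomial in $b_1,\dots,b_{i-1}$ only, with coefficient $\pm 1\in\bF_p$. Consequently
\[
a_i=-b_i+P_i(b_1,\dots,b_{i-1}),\qquad P_i\in\bF_p[x_1,\dots,x_{i-1}].
\]

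Finally, the system $(a_1,\dots,a_n)$ is upper-triangular in $(b_1,\dots,b_n)$ with diagonal entries $-1$, so it inverts recursively: set $b_i=-a_i+P_i(b_1,\dots,b_{i-1})$ and, by induction on $i$, express each $b_i$ as a polynomial in $a_1,\dots,a_i$ with $\bF_p$-coefficients. This yields simultaneously the existence and uniqueness of the factorization together with the claim that the transition maps in both directions are polynomial over $\bF_p$. I do not anticipate any genuine obstacle: the lemma reduces to a direct combinatorial expansion combined with the triviality of inverting a triangular system whose diagonal entries are units.
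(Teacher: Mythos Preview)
Your proof is correct and follows essentially the same idea as the paper's: both exploit the triangular relation $a_i=-b_i+P_i(b_1,\dots,b_{i-1})$ to solve for the $b_j$'s in terms of the $a_i$'s. The only cosmetic difference is that you expand the full product at once and then invert, whereas the paper peels off the leftmost factor $(1-b_1e_1)$ by multiplying by $(1+a_1e_1)^{-1}$ and recurses; the underlying triangular mechanism is the same.
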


\begin{proof}
This is straightforward: first observe that $b_1$ must necessarily equal $-a_1$. Then multiply both sides of the identity above by $(1+a_1e_1)^{-1}$ on the left, and observe that the left hand side takes the form $1+a_2'e_2+\dotsb+a_n'e_n$, where the $a_j'$ are certain polynomial functions of the $a_i$. Proceed by induction.
\end{proof}

To complete the proof of Proposition \ref{p:reduced-norm-key}(a), we consider the following situation. Assume that we are given an element of $U^{n,q}$ of the form
\[
g=(1-b_k e_k)\cdot(1-b_{k+1}e_{k+1})\cdot\dotsc\cdot(1-b_ne_n),
\]
where $1\leq k\leq n$. We would like to show that there exists a polynomial map $F_k$ (depending only on $k$) such that $\pr_n(L_{q^n}(g))=F_k(b_k,\dotsc,b_n)^q-F_k(b_k,\dotsc,b_n)$.

\mbr

To this end, we use descending induction on $k$. When $k=n$, we have $g=1-b_ne_n$, so $L_{q^n}(g)=1+(b_n-b_n^{q^n})e_n$, and we can take $F_n(b_n)=-(1+b_n^q+b_n^{q^2}+\dotsb+b_n^{q^{n-1}})$.

\mbr

Now suppose that $1\leq k<n$ is arbitrary. We have
\[
L_{q^n}(g) = (1-b_k^{q^n}e_k)\cdot\dotsc\cdot(1-b_n^{q^n}e_n)\cdot (1-b_ne_n)^{-1}\cdot\dotsc\cdot(1-b_ke_k)^{-1},
\]
which can be rewritten as
\[
L_{q^n}(g) = (1-b_k^{q^n}e_k) \cdot \left( 1+\sum_{i=k+1}^n c_ie_i \right) \cdot \left( 1 + (b_ke_k)+(b_ke_k)^2+\dotsb \right).
\]
Here each $c_i$ is some polynomial function of the variables $b_{k+1},\dotsc,b_n$. Further, by induction, we may assume that
\[
c_n=F_{k+1}(b_{k+1},\dotsc,b_n)^q-F_{k+1}(b_{k+1},\dotsc,b_n)
\]
for some polynomial function $F_{k+1}$.

\mbr

Expanding out the product above and collecting only the terms that involve $e_n$, we obtain the following expression:
\begin{eqnarray*}
c_ne_n &+& \left[ (c_{n-k}e_{n-k})\cdot(b_ke_k) + (c_{n-2k}e_{n-2k})\cdot(b_ke_k)^2 + \dotsb \right] \\ &-& (b_k^{q^n}e_k)\cdot \left[ (c_{n-k}e_{n-k}) + (c_{n-2k}e_{n-2k})\cdot(b_ke_k) + \dotsb \right].
\end{eqnarray*}
Thanks to our induction assumption, the term $c_ne_n$ can be ignored for the purpose of the present proof. The remaining terms can be regrouped as follows:
\[
\sum_{i\geq 1} \left[ (c_{n-ik}e_{n-ik})\cdot(b_ke_k)^i - (b_k^{q^n}e_k)\cdot(c_{n-ik}e_{n-ik})\cdot(b_ke_k)^{i-1} \right].
\]
It remains to observe that if we are in the case of \S\ref{sss:Case2}, then the terms with $i\geq 2$ in the last sum are all $0$, and the sum becomes
\[
\bigl(c_{n-k} e_{n-k}\bigr) \cdot \bigl( b_k e_k \bigr) - \bigl( b_k^{q^n} e_k \bigr) \cdot \bigl( c_{n-k} e_{n-k} \bigr) = \Bigl[ \bigl(c_{n-k} b_k^{q^{n-k}}\bigr) - \bigl(c_{n-k} b_k^{q^{n-k}}\bigr)^{q^k} \Bigr] \cdot e_n.
\]
On the other hand, if we are in the case of \S\ref{sss:Case1}, then
\[
(c_{n-ik}e_{n-ik})\cdot(b_ke_k)^i - (b_k^{q^n}e_k)\cdot(c_{n-ik}e_{n-ik})\cdot(b_ke_k)^{i-1} = \]
\[
 = \left( c_{n-ik}\cdot b_k^{q^{n-ik}(1+q^k+\dotsb+q^{ki-k})} - c_{n-ik}^{q^k}\cdot b_k^{{q^n}+q^{n-ik+k}(1+q^k+\dotsb+q^{ki-2k})} \right) \cdot e_n
\]
\[
= \left( c_{n-ik}\cdot b_k^{q^{n-ik}+q^{n-ik+k}+\dotsb+q^{n-k}} - c_{n-ik}^{q^k}\cdot b_k^{q^{n-ik+k}+q^{n-ik+2k}+\dotsb+q^n} \right) \cdot e_n,
\]
and since
\[
c_{n-ik}\cdot b_k^{q^{n-ik}+q^{n-ik+k}+\dotsb+q^{n-k}} - c_{n-ik}^{q^k}\cdot b_k^{q^{n-ik+k}+q^{n-ik+2k}+\dotsb+q^n} = A-A^{q^k}
\]
where $A=c_{n-ik}\cdot b_k^{q^{n-ik}+q^{n-ik+k}+\dotsb+q^{n-k}}$,
the induction step is complete.

\mbr

Finally, define $N^{n,q}:U^{n,q}\rar{}\bG_a$ by the formula
\[
N^{n,q}\bigl((1-b_1e_1)\cdot(1-b_2e_2)\cdot\dotsc\cdot(1-b_ne_n)\bigr)
=F_1(b_1,\dotsc,b_n).
\]
It is clear that $N^{n,q}$ has the two properties stated in Proposition \ref{p:reduced-norm-key}(a).

\subsection{Proof of Proposition \ref{p:reduced-norm-key}(b)} Fix $h\in U^{n,q}(\fqn)$. The Lang map $L_{q^n}:U^{n,q}\to U^{n,q}$ has the property that $L_{q^n}(gh)=L_{q^n}(g)$ for all $g\in U^{n,q}$, whence
\[
N^{n,q}(gh)^q - N^{n,q}(gh) = N^{n,q}(g)^q-N^{n,q}(g)
\]
by the definition of $N^{n,q}$. By the same argument as in the proof of the uniqueness assertion of Proposition \ref{p:reduced-norm-key}(a), the morphism $U^{n,q}\to\bG_a$ given by $g\mapsto N^{n,q}(gh)-N^{n,q}(g)$ is constant. Its value at $g=1$ equals $N^{n,q}(h)$, proving Proposition \ref{p:reduced-norm-key}(b).

\subsection{Proof of Proposition \ref{p:reduced-norm-key}(c)}\label{ss:proof-proposition-key-c} We use an observation due to V.~Drinfeld. Recall that $U^{n,q}$ is a normal subgroup of the multiplicative group $\cR^\times$ of the ring scheme $\cR$ introduced in \S\ref{ss:definitions-U-n-q}. Now $\cR^\times$ also contains the multiplicative group $\bG_m$ as the subgroup defined by the equations $a_1=a_2=\dotsc=a_n=0$, so we obtain a conjugation action of $\bG_m$ on $U^{n,q}$: for $g\in U^{n,q}$, the action map is given by
\begin{equation}\label{e:conjugation-action-multiplicative-group}
\bG_m\ni\la : g = 1 + \sum_{j=1}^n a_j e_j \longmapsto \la g\la^{-1} = 1 + \sum_{j=1}^n \la^{1-q^j}a_j  e_j
\end{equation}

\begin{lem}\label{l:Drinfeld}
The reduced norm map $\Nm^{n,q}:U^{n,q}(\bF_{q^n})\rar{}\bF_q$ is the unique group homomorphism which is invariant under the action of $\bF_{q^n}^\times$ on $U^{n,q}(\bF_{q^n})$ coming from \eqref{e:conjugation-action-multiplicative-group} and restricts to $\Tr_{\bF_{q^n}/\bF_q}$ on the center $Z(\bF_{q^n})=\bF_{q^n}$ of $U^{n,q}(\bF_{q^n})$.
\end{lem}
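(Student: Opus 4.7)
The plan is to establish the existence and uniqueness of such a homomorphism separately. Existence is essentially formal. Identifying $\cR(\bF_{q^n})$ with $S=\cL/\gP$ via Remark \ref{r:rings-are-obviously-equal}, the character $\sN:\cL^\times\rar{}1+\gp_K^m$ from \S\ref{ss:reduced-norm-first-definition} descends to a group homomorphism $\sN:S^\times\rar{}(1+\gp_K^m)/(1+\gp_K^{m+1})\cong\bF_q$, and by construction $\Nm^{n,q}$ is its restriction to $U^{n,q}(\bF_{q^n})\subset S^\times$. Now $\bG_m\subset\cR^\times$ is embedded as the subgroup cut out by $a_1=\dotsb=a_n=0$, so the action \eqref{e:conjugation-action-multiplicative-group} of $\bF_{q^n}^\times=\bG_m(\bF_{q^n})$ on $U^{n,q}(\bF_{q^n})$ is literally conjugation inside $S^\times$; any homomorphism from a group into an abelian target is conjugation-invariant, so $\Nm^{n,q}$ is $\bF_{q^n}^\times$-invariant. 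That $\Nm^{n,q}$ restricts to $\Tr_{\bF_{q^n}/\bF_q}$ on $Z(\bF_{q^n})=\bF_{q^n}$ was recorded in Remark \ref{r:reduced-norm-new}.

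For uniqueness, I would suppose $\varphi:U^{n,q}(\bF_{q^n})\rar{}\bF_q$ is any homomorphism with the two stated properties and show that $\chi:=\varphi-\Nm^{n,q}$ vanishes identically. By hypothesis $\chi$ is a homomorphism into an abelian group that is $\bF_{q^n}^\times$-invariant and vanishes on $Z(\bF_{q^n})$. Introduce the descending filtration $U^{n,q}=U_1\supset U_2\supset\dotsb\supset U_n=Z\supset U_{n+1}=1$, where $U_j\subset U^{n,q}$ is the closed subgroup cut out by $a_1=\dotsb=a_{j-1}=0$; the multiplication rules of \S\ref{sss:Case1}--\S\ref{sss:Case2} imply that the $\bF_{q^n}$-span of $e_j,e_{j+1},\dotsc,e_n$ is a two-sided ideal of $\cR$, so each $U_j$ is in fact normal in $\cR^\times$, with $U_j/U_{j+1}\cong\bG_a$. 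Let $N$ be the normal subgroup of $U^{n,q}(\bF_{q^n})$ generated by the commutator subgroup together with all conjugation differences $g\cdot(\la g\la^{-1})^{-1}$ for $g\in U^{n,q}(\bF_{q^n})$ and $\la\in\bF_{q^n}^\times$. Being abelian-valued and $\bF_{q^n}^\times$-invariant, $\chi$ annihilates each generator of $N$ and hence factors through $U^{n,q}(\bF_{q^n})/N$.

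The crux is the inclusion $U_j(\bF_{q^n})\subset N\cdot U_{j+1}(\bF_{q^n})$ for every $1\leq j\leq n-1$. For $g=1+ae_j$ with $a\in\bF_{q^n}$, formula \eqref{e:conjugation-action-multiplicative-group} gives $\la g\la^{-1}=1+\la^{1-q^j}ae_j$, and noting that $(1+\la^{1-q^j}ae_j)^{-1}\equiv 1-\la^{1-q^j}ae_j\pmod{U_{2j}}$ while $U_{2j}\subset U_{j+1}$ for $j\geq 1$, a short calculation yields
\[
g\cdot(\la g\la^{-1})^{-1}\;\equiv\;1+(1-\la^{1-q^j})\cdot a\cdot e_j\pmod{U_{j+1}(\bF_{q^n})}.
\]
For $1\leq j\leq n-1$ we have $n\nmid j$, so $\la\mapsto\la^{1-q^j}$ is a nontrivial character of $\bF_{q^n}^\times$; choosing $\la$ with $\la^{1-q^j}\neq 1$ and letting $a$ vary over $\bF_{q^n}$, the coefficient $(1-\la^{1-q^j})\cdot a$ sweeps out all of $\bF_{q^n}$. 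Hence every coset of $U_{j+1}(\bF_{q^n})$ in $U_j(\bF_{q^n})$ is represented by an element of $N$, yielding the desired inclusion. Iterating descendingly, $U^{n,q}(\bF_{q^n})=U_1(\bF_{q^n})\subset N\cdot Z(\bF_{q^n})$, so $U^{n,q}(\bF_{q^n})/N$ is generated by the image of $Z(\bF_{q^n})$, and since $\chi$ vanishes on $Z(\bF_{q^n})$ we conclude $\chi\equiv 0$. The only non-routine step is the displayed congruence, and the conceptual point behind it is that the $\bF_{q^n}^\times$-action on each layer $U_j/U_{j+1}$ is through a nontrivial character precisely when $j<n$; this is what prevents the existence of any $\bF_{q^n}^\times$-invariant homomorphism beyond $\Nm^{n,q}$.
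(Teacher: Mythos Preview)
Your proof is correct and follows essentially the same approach as the paper: existence comes directly from the definition of $\Nm^{n,q}$ via $\sN$, and uniqueness is obtained by showing that the subgroup generated by the conjugation differences $g\cdot(\la g\la^{-1})^{-1}$ together with the center exhausts $U^{n,q}(\bF_{q^n})$, using that for $1\leq j\leq n-1$ the character $\la\mapsto\la^{1-q^j}$ is nontrivial. The only cosmetic differences are that the paper organizes the induction as a maximal-counterexample argument rather than via your explicit filtration $U_1\supset\cdots\supset U_n$, and uses $g^{-1}\cdot(\la g\la^{-1})$ in place of your $g\cdot(\la g\la^{-1})^{-1}$.
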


\begin{proof}
The fact that $\Nm^{n,q}$ has all of the stated properties follows easily from its original definition (cf.~Remarks \ref{r:reduced-norm-new}, \ref{r:rings-are-obviously-equal} and \S\ref{ss:reduced-norm-first-definition}). To check the uniqueness claim, let $H\subset U^{n,q}(\bF_{q^n})$ be the subgroup generated by all elements of the form $g^{-1}\cdot (\la g\la^{-1})$ with $g\in U^{n,q}(\bF_{q^n})$ and $\la\in\bF_{q^n}^\times$. It suffices to show that $U^{n,q}(\bF_{q^n})=H\cdot Z(\bF_{q^n})$.

\mbr

Assume that this is not the case, and let $g=1+\sum_{j=k}^n a_j e_j \in U^{n,q}(\bF_{q^n})$ be an element that does not belong to $H\cdot Z(\bF_{q^n})$, where $k\geq 1$ is as large as possible. In particular, $k<n$. Hence there exists $\la\in\bF_{q^n}^\times$ with $\la^{1-q^k}\neq 1$. Put $b=\frac{a_k}{\la^{1-q^k}-1}$ and $g_1=1+b e_k$. Then $g_1^{-1}\cdot(\la g_1\la^{-1})=1+a_k e_k+O( e_{k+1})$, where $O(e_{k+1})$ denotes an unspecified expression of the form $\sum_{j\geq k+1}a'_j e_j$. Therefore $g=g_1^{-1}\cdot(\la g_1\la^{-1})\cdot g'$ for some $g'\in U^{n,q}(\bF_{q^n})$ such that $g'=1+O( e_{k+1})$. The maximality of $k$ implies that $g'\in H\cdot Z(\bF_{q^n})$, which is a contradiction.
\end{proof}

To see that Lemma \ref{l:Drinfeld} implies Proposition \ref{p:reduced-norm-key}(c), we argue as follows. As a special case of Proposition \ref{p:reduced-norm-key}(b), we see that $N^{n,q}:U^{n,q}(\bF_{q^n})\to\bF_q$ is a group homomorphism. Hence it suffices to check that $N^{n,q}$ is invariant under the action \eqref{e:conjugation-action-multiplicative-group} and that $N^{n,q}(1+a e_n)=\Tr_{\bF_{q^n}/\bF_q}(a)$ for all $a\in\bF_{q^n}$.

\mbr

Choose any $1\leq j\leq n$, pick $x\in\bF_{q^n}$, and consider $g=1-x e_j\in U^{n,q}(\bF_{q^n})$. We will check that $N^{n,q}(\la g\la^{-1})=N^{n,q}(g)$ for any $\la\in\bF_{q^n}^\times$, and, in addition, if $j=n$, then $N^{n,q}(g)=-\Tr_{\bF_{q^n}/\bF_q}(x)$. Since \eqref{e:conjugation-action-multiplicative-group} is a group action, and since $N^{n,q}:U^{n,q}(\bF_{q^n})\rar{}\bF_q$ is a homomorphism, it will follow from Lemma \ref{l:normal-form} that $N^{n,q}:U^{n,q}(\bF_{q^n})\rar{}\bF_q$ is invariant under the $\bF_{q^n}^\times$-action coming from \eqref{e:conjugation-action-multiplicative-group} and the proof will be complete. As before, we consider two cases.

\subsubsection{Case 1} Assume that we are in the case of \S\ref{sss:Case1} and consider the restriction of $N^{n,q}$ to the subvariety of $U^{n,q}$ consisting of all points of the form $1-b e_j$ (this subvariety is isomorphic to $\bA^1$). We calculate it explicitly as follows. We have
\[
L_{q^n}(1-b e_j) = (1-b^{q^n} e_j)\cdot\left[ 1 + (b e_j) + (b e_j)^2 + \dotsb \right],
\]
so if $j\nmid n$, we get $\pr_n\bigl(L_{q^n}(1-be_j)\bigr)=0$ and $N^{n,q}(1-b e_j)=0$, while if $j\mid n$, we get
\[
\pr_n(L_{q^n}(1-b e_j)) = b^{1+q^j+q^{2j}+\dotsb+q^{n-j}} - b^{q^j+q^{2j}+\dotsb+q^n},
\]
whence\footnote{Here we are using the fact that $\bA^1$ is connected to ensure that the expression we wrote down coincides with $N^{n,q}(1-b e_j)$ for all $b\in\bA^1$ (cf. the proof of uniqueness in Proposition \ref{p:reduced-norm-key}(a)).}
\[
N^{n,q}(1-b e_j)=-\vp(b)-\vp(b)^q-\vp(b)^{q^2}-\dotsb-\vp(b)^{q^{j-1}},
\]
where $\vp(b):=b^{1+q^j+\dotsb+q^{n-j}}$. In particular, if $j=n$, we obtain $\vp(x)=x$ and $N^{n,q}(1-x e_n)=-\Tr_{\bF_{q^n}/\bF_q}(x)$. In addition, if $j$ is arbitrary, then given $\la\in\bF_{q^n}^\times$, we have $\la(1-b e_j)\la^{-1}=1-\la^{1-q^j}b e_j$. So if $j\nmid n$, we get $N^{n,q}\bigl(\la(1-b e_j)\la^{-1}\bigr)=0=N^{n,q}(1-b e_j)$. If $j\mid n$, then with the notation above, $\vp(\la^{1-q^j}b)=\la^{1-q^n}\vp(b)=\vp(b)$, so we again have $N^{n,q}\bigl(\la(1-b e_j)\la^{-1}\bigr)=N^{n,q}(1-b e_j)$, completing the proof.

\subsubsection{Case 2} Now assume instead that we are in the case of \S\ref{sss:Case2}. Then we can repeat the same calculations as in the previous case, the sole difference being that the condition $j\mid n$ must be replaced with the following one: either $n=j$ or $n=2j$.

\section{Proof of Theorem B}\label{s:proof-Thm-B}

\subsection{Outline of the argument}\label{ss:outline-new} We first describe the strategy we will use to prove Theorem \ref{t:cohomology-of-X} (which is a stronger version of Theorem B from the introduction). We fix a character $\psi:\fqn\to\qls$ and let $q^m$ be its conductor. We also write $n_1=n/m$ and $q_1=q^m$, and let $\psi_1:\bF_{q_1}\to\qls$ be the character such that $\psi=\psi_1\circ\Tr_{\fqn/\bF_{q_1}}$.

\mbr

Recall that $\pr_n:U^{n,q}\to\bG_a$ denotes the projection onto the last factor:
\[
\pr_n(1 + a_1 e_1 + \dotsb + a_n e_n) = a_n.
\]
If $W\subset U^{n,q}$ is a subvariety, we also write $\pr_n$ for the restriction of $\pr_n$ to $W$.

\subsubsection*{Step 1} We first obtain some information about the irreducible representations of the group $U^{n,q}(\bF_{q^n})$. To this end, along with the closed connected subgroup
\[
H_m = \Bigl\{ 1 + \sum_{\substack{j\leq n/2 \\ m\mid j}} a_j e_j + \sum_{n/2<j\leq n} a_j e_j \Bigr\} \subset U^{n,q}
\]
defined in Theorem \ref{t:cohomology-of-X}(c), we introduce two more:
\[
H^+_m = \Bigl\{ 1 + \sum_{\substack{j<n/2 \\ m\mid j}} a_j  e_j + \sum_{n/2\leq j\leq n} a_j  e_j \Bigr\} \subset U^{n,q}
\]
and
\[
H^-_m = \Bigl\{ 1 + \sum_{\substack{n/2<j<n \\ m\nmid j}} a_j  e_j + a_n  e_n \Bigr\} \subset U^{n,q}.
\]
By construction, $H^-_m\subset H_m\subset H^+_m$. The subgroup $H^-_m$ will play a role in the other steps of the proof as well.

\begin{rems}
\begin{enumerate}[(1)]
\item We have $H^+_m=H_m$ unless $m$ is even and $n_1$ is odd, in which case $H_m$ is a normal subgroup of $H^+_m$ of codimension $1$.
 \sbr
\item If $m=n$, then $H^-_m=H_m$.
\end{enumerate}
\end{rems}

The following lemma is proved in \S\ref{ss:proof-l:representation-contains-new}.

\begin{lem}\label{l:representation-contains-new}
If $\rho$ is an irreducible representation of $U^{n,q}(\bF_{q^n})$ with central character $\psi$, the restriction of $\rho$ to $H^-_m(\bF_{q^n})$ contains the character $\psi\circ\pr_n:H^-_m(\bF_{q^n})\to\qls$.
\end{lem}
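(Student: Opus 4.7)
I would combine Clifford theory with an explicit analysis of the conjugation action on characters. The plan is to show that $H^-_m$ is an abelian normal subgroup of $U^{n,q}$, that $\psi\circ\pr_n$ is a character of $H^-_m(\bF_{q^n})$ extending $\psi$, and that the $U^{n,q}(\bF_{q^n})$-conjugation orbit of $\psi\circ\pr_n$ exhausts \emph{all} characters of $H^-_m(\bF_{q^n})$ extending $\psi$. Given this, Clifford theory applied to $\rho$ forces its restriction to $H^-_m(\bF_{q^n})$ to be a direct sum (with equal multiplicities) of the characters in a single such orbit, which is then the unique orbit and contains $\psi\circ\pr_n$.

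The structural statements are immediate from the multiplication rules of $\cR$. For indices $i,j\in J\cup\{n\}$, with $J=\{j:n/2<j<n,\ m\nmid j\}$, one has $e_i\cdot e_j=0$ in both cases: in Case 1 because $i+j>n$, in Case 2 because $i+j\neq n$. Hence multiplication in $H^-_m$ collapses to coordinate-wise addition, so $H^-_m$ is abelian and $\pr_n|_{H^-_m}$ is a homomorphism; since $\pr_n|_Z=\operatorname{id}$, the character $\psi\circ\pr_n$ extends $\psi$. A similar bookkeeping with the ring structure shows that the conjugate $g h g^{-1}$ of any $h\in H^-_m$ by any $g\in U^{n,q}$ lies again in $H^-_m$, so $H^-_m$ is normal.

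By Clifford theory, $\rho|_{H^-_m(\bF_{q^n})}$ is a direct sum, with equal multiplicities, of the characters in a single $U^{n,q}(\bF_{q^n})$-orbit, each extending $\psi$. The action of $g$ sends $\chi$ to $\chi\cdot\bigl(h\mapsto\psi(\pr_n([g^{-1},h]))\bigr)$, so transitivity of this action on the set of all extensions of $\psi$ is equivalent to surjectivity of the map
\[
\Phi\colon U^{n,q}(\bF_{q^n})\longrightarrow\widehat{H^-_m(\bF_{q^n})/Z(\bF_{q^n})},\qquad g\longmapsto\bigl(h\mapsto\psi(\pr_n([g,h]))\bigr).
\]

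The heart of the argument is this surjectivity, and it rests on the following elementary injectivity lemma: for each $j\in J$, the map $\bF_{q^n}\to\widehat{\bF_{q^n}}$ sending $\alpha$ to $\beta\mapsto\psi\bigl(\alpha\beta^{q^{n-j}}-\alpha^{q^j}\beta\bigr)$ is injective, hence bijective by counting $\bF_q$-dimensions. To see this, substitute $y=\alpha\beta^{q^{n-j}}$ (a bijection of $\bF_{q^n}$ for $\alpha\neq 0$), noting that $\alpha^{q^j}\beta=y^{q^j}$; the character rewrites as $y\mapsto\psi(y-y^{q^j})$, whose triviality forces the image of $y\mapsto y-y^{q^j}$ to lie in $\ker\psi$. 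By additive Hilbert 90 this image equals $\ker\Tr_{\bF_{q^n}/\bF_{q^{\gcd(n,j)}}}$, so triviality would force $\psi$ to factor through $\Tr_{\bF_{q^n}/\bF_{q^{\gcd(n,j)}}}$, i.e., $m\mid\gcd(n,j)$, contradicting $m\nmid j$. In Case 2, surjectivity of $\Phi$ follows directly because the $2$-step nilpotence of $U^{n,q}$ yields the bilinear formula $\pr_n([g,h])=\sum_{j\in J}\bigl(\alpha_{n-j}\beta_j^{q^{n-j}}-\alpha_{n-j}^{q^j}\beta_j\bigr)$ with the components indexed by $j\in J$ separating cleanly. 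In Case 1 the commutator acquires additional higher-order contributions from non-vanishing products $e_i e_{i'}=e_{i+i'}$, and the main obstacle in writing up the proof is organizing this correction; the same substitution-and-Hilbert 90 device applied to the leading-order terms still yields surjectivity of $\Phi$, but the bookkeeping is more delicate.
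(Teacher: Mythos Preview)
Your approach is correct in Case 2, where $U^{n,q}$ is $2$-step nilpotent and every subgroup containing the center is normal; there the orbit computation via your injectivity lemma goes through cleanly and gives a pleasant one-shot Clifford argument.

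In Case 1, however, your claim that $H^-_m$ is normal in $U^{n,q}$ is false, and this is not a matter of bookkeeping. Take $n=8$, $m=2$ (a legitimate conductor since $2\mid 8$). Then $H^-_2$ is spanned by $e_5,e_7,e_8$, while $e_6$ is excluded because $2\mid 6$. With $g=1+a_1e_1$ and $h=1+b_5e_5$ one finds
\[
ghg^{-1}=1+b_5e_5+\bigl(a_1b_5^{q}-b_5a_1^{q^5}\bigr)e_6+(\text{terms in }e_7,e_8),
\]
so for generic $a_1,b_5\in\bF_{q^8}$ the conjugate lands outside $H^-_2$. Once $H^-_m$ fails to be normal, the conjugation action on its characters is not defined, so neither your map $\Phi$ nor the single-orbit claim makes sense, and Clifford theory cannot be invoked for $H^-_m$ directly.

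The paper circumvents this by descending induction along the filtration $H^-_m\cap U^{\geq k}$: at the step from $k+1$ to $k$ one only conjugates by elements of $U^{\geq n-k}$, which \emph{does} normalize $H^-_m\cap U^{\geq k}$ and centralize $H^-_m\cap U^{\geq k+1}$ (since the relevant products $e_ie_j$ all have $i+j\geq n$). The explicit solvability at each step boils down to exactly your bijectivity lemma, so the analytic heart of the two arguments coincides; what your write-up is missing is a normalizing framework in Case 1 that makes the conjugation step legitimate.
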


Now consider the character\footnote{The fact that the projection map $\nu_m:H_m\to U^{n_1,q_1}$ is a group homomorphism is verified by a direct calculation, and since the restriction of $\Nm^{n_1,q_1}:U^{n_1,q_1}(\bF_{q^n})\rar{}\bF_{q_1}$ to $Z(\bF_{q^n})$ is equal to $\Tr_{\bF_{q^n}/\bF_{q_1}}:\bF_{q^n}\rar{}\bF_{q_1}$, we see that $\widetilde{\psi}$ is indeed a character that extends $\psi:Z(\bF_{q^n})\rar{}\qls$.} $\widetilde{\psi}:=\psi_1\circ\Nm^{n_1,q_1}\circ\nu_m : H_m(\bF_{q^n})\to\qls$.

\begin{rem}\label{r:restriction-psi-tilde-new}
Recall that $\nu_m:H_m(\bF_{q^n})\rar{}U^{n_1,q_1}(\bF_{q^n})$ is the map that discards all summands $a_j e_j$ with $m\nmid j$ $($cf.~Remark \ref{r:inclusion-new}$)$. Hence $\widetilde{\psi}\bigl\lvert_{H^-_m(\bF_{q^n})}=\psi\circ\pr_n$.
\end{rem}

\begin{prop}\label{p:construction-rho-psi-new}
\begin{enumerate}[$($a$)$]
\item Suppose that $m$ is odd or $n_1$ is even. Then \[\rho_\psi:=\Ind_{H_m(\bF_{q^n})}^{U^{n,q}(\bF_{q^n})}(\widetilde{\psi})\] is an irreducible representation of $U^{n,q}(\bF_{q^n})$.
 \sbr
\item Suppose that $m$ is even and $n_1$ is odd\footnote{Equivalently, $n$ is even and $m$ does not divide $n/2$.}. Let $\Ga_m\subset U^{n,q}(\bF_{q^n})$ be the subgroup defined in Theorem \ref{t:cohomology-of-X}(c); in other words,
    \[
    \Ga_m = \Bigl\{ \ga=1+\sum_{j=1}^n a_j  e_j \Bigl\lvert \ga\in H^+_m(\bF_{q^n}) \text{ and } a_{n/2}\in\bF_{q^{n/2}} \Bigr\}.
    \]
    Then $\widetilde{\psi}$ can be extended to a character of $\Ga_m$, and if $\chi:\Ga_m\rar{}\qls$ is any such extension, then $\rho_\psi:=\Ind_{\Ga_m}^{U^{n,q}(\bF_{q^n})}(\chi)$ is an irreducible representation of $U^{n,q}(\bF_{q^n})$, which is independent of the choice of $\chi$. Furthermore, $\Ind_{H_m(\bF_{q^n})}^{U^{n,q}(\bF_{q^n})}(\widetilde{\psi})$ is isomorphic to a direct sum of $q^{n/2}$ copies of $\rho_\psi$.
\end{enumerate}
 \mbr
\noindent In both cases, the restriction of $\rho_\psi$ to $H^-_m(\bF_{q^n})$ contains $\psi\circ\pr_n$; in particular, $\rho_\psi$ has central character $\psi$.
\end{prop}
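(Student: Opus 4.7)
The plan is to prove Proposition \ref{p:construction-rho-psi-new} via Mackey's irreducibility criterion, combined with a polarization-type analysis of the pairing $B_\psi(g,h) := \widetilde{\psi}([g,h])$ associated to the central character $\psi$. First I would confirm that $\widetilde{\psi} = \psi_1 \circ \Nm^{n_1,q_1} \circ \nu_m$ is a well-defined character of $H_m(\bF_{q^n})$; this reduces to checking that $\nu_m : H_m \to U^{n_1,q_1}$ is a group homomorphism, which is a direct computation from the multiplication rules in $U^{n,q}$: the summands $a_j e_j$ with $m \nmid j$ produce only terms $e_i$ with $m \nmid i$ in any product, and hence are killed by $\nu_m$ without affecting multiplicativity. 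Combined with Proposition \ref{p:reduced-norm-key}(b), this yields the character property of $\widetilde{\psi}$.

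For part (a), Mackey's criterion reduces the irreducibility of $\Ind_{H_m(\bF_{q^n})}^{U^{n,q}(\bF_{q^n})} \widetilde{\psi}$ to showing that for every $g \in U^{n,q}(\bF_{q^n}) \setminus H_m(\bF_{q^n})$, the characters $\widetilde{\psi}$ and $\widetilde{\psi}^g$ of $H_m(\bF_{q^n}) \cap g H_m(\bF_{q^n}) g^{-1}$ differ. Via a filtration argument using the descending chain of subgroups $\{1 + \sum_{j \geq k} a_j e_j\}$, one reduces to the case $g = 1 + x e_i$ with $1 \leq i < n/2$ and $m \nmid i$, for which $h = 1 + b e_{n-i} \in H_m(\bF_{q^n})$ is a natural trial element. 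A commutator calculation yields $[g,h] \equiv 1 + (x b^{q^i} - b x^{q^{n-i}}) e_n$ modulo terms in higher filtration degree, and the existence of $b$ with $\widetilde{\psi}([g,h]) \neq 1$ follows from the conductor hypothesis on $\psi$: the additive map $b \mapsto \Tr_{\bF_{q^n}/\bF_{q^m}}(x b^{q^i} - b x^{q^{n-i}})$ on $\bF_{q^n}$ cannot vanish identically, as this would force $\psi$ to have conductor strictly smaller than $q^m$.

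For part (b), when $m$ is even and $n_1$ is odd so that $e_{n/2}$ lies outside $H_m$, the key observation is that $\widetilde{\psi}([g,h]) = 1$ for all $g \in \Ga_m$ and $h \in H_m(\bF_{q^n})$: a commutator involving $g = 1 + a e_{n/2}$ with $a \in \bF_{q^{n/2}}$ can only receive a contribution from the $e_{n/2}$-component of $h$, which is absent in $H_m$. This ensures $\widetilde{\psi}$ extends to $\Ga_m$, with the $q^{n/2}$ extensions forming a torsor under the Pontryagin dual of $\bF_{q^{n/2}}$. Mackey irreducibility of $\Ind_{\Ga_m}^{U^{n,q}(\bF_{q^n})} \chi$ is verified by a commutator calculation analogous to part (a). Independence of $\chi$ up to isomorphism of $\rho_\psi$, and the decomposition $\Ind_{H_m(\bF_{q^n})}^{U^{n,q}(\bF_{q^n})} \widetilde{\psi} \cong q^{n/2} \cdot \rho_\psi$, follow from a Stone--von Neumann type uniqueness argument in the $2$-step nilpotent setting of \S\ref{sss:Case2}: all irreducible representations of $U^{n,q}(\bF_{q^n})$ with central character $\psi$ and dimension $[U^{n,q}(\bF_{q^n}) : \Ga_m]$ are isomorphic, and induction in stages accounts for the multiplicity. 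The final assertion about $\rho_\psi|_{H^-_m(\bF_{q^n})}$ containing $\psi \circ \pr_n$ is immediate from Frobenius reciprocity together with Remark \ref{r:restriction-psi-tilde-new}. The main obstacle will be the commutator analysis in the setting of \S\ref{sss:Case1}, where $U^{n,q}$ is not $2$-step nilpotent and $[g,h]$ can have components outside the center, requiring a more delicate polarization argument based on the full lower central series filtration.
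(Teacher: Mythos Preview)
Your strategy via Mackey's criterion and commutator tests is the paper's, and the test element $h = 1 + b e_{n-i}$ is the correct one. Two points deserve comment.

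First, the paper streamlines part (a) by passing to the normal abelian subgroup $U^{>n/2} \subset H_m$. Lemma \ref{l:stabilizer-new} computes the stabilizer of $\widetilde{\psi}\bigl\lvert_{U^{>n/2}(\bF_{q^n})}$ in $U^{n,q}(\bF_{q^n})$ to be exactly $H^+_m(\bF_{q^n})$, using your commutator calculation; Lemma \ref{l:irreducibility-induced} then gives irreducibility directly. Your ``filtration reduction to $g = 1 + xe_i$'' is not literally valid (nontrivial double cosets of $H_m$ need not have single-term representatives), but it is also unnecessary: since the test element $h = 1+ce_{n-k}$ lies in the normal subgroup $U^{>n/2}$, the quantity $\widetilde{\psi}(ghg^{-1})/\widetilde{\psi}(h)$ turns out to depend only on the leading coefficient $a_k$ of $g$, in both Case 1 and Case 2. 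So the Case 1 obstacle you anticipate does not materialize at this step.

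Second, there is a real gap in your independence argument for part (b). You invoke Stone--von Neumann uniqueness for $U^{n,q}(\bF_{q^n})$ itself, explicitly restricting to the $2$-step nilpotent Case 2; but the proposition must also be proved in Case 1, where the group has nilpotency class $n$ and irreducibles of a given central character and dimension need not be unique. The paper avoids this by applying the Heisenberg machinery (Lemma \ref{l:heisenberg-representations}) one level down, to the pair $H_m(\bF_{q^n}) \trianglelefteq H^+_m(\bF_{q^n})$: the commutator pairing on $H^+_m(\bF_{q^n})/H_m(\bF_{q^n}) \cong \bF_{q^n}$ is $(x,y) \mapsto \psi(xy^{q^{n/2}} - yx^{q^{n/2}})$, which is nondegenerate with $\bF_{q^{n/2}}$ maximal isotropic (Lemma \ref{l:lagrangian-1}). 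Hence $\Ind_{\Ga_m}^{H^+_m(\bF_{q^n})}\chi$ is an irreducible $\rho$ of $H^+_m(\bF_{q^n})$ independent of $\chi$, and $\rho_\psi = \Ind_{H^+_m(\bF_{q^n})}^{U^{n,q}(\bF_{q^n})}\rho$ is irreducible by the stabilizer computation. The multiplicity $q^{n/2}$ in $\Ind_{H_m(\bF_{q^n})}^{U^{n,q}(\bF_{q^n})}\widetilde{\psi}$ comes from the Heisenberg step on $H^+_m$, and this argument works uniformly in Cases 1 and 2.
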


This proposition is proved in \S\ref{ss:proof-p:construction-rho-psi-new}.

\subsubsection*{Step 2} We consider $H^\bullet_c(X,\ql)=\bigoplus_{i\in\bZ} H^i_c(X,\ql)$ as a finite dimensional graded vector space over $\ql$ equipped with commuting actions of $U^{n,q}(\bF_{q^n})$ and $\Fr_{q^n}$. In particular, given any representation (not necessarily irreducible) $\xi$ of $U^{n,q}(\bF_{q^n})$, we obtain a graded vector space $\Hom_{U^{n,q}(\bF_{q^n})}\bigl(\xi,H^\bullet_c(X,\ql)\bigr)$ with an action of $\Fr_{q^n}$.

\mbr

Now consider the representation $\xi_\psi=\Ind_{H^-_m(\bF_{q^n})}^{U^{n,q}(\bF_{q^n})}(\psi\circ\pr_n)$. In view of Lemma \ref{l:representation-contains-new}, $\xi_\psi$ is isomorphic to a direct sum of all irreducible representations of $U^{n,q}(\bF_{q^n})$ that have central character $\psi$, taken with certain multiplicities.

\begin{prop}\label{p:how-xi-psi-appears-new}
$\Hom_{U^{n,q}(\bF_{q^n})}\bigl(\xi_\psi,H^\bullet_c(X,\ql)\bigr)$ is concentrated in degree $n+n_1-2$. It has dimension $1$ if $m$ is odd or $n_1$ is even, and it has dimension $q^{n/2}$ if $m$ is even and $n_1$ is odd. $\Fr_{q^n}$ acts on it via the scalar $(-1)^{n-n_1}\cdot q^{n(n+n_1-2)/2}$.
\end{prop}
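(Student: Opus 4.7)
The plan is to analyse $\Hom_{U^{n,q}(\bF_{q^n})}(\xi_\psi,H^\bullet_c(X,\ql))$ in three stages: translate it first into the cohomology of a rank-one local system on a suitable base, then into the cohomology of an Artin--Schreier sheaf on an affine space, and finally compute the latter by an inductive reduction.

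The first stage uses Frobenius reciprocity:
\[
\Hom_{U^{n,q}(\bF_{q^n})}(\xi_\psi,H^\bullet_c(X,\ql))\cong\Hom_{H^-_m(\bF_{q^n})}(\psi\circ\pr_n,H^\bullet_c(X,\ql)),
\]
together with the fact that $L_{q^n}\from X\to Y$ is an \'etale $U^{n,q}(\bF_{q^n})$-torsor (cf.\ \S\ref{ss:definitions-U-n-q}). The right-hand side identifies with $H^\bullet_c(X/H^-_m(\bF_{q^n}),\cL)$, where $\cL$ is the rank-one $\ql$-local system on the intermediate \'etale cover $X/H^-_m(\bF_{q^n})\to Y$ corresponding to the character $\psi\circ\pr_n$. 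For the second stage I would exploit the fact that $H^-_m$ is connected and abelian: in both cases of \S\ref{ss:definitions-U-n-q} one has $e_i\cdot e_j=0$ whenever $i,j>n/2$, so multiplication on $H^-_m$ is componentwise addition. After choosing local sections of $L_{q^n}$ trivialising the torsor on an affine cover of $Y$, $\cL$ can be expressed as an Artin--Schreier sheaf $\cL_{\psi_1}(F)$ on $Y\cong\bA^{n-1}$, where $F$ is an explicit polynomial in the affine coordinates built from $\pr_n$, a section of the Lang map, and the group law in $U^{n,q}$, and where $\psi_1\from\bF_{q^m}\to\qls$ is the primitive character with $\psi=\psi_1\circ\Tr_{\bF_{q^n}/\bF_{q^m}}$.

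In the third stage I compute $H^\bullet_c(\bA^{n-1},\cL_{\psi_1}(F))$ following the strategy of \cite[\S2]{DLtheory}, by iterated application of proper base change and the projection formula along linear fibrations $\bA^d\to\bA^{d-1}$. At each step the fibre integrates to zero unless the restriction of $F$ to it is constant (contributing $\ql(-1)$ and a degree shift of $+2$) or takes a specific shape producing a nontrivial quadratic/Gauss-sum contribution. Tracking these contributions through the tower $Z\subset H^-_m\subset H_m\subset H^+_m\subset U^{n,q}$ and performing a case analysis on the parities of $m$ and $n_1$, one reads off the cohomological degree $n+n_1-2$ (from the total degree shift), the Frobenius scalar $(-1)^{n-n_1}q^{n(n+n_1-2)/2}$ (from the accumulated Tate twists and Gauss-sum signs), and the dimension, which is $1$ generically and $q^{n/2}$ in the special case $m$ even, $n_1$ odd.

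The main obstacle will be to identify the polynomial $F$ explicitly and to analyse its fibrewise restrictions at each step of the induction. In particular, in the case $m$ even and $n_1$ odd one must carefully trace how the obstruction to extending $\widetilde\psi$ from $H_m(\bF_{q^n})$ to $H^+_m(\bF_{q^n})$ (cf.\ Proposition \ref{p:construction-rho-psi-new}(b)) manifests geometrically as a degenerate fibre contribution of dimension $q^{n/2}$. Ultimately this reflects the nondegeneracy, in this case only, of the commutator pairing $H^+_m/H_m\times H^+_m/H_m\to Z$ composed with $\psi\from Z(\bF_{q^n})\to\qls$.
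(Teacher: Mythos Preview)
Your outline is essentially the paper's approach, but the second stage is where precision matters and where your description goes astray. After Frobenius reciprocity you land on $H^\bullet_c(X/H^-_m(\bF_{q^n}),\cL)$ with $\cL$ a rank-one local system on the \emph{intermediate cover} $X/H^-_m(\bF_{q^n})$; this variety is not $Y$, and the sheaf does not live on $Y$. ``Choosing local sections of $L_{q^n}$'' will not collapse this cover to $\bA^{n-1}$, and pushing $\cL$ forward to $Y$ produces a sheaf of large rank, not an Artin--Schreier sheaf. The paper bypasses the quotient entirely and uses \cite[Prop.~2.3]{DLtheory} directly: choosing a section $s:\bA^d\to U^{n,q}$ of $U^{n,q}\to U^{n,q}/H^-_m$ and setting $F(x,h)=\Fr_{q^n}(s(x))\,h\,s(x)^{-1}$, one gets the identification $\Hom_{U^{n,q}(\bF_{q^n})}\bigl(\xi_\psi,H^\bullet_c(X,\ql)\bigr)\cong H^\bullet_c\bigl(F^{-1}(Y),\widetilde{\pr}_n^*\cL_\psi\bigr)$, and then the observation that $F$ is affine-linear in the coordinate $a_n$ shows $F^{-1}(Y)\cong\bA^d\times(H^-_m\cap Y)=\bA^{n-1}$ with the pullback sheaf equal to $\al^*\cL_\psi$ for the explicit polynomial $\al(x,h)=-\pr_n\bigl(\Fr_{q^n}(s(x))hs(x)^{-1}\bigr)$. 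This is the missing bridge in your second stage.

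Your third stage is correct in principle but the organizing picture is not the tower $Z\subset H^-_m\subset H_m\subset H^+_m$. The induction instead peels off the coordinates of $\bA^{n-1}$ in matched pairs $(a_{j_k},a_{n-j_k})$ with $j_k>n/2$, $m\nmid j_k$: the polynomial $\al$ is affine-linear in $a_{j_k}$ with leading coefficient a power of $a_{n-j_k}$, so one application of \cite[Prop.~2.10]{DLtheory} kills both variables and contributes $[-2](-1)$. After $n-d-1$ such steps one is left with the coordinates indexed by multiples of $m$ (plus $a_{n/2}$ when $m$ is even and $n_1$ odd). On the multiples-of-$m$ block the remaining polynomial is exactly $-\pr_n\circ L_{q^n}$ restricted to $U^{n_1,q_1}\cap Y$, which by Proposition~\ref{p:reduced-norm-key} equals $N^{n_1,q_1}-\bigl(N^{n_1,q_1}\bigr)^{q_1}$; since $\psi$ has conductor $q_1$ this pulls $\cL_\psi$ back to the trivial sheaf, yielding $\ql[-2d](-d)$ with $d=(n+n_1-2)/2$ in the generic case. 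In the exceptional case there is an extra $\bG_a$ factor carrying the pullback of $\cL_\psi$ by $x\mapsto x^{q^{n/2}(q^{n/2}+1)}-x^{1+q^{n/2}}$, and a separate Gauss-sum computation (Proposition~\ref{p:pullback-q+1-power}) gives the cohomology as $q^{n/2}$ copies of $\ql$ in degree $1$ with $\Fr_{q^n}$ acting by $-q^{n/2}$. This is where your intuition about the commutator pairing on $H^+_m/H_m$ is relevant, but it enters the \emph{endpoint} computation rather than the inductive descent.
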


This proposition is proved in \S\ref{ss:proof-p:how-xi-psi-appears-new}.

\subsubsection*{Step 3} The last ingredient is the following result, proved in \S\ref{ss:proof-p:rho-psi-appears-new}.
\begin{prop}\label{p:rho-psi-appears-new}
If $\rho_\psi$ is the representation of $U^{n,q}(\bF_{q^n})$ constructed in Proposition \ref{p:construction-rho-psi-new}, then $\Hom_{U^{n,q}(\bF_{q^n})}\bigl( \rho_\psi ,H^\bullet_c(X,\ql)\bigr)\neq 0$.
\end{prop}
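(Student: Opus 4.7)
My plan is to compute $\Hom_{U^{n,q}(\bF_{q^n})}(\rho_\psi, H^\bullet_c(X, \ql))$ directly by identifying it with the cohomology of a rank-one local system on an affine space, following the strategy outlined in \S\ref{ss:outline-new}. By Corollary \ref{c:hom-rep-into-cohomology} (alluded to in \S\ref{ss:outline-new}), the $\rho_\psi$-isotypic component of $H^\bullet_c(X, \ql)$ is isomorphic to $H^\bullet_c(Y, \cE_{\rho_\psi})$, where $\cE_{\rho_\psi}$ is the local system on $Y$ associated to $\rho_\psi$ via the $U^{n,q}(\bF_{q^n})$-torsor $X \to Y$; thus it suffices to show $H^\bullet_c(Y, \cE_{\rho_\psi}) \neq 0$.

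The next step is to use the induced structure of $\rho_\psi$ (Proposition \ref{p:construction-rho-psi-new}) to reduce to a rank-one local system. In Case 1 ($m$ odd or $n_1$ even), $\rho_\psi = \Ind_{H_m(\bF_{q^n})}^{U^{n,q}(\bF_{q^n})}(\widetilde{\psi})$, so by the projection formula applied to the intermediate $H_m(\bF_{q^n})$-torsor $X \to X/H_m(\bF_{q^n})$, the cohomology $H^\bullet_c(Y, \cE_{\rho_\psi})$ identifies with $H^\bullet_c(Y', \cE_{\widetilde{\psi}})$ where $Y' = X/H_m(\bF_{q^n})$ and $\cE_{\widetilde{\psi}}$ is the rank-one Artin--Schreier-type sheaf attached to the character $\widetilde{\psi} = \psi_1 \circ \Nm^{n_1, q_1} \circ \nu_m$. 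Using the defining equations of $X$ and the explicit description of $H_m$, I would exhibit $Y'$ as an affine space of dimension $n+n_1-2$ (which matches the cohomological degree predicted by Proposition \ref{p:how-xi-psi-appears-new}), and express $\cE_{\widetilde{\psi}}$ as the pullback of a standard Artin--Schreier sheaf along a polynomial map built from $\Nm^{n_1, q_1}$ via Proposition \ref{p:reduced-norm-key}. Case 2 is handled by a parallel argument involving $\Ga_m$ and $\chi$.

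Finally, I would compute $H^\bullet_c(\bA^{n+n_1-2}, \cE_{\widetilde{\psi}})$ by induction on $n$ via a sequence of linear fibrations $\bA^d \to \bA^{d-1}$. The key observation is that on each affine-line fiber, the restriction of $\cE_{\widetilde{\psi}}$ is either trivial (in which case the cohomology along the fiber contributes a Tate twist) or a non-trivial Artin--Schreier sheaf (in which case the fiberwise cohomology vanishes by proper base change). Choosing the fibrations so that the inductive process terminates at a point rather than being killed by an Artin--Schreier obstruction produces the non-vanishing class. This termination is ultimately guaranteed by the fact that $\widetilde{\psi}|_{H^-_m(\bF_{q^n})} = \psi \circ \pr_n$ (Remark \ref{r:restriction-psi-tilde-new}) together with the compatibility of $\Nm^{n_1,q_1}$ with $\pr_n$ recorded in Proposition \ref{p:reduced-norm-key}(a).

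The main obstacle I anticipate is executing the inductive fibration argument cleanly: one must arrange the linear fibrations so that the rank-one local system restricts trivially to at least one fiber at each stage, and the explicit form of the polynomial defining $\cE_{\widetilde{\psi}}$ in terms of the reduced norm requires a careful choice of coordinates on $Y'$. An independent and conceptually simpler route — using Lemma \ref{l:representation-contains-new} and Frobenius reciprocity to embed $\rho_\psi \hookrightarrow \xi_\psi$, combined with Proposition \ref{p:how-xi-psi-appears-new} — would suffice provided one knows that every irreducible of $U^{n,q}(\bF_{q^n})$ with central character $\psi$ is isomorphic to $\rho_\psi$; however, proving such uniqueness via a Mackey computation or orbit-method analysis is itself non-trivial, especially in Case 2 where $\Ga_m$ is disconnected, so I expect the geometric approach above to be the more robust path.
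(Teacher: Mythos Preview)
Your approach is genuinely different from the paper's and runs into real obstacles. The paper does not attempt a direct cohomological computation of $H^\bullet_c(Y,\cE_{\rho_\psi})$; instead it shows via the Grothendieck--Lefschetz trace formula that the Euler characteristic $\sum_{y\in Y(\bF_{q^n})} t_{\cE}(y)$ is a strictly positive integer, where $\cE$ is the local system attached to $\Ind_{H_m(\bF_{q^n})}^{U^{n,q}(\bF_{q^n})}(\widetilde{\psi})$. The two key ideas you are missing are: (i) Lemma~\ref{l:trace-function-of-E-psi-tilde-1}, which proves the nonobvious identity $t_{\cE_{\widetilde{\psi}}}=\psi\circ\pr_n$ on $H_m(\bF_{q^n})$ despite $\widetilde{\psi}$ being built from the reduced norm (this uses Proposition~\ref{p:reduced-norm-key} and Corollary~\ref{c:reduced-norm-and-trace-new} in an essential way); and (ii) a positivity trick: one introduces a new \emph{additive} group law $\boxplus$ on $U^{n,q}$ for which conjugation and $\pr_n$ are homomorphisms, rewrites the induced trace-sum as a sum over coset representatives of character sums on $\boxplus$-subgroups, and observes that each summand is a nonnegative integer with the identity-coset summand strictly positive.

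Your direct-computation plan has two concrete problems. First, $Y'=X/H_m(\bF_{q^n})$ is a finite quotient of an $(n-1)$-dimensional variety and hence has dimension $n-1$, not $n+n_1-2$; you have conflated the cohomological degree with a dimension. Second, and more seriously, the dichotomy ``trivial or nontrivial Artin--Schreier on each $\bA^1$-fiber'' fails for $\cE_{\widetilde{\psi}}$: since $\widetilde{\psi}=\psi_1\circ\Nm^{n_1,q_1}\circ\nu_m$ involves the reduced norm, already for $n_1=2$ one sees a monomial of shape $b_1^{1+q_1}$ (Example~\ref{ex:reduced-norm-n=2}). On that fiber the sheaf is precisely of the type treated in Proposition~\ref{p:pullback-q+1-power}, with $q_1$-dimensional $H^1_c$ rather than a single Tate class, so the fibration does not ``terminate at a point'' as you hope. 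A computation along your lines may be salvageable, but it would require Kloosterman-type input on top of the full machinery already used for Proposition~\ref{p:how-xi-psi-appears-new}; the paper's trace-formula argument bypasses all of this. Your alternative route via uniqueness of $\rho_\psi$ among irreducibles with central character $\psi$ is, as you suspect, not available: that uniqueness is precisely what the finale of \S\ref{ss:outline-new} deduces \emph{from} Proposition~\ref{p:rho-psi-appears-new}, so invoking it here would be circular.
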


\subsubsection*{The finale} Let us show that combining the three steps above, we obtain a proof of Theorem \ref{t:cohomology-of-X}. Write $\rho_\psi$ for the irreducible representation of $U^{n,q}(\bF_{q^n})$ constructed in Proposition \ref{p:construction-rho-psi-new}. Then $H^\bullet_c(X,\ql)$ contains $\rho_\psi$ as a direct summand by Proposition \ref{p:rho-psi-appears-new}. Introduce the following multiplicities:
\[
d_1 = \dim\Hom_{U^{n,q}(\bF_{q^n})}\bigl(\rho_\psi,H^\bullet_c(X,\ql)\bigr)\geq 1,
\]
\[
d_2 = \dim\Hom_{U^{n,q}(\bF_{q^n})}\bigl(\rho_\psi,\xi_\psi\bigr),
\]
\[
d_3 = \dim\Hom_{U^{n,q}(\bF_{q^n})}\bigl(\xi_\psi,H^\bullet_c(X,\ql)\bigr).
\]
Then $d_2$ is at least the multiplicity of $\rho_\psi$ in $\Ind_{H_m(\bF_{q^n})}^{U^{n,q}(\bF_{q^n})}(\widetilde{\psi})$. Furthermore, it is clear that $d_3\geq d_1\cdot d_2$, and equality holds if and only if $\rho_\psi$ is the \emph{unique} irreducible representation of $U^{n,q}(\bF_{q^n})$ that appears both in $H^\bullet_c(X,\ql)$ and in $\xi_\psi$.

\mbr

We now claim that $d_2\geq d_3$. Indeed, combining Proposition \ref{p:construction-rho-psi-new} with Proposition \ref{p:how-xi-psi-appears-new}, we see that \[d_3=\dim\Hom_{U^{n,q}(\bF_{q^n})}\Bigl(\rho_\psi,\Ind_{H_m(\bF_{q^n})}^{U^{n,q}(\bF_{q^n})}(\widetilde{\psi})\Bigr).\] The last assertion of Remark \ref{r:restriction-psi-tilde-new} implies that $\Ind_{H_m(\bF_{q^n})}^{U^{n,q}(\bF_{q^n})}(\widetilde{\psi})$ is a direct summand of $\xi_\psi$, whence $d_2\geq d_3$.

\mbr

Comparing the assertions of the last two paragraphs, we find that $d_1=1$ and $d_1\cdot d_2=d_2=d_3$. In view of the first and third assertions of Proposition \ref{p:how-xi-psi-appears-new}, we see that all parts of Theorem \ref{t:cohomology-of-X} follow.

\subsection{Proof of Lemma \ref{l:representation-contains-new}}\label{ss:proof-l:representation-contains-new} Fix an irreducible representation $\rho$ of $U^{n,q}(\bF_{q^n})$ with central character $\psi$. We must prove that the restriction of $\rho$ to $H^-_m(\bF_{q^n})$ contains the $1$-dimensional representation $\psi\circ\pr_n:H^-_m(\bF_{q^n})\rar{}\qls$. For each integer $k$ we write $U^{\geq k}\subset U^{n,q}$ for the subgroup consisting of elements of the form $1+\sum_{j=k}^n a_j e_j$. We will show, using descending induction on $k$, that for any $k$ the restriction of $\rho$ to $H^-_m(\bF_{q^n})\cap U^{\geq k}(\bF_{q^n})$ contains the character $\psi\circ\pr_n$. This will imply the lemma.

\mbr

If $k=n$, there is nothing to prove. So we assume that $k<n$ and that the assertion in the previous paragraph holds for $k+1$ in place of $k$. Further, we may assume that $k>n/2$ and $m\nmid k$, since otherwise $H^-_m(\bF_{q^n})\cap U^{\geq k}(\bF_{q^n})=H^-_m(\bF_{q^n})\cap U^{\geq k+1}(\bF_{q^n})$.

\mbr

By the induction hypothesis, the restriction of $\rho$ to $H^-_m(\bF_{q^n})\cap U^{\geq k+1}(\bF_{q^n})$ contains $\psi\circ\pr_n$. This implies that the restriction of $\rho$ to $H^-_m(\bF_{q^n})\cap U^{\geq k}(\bF_{q^n})$ contains some character $\chi:H^-_m(\bF_{q^n})\cap U^{\geq k}(\bF_{q^n})\rar{}\qls$ such that
\[
\chi\bigl\lvert_{H^-_m(\bF_{q^n})\cap U^{\geq k+1}(\bF_{q^n})}=\psi\circ\pr_n.
\]
The subgroup $U^{\geq n-k}(\bF_{q^n})\subset U^{n,q}(\bF_{q^n})$ normalizes $H^-_m(\bF_{q^n})\cap U^{\geq k}(\bF_{q^n})$ and centralizes $H^-_m(\bF_{q^n})\cap U^{\geq k+1}(\bF_{q^n})$. It will be enough to find an element $g\in U^{\geq n-k}(\bF_{q^n})$ that conjugates $\chi$ into the character $\psi\circ\pr_n$ on $H^-_m(\bF_{q^n})\cap U^{\geq k}(\bF_{q^n})$.

\mbr

To this end, observe that by construction, we can write
\[
\chi \Bigl( 1 + \sum_{\substack{k\leq j<n \\ m\nmid j}} a_j  e_j + a_n  e_n \Bigr) = \chi_1(a_k)\cdot\psi(a_n)
\]
for some character $\chi_1:\bF_{q^n}\to\qls$. Let $g=1+a_{n-k} e_{n-k}\in U^{\geq n-k}(\bF_{q^n})$, where $a_{n-k}\in\bF_{q^n}$ will be chosen later. Then a direct calculation shows that
\[
\chi \biggl( g \cdot \Bigl( 1 + \sum_{\substack{k\leq j<n \\ m\nmid j}} a_j  e_j + a_n  e_n \Bigr) \cdot g^{-1} \biggr) = \chi_1(a_k)\cdot\psi\bigl(a_n+a_{n-k} a_k^{q^{n-k}} - a_k a_{n-k}^{q^k}\bigr).
\]
It remains to check that $a_{n-k}\in\bF_{q^n}$ can be chosen so that
\begin{equation}\label{e:need}
\chi_1(x)=\psi\bigl(a_{n-k}^{q^k}x-a_{n-k}x^{q^{n-k}}\bigr)
\end{equation}
for all $x\in\bF_{q^n}$. If we fix a nontrivial character $\psi_0:\bF_p\to\qls$, we can find (unique) $y,b\in\bF_{q^n}$ such that $\chi_1(x)=\psi_0\bigl(\Tr_{\bF_{q^n}/\bF_p}(yx)\bigr)$ and $\psi(x)=\psi_0\bigl(\Tr_{\bF_{q^n}/\bF_p}(bx)\bigr)$ for all $x\in\bF_{q^n}$. Then
\begin{eqnarray*}
\psi\bigl(a_{n-k}^{q^k}x-a_{n-k}x^{q^{n-k}}\bigr) &=& \psi_0\Bigl(\Tr_{\bF_{q^n}/\bF_p}\bigl( b a_{n-k}^{q^k}x - b a_{n-k}x^{q^{n-k}} \bigr)\Bigr) \\
&=& \psi_0\Bigl(\Tr_{\bF_{q^n}/\bF_p}\bigl( a_{n-k}^{q^{-(n-k)}}\cdot x \cdot (b-b^{q^k}) \bigr)\Bigr),
\end{eqnarray*}
where we used the identities $a_{n-k}^{q^k}=a_{n-k}^{q^{-(n-k)}}$ and $b^{q^k}=b^{q^{-(n-k)}}$ (which hold because $a_{n-k},b\in\bF_{q^n}$) together with the fact that $\Tr_{\bF_{q^n}/\bF_p}(z^{q^{n-k}})=\Tr_{\bF_{q^n}/\bF_p}(z)$ for all $z\in\bF_{q^n}$. Since $m\nmid k$ and $\psi$ has conductor $q^m$ by assumption, we have $b^{q^k}\neq b$. So if we choose $a_{n-k}=\bigl( y / (b-b^{q^k}) \bigr)^{q^{n-k}}$, then \eqref{e:need} is satisfied. \qed

\subsection{Auxiliary lemmas on finite group representations} The next two lemmas (which are rather standard) will be used in the proof of Proposition \ref{p:construction-rho-psi-new}.

\begin{lem}\label{l:irreducibility-induced}
Let $\Ga$ be a finite group, $N\subset\Ga$ a normal subgroup and $\chi:N\to\qls$ a character. Write $\Ga^\chi\subset\Ga$ for the stabilizer of $\chi$ with respect to the conjugation action of $\Ga$. If $\rho$ is any irreducible representation of $\Ga^\chi$ whose restriction to $N$ is isomorphic to a direct sum of copies of $\chi$, then $\Ind_{\Ga^\chi}^\Ga\rho$ is irreducible.
\end{lem}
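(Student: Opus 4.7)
The plan is to use Mackey's irreducibility criterion, which is essentially the tool Clifford theory provides for exactly this situation. Concretely, I will compute the endomorphism ring of $\Ind_{\Ga^\chi}^\Ga\rho$ and show it is one-dimensional.

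First, I would apply Frobenius reciprocity together with Mackey's double-coset decomposition to obtain
\[
\End_\Ga\bigl(\Ind_{\Ga^\chi}^\Ga\rho\bigr) \cong \bigoplus_{g \in \Ga^\chi \backslash \Ga / \Ga^\chi} \Hom_{\Ga^\chi \cap g\Ga^\chi g^{-1}}\bigl(\rho, {}^g\!\rho\bigr),
\]
where ${}^g\!\rho$ denotes the representation of $g\Ga^\chi g^{-1}$ obtained by conjugation. The term corresponding to the trivial double coset $g \in \Ga^\chi$ contributes $\End_{\Ga^\chi}(\rho)$, which is one-dimensional by Schur's lemma since $\rho$ is assumed irreducible. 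It therefore suffices to show that every other summand vanishes.

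Fix $g \in \Ga \setminus \Ga^\chi$. The key observation is that since $N$ is normal in $\Ga$, it is contained in $\Ga^\chi \cap g\Ga^\chi g^{-1}$, so any $\Ga^\chi \cap g\Ga^\chi g^{-1}$-equivariant map $\rho \to {}^g\!\rho$ is in particular $N$-equivariant. By hypothesis, $\rho\vert_N$ is $\chi$-isotypic, hence ${}^g\!\rho\vert_N$ is ${}^g\!\chi$-isotypic, where ${}^g\!\chi(n) = \chi(g^{-1}ng)$. Since $g \notin \Ga^\chi$, the characters $\chi$ and ${}^g\!\chi$ of $N$ are distinct, so $\Hom_N(\rho\vert_N, {}^g\!\rho\vert_N) = 0$, which forces the corresponding Mackey summand to be zero.

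Combining the two observations gives $\End_\Ga(\Ind_{\Ga^\chi}^\Ga\rho) = \ql$, and irreducibility of $\Ind_{\Ga^\chi}^\Ga\rho$ follows from the fact that a finite-dimensional semisimple representation of a finite group over an algebraically closed field of characteristic zero is irreducible iff its endomorphism algebra is one-dimensional. There is no real obstacle here; the only thing to be careful about is verifying that the hypothesis ``$\rho\vert_N$ is a sum of copies of $\chi$'' is indeed what makes the Mackey summands for $g \notin \Ga^\chi$ vanish, which it does precisely because $N$ is normal (ensuring $N \subset \Ga^\chi \cap g\Ga^\chi g^{-1}$) and because distinct characters of an abelian-quotient-irrelevant setting give no equivariant maps.
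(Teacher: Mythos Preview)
Your argument is correct and is precisely the approach the paper has in mind: the paper's own proof consists of the single sentence ``The proof is an easy exercise in applying Mackey's irreducibility criterion,'' and what you have written is exactly that exercise carried out in full.
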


The proof is an easy exercise in applying Mackey's irreducibility criterion.

\begin{lem}[See Prop.~B.4 in \cite{intro} and its proof]\label{l:heisenberg-representations}
Let $H$ be a finite group, $N\subset H$ a normal subgroup and $\chi:N\to\qls$ a character that is invariant under $H$-conjugation. Assume also that $N$ contains the commutator subgroup $[H,H]$.

\begin{enumerate}[$($a$)$]
\item The map $H\times H\rar{}\qls$ given by $(h_1,h_2)\mapsto\chi(h_1h_2h_1^{-1}h_2^{-1})$ descends to a bimultiplicative map $B_\chi:(H/N)\times(H/N)\rar{}\qls$.
 \sbr
\item Let $K=\bigl\{x\in H/N \st B_\chi(x,y)=1\ \forall\,y\in H/N \bigr\}$ denote the kernel of $B_\chi$ and write $K'\subset H$ for the preimage of $K$ in $H$. Then $\chi$ extends to a character of $K'$, and given any such extension $\chi':K'\rar{}\qls$, the induced representation $\Ind_{K'}^H\chi'$ is a direct sum of copies of an irreducible representation $\rho_{\chi'}$ of $H$.
 \sbr
\item Let $L\subset H/N$ be maximal among all subgroups of $H/N$ with the property that $B_\chi\bigl\lvert_{L\times L}\equiv 1$, write $L'\subset H$ for the preimage of $L$ in $H$ (so that $K'\subset L'$), and let $\chi':K'\rar{}\qls$ be as in part (b). Then $\chi'$ extends to a character of $L'$, and given any such extension $\widetilde{\chi}':L'\rar{}\qls$, we have $\rho_{\chi'}\cong\Ind_{L'}^H\widetilde{\chi}'$.
\end{enumerate}
\end{lem}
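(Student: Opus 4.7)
The plan is to treat parts (b) and (c) together, with (c) doing most of the work. The organizing principle is the ``symplectic'' structure $B_\chi$ induces on the finite abelian group $H/N$ (abelian because $[H,H] \subset N$): its radical is $K$, and a Lagrangian $L \supset K$ satisfies $|L/K|^2 = |H/K|$. For part (a), I first note that $[h_1, h_2] \in [H,H] \subset N$, so $\chi([h_1,h_2])$ makes sense. If $h_1 \in N$, then $H$-invariance of $\chi$ yields $\chi([h_1,h_2]) = \chi(h_1)\chi(h_2 h_1^{-1} h_2^{-1}) = \chi(h_1)\chi(h_1^{-1}) = 1$, so $B_\chi$ descends to $H/N$; bimultiplicativity follows from the commutator identity $[h_1 h_1', h_2] = h_1 [h_1', h_2] h_1^{-1} \cdot [h_1, h_2]$ and $H$-invariance again. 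Note also that $B_\chi$ is alternating, since $[y,x] = [x,y]^{-1}$.

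For the existence of the extensions $\chi'$ in (b) and $\widetilde{\chi}'$ in (c), the key observation is that the hypothesis $B_\chi \equiv 1$ on $K \times K$ (resp.\ on $L \times L$) forces $\chi$ to be trivial on $[K',K']$ (resp.\ $[L',L']$). Thus $\chi$ descends to the corresponding abelianized quotient and extends along the inclusion of a finite abelian subgroup by divisibility of $\qls$. Observe also that $K'$ and $L'$ are automatically normal in $H$ because $H/N$ is abelian.

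The heart of the argument is part (c). Given a Lagrangian $L$ and an extension $\widetilde{\chi}'$, I apply Mackey's irreducibility criterion to $\Ind_{L'}^H \widetilde{\chi}'$. Since $L'$ is normal in $H$, this reduces to showing $\widetilde{\chi}'^h \neq \widetilde{\chi}'$ on $L'$ whenever $h \notin L'$, where $\widetilde{\chi}'^h(l) := \widetilde{\chi}'(h l h^{-1})$. A direct calculation gives $\widetilde{\chi}'^h(l)\cdot \widetilde{\chi}'(l)^{-1} = \widetilde{\chi}'([h,l]) = \chi([h,l]) = B_\chi(\overline{h}, \overline{l})$, and the maximal isotropy $L = L^\perp$ in $H/K$ ensures this is a nontrivial character of $L'/K'$ whenever $\overline{h} \notin L$. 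A cardinality count based on $|L/K|^2 = |H/K|$ then shows the map $h \mapsto B_\chi(\overline{h}, \cdot)$ induces a bijection $H/L' \iso \Hom(L'/K', \qls)$, so every character of $L'/K'$ arises as $\widetilde{\chi}'^h / \widetilde{\chi}'$ for some $h$; consequently $\Ind_{L'}^H (\widetilde{\chi}' \cdot \eta) \cong \Ind_{L'}^H \widetilde{\chi}'$ for any such $\eta$. Induction in stages then gives
\[
\Ind_{K'}^H \chi' = \Ind_{L'}^H \Ind_{K'}^{L'} \chi' = \bigoplus_{\eta} \Ind_{L'}^H(\widetilde{\chi}' \cdot \eta),
\]
which is a direct sum of $|L'/K'|$ copies of the irreducible $\rho_{\chi'} := \Ind_{L'}^H \widetilde{\chi}'$, independent of the choice of $\widetilde{\chi}'$; this yields (b) together with the last clause of (c).

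The main obstacle will be the Mackey/irreducibility step: one must carefully identify $\widetilde{\chi}'^h / \widetilde{\chi}'$ with the character $B_\chi(\overline{h}, \cdot)$ of $L'/K'$ and verify the cardinality identity $|H/L| = |L/K|$. The latter rests on the standard but nontrivial structural fact that a non-degenerate alternating bimultiplicative form on a finite abelian group admits a Lagrangian whose order squared is the order of the group.
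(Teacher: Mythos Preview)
The paper does not give its own proof of this lemma; it simply cites ``Prop.~B.4 in \cite{intro} and its proof'' and moves on. Your argument is therefore not being compared against anything in the paper itself, but it is a correct, self-contained proof of the statement.

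One small remark: you flag the cardinality identity $|L/K|^2 = |(H/N)/K|$ as resting on a ``standard but nontrivial structural fact'' about nondegenerate alternating forms on finite abelian groups. In fact you have already done the work needed to deduce it directly. Once you have established $L = L^{\perp}$ (which, as you observe, follows immediately from maximality plus the alternating property), the map $\bar h \mapsto B_\chi(\bar h,\cdot)\bigl\lvert_L$ gives an injection $(H/N)/L \hookrightarrow \Hom(L/K,\qls)$, while the map $\bar l \mapsto B_\chi(\cdot,\bar l)$ gives an injection $L/K \hookrightarrow \Hom((H/N)/L,\qls)$. Comparing cardinalities yields $|(H/N)/L| = |L/K|$ with no further structural input. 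This also shows directly that the first injection is a bijection, which is exactly what you need for the step ``every character of $L'/K'$ arises as $\widetilde{\chi}'^h/\widetilde{\chi}'$.'' So the proof is even more elementary than you suggest.
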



\subsection{Proof of Proposition \ref{p:construction-rho-psi-new}}\label{ss:proof-p:construction-rho-psi-new} Write $U^{>n/2}\subset U^{n,q}$ for the subgroup consisting of elements of the form $1+\sum_{n/2<j\leq n}a_j  e_j$. This is a normal abelian subgroup of $U^{n,q}$, which is contained in $H_m$. We first establish the following

\begin{lem}\label{l:stabilizer-new}
The normalizer in $U^{n,q}(\bF_{q^n})$ of the character
\[
\widetilde{\psi}\bigl\lvert_{U^{>n/2}(\bF_{q^n})} : U^{>n/2}(\bF_{q^n}) \rar{} \qls
\]
is equal to $H^+_m(\bF_{q^n})$.
\end{lem}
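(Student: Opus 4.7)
The plan is to identify the character $\chi:=\widetilde{\psi}\lvert_{U^{>n/2}(\bF_{q^n})}$ explicitly, then prove the two containments $H^+_m(\bF_{q^n})\subseteq N$ and $N\subseteq H^+_m(\bF_{q^n})$ separately, where $N$ denotes the normalizer in question.

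First I would show that $\widetilde{\psi}\lvert_{U^{>n/2}(\bF_{q^n})}=\psi\circ\pr_n$. The projection $\nu_m$ sends $U^{>n/2}$ into the subgroup $U^{n_1,q_1,>n_1/2}$ of $U^{n_1,q_1}$, which is abelian since every product $e'_ie'_j$ with $i,j>n_1/2$ vanishes in $\cR$ in both \S\ref{sss:Case1} and \S\ref{sss:Case2}. The explicit formulas for $N^{n_1,q_1}(1-be'_k)$ recorded in the proof of Proposition \ref{p:reduced-norm-key}(c) then vanish on each one-parameter subgroup $\{1+ae'_k\}$ with $n_1/2<k<n_1$ (no such $k$ divides $n_1$ in Case 1, and $k=n_1/2$ is excluded in Case 2), while on the center $\{1+ae'_{n_1}\}$ the reduced norm equals $\Tr_{\bF_{q^n}/\bF_{q_1}}$. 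Combining these, $\widetilde{\psi}(g)=\psi_1(\Tr_{\bF_{q^n}/\bF_{q_1}}(\pr_n(g)))=\psi(\pr_n(g))$ for $g\in U^{>n/2}(\bF_{q^n})$, so $\chi=\psi\circ\pr_n$.

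For the forward containment, note that $\widetilde{\psi}$ is a character of $H_m(\bF_{q^n})$ extending $\chi$ and $U^{>n/2}$ is normal in $H_m$, so conjugation by $H_m(\bF_{q^n})$ preserves $\chi$ automatically. When $m$ is even and $n_1$ is odd, so $H^+_m$ strictly contains $H_m$, the extra generators $1+ae_{n/2}$ of $H^+_m$ over $H_m$ in fact commute with all of $U^{>n/2}$: this is immediate from the multiplication rules $e_{n/2}e_j=e_je_{n/2}=0$ for $j>n/2$ strict in both Cases. Hence $H^+_m(\bF_{q^n})\subseteq N$.

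The reverse containment is the main content. Given $h=1+\sum_kc_ke_k\in N\setminus H^+_m$, I would iteratively left-multiply $h$ by the inverse of a factor $(1+c_{k'}e_{k'})\in H^+_m$, where $k'$ denotes the smallest index with $c_{k'}\neq 0$ in the current $h$, provided $m\mid k'$ (so $(1+c_{k'}e_{k'})\in H^+_m\subseteq N$ and stripping preserves membership in $N$). Each step strictly increases the smallest nonzero index, so the procedure must terminate at an $h$ whose smallest nonzero index $k^*$ either (a) satisfies $k^*\geq n/2$, forcing $h\in H^+_m$ and contradicting the hypothesis, or (b) satisfies $k^*<n/2$ and $m\nmid k^*$. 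In case (b), take $g=1+ae_{n-k^*}\in U^{>n/2}(\bF_{q^n})$ with $a\in\bF_{q^n}$ free; the assumption $c_k=0$ for $k<k^*$ combined with the multiplication rules (which give $e_ke_{n-k^*}=e_{n-k^*}e_k=0$ for $k>k^*$ and $e_{k^*}e_{n-k^*}=e_{n-k^*}e_{k^*}=e_n$ in both Cases) yields $h\xi-\xi h=(c_{k^*}a^{q^{k^*}}-ac_{k^*}^{q^{n-k^*}})e_n$ for $\xi=g-1$. Combined with $e_n\cdot h^{-1}=e_n$ (since $e_ne_k=0$ for $k\geq 1$) and $(h\xi h^{-1})\xi=0$ (products of two elements of $U^{>n/2}$ vanish for the same reason that $U^{>n/2}$ is abelian), this gives $[h,g]=1+(c_{k^*}a^{q^{k^*}}-ac_{k^*}^{q^{n-k^*}})e_n$, whence $\chi([h,g])=\psi(c_{k^*}a^{q^{k^*}}-ac_{k^*}^{q^{n-k^*}})$. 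A standard trace-duality calculation using the nondegeneracy of $\Tr_{\bF_{q^n}/\bF_p}$ and the hypothesis that $\psi$ has conductor $q^m$ then shows this character of $a\in\bF_{q^n}$ is nontrivial precisely when $c_{k^*}\neq 0$ and $m\nmid k^*$, contradicting $h\in N$. The main obstacle is verifying that the multiplication rules in $\cR$ force the commutator $[h,g]$ to collapse to this clean form once $c_k=0$ for $k<k^*$; once that vanishing is in hand, the stripping procedure supplies the reduction to case (b) and completes the proof.
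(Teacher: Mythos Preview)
Your proof is correct and follows essentially the same approach as the paper: both reduce to an element whose lowest nonzero coefficient sits at an index $k<n/2$ with $m\nmid k$, test against $1+ae_{n-k}$, and obtain the contradiction $\psi(c_k a^{q^k}-a c_k^{q^{n-k}})\equiv 1$ via the trace-duality argument using that the conductor of $\psi$ is $q^m$. The only cosmetic differences are that you first establish $\widetilde{\psi}\lvert_{U^{>n/2}}=\psi\circ\pr_n$ in full (the paper only uses this on the relevant two-parameter subgroup, where it is immediate since $m\nmid(n-k)$), and you strip coefficients one at a time from the left whereas the paper clears them in one step by right-multiplying by an element of $U^{n_1,q_1}(\bF_{q^n})$.
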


\begin{proof}
First let us check that $H^+_m(\bF_{q^n})$ does normalize $\widetilde{\psi}\bigl\lvert_{U^{>n/2}}(\bF_{q^n})$. If $m$ is odd or $n_1$ is even, then $H^+_m=H_m$, so there is nothing to do. Suppose that $m$ is even and $n_1$ is odd. It is enough to show that any element of the form $g=1+a_{n/2} e_{n/2}\in U^{n,q}(\bF_{q^n})$ normalizes $\widetilde{\psi}\bigl\lvert_{U^{>n/2}}(\bF_{q^n})$. But in fact, $g$ centralizes $U^{>n/2}(\bF_{q^n})$.

\mbr

Now, to obtain a contradiction, assume that there exists an element $g\in U^{n,q}(\bF_{q^n})$ such that $g\not\in H^+_m(\bF_{q^n})$ and $g$ normalizes $\widetilde{\psi}\bigl\lvert_{U^{>n/2}}(\bF_{q^n})$. Write $g=1+\sum_{j=1}^n a_j  e_j$ and let $k$ be the smallest integer such that $a_k\neq 0$ and $m\nmid k$. Then $k<n/2$. Multiplying $g$ by a suitable element of $U^{n_1,q_1}(\bF_{q^n})$ on the right, we may assume that $a_j=0$ for all $1\leq j<k$. Next consider the subgroup of $U^{>n/2}(\bF_{q^n})$ consisting of all elements of the form $1+c_{n-k} e_{n-k}+c_n e_n$. By assumption, we have
\begin{eqnarray*}
\psi(c_n) &=& \widetilde{\psi}(1+c_{n-k} e_{n-k}+c_n e_n) \\
&=& \widetilde{\psi}\bigl(g\cdot(1+c_{n-k} e_{n-k}+c_n e_n)\cdot g^{-1}\bigr) \\
&=& \widetilde{\psi}\bigl( 1 + c_{n-k}  e_{n-k} + (c_n+a_k c_{n-k}^{q^k} - c_{n-k} a_k^{q^{n-k}})  e_n \bigr) \\
&=& \psi \bigl( c_n+a_k c_{n-k}^{q^k} - c_{n-k} a_k^{q^{n-k}} \bigr) = \psi(a_n)\cdot \psi \bigl( a_k c_{n-k}^{q^k} - c_{n-k} a_k^{q^{n-k}} \bigr).
\end{eqnarray*}
We see that $\psi \bigl( a_k c_{n-k}^{q^k} - c_{n-k} a_k^{q^{n-k}} \bigr)=1$ for all $c_{n-k}\in\bF_{q^n}$. We claim that this is a contradiction. Indeed, as in \S\ref{ss:proof-l:representation-contains-new}, choose $b\in\bF_{q^n}$ with $\psi(x)=\psi_0\bigl(\Tr_{\bF_{q^n}/\bF_p}(bx)\bigr)$ for all $x\in\bF_{q^n}$. Then the computation from \S\ref{ss:proof-l:representation-contains-new} shows that
\[
\psi \bigl( a_k c_{n-k}^{q^k} - c_{n-k} a_k^{q^{n-k}} \bigr) = \psi_0 \Bigl(\Tr_{\bF_{q^n}/\bF_p}\bigl( c_{n-k}^{q^{-(n-k)}}\cdot a_k \cdot (b-b^{q^k}) \bigr)\Bigr),
\]
and since $b^{q^k}\neq b$ and $a_k\neq 0$, the right hand side cannot be $1$ for all $c_{n-k}\in\bF_{q^n}$.
\end{proof}

We proceed with the proof of Proposition \ref{p:construction-rho-psi-new}. If $m$ is odd or $n_1$ is even, then by Lemma \ref{l:stabilizer-new}, the normalizer in $U^{n,q}(\bF_{q^n})$ of $\widetilde{\psi}\bigl\lvert_{U^{>n/2}(\bF_{q^n})}$ is equal to $H_m(\bF_{q^n})$, so applying Lemma \ref{l:irreducibility-induced} to $\Ga=U^{n,q}(\bF_{q^n})$, $N=U^{>n/2}(\bF_{q^n})$ and $\chi=\widetilde{\psi}\bigl\lvert_{U^{>n/2}(\bF_{q^n})}$ implies that $\Ind_{H_m(\bF_{q^n})}^{U^{n,q}(\bF_{q^n})}(\widetilde{\psi})$ is irreducible, proving part (a) of the proposition.

\mbr

Next assume that $m$ is even and $n_1$ is odd. Let us apply Lemma \ref{l:heisenberg-representations} to the group $H=H^+_m(\bF_{q^n})$, the normal subgroup $N=H_m(\bF_{q^n})$ of $H$ and the character $\chi=\widetilde{\psi}$. By Lemma \ref{l:stabilizer-new}, $\widetilde{\psi}$ is invariant under $H^+_m(\bF_{q^n})$-conjugation. The quotient $H^+_m(\bF_{q^n})/H_m(\bF_{q^n})$ can be naturally identified with the additive group of $\bF_{q^n}$. To calculate the induced ``commutator pairing''
\[
B_{\widetilde{\psi}} : \bigl(H^+_m(\bF_{q^n})/H_m(\bF_{q^n})\bigr) \times \bigl(H^+_m(\bF_{q^n})/H_m(\bF_{q^n})\bigr) \rar{} \qls,
\]
we observe that if $g=1+x e_{n/2}$ and $h=1+y e_{n/2}$ with $x,y\in\bF_{q^n}$, then
\[
ghg^{-1}h^{-1} = 1 + \bigl( x\cdot y^{q^{n/2}} - y\cdot x^{q^{n/2}} \bigr) \cdot  e_n,
\]
whence $B_{\widetilde{\psi}}$ can be identified with the pairing
\begin{equation}\label{e:pairing-1}
\bF_{q^n} \times \bF_{q^n} \rar{} \qls, \qquad (x,y) \longmapsto \psi\bigl( x\cdot y^{q^{n/2}} - y\cdot x^{q^{n/2}} \bigr).
\end{equation}

\begin{lem}\label{l:lagrangian-1}
The pairing \eqref{e:pairing-1} is nondegenerate, and the additive subgroup $\bF_{q^{n/2}}\subset\bF_{q^n}$ is maximal isotropic with respect to it.
\end{lem}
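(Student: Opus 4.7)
\textbf{Proof plan for Lemma \ref{l:lagrangian-1}.} The plan is to verify isotropy of $\bF_{q^{n/2}}$ by a trivial computation, then establish nondegeneracy by writing $\psi$ in terms of the absolute trace form on $\bF_{q^n}$ and exploiting the hypothesis that $m$ is even and $n_1$ is odd, and finally deduce maximality by a counting argument using the fact that the pairing is alternating (in the multiplicative sense that $B(x,x)=1$).

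For isotropy, I would simply observe that if $x,y\in\bF_{q^{n/2}}$, then $x^{q^{n/2}}=x$ and $y^{q^{n/2}}=y$, so $xy^{q^{n/2}}-yx^{q^{n/2}}=0$, whence the pairing takes the value $\psi(0)=1$. Since $\abs{\bF_{q^{n/2}}}=q^{n/2}$ and $\abs{\bF_{q^n}}=q^n$, it will suffice (by the standard lagrangian dimension count for nondegenerate alternating pairings on finite abelian groups) to prove nondegeneracy: then any isotropic subgroup $L\subset\bF_{q^n}$ satisfies $L\subset L^\perp$ and $\abs{L}\cdot\abs{L^\perp}=q^n$, giving $\abs{L}\leq q^{n/2}$, and our $\bF_{q^{n/2}}$ achieves this bound.

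The main step is nondegeneracy. Fix a nontrivial character $\psi_0\from\bF_p\to\qls$ and write $\psi(z)=\psi_0\bigl(\Tr_{\bF_{q^n}/\bF_p}(bz)\bigr)$ for a unique $b\in\bF_{q^n}$. The assumption that $\psi$ has conductor $q^m$ means $b\in\bF_{q^m}$ but $b\notin\bF_{q^k}$ for any proper divisor $k$ of $m$ that also divides $n$. Since $m$ is even, $n=mn_1$ and $n_1$ is odd, we have $n/2=(m/2)n_1$ with $n/2$ an integer and $m/2$ a proper divisor of $m$ that divides $n$ (as $n=2\cdot(m/2)\cdot n_1$). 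Therefore $b^{q^{m/2}}\neq b$, and moreover $n/2\equiv m/2\pmod{m}$, so that $b^{q^{n/2}}=b^{q^{m/2}}$. Now suppose $y\in\bF_{q^n}$ satisfies $\psi(xy^{q^{n/2}}-yx^{q^{n/2}})=1$ for all $x\in\bF_{q^n}$. Using the Frobenius-invariance $\Tr_{\bF_{q^n}/\bF_p}(z^{q^{n/2}})=\Tr_{\bF_{q^n}/\bF_p}(z)$, I would rewrite
\[
\Tr_{\bF_{q^n}/\bF_p}(byx^{q^{n/2}})=\Tr_{\bF_{q^n}/\bF_p}(b^{q^{n/2}}y^{q^{n/2}}x)=\Tr_{\bF_{q^n}/\bF_p}(b^{q^{m/2}}y^{q^{n/2}}x),
\]
so that the hypothesis becomes
\[
\Tr_{\bF_{q^n}/\bF_p}\bigl((b-b^{q^{m/2}})y^{q^{n/2}}x\bigr)=0\qquad\text{for all }x\in\bF_{q^n}.
\]
Nondegeneracy of the absolute trace pairing forces $(b-b^{q^{m/2}})y^{q^{n/2}}=0$, and since $b-b^{q^{m/2}}\neq 0$ we conclude $y=0$.

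The only slightly delicate point will be the bookkeeping that $n/2\equiv m/2\pmod{m}$ under the parity assumptions on $m$ and $n_1$; once that is in place, everything else is a direct computation. I do not foresee a serious obstacle.
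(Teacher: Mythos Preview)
Your proof is correct and follows essentially the same approach as the paper. Both arguments verify isotropy trivially, deduce maximality from nondegeneracy by the cardinality count $\abs{\bF_{q^{n/2}}}=\sqrt{\abs{\bF_{q^n}}}$, and prove nondegeneracy by writing $\psi(z)=\psi_0(\Tr_{\bF_{q^n}/\bF_p}(bz))$ and reducing to $(b-b^{q^{n/2}})y^{q^{n/2}}=0$ via the Frobenius-invariance of the trace; the paper phrases the key nonvanishing as ``$m\nmid n/2$ implies $b\neq b^{q^{n/2}}$,'' while you compute $b^{q^{n/2}}=b^{q^{m/2}}$ explicitly from $b\in\bF_{q^m}$ and $n/2\equiv m/2\pmod m$, which amounts to the same thing.
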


\begin{proof}
It is clear that $\bF_{q^{n/2}}$ is isotropic with respect to \eqref{e:pairing-1}. If we show that \eqref{e:pairing-1} is nondegenerate, then the maximality will follow from the fact that $\#{\bF_{q^{n/2}}}=\sqrt{\#{\bF_{q^n}}}$. To this end, as in \S\ref{ss:proof-l:representation-contains-new}, choose $b\in\bF_{q^n}$ such that $\psi(x)=\psi_0\bigl(\Tr_{\bF_{q^n}/\bF_p}(bx)\bigr)$ for all $x\in\bF_{q^n}$. Assume that $y\in\bF_{q^n}$ is such that $\psi\bigl( x\cdot y^{q^{n/2}} - y\cdot x^{q^{n/2}} \bigr)=1$ for all $x\in\bF_{q^n}$. Then $\psi_0\Bigl( \Tr_{\bF_{q^n}/\bF_p} \bigl( b\cdot x\cdot y^{q^{n/2}} - b^{q^{n/2}}\cdot y^{q^{n/2}}\cdot x \bigr) \Bigr)=1$ for all $x\in\bF_{q^n}$, where we used the identities $b^{q^{-n/2}}=b^{q^{n/2}}$, $y^{q^{-n/2}}=y^{q^{n/2}}$ and the fact that $\Tr_{\bF_{q^n}/\bF_p}(z^{q^{n/2}})=\Tr_{\bF_{q^n}/\bF_p}(z)$ for all $z\in\bF_{q^n}$. This forces $(b-b^{q^{n/2}})\cdot y^{q^{n/2}}=0$. Since $\psi$ has conductor $q^m$ and $m$ does not divide $n/2$ by assumption, we have $b\neq b^{q^{n/2}}$, whence $y=0$, as needed.
\end{proof}

Now we complete the proof of Proposition \ref{p:construction-rho-psi-new}(b). Note that the subgroup $\Ga_m$ equals the preimage of $\bF_{q^{n/2}}\subset\bF_{q^n}=H^+_m(\bF_{q^n})/H_m(\bF_{q^n})$ in $H^+_m(\bF_{q^n})$. By Lemmas \ref{l:heisenberg-representations} and \ref{l:lagrangian-1}, $\Ind_{H_m(\bF_{q^n})}^{H^+_m(\bF_{q^n})}(\widetilde{\psi})$ is a direct sum of $q^{n/2}$ copies of a single irreducible representation $\rho$ of $H^+_m(\bF_{q^n})$. Moreover, $\widetilde{\psi}$ can be extended to a character of $\Ga_m$, and if $\chi$ is any such extension, then $\rho\cong\Ind_{\Ga_m}^{H^+_m(\bF_{q^n})}(\chi)$. We see that
\[
\rho_\psi := \Ind_{\Ga_m}^{U^{n,q}(\bF_{q^n})}(\chi) \cong \Ind_{H^+_m(\bF_{q^n})}^{U^{n,q}(\bF_{q^n})}(\rho)
\]
is independent of the choice of $\chi$, and
\[
\Ind_{H_m(\bF_{q^n})}^{U^{n,q}(\bF_{q^n})}(\widetilde{\psi})\cong \Ind_{H^+_m(\bF_{q^n})}^{U^{n,q}(\bF_{q^n})} \Ind_{H_m(\bF_{q^n})}^{H^+_m(\bF_{q^n})}(\widetilde{\psi})
\]
is isomorphic to a direct sum of $q^{n/2}$ copies of $\rho_\psi$, completing the proof.

\subsection{Proof of Proposition \ref{p:how-xi-psi-appears-new}}\label{ss:proof-p:how-xi-psi-appears-new} As explained in Remark \ref{r:inclusion-new}, we identify $U^{n_1,q_1}$ with the subgroup of $U^{n,q}$ consisting of all elements of the form $1+\sum_{m\mid j}a_j e_j$.

\subsubsection{Auxiliary notation} Let $I'$ denote the set of integers $j$ such that $n/2<j<n$ and $m\nmid j$. Put $I=I'\cup\{n\}$ and $J=\{1,2,\dotsc,n\}\setminus I$. Then we can write
\[
H^-_m = \Bigl\{ 1 + \sum_{i\in I} a_i  e_i \Bigr\} \subset U^{n,q},
\]
and we can identify $U^{n,q}/H^-_m$ with an affine space $\bA^d$ of dimension
\[
d:=\#{J} = \begin{cases}
\frac{n+n_1}{2}-1 & \text{if } m \text{ is odd or } n_1 \text{ is even}, \\
\frac{n+n_1+1}{2}-1 & \text{if } m \text{ is even and } n_1 \text{ is odd}.
\end{cases}
\]
We will denote the coordinates of this affine space by $(a_j)_{j\in J}$.

\subsubsection{A reformulation of Proposition \ref{p:how-xi-psi-appears-new}}\label{sss:reformulation-1} The morphism
\[
s : \bA^d \rar{} U^{n,q}, \qquad (a_j)_{j\in J} \longmapsto 1+\sum_{j\in J} a_j e_j
\]
is a section of the quotient map $U^{n,q}\rar{}U^{n,q}/H^-_m$. Define
\[
F:\bA^d\times H^-_m\rar{}U^{n,q} \quad\text{via}\quad (x,h)\mapsto \Fr_{q^n}(s(x))h s(x)^{-1},
\]
write $\cL_\psi$ for the Artin-Scherier local system on $\bG_a$ defined by the character $\psi$, and let $\widetilde{\pr}_n:\bA^d\times H^-_m\rar{}\bG_a$ be the composition of the second projection $\bA^d\times H^-_m\rar{}H^-_m$ with $\pr_n:H^-_m\rar{}\bG_a$. By \cite[Prop.~2.3]{DLtheory}, we have
\begin{equation}\label{e:2-1}
\Hom_{U^{n,q}(\bF_{q^n})}\bigl(\xi_\psi,H^\bullet_c(X,\ql)\bigr) \cong H^\bullet_c \bigl( F^{-1}(Y), \widetilde{\pr}_n^*\cL_\psi\bigl\lvert_{F^{-1}(Y)} \bigr)
\end{equation}
as graded vector spaces with an action of $\Fr_{q^n}$.

\bigbreak

\bigbreak

Now note that if $h=1+\sum_{i\in I}a_i e_i\in H^-_m$ and we put $h^\circ:=1+\sum_{i\in I\setminus\{n\}}a_i e_i$, then for any $g_1,g_2\in U^{n,q}$, we have $\pr_n(g_1hg_2)=a_n+\pr_n(g_1h^\circ g_2)$. In particular, for any $x\in\bA^d$, we have $\pr_n\bigl(F(x,h)\bigr)=a_n+\pr_n\bigl(F(x,h^\circ)\bigr)$. This implies that the map
\[
\bA^d\times H^-_m \rar{} \bA^d\times (H^-_m\cap Y), \qquad (x,h)\longmapsto(x,h^\circ)
\]
(which is the projection onto the first $n-1$ coordinates) yields an isomorphism between $F^{-1}(Y)\subset\bA^d\times H^-_m$ and $\bA^d\times (H^-_m\cap Y)$. Under this isomorphism, the local system $\widetilde{\pr}_n^*\cL_\psi\bigl\lvert_{F^{-1}(Y)}$ corresponds to the local system $\al^*(\cL_\psi)$ on $\bA^d\times(H^-_m\cap Y)$, where $\al:\bA^d\times(H^-_m\cap Y)\rar{}\bG_a$ is given by $\al(x,h)=-\pr_n\bigl(\Fr_{q^n}(s(x))h s(x)^{-1}\bigr)$. So in view of \eqref{e:2-1}, Proposition \ref{p:how-xi-psi-appears-new} is equivalent to the following assertions:
\begin{equation}\label{e:assertion1-1}
H^r_c \bigl( \bA^d\times(H^-_m\cap Y), \al^*(\cL_\psi) \bigr) = 0 \qquad\text{if } r\neq n+n_1-2;
\end{equation}
\begin{equation}\label{e:assertion2-1}
\begin{split}
\dim H^{n+n_1-2}_c \bigl( \bA^d\times(H^-_m\cap Y), \al^*(\cL_\psi) \bigr) = \\ =
\begin{cases}
1 & \text{if } m \text{ is odd or } n_1 \text{ is even}, \\
q^{n/2} & \text{if } m \text{ is even and } n_1 \text{ is odd};
\end{cases}
\end{split}
\end{equation}
\begin{equation}\label{e:assertion3-1}
\begin{split}
\Fr_{q^n} \text{ acts on } \dim H^{n+n_1-2}_c \bigl( \bA^d\times(H^-_m\cap Y), \al^*(\cL_\psi) \bigr) \\ \text{ as multiplication by the scalar } (-1)^{n-n_1}\cdot q^{n(n+n_1-2)/2}.
\end{split}
\end{equation}

\subsubsection{Additional notation} There are $n-d-1$ integers $j$ such that $n>j>n/2$ and $m\nmid j$. Let us label them as follows: $j_1>j_2>\dotsb>j_{n-d-1}$. Thus $I=\{n\}\cup I^\circ$, where $I^\circ=\{j_1,j_2,\dotsc,j_{n-d-1}\}$, and $J=\{m,2m,\dotsc,(n_1-1)m\}\cup J^\circ$, where
\[
J^\circ = \begin{cases}
\{n-j_1,\dotsc,n-j_{n-d-1}\} & \text{if } m \text{ is odd or } n_1 \text{ is even}, \\
\{n-j_1,\dotsc,n-j_{n-d-1},n/2\} & \text{if } m \text{ is even and } n_1 \text{ is odd}.
\end{cases}
\]
From now on we will identify $\bA^d\times(H^-_m\cap Y)$ with the affine space $\bA^{n-1}$, whose coordinates will be denoted by $(a_j)_{j\in J\cup I^\circ}$.

\subsubsection{An inductive setup}\label{sss:inductive-setup-1} For each $1\leq k\leq n-d$, put $I_k^\circ=\{j_k,j_{k+1},\dotsc,j_{n-d-1}\}$ and
$J_k=\{m,2m,\dotsc,(n_1-1)m\}\cup J_k^\circ$, where
\[
J^\circ_k = \begin{cases}
\{n-j_k,\dotsc,n-j_{n-d-1}\} & \text{if } m \text{ is odd or } n_1 \text{ is even}, \\
\{n-j_k,\dotsc,n-j_{n-d-1},n/2\} & \text{if } m \text{ is even and } n_1 \text{ is odd}.
\end{cases}
\]
In particular, $I_1^\circ=I^\circ$, $J^\circ_1=J^\circ$, $I^\circ_{n-d}=\varnothing$, $J^\circ_{n-d}=\varnothing$ if $m$ is odd or $n_1$ is even, and $J^\circ_{n-d}=\{n/2\}$ if $m$ is even and $n_1$ is odd. Observe also that for each $1\leq k\leq n-d-1$, the set $I^\circ_{k+1}$ (respectively, $J_{k+1}$) is obtained from $I^\circ_k$ (respectively, $J_k$) by removing $j_k$ (respectively, $n-j_k$).

\mbr

We write $\bA^{n-2k+1}$ for the $(n-2k+1)$-dimensional affine space, whose coordinates will be denoted by $(a_j)_{j\in J_k\cup I_k^\circ}$. If $1\leq k\leq n-d-1$, we write $p_k:\bA^{n-2k+1}\to\bA^{n-2k-1}$ for the projection obtained by discarding $a_{j_k}$ and $a_{n-j_k}$, and $\iota_k:\bA^{n-2k-1}\into\bA^{n-2k+1}$ for the natural ``zero section'' of $p_k$. We put $\al_k=\al\circ\iota_1\circ\dotsc\circ\iota_{k-1}:\bA^{n-2k+1}\to\bG_a$, where $\al:\bA^{n-1}=\bA^d\times(H^-_m\cap Y)\rar{}\bG_a$ is the morphism introduced in \S\ref{sss:reformulation-1}. In particular, we have $\al_1=\al$.

\subsubsection{The key lemma} The next result allows us to exploit the inductive setup formulated in \S\ref{sss:inductive-setup-1}. Its proof is based on \cite[Prop.~2.10]{DLtheory}.

\begin{lem}\label{l:key-induction-new}
For each $1\leq k\leq n-d-1$, we have
\[
H^\bullet_c(\bA^{n-2k+1},\al_k^*\cL_\psi) \cong H^\bullet_c(\bA^{n-2k-1},\al_{k+1}^*\cL_\psi)[-2](-1)
\]
as graded vector spaces with an action of $\Fr_{q^n}$.
\end{lem}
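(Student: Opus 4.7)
The plan is to apply the proper base change theorem and projection formula to the morphism $p_k : \bA^{n-2k+1} \to \bA^{n-2k-1}$, which is a trivial $\bA^2$-bundle with fiber coordinates $(a_{j_k}, a_{n-j_k})$. What we aim for is the sheaf-level identity
\[
R(p_k)_! \, \al_k^* \cL_\psi \;\cong\; \al_{k+1}^* \cL_\psi \otimes \ql(-1)[-2]
\]
on $\bA^{n-2k-1}$, after which the lemma follows by pushing forward further to $\Spec \bF_{q^n}$ (or, equivalently, by applying the Leray spectral sequence, which degenerates since the right-hand side is concentrated in a single degree).

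The first step, which is the heart of the matter, is to analyze the restriction of $\al_k$ to a fiber of $p_k$. Recall $\al_k(x,h) = -\pr_n \!\bigl( \Fr_{q^n}(s(x))\,h\,s(x)^{-1} \bigr)$, where the $e_n$-coefficient receives contributions only from products $e_i \cdot e_{n-i}$ in $\cR$ (in Case 2) or more generally from words $e_{i_1}\cdots e_{i_r}$ with $\sum i_t = n$ (in Case 1). Because $n - j_k \in J_k^\circ$ is paired with $j_k \in I_k^\circ$, the variables $a_{j_k}$ and $a_{n-j_k}$ appear in $\al_k$ via a contribution coming from $e_{n-j_k}e_{j_k}$ (from $\Fr_{q^n}(s)\cdot h$) together with $e_{j_k}e_{n-j_k}$ (from $\Fr_{q^n}(s) \cdot s^{-1}$, and from $h\cdot s^{-1}$). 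I expect to find, for each $x_0 \in \bA^{n-2k-1}$, a decomposition
\[
\al_k(x_0, a_{j_k}, a_{n-j_k}) \;=\; \al_{k+1}(x_0) \,+\, \phi_{x_0}(a_{j_k}, a_{n-j_k}),
\]
where $\phi_{x_0} : \bA^2 \to \bG_a$ is a "nondegenerate Frobenius-bilinear" function of the form $c_1 \cdot a_{j_k}\, a_{n-j_k}^{q^r} - c_2\cdot a_{n-j_k}\,a_{j_k}^{q^s}$ plus purely linear terms in $a_{j_k}$ and $a_{n-j_k}$. The precise form of this expression is dictated by the multiplication rules of $\cR$ at indices $j_k$ and $n-j_k$; since $m \nmid j_k$, the character $\psi$ detects these terms nontrivially.

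Given such a $\phi_{x_0}$, the second step is to verify that $H^\bullet_c(\bA^2, \phi_{x_0}^* \cL_\psi) \cong \ql(-1)[-2]$ uniformly in $x_0$, i.e.\ that the rank-one Artin–Schreier sheaf $\phi^* \cL_\psi$ on the total space $\bA^2 \times_{\bA^{n-2k-1}} \bA^{n-2k+1}$ has vanishing $Rp_{k,!}$ except in degree $2$, where it equals $\ql(-1)$. By a change of variables over each $x_0$ (resolving the affine correction and applying a Frobenius substitution), this reduces to the fact that the cohomology of $\cL_\psi$ pulled back by a nondegenerate bilinear pairing $\bA^2 \to \bG_a$ is $\ql(-1)$ concentrated in degree $2$—a standard calculation via Künneth and the vanishing of $H^\bullet_c(\bA^1, f^* \cL_\psi)$ for nonconstant affine $f$. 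The nondegeneracy required is exactly the statement that the coefficients $c_1, c_2$ are not both zero and cannot be eliminated by a Frobenius substitution, which should be automatic from the structure of $\cR$. The entire package is precisely what \cite[Prop.~2.10]{DLtheory} is designed to deliver, and applying it yields the desired sheaf-theoretic identity.

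The main obstacle is the first step: explicitly isolating, inside the expansion of $\pr_n\bigl(\Fr_{q^n}(s(x))\,h\,s(x)^{-1}\bigr)$, the terms involving $a_{j_k}$ and $a_{n-j_k}$ and proving that they assemble into a function $\phi_{x_0}$ of the nondegenerate form described above, independently of whether we are in Case~1 or Case~2 and independently of the parities of $m$ and $n_1$. This is a finite but delicate computation in the ring $\cR$: one must verify that no "cross-cancellations" involving the discarded variables occur and that the linear-in-$(a_{j_k}, a_{n-j_k})$ pieces can always be absorbed by a change of variables. Once this is done, the projection formula and proper base change give the lemma formally.
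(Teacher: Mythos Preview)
Your plan is essentially the paper's proof: isolate the $(a_{j_k},a_{n-j_k})$-dependence of $\al_k$ and invoke \cite[Prop.~2.10]{DLtheory}. One correction to the expected shape of $\phi_{x_0}$ that dissolves the ``main obstacle'' you flag at the end. The ordering $j_1>j_2>\dotsb>j_{n-d-1}$ (not an incidental choice) guarantees that $a_{j_k}$ appears in \emph{exactly} the two monomials
\[
a_{j_k}\,a_{n-j_k}^{q^{j_k}} - a_{j_k}^{q^{n-j_k}}\,a_{n-j_k}^{q^n} \;=\; z - z^{q^{n-j_k}}, \qquad z=a_{j_k}\,a_{n-j_k}^{q^{j_k}},
\]
and in no other term of $\al_k$: there are \emph{no} linear-in-$a_{j_k}$ corrections to absorb. (Reason: $a_{j_k}$ sits only in the middle factor $h$, and to contribute to $e_n$ it must be completed by indices from $J_k$ summing to $n-j_k$; the only such index not a multiple of $m$ and $\leq n-j_k$ is $n-j_k$ itself.) By contrast $a_{n-j_k}$ \emph{can} appear in further terms $a_{n-j_k}\cdot\be_k$, where $\be_k$ is an arbitrary polynomial in the remaining variables---not ``purely linear''---but this is harmless. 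The mechanism is not a change of variables to a nondegenerate bilinear form; rather, integrating out $a_{j_k}$ first gives a complex supported on $\{a_{n-j_k}=0\}$ (the nondegeneracy here is exactly $m\nmid j_k$), where the $\be_k$-terms vanish and only $\al_{k+1}$ survives. That is precisely the content of \cite[Prop.~2.10]{DLtheory}, applied with $y=a_{j_k}$ and $f=a_{n-j_k}$.
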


\begin{proof}
Recall that
\[
\al_k \bigl( (a_j)_{j\in J_k\cup I^\circ_k} \bigr)
=
-\pr_n \biggl(
\Bigl( 1+\sum_{j\in J_k}a_j^{q^n} e_j \Bigr) \cdot \Bigl( 1+\sum_{i\in I_k^\circ} a_i  e_i \Bigr) \cdot \Bigl( 1+\sum_{j\in J_k}a_j e_j \Bigr)^{-1}
\biggr).
\]
The right hand side can be written as a certain sum of monomials in the variables $(a_j)_{j\in J_k\cup I^\circ_k}$. By our choice of the ordering $j_1>j_2>\dotsb>j_{n-d-1}>n/2$, only two of these monomials involve the variable $a_{j_k}$, namely, $-a_{n-j_k}^{q^n}\cdot a_{j_k}^{q^{n-j_k}}$ and $a_{j_k}\cdot a_{n-j_k}^{q^{j_k}}$. This implies that we can write
\[
\begin{split}
\al_k \bigl( (a_j)_{j\in J_k\cup I^\circ_k} \bigr) = & a_{j_k}\cdot a_{n-j_k}^{q^{j_k}} - a_{j_k}^{q^{n-j_k}}\cdot a_{n-j_k}^{q^n} \\
& + \widetilde{\al}_{k+1} \bigl( (a_j)_{j\in J_{k+1}\cup I^\circ_{k+1}} \bigr) \\
& + a_{n-j_k} \cdot \be_k\bigl( (a_j)_{j\in J_k\cup I^\circ_{k+1}} \bigr)
\end{split}
\]
for some polynomials $\widetilde{\al}_{k+1}:\bA^{n-2k-1}\to\bG_a$ and $\be_k:\bA^{n-2k}\to\bG_a$. Substituting $a_{j_k}=a_{n-j_k}=0$ into the last identity shows that $\widetilde{\al}_{k+1}=\al_{k+1}$.

\mbr

We now apply \cite[Prop.~2.10]{DLtheory} in the following setting. $\psi$, $q$ and $n$ have the same meaning as in \emph{loc.~cit.} We let $S_2$ be the affine space $\bA^{n-2k}$ with coordinates $(a_j)_{j\in J_k\cup I^\circ_{k+1}}$ and identify $S=S_2\times\bA^1$ with the affine space $\bA^{n-2k+1}$, where the additional coordinate is labeled $a_{j_k}$ and corresponds to the coordinate $y$ in \emph{loc.~cit.} The morphism $f:S_2\to\bG_a$ from \emph{loc.~cit.} is the projection onto the coordinate $a_{n-j_k}$, so that the subscheme $S_3\subset S_2$ introduced in \emph{loc.~cit.} is identified with the affine space $\bA^{n-2k-1}$ with coordinates $(a_j)_{j\in J_{k+1}\cup I^\circ_{k+1}}$.
Since $m\nmid j_k$, the assertion of Lemma \ref{l:key-induction-new} follows at once from \cite[Prop.~2.10]{DLtheory}.
\end{proof}

\subsubsection{Conclusion of the proof of Proposition \ref{p:how-xi-psi-appears-new}} We return to the task of proving \eqref{e:assertion1-1}--\eqref{e:assertion3-1}. Applying Lemma \ref{l:key-induction-new} successively for $k=1,2,\dotsc,n-d-1$, we obtain
\begin{equation}\label{e:6-1}
\begin{split}
H^\bullet_c \bigl( \bA^d\times(H^-_m\cap Y), \al^*\cL_\psi \bigr) = H^\bullet_c \bigl( \bA^{n-1}, \al^*\cL_\psi \bigr) \\
\cong H^\bullet_c \bigl( \bA^{n-2(n-d)+1}, \al_{n-d}^*\cL_\psi \bigr)[-2(n-d-1)](-(n-d-1)).
\end{split}
\end{equation}
Let us now calculate $\al_{n-d}^*(\cL_\psi)$. Recall that the coordinates on the affine space $\bA^{n-2(n-d)+1}$ are labeled $(a_j)_{j\in J_{n-d}}$, where
\[
J_{n-d} = \begin{cases}
\{m,2m,\dotsc,(n_1-1)m\} & \text{if } m \text{ is odd or } n_1 \text{ is even}, \\
\{m,2m,\dotsc,(n_1-1)m,n/2\} & \text{if } m \text{ is even and } n_1 \text{ is odd},
\end{cases}
\]
and
\[
\al_{n-d} \bigl( (a_j)_{j\in J_{n-d}} \bigr) = -\pr_n \biggl( \Bigl( 1 + \sum_{j\in J_{n-d}} a_j^{q^n}  e_j \Bigr) \cdot \Bigl( 1 + \sum_{j\in J_{n-d}} a_j  e_j \Bigr)^{-1} \biggr).
\]
So if $m$ is odd or $n_1$ is even, we can naturally identify $\bA^{n-2(n-d)+1}$ with
\[
(U^{n_1,q_1}\cap Y)\subset U^{n_1,q_1}\subset U^{n,q},
\]
and $\al_{n-d}$ with the map
\[
U^{n_1,q_1}\cap Y \rar{} \bG_a, \qquad g\longmapsto -\pr_n(L_{q^n}(g)).
\]
Recall that $q^n=q_1^{n_1}$.
Applying Proposition \ref{p:reduced-norm-key} with $(n,q)$ replaced by $(n_1,q_1)$, we find that
$-\pr_n(L_{q^n}(g)) = N^{n_1,q_1}(g) - N^{n_1,q_1}(g)^{q_1}$
for all $g\in U^{n_1,q_1}$. Since $q_1=q^m$ is the conductor of $\psi$, the pullback of $\cL_\psi$ by the map $x\mapsto x-x^{q_1}$ is trivial. Therefore $\al_{n-d}^*(\cL_\psi)$ is the trivial local system on $\bA^{n-2(n-d)+1}$, whence
\[
H^\bullet_c \bigl( \bA^{n-2(n-d)+1}, \al_{n-d}^*\cL_\psi \bigr) \cong \ql[-2(n-2(n-d)+1)](-(n-2(n-d)+1)).
\]
Combining this with \eqref{e:6-1} yields
\[
H^\bullet_c \bigl( \bA^d\times(H^-_m\cap Y), \al^*\cL_\psi \bigr) \cong \ql[-2d](-d).
\]
Recalling that $d=(n+n_1-2)/2$ when $m$ is odd or $n_1$ is even, we obtain all the desired assertions \eqref{e:assertion1-1}--\eqref{e:assertion3-1} in this case.

\mbr

Suppose next that $m$ is even and $n_1$ is odd. Then we can naturally identify $\bA^{n-2(n-d)+1}$ with $(U^{n_1,q_1}\cap Y)\times\bG_a$, where the first factor $U^{n_1,q_1}\cap Y$ corresponds to the coordinates $(a_j)_{j=m,2m,\dotsc,(n_1-1)m}$ and the second factor $\bG_a$ corresponds to the coordinate $a_{n/2}$. Under this identification, $\al_{n-d}$ corresponds to the map
\[
(g,x) \longmapsto -\pr_n(L_{q^n}(g)) + x^{q^n+q^{n/2}} - x^{1+q^{n/2}}.
\]
As in the previous case, the pullback of $\cL_\psi$ by the map $(g,x)\mapsto -\pr_n(L_{q^n}(g))$ is trivial. Since the local system $\cL_\psi$ is multiplicative, we see that $H^\bullet_c \bigl( \bA^{n-2(n-d)+1}, \al_{n-d}^*\cL_\psi \bigr)$ is isomorphic to
\[ H^\bullet_c\bigl(\bG_a,f^*(\cL_\psi)\bigr)[-2(n-2(n-d))](-(n-2(n-d))),
\]
where $f:\bG_a\to\bG_a$ is given by $x\mapsto x^{q^n+q^{n/2}} - x^{1+q^{n/2}}$. So by \eqref{e:6-1},
\begin{equation}\label{e:7-1}
H^\bullet_c \bigl( \bA^d\times(H^-_m\cap Y), \al^*\cL_\psi \bigr) \cong H^\bullet_c\bigl(\bG_a,f^*(\cL_\psi)\bigr)[-2(d-1)](-(d-1))
\end{equation}
Now we can factor $f$ as $f=f_1\circ f_2$, where $f_1(x)=x^{q^{n/2}}-x$ and $f_2(x)=x^{1+q^{n/2}}$. Thus $f^*(\cL_\psi)\cong f_2^*f_1^*(\cL_\psi)$. Since $f_1$ is a homomorphism, $f_1^*(\cL_\psi)\cong\cL_{\psi\circ f_1}$ is the multiplicative local system on $\bG_a$ corresponding to the character $\psi\circ f_1:\bF_{q^n}\rar{}\qls$. Now since $\psi$ has conductor $q^m$ and $m\nmid(n/2)$ by assumption, $\psi\circ f_1$ is nontrivial. On the other hand, $(\psi\circ f_1)\bigl\lvert_{\bF_{q^{n/2}}}$ is necessarily trivial. Applying Proposition \ref{p:pullback-q+1-power} below to $q^{n/2}$ in place of $q$ and $\psi\circ f_1$ in place of $\psi$, we see that
\begin{equation}\label{e:8-1}
\dim H^r_c(\bG_a,f_2^*\cL_{\psi\circ f_1}) = \begin{cases}
q^{n/2} & \text{if } r=1, \\
0 & \text{otherwise},
\end{cases}
\end{equation}
and
\begin{equation}\label{e:9-1}
\Fr_{q^n} \text{ acts on } H^1_c(\bG_a,f_2^*\cL_{\psi\circ f_1}) \text{ via } -q^{n/2}
\end{equation}
Recalling that $d=(n+n_1-1)/2$ in the situation under consideration, we see that all the desired assertions \eqref{e:assertion1-1}--\eqref{e:assertion3-1} follow in this case from \eqref{e:7-1}, \eqref{e:8-1} and \eqref{e:9-1}.

\subsection{An auxiliary cohomology calculation} The next result was used in \S\ref{ss:proof-p:how-xi-psi-appears-new}.

\begin{prop}\label{p:pullback-q+1-power}
Let $\psi:\bF_{q^2}\rar{}\qls$ be a nontrivial character such that $\psi$ is trivial on $\bF_q\subset\bF_{q^2}$, and let $\cL_\psi$ be the corresponding Artin-Schreier local system on $\bG_a$ over $\bF_{q^2}$. Write $f:\bG_a\rar{}\bG_a$ for the map $x\mapsto x^{q+1}$. Then
\[
\dim H^i_c(\bG_a,f^*\cL_\psi) = \begin{cases}
q & \text{if } i=1, \\
0 & \text{otherwise}.
\end{cases}
\]
Moreover, the Frobenius $\Fr_{q^2}$ acts on $H^1_c(\bG_a,f^*\cL_\psi)$ via multiplication by $-q$.
\end{prop}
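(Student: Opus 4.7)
The plan is to combine three ingredients: vanishing of $H^0_c$ and $H^2_c$, the Grothendieck--Ogg--Shafarevich Euler characteristic formula, and a trace calculation coupled with Deligne's purity theorem.

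First, $H^0_c(\bG_a, f^*\cL_\psi)=0$ by the standard fact that $H^0_c$ of any non-zero local system on a smooth, irreducible, non-proper variety vanishes, while $H^2_c(\bG_a, f^*\cL_\psi)=0$ will follow from non-triviality of geometric monodromy, which in turn is implied by wild ramification at $\infty$ (to be verified next). Applying GOS to the embedding $\bA^1\hookrightarrow\mathbb{P}^1$,
\[
\chi_c(\bG_a, f^*\cL_\psi) = \rank(f^*\cL_\psi) - \operatorname{Sw}_\infty(f^*\cL_\psi) = 1 - \operatorname{Sw}_\infty(f^*\cL_\psi).
\]
Any non-trivial Artin--Schreier sheaf satisfies $\operatorname{Sw}_\infty(\cL_\psi)=1$, and $f(x)=x^{q+1}$ is totally ramified at $\infty$ with ramification index $q+1$, which is coprime to $p$ and therefore tame. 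The pullback formula for Swan conductors under tame base change then gives $\operatorname{Sw}_\infty(f^*\cL_\psi) = (q+1)\cdot 1 = q+1$, hence $\chi_c = -q$ and $\dim H^1_c(\bG_a, f^*\cL_\psi) = q$. The wildness of $f^*\cL_\psi$ at $\infty$ is now evident, confirming $H^2_c=0$ as promised.

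Next I apply the Grothendieck--Lefschetz trace formula over $\bF_{q^2}$:
\[
-\Tr(\Fr_{q^2}, H^1_c(\bG_a, f^*\cL_\psi)) = \sum_{x\in\bF_{q^2}}\psi(x^{q+1}).
\]
For each $x\in\bF_{q^2}$ we have $x^{q+1} = x\cdot x^q = N_{\bF_{q^2}/\bF_q}(x) \in \bF_q$, and by hypothesis $\psi\bigl\lvert_{\bF_q}\equiv 1$, so every summand equals $1$ and the total is $q^2$. Thus $\Tr(\Fr_{q^2}, H^1_c) = -q^2$.

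The remaining---and only non-routine---step is to pin down individual eigenvalues from the trace. By Deligne's Weil II \cite{De80}, $\cL_\psi$ is pure of weight $0$, hence so is $f^*\cL_\psi$, and therefore every eigenvalue $\al$ of $\Fr_{q^2}$ on $H^1_c(\bG_a, f^*\cL_\psi)$ satisfies $\abs{\al}\leq q^{2\cdot 1/2}=q$. Writing the $q$ eigenvalues as $\al_1,\dotsc,\al_q$, we have $\sum\al_i=-q^2$ and $\abs{\al_i}\leq q$, so the chain of inequalities $q^2=\abs{\sum\al_i}\leq\sum\abs{\al_i}\leq q\cdot q = q^2$ is saturated; this forces $\abs{\al_i}=q$ for all $i$ and all $\al_i$ to share the same argument, and since the sum is a negative real of modulus $q^2$, we conclude $\al_i=-q$ for every $i$, completing the plan.
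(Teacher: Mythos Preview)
Your proof is correct and takes a genuinely different route from the paper's argument. The paper proceeds by the projection formula $R\pr_!(f^*\cL_\psi)\cong R\pr_!(\cL_\psi\otimes f_!\ql)$ and a decomposition $f_!\ql\cong\ql\oplus\bigoplus_{s=1}^q j_!(\sM_s)$ into Kummer sheaves on $\bG_m$; it then invokes the classical computation of $H^\bullet_c(\bG_a,\cL_\psi\otimes j_!\sM_s)$ from SGA~4\textonehalf\ (\emph{Sommes Trig.}, Prop.~4.2), which gives $\dim H^1_c=1$ and $\abs{\la_s}=q$ for each summand. From there the paper applies the same trace-saturation step you use to conclude $\la_s=-q$.

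Your approach bypasses the decomposition entirely: the Grothendieck--Ogg--Shafarevich formula with the tame pullback rule for Swan conductors gives $\dim H^1_c=q$ in one stroke, and Deligne's Weil~II supplies the bound $\abs{\al_i}\leq q$. The trade-off is that the paper's $\abs{\la_s}=q$ is really the classical fact that Gauss sums over $\bF_{q^2}$ have absolute value $q$, which is considerably more elementary than Weil~II; on the other hand, your argument is shorter and avoids proving the auxiliary sheaf decomposition lemma. Both converge on the identical endgame: $\sum\al_i=-q^2$ with $\abs{\al_i}\leq q$ forces every $\al_i=-q$.
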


\begin{proof}
Since $x^{q+1}\in\bF_q$ for all $x\in\bF_{q^2}$, we have the identity
\begin{equation}\label{e:*}
\sum_{x\in\bF_{q^2}} \psi(x^{q+1}) = q^2,
\end{equation}
which is consistent with the assertion of the proposition in view of the Grothendieck-Lefschetz trace formula. We will deduce the proposition from \eqref{e:*}. To this end,
observe that if $\pr:\bG_a\to\Spec\bF_{q^2}$ denotes the structure morphism, then
\begin{equation}\label{e:**}
R\pr_!(f^*\cL_\psi)\cong R\pr_!(f_!f^*\cL_\psi) \cong R\pr_!(\cL_\psi\tens f_!\ql),
\end{equation}
where $\ql$ is the constant rank $1$ local system on $\bG_a$ and in the second isomorphism we used the projection formula.

\begin{lem}\label{l:sublemma}
One has
\[
f_!\ql\cong\ql\oplus \bigoplus_{s=1}^q j_!(\sM_s)
\]
for certain nontrivial multiplicative local systems $\sM_1,\sM_2,\dotsc,\sM_q$ on $\bG_m$ over $\bF_{q^2}$, where $j:\bG_m\into\bG_a$ denotes the inclusion map.
\end{lem}

Before proving this lemma, let us explain why it implies the assertion of the proposition. By the lemma and \eqref{e:**}, we have
\begin{equation}\label{e:***}
R\pr_!(f^*\cL_\psi) \cong R\pr_!(\cL_\psi)\oplus\bigoplus_{s=1}^q R\pr_!(\cL_\psi\tens j_!\sM_s).
\end{equation}
Now $R\pr_!(\cL_\psi)=0$ because $\psi$ is nontrivial. By \cite[Prop.~4.2 in \emph{Sommes Trig.}]{sga4.5},
\[
\dim H^r_c(\bG_a,\cL_\psi\tens j_!\sM_s) = \begin{cases}
1 & \text{if } r=1, \\
0 & \text{otherwise},
\end{cases}
\]
and if $\la_s$ is the scalar by which $\Fr_{q^2}$ acts on $H^1_c(\bG_a,\cL_\psi\tens j_!\sM_s)$, then $\abs{\la_s}=q$. Applying the Grothendieck-Lefschetz trace formula to \eqref{e:***} yields
\[
\sum_{x\in\bF_{q^2}} \psi(x^{q+1}) = -(\la_1+\dotsb+\la_q).
\]
Comparing this with \eqref{e:*} and using the fact that $\abs{\la_s}=q$ for all $s$, we see that $\la_s=-q$ for all $s$, which, in view of \eqref{e:***}, yields the proposition.
\end{proof}

\begin{proof}[Proof of Lemma \ref{l:sublemma}]
Since $f^{-1}(0)=\{0\}$, the stalk of $f_!(\ql)$ at $0$ is $1$-dimensional. Since $f$ is a finite morphism, we have $f_!(\ql)=f_*(\ql)$, and by adjunction, there is a natural map $\ql\rar{}f_*(\ql)$ (coming from the natural isomorphism $f^*(\ql)\rar{\simeq}\ql$), which induces an isomorphism on the stalks over $0$.

\mbr

Next let us calculate the restriction of $f_!(\ql)$ to $\bG_m\subset\bG_a$. To this end, consider the restriction of $f$ to a morphism $\bG_m\rar{}\bG_m$. Since $f'(x)=(q+1)x^q=x^q$, the map $f:\bG_m\rar{}\bG_m$ is \'etale, and in fact, it can be identified with the quotient of $\bG_m$ by the finite discrete\footnote{Observe that all the $(q+1)$-st roots of unity already lie in $\bF_{q^2}$.} subgroup $\mu_{q+1}(\bF_{q^2})\subset\bG_m$ of $(q+1)$-st roots of unity. This means that the restriction $f_!(\ql)\bigl\lvert_{\bG_m}$ decomposes as a direct sum $\ql\oplus\bigoplus_{s=1}^q\sM_s$, where the $\sM_s$ are the local systems coming from the nontrivial characters of $\mu_{q+1}(\bF_{q^2})$.

\mbr

In particular, for each $s$, we have a map $\sM_s\into f_!(\ql)\bigl\lvert_{\bG_m}$, which by adjunction induces a map $j_!\sM_s\rar{}f_!(\ql)$. Finally, combining these maps with the map $\ql\rar{}f_!(\ql)$ constructed in the first paragraph of the proof, we obtain a map
\[
\ql\oplus \bigoplus_{s=1}^q j_!(\sM_s) \rar{} f_!(\ql).
\]
By looking at the stalks, one sees that this map is an isomorphism.
\end{proof}

\subsection{The Lang torsor} This subsection contains background material for the proof of Proposition \ref{p:rho-psi-appears-new}. Let $G$ be a connected algebraic group over $\bF_q$. The Lang isogeny $L_q:G\to G$, given by $L_q(g)=\Fr_q(g)\cdot g^{-1}$, identifies $G$ with the quotient of $G$ by the right multiplication action of the finite discrete group $G(\bF_q)$. We will view $G$ as a right $G(\bF_q)$-torsor over itself by means of $L_q$, which we will call the ``Lang torsor.''

\mbr

If $\rho$ is a representation of $G(\bF_q)$ over $\ql$, we denote by $\cE_\rho$ the $\ql$-local system associated to the Lang torsor by means of $\rho$. For the definition of $\cE_\rho$, see \cite[\S\S1.2 and 1.22 in \emph{Sommes Trig.}]{sga4.5}; note that $\cE_\rho$ is denoted by $\cF(\rho)$ in \emph{loc.~cit.}

\begin{prop}
Let $\widehat{G(\bF_q)}$ be a set of representatives of the isomorphism classes of all irreducible representations of $G(\bF_q)$ over $\ql$. Then
\begin{equation}\label{e:lang-pushforward-decomposition}
L_{q!}(\ql) \cong \bigoplus_{\rho\in\widehat{G(\bF_q)}} \rho\tens\cE_\rho
\end{equation}
as local systems with an action of $G(\bF_q)$, where $\ql$ is the constant sheaf on $G$ and the action of $G(\bF_q)$ on the pushforward $L_{q!}(\ql)=L_{q*}(\ql)$ comes from the right multiplication action of $G(\bF_q)$ on $G$.
\end{prop}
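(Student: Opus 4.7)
The argument is essentially a geometric incarnation of the Peter--Weyl decomposition of the regular representation of the finite group $\Gamma := G(\bF_q)$.

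First, I would verify that $L_q:G\to G$ is a finite \'etale Galois cover with deck transformation group $\Gamma$ acting on the source by right translation. Surjectivity is Lang's theorem (using that $G$ is connected), and \'etaleness follows because the differential of $L_q$ at any $\bF_q$-point is the identity. The fibres of $L_q$ are precisely the right cosets of $\Gamma$ in $G$, so $L_q$ identifies the target with the quotient of the source by right translation. Consequently $L_{q!}(\ql)=L_{q*}(\ql)$ is a local system on $G$ of rank $|\Gamma|$, and its geometric stalk at any point is canonically the regular representation $\ql[\Gamma]$; the $\Gamma$-action coming from right translation on the source corresponds (after fixing conventions) to one of the two sides of the natural $\Gamma$-bimodule structure on $\ql[\Gamma]$.

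Next, I would upgrade the stalkwise decomposition $\ql[\Gamma]\cong \bigoplus_\rho V_\rho\tens V_\rho^*$ to an isomorphism of local systems. The cleanest way is to argue by descent along the Galois cover $L_q$: pulling both sides of \eqref{e:lang-pushforward-decomposition} back along $L_q$ gives trivial local systems, since $L_q^*L_{q*}(\ql)\cong\ql\tens \ql[\Gamma]$ (as a $\Gamma$-equivariant constant sheaf on $G$) and since $L_q^*\cE_\rho$ is trivial by the very definition of $\cE_\rho$ (see \cite[Sommes Trig., \S1.22]{sga4.5}, where $\cE_\rho$ is constructed from the Lang torsor as the sheaf trivialized by $L_q$ with descent datum prescribed by $\rho$). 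After pullback, the claimed isomorphism becomes the $\Gamma$-bimodule decomposition $\ql[\Gamma]\cong\bigoplus_\rho V_\rho\tens V_\rho^*$ on each stalk, which is Schur orthogonality. Since this pullback isomorphism is $\Gamma$-equivariant for the deck-transformation action, it descends uniquely to an isomorphism on $G$, giving the statement.

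The only real point that requires care is matching conventions: one must check that the $\Gamma$-action on $L_{q*}(\ql)$ induced by right translation on the source of $L_q$ corresponds on stalks to the action on the $V_\rho$-factor (rather than the $V_\rho^*$-factor) in the Peter--Weyl decomposition. This in turn hinges on whether $\cE_\rho$ as defined in \cite{sga4.5} is built from $\rho$ or $\rho^*$ via the torsor; since the sum in \eqref{e:lang-pushforward-decomposition} ranges over \emph{all} irreducibles, any convention issue is absorbed by dualizing the indexing set $\widehat{G(\bF_q)}$, which is an involution on isomorphism classes. Apart from this bookkeeping, the proof is routine once the descent framework is set up, and I do not anticipate any substantive obstacle.
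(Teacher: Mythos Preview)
Your argument is correct and is the standard way to see this decomposition. Note, however, that the paper does not actually supply a proof of this proposition: it is stated without argument, with an implicit appeal to the construction in \cite[\emph{Sommes Trig.}, \S\S1.2, 1.22]{sga4.5}. So there is no ``paper's proof'' to compare against; you have filled in what the authors treat as well known.

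One small imprecision: the differential of $L_q$ at an $\bF_q$-point is not literally the identity. Since $d\Fr_q=0$, the differential of $g\mapsto\Fr_q(g)\cdot g^{-1}$ at $g_0\in G(\bF_q)$ is (after the evident identifications of tangent spaces) minus the identity, not the identity. This does not affect the conclusion that $L_q$ is \'etale, but you should state it accurately.
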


We remark that in formula \eqref{e:lang-pushforward-decomposition}, the action of $G(\bF_q)$ on each of the summands $\rho\tens\cE_\rho$ comes only from the action of $G(\bF_q)$ on $\rho$.

\begin{cor}\label{c:hom-rep-into-cohomology}
Let $Y\subset G$ be an $\bF_q$-subvariety and put $X=L_q^{-1}(Y)$. Then
\begin{equation}\label{e:isotypic-component-cohomology}
\Hom_{G(\bF_q)}\bigl(\rho,H^i_c(X,\ql)\bigr) \cong H^i_c\bigl(Y,\cE_\rho\bigl\lvert_Y\bigr)
\end{equation}
as vector spaces with an action of $\Fr_q$, for any $i\in\bZ$ and any representation $\rho$ of $G(\bF_q)$ over $\ql$, where the action of $G(\bF_q)$ on $H^i_c(X,\ql)$ comes from the right multiplication action of $G(\bF_q)$ on $X$.
\end{cor}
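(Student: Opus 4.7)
The plan is to deduce this corollary from the preceding proposition by a straightforward application of proper base change combined with Schur's lemma. Since $X = L_q^{-1}(Y)$ is defined by the cartesian square
\[
\xymatrix{
X \ar[r]^{j} \ar[d]_{L_q|_X} & G \ar[d]^{L_q} \\
Y \ar[r]_{i} & G,
}
\]
and since $L_q \from G \to G$ is a finite étale Galois cover with group $G(\bF_q)$ acting by right multiplication, its restriction $L_q|_X \from X \to Y$ is again a right $G(\bF_q)$-torsor. In particular, $L_q|_X$ is finite, hence proper, so proper base change gives a canonical $G(\bF_q)$-equivariant isomorphism $(L_q|_X)_!\ql \cong i^*(L_{q!}\ql) = L_{q!}\ql\bigl\lvert_Y$.

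First I would combine this with the decomposition \eqref{e:lang-pushforward-decomposition} from the preceding proposition to obtain a $G(\bF_q)$-equivariant isomorphism of local systems on $Y$:
\[
(L_q|_X)_!\ql \cong \bigoplus_{\pi \in \widehat{G(\bF_q)}} \pi \otimes \cE_\pi\bigl\lvert_Y.
\]
Pushing forward to $\Spec \bF_q$ (equivalently, applying $H^i_c(Y,-)$), the Leray spectral sequence for the finite morphism $L_q|_X$ degenerates and gives
\[
H^i_c(X,\ql) \cong H^i_c\bigl(Y,(L_q|_X)_!\ql\bigr) \cong \bigoplus_{\pi} \pi \otimes H^i_c\bigl(Y,\cE_\pi\bigl\lvert_Y\bigr)
\]
as graded vector spaces carrying commuting actions of $G(\bF_q)$ and $\Fr_q$.

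For the irreducible case, applying $\Hom_{G(\bF_q)}(\rho, -)$ to both sides and invoking Schur's lemma (valid since $\ql$ is algebraically closed of characteristic zero and $G(\bF_q)$ is finite) picks out exactly the $\rho$-summand and yields the claimed isomorphism \eqref{e:isotypic-component-cohomology}, with the Frobenius compatibility inherited from the naturality of every construction used. For a general representation $\rho$, I would reduce to the irreducible case by decomposing $\rho = \bigoplus_\pi m_\pi \pi$: the left-hand side then becomes $\bigoplus_\pi m_\pi \cdot H^i_c(Y,\cE_\pi|_Y)$, while the additivity of the construction $\rho \mapsto \cE_\rho$ (clear from its definition via the Lang torsor) gives $\cE_\rho|_Y \cong \bigoplus_\pi m_\pi \cE_\pi|_Y$, so that the right-hand side equals the same direct sum.

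There is no real obstacle here; the statement is a formal consequence of \eqref{e:lang-pushforward-decomposition}. The only point requiring mild care is tracking the Frobenius action through the base change and cohomology isomorphisms, which amounts to verifying that all the identifications above are defined over $\bF_q$ and hence automatically $\Fr_q$-equivariant.
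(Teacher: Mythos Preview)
Your proof is correct and follows essentially the same approach as the paper: proper base change to identify $H^i_c(X,\ql)$ with $H^i_c(Y,L_{q!}\ql|_Y)$, followed by the decomposition \eqref{e:lang-pushforward-decomposition} and reduction to the irreducible case by additivity. You have simply spelled out more of the details (the cartesian square, the explicit invocation of Schur's lemma) than the paper's three-sentence argument.
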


\begin{proof}
We have $H^i_c(X,\ql)\cong H^i_c\bigl(Y,L_{q!}(\ql)\bigl\lvert_Y\bigr)$ by the proper base change theorem. Both sides of \eqref{e:isotypic-component-cohomology} are additive with respect to $\rho$, so it suffices to prove it when $\rho$ is irreducible. In that case \eqref{e:isotypic-component-cohomology} follows from \eqref{e:lang-pushforward-decomposition}.
\end{proof}

If $X$ is a scheme of finite type over $\bF_q$ and $\cF$ is a constructible $\ell$-adic sheaf (for example, a $\ql$-local system) on $X$, we denote the corresponding trace of Frobenius function by $t_{\cF}:X(\bF_q)\to\ql$. The next result is \cite[\S1.23 in \emph{Sommes Trig.}]{sga4.5}.

\begin{prop}\label{p:trace-function-of-E-rho}
Given $\ga\in G(\bF_q)$, choose any $g\in G(\overline{\bF}_q)$ with $\ga=L_q(g)$. Then $g^{-1}\cdot\Fr_q(g)\in G(\bF_q)$ and $t_{\cE_\rho}(\ga)=\tr(\rho(g^{-1}\cdot\Fr_q(g)))$.
\end{prop}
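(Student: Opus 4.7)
The plan is to unwind the definitions of the Lang torsor and of the local system attached to a representation, and to read off the answer from the standard description of how geometric Frobenius acts on stalks of local systems obtained from finite \'etale Galois covers.

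First I would verify the integrality statement $\ga' := g^{-1}\cdot\Fr_q(g)\in G(\bF_q)$. Rewriting $\ga=L_q(g)=\Fr_q(g)g^{-1}$ as $\Fr_q(g)=\ga g$ gives $\ga'=g^{-1}\ga g$. Applying $\Fr_q$ and using $\ga\in G(\bF_q)$ yields
\[
\Fr_q(\ga')=\Fr_q(g)^{-1}\ga\Fr_q(g)=(\ga g)^{-1}\ga(\ga g)=g^{-1}\ga g=\ga',
\]
so $\ga'$ is Frobenius-fixed.

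For the trace identity, recall that $L_q\from G\to G$ is a finite \'etale right $G(\bF_q)$-torsor, and by construction $\cE_\rho$ is the local system on the base whose monodromy representation of $\pi_1^{\text{\'et}}(G,\bar\ga)$ factors as $\rho$ composed with the classifying homomorphism $\pi_1^{\text{\'et}}(G,\bar\ga)\to G(\bF_q)$ determined by the torsor. Choosing $g\in G(\bar\bF_q)$ with $L_q(g)=\ga$ amounts to choosing a base point in the fiber $L_q^{-1}(\ga)$, and this identifies the stalk $(\cE_\rho)_{\bar\ga}$ with the representation space $V$ of $\rho$. Under this identification, the action of the geometric Frobenius $\Fr_q\in \Gal(\bar\bF_q/\bF_q)$ on the stalk is $\rho(\gamma_{\Fr_q})$, where $\gamma_{\Fr_q}\in G(\bF_q)$ is the unique element satisfying $\Fr_q(g)=g\cdot\gamma_{\Fr_q}$. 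This is exactly $\gamma_{\Fr_q}=g^{-1}\Fr_q(g)=\ga'$, so taking traces gives $t_{\cE_\rho}(\ga)=\tr(\rho(g^{-1}\Fr_q(g)))$.

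There is essentially no obstacle beyond invoking the right general formalism; the main sanity check is independence of the choice of $g$. If $g'=g\cdot h$ with $h\in G(\bF_q)$ then $(g')^{-1}\Fr_q(g')=h^{-1}\bigl(g^{-1}\Fr_q(g)\bigr)h$, which is $G(\bF_q)$-conjugate to $g^{-1}\Fr_q(g)$ and therefore has the same value under $\tr\circ\rho$, confirming that the right-hand side is well defined.
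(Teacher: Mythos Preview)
Your argument is correct. The paper does not actually prove this proposition: it simply cites \cite[\S1.23 in \emph{Sommes trig.}]{sga4.5}, and your proof is essentially the standard unwinding of definitions that one finds there.
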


We will also need

\begin{prop}\label{p:trace-function-induced-representation}
Let $H\subset G$ be a connected algebraic subgroup and assume that the quotient map $G\to G/H$ has a section $s:G/H\to G$ defined over $\bF_q$. Let $\eta$ be a representation of $H(\bF_q)$ over $\ql$, put $\rho=\Ind_{H(\bF_q)}^{G(\bF_q)}\eta$, and write $\cE_\eta$ $($respectively, $\cE_\rho${}$)$ for the $\ql$-local system on $H$ $($respectively, on $G${}$)$ coming from the Lang isogeny for $H$ $($respectively, for $G${}$)$ via $\eta$ $($respectively, via $\rho${}$)$, as above. Then
\begin{equation}\label{e:compatibility-of-inductions}
t_{\cE_\rho} = \operatorname{ind}_{H(\bF_q)}^{G(\bF_q)} t_{\cE_\eta},
\end{equation}
where $\operatorname{ind}_{H(\bF_q)}^{G(\bF_q)}$ denotes the induction map from conjugation-invariant functions on $H(\bF_q)$ to conjugation-invariant functions on $G(\bF_q)$.
\end{prop}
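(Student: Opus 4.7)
The plan is to compute both sides of \eqref{e:compatibility-of-inductions} pointwise at $\gamma\in G(\bF_q)$ and match the resulting finite sums term by term via an explicit bijection of index sets. Fix $\gamma$ and choose any lift $g\in G(\overline{\bF}_q)$ with $L_q(g)=\gamma$. By Proposition~\ref{p:trace-function-of-E-rho}, $t_{\cE_\rho}(\gamma)=\tr\bigl(\rho(g^{-1}\Fr_q(g))\bigr)$. Because the section $s$ is $\bF_q$-rational, every $\bar y\in(G/H)(\bF_q)$ lifts to $s(\bar y)\in G(\bF_q)$, so $(G/H)(\bF_q)=G(\bF_q)/H(\bF_q)$ and the Frobenius character formula for an induced representation gives
\[
t_{\cE_\rho}(\gamma)=\sum_{\bar y\in A}\tr\bigl(\eta\bigl(s(\bar y)^{-1}g^{-1}\Fr_q(g)\,s(\bar y)\bigr)\bigr),
\]
where $A:=\{\bar y\in(G/H)(\bF_q):(g^{-1}\Fr_q(g))\cdot\bar y=\bar y\}$. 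Using $s$ again to pick coset representatives in $\operatorname{ind}_{H(\bF_q)}^{G(\bF_q)}$ yields
\[
\bigl(\operatorname{ind}_{H(\bF_q)}^{G(\bF_q)}t_{\cE_\eta}\bigr)(\gamma)=\sum_{\bar y''\in B}t_{\cE_\eta}\bigl(s(\bar y'')^{-1}\gamma\,s(\bar y'')\bigr),
\]
where $B:=\{\bar y''\in(G/H)(\bF_q):\gamma\cdot\bar y''=\bar y''\}$.

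The next step is to verify that left multiplication by $g$ defines a bijection $A\xrightarrow{\sim}B$, $\bar y\mapsto g\bar y$. The membership condition for $A$ unpacks to $\Fr_q(g\bar y)=g\bar y$, i.e.\ $g\bar y\in(G/H)(\bF_q)$; and then $\gamma\cdot g\bar y=\Fr_q(g)\bar y=\Fr_q(g\bar y)=g\bar y$ places $g\bar y$ in $B$. The inverse map $\bar y''\mapsto g^{-1}\bar y''$ is handled symmetrically using $\gamma\bar y''=\bar y''$ to check $\bF_q$-rationality.

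The heart of the argument is the term-by-term matching. For $\bar y\in A$ with image $\bar y'':=g\bar y\in B$, set $h_0:=s(\bar y'')^{-1}g\,s(\bar y)$; since $g\,s(\bar y)$ and $s(\bar y'')$ have the same image $\bar y''$ in $G/H$, one has $h_0\in H(\overline{\bF}_q)$. A direct manipulation using only the $\bF_q$-rationality of $s$, $\bar y$, and $\bar y''$ yields simultaneously
\[
\Fr_q(h_0)\,h_0^{-1}=s(\bar y'')^{-1}\Fr_q(g)g^{-1}\,s(\bar y'')=s(\bar y'')^{-1}\gamma\,s(\bar y''),
\]
\[
h_0^{-1}\,\Fr_q(h_0)=s(\bar y)^{-1}g^{-1}\Fr_q(g)\,s(\bar y).
\]
Applying Proposition~\ref{p:trace-function-of-E-rho} to $H$ with this choice of $h_0$ then identifies the $\bar y''$-summand of the right-hand side of \eqref{e:compatibility-of-inductions} with the $\bar y$-summand of the left-hand side.

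The main subtlety will be the correct identification of the two index sets: the sum for $t_{\cE_\rho}$ filters by stabilizers of $g^{-1}\Fr_q(g)$, while the sum for $\operatorname{ind} t_{\cE_\eta}$ filters by stabilizers of $\gamma=\Fr_q(g)g^{-1}$, and these are related by left translation by $g$ on $G/H$. Once this is noticed, the element $h_0=s(g\bar y)^{-1}g\,s(\bar y)$ emerges as the natural Lang cocycle in $H$ adapted to both parameterizations, and the verification is routine.
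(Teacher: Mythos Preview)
Your argument is correct, and it proceeds by a genuinely different route from the paper's. The paper does not unwind Proposition~\ref{p:trace-function-of-E-rho} on both sides; instead it invokes a sheaf-theoretic identity $\cE_\rho\cong F_!(\pr_2^*\cE_\eta)$, where $F:(G/H)\times H\to G$ is the map $(x,h)\mapsto \Fr_q(s(x))\cdot h\cdot s(x)^{-1}$ (this is cited from \cite[\S6.2]{DLtheory}), and then applies the Grothendieck--Lefschetz trace formula to $F_!(\pr_2^*\cE_\eta)$. Since $s$ is $\bF_q$-rational, over $\bF_q$-points $F(x,h)=s(x)hs(x)^{-1}$, so the fiber sum collapses immediately to the defining formula for $\operatorname{ind}_{H(\bF_q)}^{G(\bF_q)}t_{\cE_\eta}$. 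No index-set bijection or Lang-cocycle $h_0$ is needed.

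Your approach trades that sheaf input for a more hands-on matching: you expand $t_{\cE_\rho}(\gamma)=\tr\rho(g^{-1}\Fr_q(g))$ by the Frobenius character formula, and then the discrepancy between summing over stabilizers of $g^{-1}\Fr_q(g)$ (your set $A$) and stabilizers of $\gamma=\Fr_q(g)g^{-1}$ (your set $B$) is reconciled by the translation $\bar y\mapsto g\bar y$, with $h_0=s(g\bar y)^{-1}g\,s(\bar y)$ serving as a Lang lift in $H$. This is entirely self-contained (it uses nothing beyond Proposition~\ref{p:trace-function-of-E-rho} and the standard character formula), whereas the paper's argument is shorter but imports the pushforward description of $\cE_\rho$ from an external reference.
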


\begin{rem}
In general, $t_{\cE_\rho}$ is \emph{not} equal to the character of the representation $\rho$, so formula \eqref{e:compatibility-of-inductions} is not evident. However, by \cite[Lem.~1.24 in \emph{Sommes trig.}]{sga4.5}, $t_{\cE_\eta}$ is a conjugation-invariant function on $H(\bF_q)$, so formula \eqref{e:compatibility-of-inductions} makes sense.
\end{rem}

\begin{proof}
Write $\pr_2:(G/H)\times H \to H$ for the second projection and define $F:(G/H)\times H\to G$ by
$F(x,h)=\Fr_q(s(x))\cdot h\cdot s(x)^{-1}$.
Then $\cE_\rho \cong F_!(\pr_2^*\cE_\eta)$ by the argument used in \cite[\S6.2]{DLtheory}. The Grothendieck-Lefschetz trace formula yields
\[
t_{\cE_\rho}(g) = \sum_{\substack{(x,h)\in (G/H)(\bF_q)\times H(\bF_q) \\ F(x,h)=g}} t_{\cE_\eta}(h) \qquad\forall\,g\in G(\bF_q).
\]
Now if $x\in (G/H)(\bF_q)$, then $F(x,h)=s(x)hs(x)^{-1}$ for all $h$. Moreover, since $G$ and $H$ are both connected, we obtain $G(\bF_q)/H(\bF_q)\rar{\simeq}(G/H)(\bF_q)$, so as $x$ ranges over $(G/H)(\bF_q)$, we see that $s(x)$ ranges over a set of representatives of the left cosets of $H(\bF_q)$ in $G(\bF_q)$. Recalling the definition of the map $\operatorname{ind}_{H(\bF_q)}^{G(\bF_q)}$, we obtain \eqref{e:compatibility-of-inductions}.
\end{proof}

\subsection{Proof of Proposition \ref{p:rho-psi-appears-new}}\label{ss:proof-p:rho-psi-appears-new} Recall that $\widetilde{\psi}:H_m(\bF_{q^n})\to\qls$ is the character defined by $\widetilde{\psi}=\psi_1\circ\Nm^{n_1,q_1}\circ\nu_m$ (see \S\ref{ss:outline-new}). In view of the last statements of parts (a) and (b) of Proposition \ref{p:construction-rho-psi-new}, the assertion of Proposition \ref{p:rho-psi-appears-new} is equivalent to:
\[
\Hom_{U^{n,q}(\bF_{q^n})}\Bigl( \Ind_{H_m(\bF_{q^n})}^{U^{n,q}(\bF_{q^n})}(\widetilde{\psi}),H^\bullet_c(X,\ql)\Bigr) \neq 0.
\]

\mbr

Let $\cE_{\widetilde{\psi}}$ be the local system on $H_m$ coming from the Lang isogeny $L_{q^n}:H_m\to H_m$ via $\widetilde{\psi}$. We simply write $\cE$ for the local system on $U^{n,q}$ coming from the Lang isogeny $L_{q^n}:U^{n,q}\rar{}U^{n,q}$ via $\Ind_{H_m(\bF_{q^n})}^{U^{n,q}(\bF_{q^n})}(\widetilde{\psi})$. By Corollary \ref{c:hom-rep-into-cohomology}, we have
\[
\Hom_{U^{n,q}(\bF_{q^n})}\Bigl( \Ind_{H_m(\bF_{q^n})}^{U^{n,q}(\bF_{q^n})}(\widetilde{\psi}),H^\bullet_c(X,\ql)\Bigr) \cong H^\bullet_c\bigl(Y,\cE\bigl\lvert_{Y}\bigr).
\]
So in order to show that the left hand side is nonzero, it suffices (by the Grothendieck-Lefschetz trace formula) to check that
\begin{equation}\label{e:need-nonvanishing-1}
\sum_{y\in Y(\bF_{q^n})} t_{\cE}(y) \neq 0.
\end{equation}
Now by Proposition \ref{p:trace-function-induced-representation},
\begin{equation}\label{e:formula-for-t-E-1}
t_{\cE} = \operatorname{ind}_{H_m(\bF_{q^n})}^{U^{n,q}(\bF_{q^n})}(t_{\cE_{\widetilde{\psi}}}).
\end{equation}

To compute the right hand side of the last identity, we use

\begin{lem}\label{l:trace-function-of-E-psi-tilde-1}
We have\footnote{We point out that in general, $\widetilde{\psi}\neq\psi\circ\pr_n$ on $H_m(\bF_{q^n})$ and $\psi\circ\pr_n:H_m(\bF_{q^n})\to\qls$ is not a group homomorphism.} $t_{\cE_{\widetilde{\psi}}}=\psi\circ\pr_n:H_m(\bF_{q^n})\rar{}\qls$.
\end{lem}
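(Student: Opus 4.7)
The plan is to unpack $t_{\cE_{\widetilde{\psi}}}(\ga)$ via Proposition \ref{p:trace-function-of-E-rho}, push the computation down along $\nu_m$ to the group $U^{n_1,q_1}$, and then identify the resulting expression with $\psi(\pr_n(\ga))$ using the ``additive'' behavior of $N^{n_1,q_1}$ (Proposition \ref{p:reduced-norm-key}(b)) together with Corollary \ref{c:reduced-norm-and-trace-new}.

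Concretely, fix $\ga \in H_m(\bF_{q^n})$ and choose $g \in H_m(\overline{\bF}_q)$ with $L_{q^n}(g) = \ga$. Put $h = g^{-1}\Fr_{q^n}(g)$, which lies in $H_m(\bF_{q^n})$ by Proposition \ref{p:trace-function-of-E-rho}, giving
\[
t_{\cE_{\widetilde{\psi}}}(\ga) = \widetilde{\psi}(h) = \psi_1\bigl(N^{n_1,q_1}(\nu_m(h))\bigr).
\]
Since $\nu_m$ is defined over $\bF_q$ it commutes with $\Fr_{q^n}$, so setting $g_1 = \nu_m(g)$ we have $\nu_m(h) = g_1^{-1}\Fr_{q^n}(g_1)$ and $L_{q^n}(g_1) = \nu_m(\ga) \in U^{n_1,q_1}(\bF_{q^n})$. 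Since $\nu_m$ sends $e_n$ to the last coordinate of $U^{n_1,q_1}$, we also have $\pr_{n_1}(L_{q^n}(g_1)) = \pr_n(\ga)$.

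The core of the proof is then the identity
\[
N^{n_1,q_1}\bigl(g_1^{-1}\Fr_{q^n}(g_1)\bigr) = \Tr_{\bF_{q^n}/\bF_{q_1}}\bigl(\pr_n(\ga)\bigr) \quad \text{in } \bF_{q_1},
\]
from which the lemma follows by applying $\psi_1$ and recalling $\psi = \psi_1 \circ \Tr_{\bF_{q^n}/\bF_{q_1}}$. To establish it, factor $\Fr_{q^n}(g_1) = g_1 \cdot \bigl(g_1^{-1}\Fr_{q^n}(g_1)\bigr)$; the second factor lies in $U^{n_1,q_1}(\bF_{q^n}) = U^{n_1,q_1}(\bF_{q_1^{n_1}})$, so Proposition \ref{p:reduced-norm-key}(b) (applied with $n,q$ replaced by $n_1,q_1$) yields
\[
N^{n_1,q_1}\bigl(g_1^{-1}\Fr_{q^n}(g_1)\bigr) = N^{n_1,q_1}\bigl(\Fr_{q^n}(g_1)\bigr) - N^{n_1,q_1}(g_1).
\]
On the other hand, since $\pr_{n_1}(L_{q^n}(g_1)) = \pr_n(\ga) \in \bF_{q^n}$, Corollary \ref{c:reduced-norm-and-trace-new} (with the same replacement, noting $q_1^{n_1}=q^n$) gives the same right-hand side for $\Tr_{\bF_{q^n}/\bF_{q_1}}(\pr_n(\ga))$, and the identity follows.

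The only subtle point is bookkeeping about where each element lives: $g_1$ is merely a geometric point of $U^{n_1,q_1}$, so Proposition \ref{p:reduced-norm-key}(b) can be applied only to the factorization in which the second factor is already an $\bF_{q^n}$-point. Once the factorization is chosen correctly, the proof is essentially mechanical.
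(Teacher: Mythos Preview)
Your proof is correct and follows essentially the same route as the paper: apply Proposition~\ref{p:trace-function-of-E-rho}, push down through the homomorphism $\nu_m$ (which commutes with $\Fr_{q^n}$), and then combine Proposition~\ref{p:reduced-norm-key}(b) with Corollary~\ref{c:reduced-norm-and-trace-new}. The only cosmetic difference is that the paper first records the semidirect product decomposition $H_m=U^{n_1,q_1}\ltimes(H^-_m\cap Y)$ before defining $g_1$, whereas you set $g_1=\nu_m(g)$ directly; these are the same thing.
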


\begin{proof}
The projection $\nu_m:H_m\rar{}U^{n_1,q_1}$ obtained by discarding all summands $a_j e_j$ with $m\nmid j$ is an algebraic group homomorphism (cf.~Remark \ref{r:inclusion-new}), whose kernel is equal to $H^-_m\cap Y$. Moreover, if we view $U^{n_1,q_1}$ as a subgroup of $U^{n,q}$ as explained earlier, then $U^{n_1,q_1}\subset H^-_m$ and $\nu_m$ restricts to the identity on $U^{n_1,q_1}$. So we obtain a semidirect product decomposition $H_m=U^{n_1,q_1}\ltimes(H^-_m\cap Y)$.

\mbr

Now to calculate the function $t_{\cE_{\widetilde{\psi}}}:H_m(\bF_{q^n})\rar{}\qls$ we use Proposition \ref{p:trace-function-of-E-rho}. Fix $\ga\in H_m(\bF_{q^n})$ and choose $g\in H_m(\overline{\bF}_{q^n})$ with $L_{q^n}(g)=\ga$. Write $g=g_1\cdot h$ for uniquely determined $g_1\in U^{n_1,q_1}(\overline{\bF}_{q^n})$ and $h\in (H^-_m\cap Y)(\overline{\bF}_{q^n})$. Then
\[
\nu_m(g^{-1}\cdot\Fr_{q^n}(g)) = \nu_m(h^{-1}\cdot g_1^{-1}\cdot \Fr_{q^n}(g_1)\cdot\Fr_{q^n}(h)) = g_1^{-1}\cdot\Fr_{q^n}(g_1)
\]
because $\nu_m$ is an algebraic group homomorphism, so by Proposition \ref{p:trace-function-of-E-rho},
\[
t_{\cE_{\widetilde{\psi}}}(\ga) = \widetilde{\psi}(g^{-1}\cdot\Fr_{q^n}(g)) = \psi_1(\Nm^{n_1,q_1}(g_1^{-1}\cdot\Fr_{q^n}(g_1))).
\]
Now $g_1^{-1}\cdot\Fr_{q^n}(g_1)\in U^{n_1,q_1}(\bF_{q^n})$ and $\Fr_{q^n}(g_1)=g_1\cdot \bigl(g_1^{-1}\cdot\Fr_{q^n}(g_1)\bigr)$. Applying Proposition \ref{p:reduced-norm-key} to the reduced norm morphism $N^{n_1,q_1}:U^{n_1,q_1}\to\bG_a$, we obtain
\begin{eqnarray*}
\Nm^{n_1,q_1}(g_1^{-1}\cdot\Fr_{q^n}(g_1)) &=& N^{n_1,q_1}(\Fr_{q^n}(g_1))-N^{n_1,q_1}(g_1) \\ &=& \Tr_{\bF_{q^n}/\bF_{q_1}}(\pr_n(L_{q^n}(g_1))),
\end{eqnarray*}
where in the last step we used Corollary \ref{c:reduced-norm-and-trace-new}. Here we note that $L_{q^n}(g_1)\in U^{n_1,q_1}(\bF_{q^n})$ because $L_{q^n}(g_1)=\nu_m(\ga)$. Now $\pr_n(L_{q^n}(g_1))=\pr_n(\ga)$, so we finally obtain
\[
t_{\cE_{\widetilde{\psi}}}(\ga) = \psi_1 \bigl( \Tr_{\bF_{q^n}/\bF_{q_1}}(\pr_n(L_{q^n}(g_1))) \bigr) = \psi(\pr_n(\ga)),
\]
which completes the proof of Lemma \ref{l:trace-function-of-E-psi-tilde-1}.
\end{proof}

Let us now verify \eqref{e:need-nonvanishing-1}. By \eqref{e:formula-for-t-E-1} and Lemma \ref{l:trace-function-of-E-psi-tilde-1}, \[t_{\cE}=\operatorname{ind}_{H_m(\bF_{q^n})}^{U^{n,q}(\bF_{q^n})}(\psi\circ\pr_n),\] so if $\{g_i\}_{i\in I}$ are representatives of all the left cosets of $H_m(\bF_{q^n})$ in $U^{n,q}(\bF_{q^n})$, then
\begin{equation}\label{e:auxiliary-1}
\sum_{y\in Y(\bF_{q^n})} t_{\cE}(y) = \sum_{i\in I} \sum_{y\in Y(\bF_{q^n})\cap(g_i\cdot H_m(\bF_{q^n})\cdot g_i^{-1})} \psi(\pr_n(g_i^{-1}yg_i)).
\end{equation}
We will show that the right hand side is a strictly positive integer. To this end, consider a new group operation on $U^{n,q}$, which we denote by $\boxplus$ and define by
\[
\Bigl( 1 + \sum_{j=1}^n a_j  e_j \Bigr) \boxplus \Bigl( 1 + \sum_{j=1}^n b_j  e_j \Bigr) = 1 + \sum_{j=1}^n (a_j+b_j) e_j.
\]
For any $g\in U^{n,q}$, the map $x\mapsto gxg^{-1}$ is a homomorphism with respect to $\boxplus$ (where we denote the old group operation on $U^{n,q}$ multiplicatively, as usual), as is the map $\pr_n:U^{n,q}\rar{}\bG_a$. Hence for each $i\in I$, the subset
\[
Y(\bF_{q^n})\cap(g_i\cdot H_m(\bF_{q^n})\cdot g_i^{-1}) \subset U^{n,q}(\bF_{q^n})
\]
is a subgroup with respect to $\boxplus$, and the map $y\mapsto\psi(\pr_n(g_i^{-1}yg_i))$ is a character of this subgroup. Therefore the $i$-th summand,
\[
\sum_{y\in Y(\bF_{q^n})\cap(g_i\cdot H_m(\bF_{q^n})\cdot g_i^{-1})} \psi(\pr_n(g_i^{-1}yg_i)),
\]
is equal to either $0$ or the order of $Y(\bF_{q^n})\cap(g_i\cdot H_m(\bF_{q^n})\cdot g_i^{-1})$, which is a positive integer. Finally note that there is an $i_0\in I$ for which $g_{i_0}\in H_m(\bF_{q^n})$. Then the corresponding character $y\mapsto\psi(\pr_n(g_{i_0}^{-1}yg_{i_0}))$ of $Y(\bF_{q^n})\cap(g_{i_0}\cdot H_m(\bF_{q^n})\cdot g_{i_0}^{-1})=Y(\bF_{q^n})\cap H_m(\bF_{q^n})$ is equal to $y\mapsto\psi(\pr_n(y))\equiv\psi(0)=1$ because $Y=\pr_n^{-1}(0)$. So the summand corresponding to $i=i_0$ in \eqref{e:auxiliary-1} is positive, which yields \eqref{e:need-nonvanishing-1}.

\part{Geometric realization of the correspondences}

\section{Notation and constructions}\label{s:formulation-Thm-C}

\subsection{Assumptions and terminology} In this section we state the last main result of the article, Theorem C, which provides a bridge between Theorems A and B from the Introduction. For convenience, we begin by briefly reviewing the notation introduced earlier in the article, which will be used in the current part.

\mbr

We work with a local non-Archimedean field $K$ with residue field $\bF_q$ and integers $m,n\geq 1$. We fix a uniformizer $\varpi\in\cO_K$, an unramified extension $L\supset K$ of degree $n$ and a central division algebra $D$ over $K$ with invariant $1/n$. We also fix a $K$-algebra embedding $L\subset D$. There exists a uniformizer $\Pi\in\cO_D$ such that $\Pi^n=\varpi$ and $\Pi a\Pi^{-1}=\vp(a)$ for all $a\in L$, where $\vp\in\Gal(L/K)$ is the (arithmetic) Frobenius.

\mbr

We assume that the matrix algebra $M_n(K)$ is identified with the algebra $\End_K(L)$ of $K$-vector space endomorphisms of $L$ by means of choosing a basis of $\cO_L$ as an $\cO_K$-module. This determines a $K$-algebra embedding $L\subset M_n(K)$. We write $G=GL_n(K)$ and identify it with the group of $K$-vector space automorphisms of $L$ whenever convenient. In particular, we view $\Gal(L/K)$ as a subgroup of $G$.

\mbr

The groups $L^\times,G,D^\times$ have natural filtrations by principal congruence subgroups, which we denote by $U^r_L$, $U^r_G$ and $U^r_D$, respectively, where $r\geq 1$. We have
$U^r_L = 1+\fp_L^r = 1 + \varpi^r\cO_L$, where $\fp_L\subset\cO_L$ denotes the maximal ideal, $U^r_G = 1 + \varpi^r M_n(\cO_K)$ and $U^r_D = 1 + \Pi^r \cO_D$, where $\cO_D\subset D$ is the unique maximal $\cO_K$-order.

\mbr

We will say that a character $\te:L^\times\to\qls$ is \emph{primitive of conductor $r\geq 2$} if $\te\bigl\lvert_{U^r_L}\equiv 1$ and $\te\bigl\lvert_{U^{r-1}_L}$ has trivial stabilizer in $\Gal(L/K)$.

\subsection{Auxiliary groups} In \S\ref{theunipotentgroup} we introduced a unipotent group\footnote{In Part 2, $\U$ was denoted by $U^{n,q}$ to make the dependence on $n$ and $q$ explicit; since $n$ and $q$ are fixed throughout the present part, we omit them to simplify the notation.} $\U$ defined over $\fqn$ and a smooth hypersurface $X\subset\U$. As in Part 1, we will write $X_{\bfq}=X\tens_{\fqn}\bfq$. In Theorem \ref{existenceofaffinoid} we described a right action of a certain subgroup $\cJ\subset G\times D^\times\times\cW_K$ on $X_{\bfq}^\perf$. In the next few subsections we reformulate the definition of $\cJ$ and its action on $X_{\bfq}^\perf$ in a slightly different way, which is more suitable for the proof of Theorem C. Consider the subgroups
\[
J^m_G = 1 + \fp_L^m + \fp_L^{\ceil{m/2}} C_1^\circ \subset G, \qquad J^m_D = 1 + \fp_L^m + \fp_L^{\floor{m/2}} C_2^\circ \subset D^\times
\]
(we remark that $\floor{m/2}=\ceil{(m-1)/2}$), where $C_1^\circ=C_1\cap M_n(\cO_K)$, $C_2^\circ=C_2\cap\cO_D$ and $C_1\subset M_n(K)$ (resp. $C_2\subset D$) is the orthogonal complement of $L$ with respect to the trace pairing (resp. the reduced trace pairing).

\mbr

With this notation, $\cL^\times=\De(\cO_L^\times)\cdot(J^m_G\times J^m_D)$, where $\cL$ is the linking order introduced in \S\ref{ss:linking-orders-unramified-case} and $\De:L^\times\into G\times D^\times$ is the diagonal embedding. Now define
\[
\diag_{1,2}:L^\times \into G\times D^\times\times \cW_K, \qquad \al \longmapsto (\al,\al,1),
\]
and let $\cJ_0=\diag_{1,2}(L^\times)\cdot(J^m_G\times J^m_D\times\{1\})$. This is a subgroup of $G\times D^\times\times \cW_K$ because $L^\times$ normalizes $J^m_G$ and $J^m_D$. With the notation of Part 1, $\cJ_0$ is the subgroup of $G\times D^\times\times \cW_K$ generated by $\cL^\times\times\{1\}$ and the (central) subgroup consisting of elements of the form $(\al,\al,1)$, where $\al\in K^\times$.

\subsection{The subgroup $\cJ$}\label{ss:the-subgroup-J} We write $j_1:\cW_K\to G$ for the composition of the natural surjective homomorphism $\cW_K\to\Gal(L/K)$ with the inclusion $\Gal(L/K)\into G$. Whenever convenient, we will also view $j_1$ as a homomorphism $\cW_{L/K}\to G$, where $\cW_{L/K}=\cW_K/\overline{[\cW_L,\cW_L]}$ is the relative Weil group of $L$ over $K$.

\mbr

Recall also that there is a group isomorphism $j_2:\cW_{L/K}\rar{\simeq}N_{D^\times}(L^\times)$ that on the subgroup $\cW_L^{ab}=\cW_L/\overline{[\cW_L,\cW_L]}$ restricts to the inverse of the local class field theory isomorphism $\rec_L:L^\times\rar{\simeq}\cW_L^{ab}$, where $N_{D^\times}(L^\times)$ denotes the normalizer of $L^\times$ in $D^\times$. We can also view $j_2$ as a homomorphism $\cW_K\to D^\times$.

\mbr

Let $W\subset G\times D^\times\times\cW_K$ be the subgroup consisting of all elements of the form $(j_1(w),j_2(w),w)$, where $w\in\cW_K$. This subgroup normalizes both $\diag_{1,2}(L^\times)$ and $J^m_G\times J^m_D\times\{1\}$, whence it also normalizes $\cJ_0$. The product $W\cdot\cJ_0$ is an open subgroup of $G\times D^\times\times\cW_K$, which can be identified with a semidirect product $\cW_K\ltimes\cJ_0$ for the obvious action of $\cW_K$ on $\cJ_0$. It is straightforward to check that $W\cdot\cJ_0=\cJ$, where $\cJ\subset G\times D^\times\times\cW_K$ is the subgroup appearing in Theorem \ref{existenceofaffinoid}.

\subsection{The action of $\cJ$ on $X_{\bfq}^\perf$}\label{ss:the-action-of-J-on-X} Recall from \S\ref{theunipotentgroup} that the variety $X$ can already be defined over $\bF_q\subset\fqn$, so that $X=X_0\tens_{\bF_q}\fqn$ and $X_{\bfq}=X_0\tens_{\bF_q}\bfq$. The group $\U(\fqn)$ acts on $X$ by right multiplication; the resulting representations of $\U(\fqn)$ in $H^i_c(X_{\bfq},\ql)$ were calculated in Part 2. The right action of $\cJ$ on $X_{\bfq}^\perf$, which was constructed in Theorem \ref{existenceofaffinoid}, is determined uniquely by the following four rules. The first one describes the action of $J^m_G\times J^m_D\times\{1\}$, the second one describes the action of $\diag_{1,2}(L^\times)$ and the third and fourth ones describes the action of $W$.

\subsubsection{} Let $ Z$ denote the center of $\U$, as in Part 2; then $ Z(\fqn)$ is equal to the center of $\U(\fqn)$, and $ Z(\fqn)$ can be naturally identified with the additive group of $\fqn$. If $m$ is odd, there is a natural surjective homomorphism $J^m_D\to\U(\fqn)$ which induces an isomorphism $J^m_D/J^{m+1}_D\rar{\simeq}\U(\fqn)$ (cf.~the proof of Lemma \ref{Sring}), and the natural projection $J^m_G\to 1+\fp_L^m$ induces a surjective homomorphism $J^m_G\to U^m_L/U^{m+1}_L\cong\fqn\cong Z(\fqn)$. Similarly, if $m$ is even, we have natural surjections $J^m_G\to\U(\fqn)$ and $J^m_D\to Z(\fqn)$.
In both cases, we obtain a surjective homomorphism $J^m_G\times J^m_D \to \U(\fqn)\times  Z(\fqn)$, which determines a right action of the group $J^m_G\times J^m_D\times\{1\}$ on $X_{\bfq}^\perf$ as follows: $\U(\fqn)$ acts by right multiplication and $ Z(\fqn)$ acts via $z:x\mapsto x\cdot z^{-1}$ for any $z\in Z(\fqn)$.

\subsubsection{} If $\al\in K^\times$, then $(\al,\al,1)\in\diag_{1,2}(L^\times)$ acts trivially on $X_{\bfq}^\perf$. If $\al\in\cO_L^\times$, then $(\al,\al,1)$ acts on $X_{\bfq}^\perf$ via $x\mapsto\overline{\al}^{-1}x\overline{\al}$, where $\overline{\al}$ denotes the image of $\al$ in $\cO_L^\times/U_L^1\cong\fqn^\times$ (the conjugation action of $\fqn^\times$ on $X_{\bfq}$ comes from viewing $X_{\bfq}$ as a subvariety of $\cR^\times_{\bfq}$ and identifying $\fqn^\times$ with a subgroup of $\cR^\times(\fqn)$ as in \S\S\ref{ss:definitions-U-n-q}, \ref{ss:proof-proposition-key-c}).

\subsubsection{} If $w\in\overline{[\cW_L,\cW_L]}\subset\cW_K$, the element $(j_1(w),j_2(w),w)=(1,1,w)\in W$ acts trivially on $X_{\bfq}^\perf$, so from now on we form $G\times D^\times\times\cW_{L/K}$, identified with a quotient of $G\times D^\times\times\cW_K$ in the obvious way, and (whenever convenient) use the notation $\cJ_0,W,\cJ$ for the images of these groups in $G\times D^\times\times\cW_{L/K}$ (by a slight abuse of notation). With this in mind, the action of $W\cong\cW_{L/K}$ on $X_{\bfq}^\perf$ is characterized as follows. If $\al\in\cO_L^\times$, then $(1,\al,\rec_L(\al))$ acts on $X_{\bfq}^\perf$ via $x\mapsto\overline{\al}^{-1}x\overline{\al}$ when $m$ is odd, and $(1,\al,\rec_L(\al))$ acts on $X_{\bfq}^\perf$ trivially when $m$ is even.

\subsubsection{}\label{sss:the-action-of-Frobenius} If $\Phi\in\cW_{L/K}$ is the element determined by the equality $j_2(\Phi)=\Pi$, the action of $(j_1(\Phi),j_2(\Phi),\Phi)$ on $X_{\bfq}^\perf=(X_0\tens_{\bF_q}\bfq)^{\perf}$ comes from the automorphism $\id_{X_0}\tens\vp_q^{-1}$ of $X_0\tens_{\bF_q}\bfq$, where $\vp_q:\bfq\to\bfq$ is the Frobenius substitution $a\mapsto a^q$. Since $\cW_{L/K}$ is the semidirect product of the cyclic group generated by $\Phi$ and the subgroup $\rec_L(\cO_L^\times)$, these formulas uniquely determine a right action of $W$ on $X_{\bfq}^\perf$.

\subsection{Formulation of the main result}\label{ss:Theorem-C} The right action of $\cJ$ on $X_{\bfq}^\perf$ yields a (smooth) representation of $\cJ$ in the vector space $H^{n-1}_c(X_{\bfq}^\perf,\ql)$. The natural morphism $X_{\bfq}^\perf\to X_{\bfq}$ induces an isomorphism between $H^{n-1}_c(X_{\bfq},\ql)$ and $H^{n-1}_c(X_{\bfq}^\perf,\ql)$, which we use to tacitly identify the two spaces. In particular, we let $\cH^{n-1}$ denote the resulting representation of $\cJ$ in $H^{n-1}_c(X_{\bfq},\ql)$; this will allow us to write $X_{\bfq}$ instead of $X_{\bfq}^\perf$ in all that follows.

\begin{rems}\label{rems:actions-on-X-over-F-q-bar}
\begin{enumerate}[(1)]
\item Recall that with the conventions of Part 2 (see Remarks \ref{rems:ell-adic-cohomology}), the underlying vector space of $H^{n-1}_c(X,\ql)$ is equal to $H^{n-1}_c(X_{\bfq},\ql)$. Thus Theorem \ref{t:cohomology-of-X} gives us a description of $H^{n-1}_c(X_{\bfq},\ql)$ as a representation of $\U(\fqn)$.
 \sbr
\item If $\Fr_q$ is the endomorphism of $X_{\bfq}=X_0\tens_{\bF_q}\bfq$ induced by the absolute Frobenius on $X_0$, then the actions of $\Fr_q$ and $\id_{X_0}\tens\vp_q^{-1}$ on $H^\bullet_c(X_{\bfq},\ql)$ coincide. Hence if $\Phi\in\cW_{L/K}$ is the element such that $j_2(\Phi)=\Pi$, then by Theorem \ref{t:cohomology-of-X}(b), $(j_1(\Phi),j_2(\Phi),\Phi)^n$ acts on $H^{n-1}_c(X_{\bfq},\ql)$ via the scalar $(-1)^{n-1} q^{n(n-1)/2}$.
\end{enumerate}
\end{rems}

The next result is proved in \S\ref{s:proof-Thm-C}. In \S\ref{s:proof-Thm-A} we use it to prove Theorem A.

\begin{thmc}
Let $\pi,\rho,\sg$ be smooth irreducible representations of $G,D^\times$ and $\cW_K$, respectively. The following two conditions are equivalent:

\begin{enumerate}[$($i$)$]
\item $\Hom_{\cJ}\bigl(\pi\tens\rho\tens\sg\bigl\lvert_{\cJ},\cH^{n-1}\bigr)\neq 0$;
 \sbr
\item $\pi$ corresponds to $\rho^\vee$ under the local Jacquet-Langlands correspondence, and to the twist $\sg\bigl(\frac{n-1}{2}\bigr)$ under the local Langlands correspondence, and there is a primitive character $\te:L^\times\to\qls$ of conductor $m+1$ such that $\sg\cong\sg_\te = \Ind_{L/K}\te$.
\end{enumerate}
Moreover, if these conditions hold, then $\dim\Hom_{\cJ}\bigl(\pi\tens\rho\tens\sg\bigl\lvert_{\cJ},\cH^{n-1}\bigr)=1$.
\end{thmc}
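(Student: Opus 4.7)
The plan is to reduce Theorem C to Theorem \ref{t:cohomology-of-X} by a Mackey-theoretic analysis of the $\cJ$-action on $\cH^{n-1}$, combined with Henniart's characterization of the two correspondences. First, I would analyze $\cH^{n-1}$ using the surjection $J_G^m \times J_D^m \twoheadrightarrow \U(\fqn) \times Z(\fqn)$ and the semidirect product description $\cJ = W \ltimes \cJ_0$ from \S\ref{ss:the-subgroup-J}. By Theorem \ref{t:cohomology-of-X}, the summand $\rho_\psi$ can contribute to $\cH^{n-1}$ only when $n + n_1 - 2 = n - 1$, i.e.\ $n_1 = 1$, which forces $\psi$ to have conductor $q^n$; when it does, it appears with multiplicity one and $\Phi^n$ (which acts as $\Fr_{q^n}$) acts by the scalar $(-1)^{n-1} q^{n(n-1)/2}$. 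Thus the set of characters of $Z(\fqn)$ that appear is exactly the set of primitive characters of $\fqn$, and this set is permuted freely and transitively within each orbit by the image of $W$ in $\Gal(\fqn/\bF_q) \cong \Gal(L/K)$.

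Second, I would identify characters $\psi$ of $Z(\fqn)$ with characters of $U_L^m / U_L^{m+1}$, and hence with restrictions $\te\bigl\lvert_{U_L^m}$ of smooth characters $\te : L^\times \to \qls$ of conductor $m+1$. Under this identification, primitivity of $\psi$ (trivial $\Gal(\fqn/\bF_q)$-stabilizer) corresponds to the condition that the conductor of $\te$ cannot be lowered by twisting by $\chi \circ N_{L/L'}$ for a proper subfield $L' \subsetneq L$, which is precisely the condition that $\sg_\te = \Ind_{L/K}\te$ is an irreducible representation of $\cW_K$ and matches condition (ii) on $\sg$. Combining this with the first step, $\cH^{n-1}$ decomposes as a direct sum over $\Gal(L/K)$-orbits $[\te]$ of such characters.

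Third, I would use Theorem \ref{t:cohomology-of-X}(c), which exhibits each $\rho_\psi$ itself as (a sum of copies of) an induced representation from a small compact subgroup, to write each orbit summand of $\cH^{n-1}$ as a $\cJ$-representation $\cH_\te \cong \Ind_{\widetilde{\cJ}_\te}^{\cJ}(\Lambda_\te)$, where $\widetilde{\cJ}_\te \subset \cJ$ is the compact-mod-center subgroup containing $H_m(\fqn)$ together with $\diag_{1,2}(\cO_L^\times)$, the $\rec_L(L^\times)$-image in $W$, and the element $(j_1(\Phi), j_2(\Phi), \Phi)$; and $\Lambda_\te$ is an explicit character of $\widetilde{\cJ}_\te$ (a Heisenberg-type representation in the exceptional case of Theorem \ref{t:cohomology-of-X}(c)(ii)). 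The Frobenius eigenvalue from Theorem \ref{t:cohomology-of-X}(b) pins down the restriction of $\Lambda_\te$ to $\Phi^n$ and hence fixes $\Lambda_\te$ up to the appropriate root of unity. Frobenius reciprocity then gives
\[
\Hom_\cJ\bigl(\pi \otimes \rho \otimes \sg\bigl\lvert_\cJ,\, \cH^{n-1}\bigr) \;=\; \bigoplus_{[\te]} \Hom_{\widetilde{\cJ}_\te}\bigl(\pi \otimes \rho \otimes \sg\bigl\lvert_{\widetilde{\cJ}_\te},\, \Lambda_\te\bigr).
\]

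Finally, I would identify the compactly induced representations $\pi_\te := \ind^{G}_{\widetilde{\cJ}_\te \cap G}(\Lambda_\te\bigl\lvert_{G})$ and $\rho_\te := \ind^{D^\times}_{\widetilde{\cJ}_\te \cap D^\times}(\Lambda_\te\bigl\lvert_{D^\times})$ with the Bushnell--Kutzko/Howe-type constructions of the supercuspidal matching $\sg_\te \leftrightarrow \pi_\te \leftrightarrow \rho_\te^\vee$ under the local Langlands and Jacquet--Langlands correspondences, with the $\tfrac{n-1}{2}$-twist arising directly from the Frobenius eigenvalue computed in Theorem \ref{t:cohomology-of-X}(b). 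This last step is the main obstacle: it requires verifying Henniart's rigidity conditions (equality of central characters and of epsilon factors of pairs with twists) for the triple $(\pi_\te, \rho_\te, \sg_\te(\tfrac{n-1}{2}))$, using the tamely ramified automorphic induction identity from $L$ to $K$ to reduce the check on $\sg_\te$ to one involving only the character $\te$. This is precisely the strategy deployed by Henniart in \cite{Henniart-MathNachr1992, Henniart-JLC-I}, which we adapt. The multiplicity one conclusion then follows from the irreducibility of each $\cH_\te$.
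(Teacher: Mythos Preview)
Your strategy is essentially the same as the paper's: use Theorem~\ref{t:cohomology-of-X} to see that only characters $\psi$ with trivial $\Gal(\fqn/\bF_q)$-stabilizer occur in $\cH^{n-1}$, match these with primitive $\te$ of conductor $m+1$, reduce the Hom via Frobenius reciprocity, and then invoke Henniart's results on the two correspondences. The overall architecture is right.

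Where your proposal diverges, and where it gets imprecise, is in the passage from $\cJ$ to the subgroup $\widetilde{\cJ}_\te$. First, your description of $\widetilde{\cJ}_\te$ is internally inconsistent: you include the Frobenius element $(j_1(\Phi),j_2(\Phi),\Phi)$, but this element permutes the Galois-conjugates of $\psi$, so it does not stabilize a single summand $\cH^{n-1}[\psi]$; either drop it and induce over the orbit, or keep it and let $\Lambda_\te$ be more than a character. Second, and more seriously, your final step proposes to recover $\pi_\te$ and $\rho_\te$ by inducing $\Lambda_\te\bigl\lvert_G$ and $\Lambda_\te\bigl\lvert_{D^\times}$ from $\widetilde{\cJ}_\te\cap G$ and $\widetilde{\cJ}_\te\cap D^\times$. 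These intersections are far too small: the generators you list (e.g.\ $\diag_{1,2}(\cO_L^\times)$, $\diag_{2,3}(L^\times)$, the Frobenius triple) all have nontrivial components in at least two of the three factors, so $\widetilde{\cJ}_\te\cap(G\times\{1\}\times\{1\})$ is essentially just a piece of $J^m_G$ and does not see the $L^\times$-action needed to build the supercuspidal type.

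The paper's device for separating the $G$ and $D^\times$ factors is different and is the main point you are missing. It first reduces from $\cJ$ to the normal subgroup $\cJ_1$ of index $n$ (Proposition~\ref{p:nonzero-homs}), and then passes not to a \emph{subgroup} but to a \emph{cover} $\cJt_1=\Jt^m_G\times\Jt^m_D$, where $\Jt^m_G=L^\times\ltimes J^m_G$ and $\Jt^m_D=L^\times\ltimes J^m_D$, via the surjection $f(\al,x,\be,y)=(\al x,\be y,\rec_L(\al^{-1}\be))$. Because $\cJt_1$ is genuinely a product, the pullback $f^*\cH^{n-1}[\psi^{\pm 1}]$ factors as an external tensor product (Lemma~\ref{l:pullback-of-H}), and the Hom space splits as a product of a $\Jt^m_G$-Hom and a $\Jt^m_D$-Hom. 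Each factor then descends to $L^\times\cdot J^m_G$ (resp.\ $L^\times\cdot J^m_D$), and at that point one can quote the packaged Henniart results from \cite{SpecialCasesCorrespondences} directly, without redoing the epsilon-factor verification you allude to. Note also that the paper does not use the explicit induction of Theorem~\ref{t:cohomology-of-X}(c) at all; only the irreducibility and multiplicity-one statements of parts (a) and (b), together with the Frobenius eigenvalue, are needed.
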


Explicitly, $\sg\bigl(\frac{n-1}{2}\bigr)$ is the twist of $\sg$ by the character of $\cW_K$ coming from
\[
\abs{\cdot}^{(n-1)/2}\circ\rec_K^{-1}: \cW_K^{ab} \rar{} K^\times\rar{}\qls,
\]
where $\abs{\cdot}$ is the normalized absolute value on $K$, so that $\abs{\varpi}=q^{-1}$. The apparent dependence of this twist on the choice of $\sqrt{q}\in\ql$ when $n$ is even is explained as follows: if $\eps:K^\times\rar{}\qls$ is a character whose kernel equals $N_{L/K}(L^\times)$, then for even $n$, we have $\eps^{n/2}(\varpi)=-1$ and $\eps^{n/2}\bigl\lvert_{\cO_K^\times}\equiv 1$, and $\sg_\te$ is invariant under twisting by $\eps\circ\rec_K^{-1}$ (hence also by $\eps^{n/2}\circ\rec_K^{-1}$).

\section{Proof of Theorem C}\label{s:proof-Thm-C}

\subsection{Outline} We first introduce some notation and formulate two auxiliary results that will be used in the proof of Theorem C. The proof is given in the next subsection, and the auxiliary results are proved in the remainder of this section.

\mbr

The main ingredients in the proof of Theorem C are Theorem \ref{t:cohomology-of-X} from Part 2 and some results of Kazhdan and Henniart on the local Langlands and Jacquet-Langlands correspondences \cite{Kazhdan-on-lifting,Henniart-MathNachr1992,Henniart-JLC-I}. The results of Henniart on which we rely were restated in a form suited for our purposes in the article \cite{SpecialCasesCorrespondences}. However, the portion of the latter article on which the current one depends is rather small: most of \emph{op.~cit.} was devoted to background from $p$-adic representation theory and to the proof of a special case of Theorem \ref{t:cohomology-of-X}, namely, \cite[Thm.~2.9]{SpecialCasesCorrespondences}. The full strength of Theorem \ref{t:cohomology-of-X} is needed for Theorem C, and the proof of Theorem \ref{t:cohomology-of-X} that we gave in Part 2 is independent of \emph{op.~cit.}

\subsubsection{} As we remarked earlier, the subgroup
\[
\{1\}\times\{1\}\times\overline{[\cW_L,\cW_L]}\subset\cJ\subset G\times D^\times\times\cW_K
\]
acts trivially on $X_{\bfq}$. Therefore without loss of generality we can replace $\cW_K$ with its quotient $\cW_{L/K}$ throughout the proof of the theorem. As before, by a slight abuse of notation, we also use the letter $\cJ$ for the image of $\cJ$ in $G\times D^\times\times\cW_{L/K}$. The representation $\sg_\te=\Ind_{L/K}\te$ is also trivial on $\overline{[\cW_L,\cW_L]}$, so from now on we will view $\sg_\te$ as a representation of $\cW_{L/K}$. Explicitly, $\sg_\te=\Ind_{\cW_L^{ab}}^{\cW_{L/K}}(\te\circ\rec_L^{-1})$.

\subsubsection{} Recall that the isomorphism $j_2:\cW_{L/K}\rar{\simeq}N_{D^\times}(L^\times)$ used in \S\ref{ss:the-subgroup-J} restricts to $\rec_L^{-1}:\cW_L^{ab}\rar{\simeq}L^\times$. We introduce the homomorphism
\[
\diag_{2,3}:L^\times\rar{}G\times D^\times\times\cW_{L/K}, \qquad \al\mapsto(1,\al,j_2^{-1}(\al))
\]
and consider the normal subgroup of index $n$
\[
\cJ_1:=\diag_{1,2}(L^\times)\cdot (J^m_G\times J^m_D \times\{1\}) \cdot \diag_{2,3}(L^\times) \subset \cJ.
\]
We remark that $\cJ_1$ can also be viewed as a subgroup of $G\times D^\times\times\cW_L^{ab}$. For each character $\psi:\fqn\to\qls$ with trivial $\Gal(\fqn/\bF_q)$-stabilizer, let $\cH^{n-1}[\psi]\subset\cH^{n-1}$ denote the subspace on which $Z(\fqn)\cong\fqn$ acts via $\psi$. (As in Part 2, the action of $Z(\fqn)\subset\U(\fqn)$ on $\cH^{n-1}=H^{n-1}_c(X_{\bfq},\ql)$ comes from the right multiplication action of $\U(\fqn)$ on $X_{\bfq}$.)

\begin{lem}\label{l:new}
\begin{enumerate}[$($a$)$]
\item If $\psi:\fqn\to\qls$ is a character with trivial $\Gal(\fqn/\bF_q)$-stabilizer, then $\cH^{n-1}[\psi]$ is stable under $\cJ_1$, and is irreducible as a representation of $\cJ_1$.
 \sbr
\item We have $\cH^{n-1}=\bigoplus\cH^{n-1}[\psi]$, the sum ranging over all characters $\psi:\fqn\to\qls$ with trivial $\Gal(\fqn/\bF_q)$-stabilizer.
\end{enumerate}
\end{lem}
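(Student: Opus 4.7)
The strategy is to derive the lemma directly from Theorem \ref{t:cohomology-of-X}. \emph{Part (b):} By the description of the $\cJ$-action in \S\ref{ss:the-action-of-J-on-X}, the subspace $\cH^{n-1}[\psi]\subset\cH^{n-1}$ is by definition the $\psi$-isotypic component for the right-multiplication action of $Z(\fqn)\subset\U(\fqn)$. Theorem \ref{t:cohomology-of-X}(a) asserts that the only irreducible $\U(\fqn)$-constituents of $H^\bullet_c(X_{\bfq},\ql)$ are the representations $\rho_\psi$, each appearing with multiplicity one; Theorem \ref{t:cohomology-of-X}(b) places $\rho_\psi$ in cohomological degree $n+n_1-2$, where $q^m$ is the conductor of $\psi$ and $n_1=n/m$. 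The equation $n+n_1-2=n-1$ forces $n_1=1$, equivalently $\psi$ has trivial $\Gal(\fqn/\bF_q)$-stabilizer. Hence $\cH^{n-1}[\psi]\cong\rho_\psi$ for such $\psi$, and $\cH^{n-1}[\psi]=0$ otherwise, which proves (b).

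\emph{Stability in (a):} The group $\cJ_1$ is generated by $J^m_G\times J^m_D\times\{1\}$ together with $\diag_{1,2}(L^\times)$ and $\diag_{2,3}(L^\times)$. The first subgroup acts through the surjection $J^m_G\times J^m_D\twoheadrightarrow\U(\fqn)\times Z(\fqn)$, and both factors commute with the right-multiplication action of $Z(\fqn)$, so preserve isotypic components. For $\alpha\in\cO_L^\times$, both $\diag_{1,2}(\alpha)$ and $\diag_{2,3}(\alpha)$ act on $X_{\bfq}$ through the conjugation action of $\overline{\alpha}\in\fqn^\times$ on $\U$ from \eqref{e:conjugation-action-multiplicative-group} (or trivially, depending on the parity of $m$); since $\la^{1-q^n}=1$ for $\la\in\fqn^\times$, this conjugation fixes $Z\subset\U$ pointwise and therefore commutes with the $Z(\fqn)$-action. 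As $L^\times=\varpi^{\bZ}\cdot\cO_L^\times$, it remains to handle the uniformizer: $\diag_{1,2}(\varpi)=(\varpi,\varpi,1)$ acts trivially by the rules of \S\ref{ss:the-action-of-J-on-X}, and $\diag_{2,3}(\varpi)=(1,\varpi,\rec_L(\varpi))$ equals $(j_1(\Phi),j_2(\Phi),\Phi)^n$ (since $j_1(\Phi)^n=1$ and $j_2(\Phi)^n=\Pi^n=\varpi$), which by Remark \ref{rems:actions-on-X-over-F-q-bar}(2) and Theorem \ref{t:cohomology-of-X}(b) acts on all of $\cH^{n-1}$ as the scalar $(-1)^{n-1}q^{n(n-1)/2}$.

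\emph{Irreducibility in (a):} For $\psi$ with trivial Galois stabilizer, part (b) identifies $\cH^{n-1}[\psi]$ with the irreducible $\U(\fqn)$-module $\rho_\psi$. Since $J^m_G\times J^m_D\subset\cJ_1$ surjects onto $\U(\fqn)\times Z(\fqn)$, the restriction of $\cH^{n-1}[\psi]$ to $J^m_G\times J^m_D$ is an external tensor product of $\rho_\psi$ with a character of $Z(\fqn)$, hence already irreducible. A fortiori $\cH^{n-1}[\psi]$ is irreducible as a $\cJ_1$-module.

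The substantive input is Theorem \ref{t:cohomology-of-X}; once it is in hand there is no real obstacle. Part (b) is simply a degree count, and the verifications in part (a) reduce to the triviality of the $\fqn^\times$-conjugation action on the center $Z\subset\U$ (an immediate consequence of \eqref{e:conjugation-action-multiplicative-group}), together with the fact that the $n$-th power of the chosen Frobenius lift acts on $\cH^{n-1}$ as a scalar.
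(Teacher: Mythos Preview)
Your proof is correct and follows essentially the same approach as the paper: both derive (b) from the degree computation in Theorem \ref{t:cohomology-of-X}(b), and both deduce irreducibility in (a) from the fact that a subgroup of $\cJ_1$ surjects onto $\U(\fqn)$ and $\cH^{n-1}[\psi]\cong\rho_\psi$ is already $\U(\fqn)$-irreducible. The only difference is that the paper dispatches stability in one line (``the action of $Z(\fqn)$ on $X_{\bfq}$ commutes with the action of $\cJ_1$''), whereas you carry out the generator-by-generator verification explicitly, including the identification $\diag_{2,3}(\varpi)=\Phi_0^n$; this is the same verification the paper uses later in the proof of Lemma \ref{l:pullback-of-H}.
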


\begin{proof}
The action of $Z(\fqn)$ on $X_{\bfq}$ commutes with the action of $\cJ_1$, so $\cH^{n-1}$ is stable under $\cJ_1$. By Theorem \ref{t:cohomology-of-X}, the characters of $Z(\fqn)\cong\fqn$ that have \emph{non}trivial $\Gal(\fqn/\bF_q)$-stabilizer do not appear in $\cH^{n-1}$ (they appear in higher cohomological degrees), which yields assertion (b). Theorem \ref{t:cohomology-of-X} also implies that $\cH^{n-1}[\psi]$ is already irreducible as a representation of $\U(\fqn)$. Since $\cJ_1$ contains a subgroup $\cJ_2\subset\cJ_1$ whose action on $X_{\bfq}$ comes from the right multiplication action of $\U(\fqn)$ on $X_{\bfq}$ via a surjective homomorphism $\cJ_2\to\U(\fqn)$, assertion (a) follows. (We have $\cJ_2=J^m_G\times\{1\}\times\{1\}$ if $m$ is even and $\cJ_2=\{1\}\times J^m_D\times\{1\}$ if $m$ is odd.)
\end{proof}

\subsubsection{} The first key step of the proof of Theorem C is
\begin{prop}\label{p:nonzero-homs}
Let $\pi,\rho,\sg$ be smooth irreducible representations of $G,D^\times$, $\cW_{L/K}$.

\begin{enumerate}[$($a$)$]
\item If $\Hom_{\cJ}\bigl(\pi\tens\rho\tens\sg\bigl\lvert_{\cJ},\cH^{n-1}\bigr)\neq 0$, there is a primitive character $\te:L^\times\to\qls$ of conductor $m+1$ such that $\sg\cong\sg_\te=\Ind_{\cW_L^{ab}}^{\cW_{L/K}}(\te\circ\rec_L^{-1})$.
 \sbr
\item If there is a primitive character $\te:L^\times\to\qls$ of conductor $m+1$ such that $\sg\cong\sg_\te$, then there exists a natural vector space isomorphism
\begin{equation}\label{e:isomorphism-of-Homs}
\Hom_{\cJ}\bigl(\pi\tens\rho\tens\sg\bigl\lvert_{\cJ},\cH^{n-1}\bigr) \cong \Hom_{\cJ_1}\bigl(\pi\tens\rho\tens(\te\circ\rec_L^{-1})\bigl\lvert_{\cJ_1},\cH^{n-1}[\psi^{\pm 1}]\bigr),
\end{equation}
where $\psi:\fqn\to\qls$ comes from $\te\bigl\lvert_{U^m_L}$ via the identification $U^m_L/U^{m+1}_L\cong\fqn$ and the sign $\pm 1$ is equal to $(-1)^m$.
\end{enumerate}
\end{prop}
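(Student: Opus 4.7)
The plan is to combine a Mackey decomposition along the index-$n$ normal inclusion $\cJ_1\subset \cJ$ with Lemma \ref{l:new} and a central-character matching argument.

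\emph{Step 1 (Mackey setup).} The third-coordinate projection identifies $\cJ/\cJ_1$ with $\cW_{L/K}/\cW_L^{ab} = \Gal(L/K)$. By tracing through \S\ref{sss:the-action-of-Frobenius}, the class of the Frobenius lift $(j_1(\Phi), j_2(\Phi), \Phi)$ permutes the summands $\cH^{n-1}[\psi]$ of Lemma \ref{l:new} via the natural action of $\Gal(L/K) \cong \Gal(\fqn/\bF_q)$ on characters of $\fqn$. Every $\psi$ that occurs has trivial Galois stabilizer, so the orbits all have length $n$ and one obtains
\begin{equation*}
\cH^{n-1}\cong \bigoplus_{\mathcal{O}}\Ind_{\cJ_1}^{\cJ}\cH^{n-1}[\psi_\mathcal{O}],
\end{equation*}
where $\mathcal{O}$ ranges over the $\Gal(L/K)$-orbits with chosen representatives $\psi_\mathcal{O}$. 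Frobenius reciprocity then gives
\begin{equation*}
\Hom_{\cJ}\bigl(\pi\otimes\rho\otimes\sigma\bigl\lvert_{\cJ},\cH^{n-1}\bigr) \cong \bigoplus_{\mathcal{O}} \Hom_{\cJ_1}\bigl(\pi\otimes\rho\otimes\sigma\bigl\lvert_{\cJ_1},\cH^{n-1}[\psi_\mathcal{O}]\bigr).
\end{equation*}

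\emph{Step 2 (Decomposition of $\sigma\lvert_{\cJ_1}$).} On $\cJ_1$ the third coordinate lands in $\cW_L^{ab}$, so $\sigma\lvert_{\cJ_1}$ depends only on $\sigma\lvert_{\cW_L^{ab}} = \bigoplus_\chi m_\chi(\chi\circ\rec_L^{-1})$, a direct sum of characters. This refines the hom of interest to
\begin{equation*}
\bigoplus_{\mathcal{O},\chi} m_\chi\cdot\Hom_{\cJ_1}\bigl(\pi\otimes\rho\otimes(\chi\circ\rec_L^{-1}),\cH^{n-1}[\psi_\mathcal{O}]\bigr).
\end{equation*}

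\emph{Step 3 (Central character matching --- the hard step).} I would test against the filtration $\diag_{1,2}(U^r_L)\subset \cJ_1$: for $r$ larger than the depths of $\pi$ and $\rho$, these elements act trivially on the smooth representations $\pi$ and $\rho$ and on $\chi\circ\rec_L^{-1}$ (since the third coordinate is trivial), but act on $\cH^{n-1}[\psi_\mathcal{O}]$ through the image of $\alpha\in U^r_L$ under the composite $U^r_L\hookrightarrow J^r_G\times J^r_D\to\U(\fqn)\times Z(\fqn)$ coming from Lemma \ref{Sring}. Comparing with the analogous test using $\diag_{2,3}(U^r_L)$ (which does project nontrivially to $\cW_L^{ab}$) and iterating down from large $r$ to $r=m$, one forces the hom to vanish unless $\chi$ has conductor exactly $m+1$ and $\chi\lvert_{U^m_L}=\psi_\mathcal{O}^{(-1)^m}$ under the canonical identification $U^m_L/U^{m+1}_L\cong\fqn$. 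The parity-dependent sign $(-1)^m$ that appears in the statement reflects whether the central character on $\cH^{n-1}[\psi_\mathcal{O}]$ is detected through the $J^m_G$-component (for $m$ even) or the $J^m_D$-component (for $m$ odd) in the normal form of Lemma \ref{Sring}. Writing $\chi=\theta$, the triviality of the $\Gal(\fqn/\bF_q)$-stabilizer of $\psi_\mathcal{O}$ translates into primitivity of $\theta$ of conductor $m+1$.

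\emph{Step 4 (Conclusion).} For (a), $\sigma$ is irreducible and contains $\theta\circ\rec_L^{-1}$ on $\cW_L^{ab}$; since $\sigma_\theta=\Ind_{\cW_L^{ab}}^{\cW_{L/K}}(\theta\circ\rec_L^{-1})$ is itself irreducible by primitivity of $\theta$, we conclude $\sigma\cong\sigma_\theta$. For (b), taking $\sigma=\sigma_\theta$, Mackey gives $\sigma_\theta\lvert_{\cW_L^{ab}}=\bigoplus_{\tau\in\Gal(L/K)}\theta^\tau\circ\rec_L^{-1}$ with multiplicity one, and the conjugates $\theta^\tau\lvert_{U^m_L}$ run through exactly the $\Gal(L/K)$-orbit of $\psi^{(-1)^m}$, where $\psi$ corresponds to $\theta\lvert_{U^m_L}$. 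Therefore exactly one pair $(\chi,\mathcal{O})=(\theta,\text{orbit of }\psi^{(-1)^m})$ contributes, and the sole surviving summand is the desired isomorphism \eqref{e:isomorphism-of-Homs}. The main obstacle will be the parity-dependent central-character computation in Step 3; the remaining argument is bookkeeping via Mackey theory and the standard irreducibility criterion for $\sigma_\theta$.
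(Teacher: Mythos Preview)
Your overall architecture is the same as the paper's: decompose $\cH^{n-1}$ along $\Gal(L/K)$-orbits of the $\psi$'s (this is Lemma~\ref{l:induce-from-J-1-to-J} in the paper, applied one orbit at a time), apply Frobenius reciprocity along $\cJ_1\subset\cJ$, and then match central characters to pin down $\theta\bigl\lvert_{U^m_L}$. Steps 1, 2, and 4 are fine.

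The gap is in Step 3. Your proposed test against $\diag_{1,2}(U^r_L)$ is vacuous: for $\alpha\in U^m_L$, the element $(\alpha,\alpha,1)$ maps under $J^m_G\times J^m_D\to \U(\fqn)\times Z(\fqn)$ to a pair $(z,z)$ with $z\in Z(\fqn)$, and by the action rule in \S\ref{ss:the-action-of-J-on-X} (one factor acts by right multiplication by $z$, the other by $z^{-1}$) the net effect on $X_{\bfq}$ is trivial. So this test yields no constraint at all, and there is nothing to ``iterate down'' from large $r$. The paper's actual computation is much more direct: one tests against the element $(1,1,\rec_L(\alpha))\in\cJ_1$ for $\alpha\in U^m_L$, which acts trivially on $\pi$ and $\rho$ (first two coordinates are $1$) and by the scalar $\chi(\alpha)$ on $\chi\circ\rec_L^{-1}$. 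Its action on $\cH^{n-1}[\psi_{\mathcal O}]$ is computed via the factorization
\[
(1,1,\rec_L(\alpha)) \;=\; (1,\alpha^{-1},1)\cdot\diag_{2,3}(\alpha) \qquad(\text{for }m\text{ even}),
\]
or the analogous factorization $(\alpha^{-1},1,\rec_L(\alpha))=(\alpha^{-1},\alpha^{-1},1)\cdot\diag_{2,3}(\alpha)$ followed by $(\alpha,1,1)$ for $m$ odd. In each case the $\diag_{2,3}$ factor acts trivially (since $\overline{\alpha}=1$ in $\fqn^\times$) and the remaining factor acts through $Z(\fqn)$, giving the scalar $\psi_{\mathcal O}^{(-1)^m}$ evaluated at the image of $\alpha$. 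This single computation at level $m$ (and the trivial version at level $m{+}1$) immediately yields $\chi\bigl\lvert_{U^{m+1}_L}\equiv 1$ and $\chi\bigl\lvert_{U^m_L}=\psi_{\mathcal O}^{(-1)^m}$; no iteration or depth argument for $\pi,\rho$ is needed. Once Step 3 is fixed this way, your Step 4 goes through exactly as you wrote it, and matches the paper's argument.
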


This proposition is proved in \S\ref{ss:proof-p:nonzero-homs}. We see that to complete the proof of Theorem C it suffices to show that the right hand side of \eqref{e:isomorphism-of-Homs} is $1$-dimensional whenever it is nonzero, and to identify those pairs $(\pi,\rho)$ for which it is nonzero (for a given primitive character $\te:L^\times\to\qls$ of conductor $m+1$). We will now introduce some additional notation and formulate two more lemmas from which Theorem C follows.

\subsubsection{} In the remainder of the proof of Theorem C (including \S\ref{ss:proof-Thm-C-completion}) we work with a fixed primitive character $\te:L^\times\to\qls$ of conductor $m+1$ and let $\psi:\fqn\to\qls$ be the character induced by $\te$ as in the last proposition.

\subsubsection{} The subgroups $J^m_G\subset G$ and $J^m_D\subset D^\times$ are each normalized by $L^\times$. We let $L^\times$ act on each of them by conjugation and form the corresponding semidirect products $\Jt^m_G=L^\times\ltimes J^m_G$ and $\Jt^m_D=L^\times\ltimes J^m_D$. Let $\cJt_1=\Jt^m_G\times\Jt^m_D$. We obtain a surjective homomorphism $f:\cJt_1\rar{}\cJ_1$ given by
\[
f(\al,x,\be,y)=\bigl(\al\cdot x,\be\cdot y,\rec_L(\al^{-1}\cdot\be)\bigr), \qquad \al,\be\in L^\times,\ x\in J^m_G,\ y\in J^m_D.
\]
Let $f^*$ denote the functor of pullback via $f$ from representations of $\cJ_1$ to representations of $\cJt_1$. Since $f$ is surjective, this functor is fully faithful. If $R$ is a representation of $L^\times\cdot J^m_G$ or $L^\times\cdot J^m_D$, we will denote by $\widetilde{R}$ its pullback to $\Jt^m_G$ (respectively, $\Jt^m_G$) via the multiplication map $\Jt^m_G\rar{}L^\times\cdot J^m_G$ (respectively, $\Jt^m_D\rar{}L^\times\cdot J^m_D$). If $\nu$ is a character of $L^\times$ and $R'$ is a representation of either $\Jt^m_G$ or $\Jt^m_D$, we will write $\nu\cdot R'$ for the twist of $R'$ by the pullback of $\nu$ via the natural projection onto $L^\times$.

\begin{lem}\label{l:pullback-of-pi-rho}
$f^*\bigl(\pi\tens\rho\tens(\te\circ\rec_L^{-1})\bigr)\
\cong(\te^{-1}\cdot\widetilde{\pi})\tens(\te\cdot\widetilde{\rho})$ as representations of $\cJt_1$.
\end{lem}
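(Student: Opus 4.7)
The plan is to verify the asserted isomorphism by direct computation, since both sides are defined by explicit formulas on the elements of $\cJt_1 = \Jt^m_G \times \Jt^m_D$. The underlying vector space in both cases is the same tensor product $V_\pi \tens V_\rho$ (where $V_\pi, V_\rho$ denote the spaces of $\pi, \rho$), so all that must be done is to check that the two actions coincide.

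First I would unpack the left-hand side. Given $(\al,x,\be,y) \in \Jt^m_G\times\Jt^m_D$ with $\al,\be\in L^\times$, $x\in J^m_G$, $y\in J^m_D$, the defining formula for $f$ gives
\[
f(\al,x,\be,y) = (\al\cdot x,\ \be\cdot y,\ \rec_L(\al^{-1}\be)) \in \cJ_1.
\]
Applying $\pi\tens\rho\tens(\te\circ\rec_L^{-1})$ to this element and using $\te\circ\rec_L^{-1}\circ\rec_L(\al^{-1}\be)=\te(\al^{-1}\be)=\te(\al)^{-1}\te(\be)$, one obtains the operator
\[
\te(\al)^{-1}\te(\be)\cdot\bigl(\pi(\al\cdot x)\tens\rho(\be\cdot y)\bigr)
\]
on $V_\pi\tens V_\rho$. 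Next I would unpack the right-hand side. By the definition of $\widetilde{\pi}$ as the pullback of $\pi\bigl|_{L^\times\cdot J^m_G}$ via the multiplication map $\Jt^m_G\to L^\times\cdot J^m_G$, we have $\widetilde{\pi}(\al,x)=\pi(\al\cdot x)$; twisting by $\te^{-1}$ pulled back from the projection $\Jt^m_G\to L^\times$ then yields $(\te^{-1}\cdot\widetilde{\pi})(\al,x)=\te(\al)^{-1}\pi(\al\cdot x)$, and analogously $(\te\cdot\widetilde{\rho})(\be,y)=\te(\be)\rho(\be\cdot y)$. Tensoring gives exactly the operator displayed above.

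Thus it only remains to observe that $f$ is indeed a well-defined group homomorphism (so that the pullback representation makes sense)—i.e. that the formula respects the semidirect-product multiplication on both sides and lands in $\cJ_1$—which follows from the facts that $L^\times$ normalises both $J^m_G$ and $J^m_D$ and that $\rec_L$ is a homomorphism. The main step is not an obstacle at all: the whole lemma is a bookkeeping identity that records how the "diagonal twist" $\te\circ\rec_L^{-1}$ on the $\cW_L^{ab}$ factor of $\cJ_1$ decomposes, after pullback to $\cJt_1$, into the product of the $\te^{-1}$-twist on $\Jt^m_G$ and the $\te$-twist on $\Jt^m_D$. No nontrivial representation-theoretic input is needed beyond the definitions set up in the paragraph preceding the lemma.
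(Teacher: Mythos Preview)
Your proposal is correct and is exactly what the paper means when it says the lemma ``follows at once from the definitions'': you have simply written out explicitly the bookkeeping that the paper leaves implicit. There is no alternative approach to compare, since the lemma is a tautological unwinding of the formulas for $f$, $\widetilde{\pi}$, $\widetilde{\rho}$, and the twist notation $\nu\cdot R'$.
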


This lemma follows at once from the definitions.

\subsubsection{} Let $\eta:K^\times\to\qls$ be the character defined by $\eta(x)=\abs{x}^{(n-1)/2}$, where $\abs{\cdot}$ is the normalized absolute value on $K$. If $n$ is even, to define $\eta$ one needs to choose $\sqrt{q}\in\qls$, though ultimately this choice is irrelevant (see the remark following Theorem C). We also let $\xi:L^\times\to\qls$ be the character determined by $\xi\bigl\lvert_{\cO_L^\times}\equiv 1$ and $\xi(\varpi)=(-1)^{n-1}$. We now need to define representations $R_\psi$ and $R'_{\psi^{-1}}$ of the groups $\Jt^m_G$ and $\Jt^m_D$, respectively; their construction depends on the parity of $m$.

\subsubsection*{The case where $m$ is odd}
Let $R_\psi:\Jt^m_G\rar{}\qls$ be the character defined as the composition
\[
\Jt^m_G = L^\times \ltimes J^m_G \rar{} J^m_G \rar{} 1+\fp^m_L = U^m_L \rar{} U^m_L/U^{m+1}_L\cong\fqn \rar{\psi}\qls,
\]
where the first two arrows are the natural projections (even though they are not group homomorphisms, $R_\psi$ is). On the other hand, we have a natural surjective homomorphism $\Jt^m_D\to\cR^\times(\fqn)$, where $\cR$ is the ring scheme introduced in \S\ref{ss:definitions-U-n-q}. It is defined by combining the surjection $J^m_D\to\U(\fqn)\into\cR^\times(\fqn)$ (cf.~the proof of Lemma \ref{Sring}) and the surjection $L^\times\to\cO_L^\times\to (\cO_L/(\varpi))^\times=\fqn^\times\into\cR^\times(\fqn)$, where the arrow $L^\times\to\cO_L^\times$ is the splitting of $\cO_L^\times\into L^\times$ defined by $\varpi$.

\mbr

The right multiplication action of $\U(\fqn)\subset\cR^\times(\fqn)$ on $X_{\bfq}$ and the conjugation action of $\fqn^\times\subset\cR^\times(\fqn)$ on $X_{\bfq}$ combine to form a right action of $\cR^\times(\fqn)$ on $X_{\bfq}$, which yields an irreducible representation $R'_{\psi^{-1}}$ of $\Jt^m_D$ in the space $\cH^{n-1}[\psi^{-1}]$.

\subsubsection*{The case where $m$ is even} Here the roles of $G$ and $D$ are reversed. We have a surjective homomorphism $\Jt^m_G\to\cR^\times(\fqn)$, which yields a right action of $\Jt^m_G$ on $X_{\bfq}$ and an irreducible representation $R_{\psi}$ of $\Jt^m_G$ in the space $\cH^{n-1}[\psi]$. On the other hand, $R'_{\psi^{-1}}$ is the $1$-dimensional representation of $\Jt^m_D$ defined as the composition
\[
\Jt^m_D = L^\times \ltimes J^m_D \rar{} J^m_D \rar{} 1+\fp^m_L = U^m_L \rar{} U^m_L/U^{m+1}_L\cong\fqn \rar{\psi^{-1}}\qls.
\]

\mbr

In each of the two cases considered above, we have the following

\begin{lem}\label{l:pullback-of-H}
$f^*\cH^{n-1}[\psi^{\pm 1}]\cong\bigl((\eta\circ N_{L/K})\cdot\xi\cdot R_\psi\bigr)\tens\bigl((\eta\circ N_{L/K})^{-1}\cdot\xi^{-1}\cdot R'_{\psi^{-1}}\bigr)$.
\end{lem}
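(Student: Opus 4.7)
The strategy is to exploit the explicit factorization
\[
f(\al,x,\be,y) = \diag_{1,2}(\al)\cdot\diag_{2,3}(\al^{-1}\be)\cdot(x,y,1)
\]
in $\cJ_1$ (a direct check using $\diag_{1,2}(\al) = (\al,\al,1)$ and $\diag_{2,3}(\gamma) = (1,\gamma,\rec_L(\gamma))$). This reduces the proof to understanding how each of these three factors acts on $\cH^{n-1}[\psi^{\pm 1}]$. First I would note that the underlying vector spaces on both sides of the claimed isomorphism coincide naturally: whichever of $R_\psi,R'_{\psi^{-1}}$ is the character (the one opposite to the parity of $m$) is one-dimensional, so tensoring with it is merely scalar multiplication, and the remaining factor has underlying space $\cH^{n-1}[\psi^{\pm 1}]$. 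It thus suffices to check that the two actions of $\cJt_1$ agree element-by-element.

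Next I would analyze each factor, writing $\al = \varpi^a u$ and $\be = \varpi^b v$ with $u,v\in\cO_L^\times$. By \S\ref{ss:the-action-of-J-on-X}, $\diag_{1,2}(\varpi)$ acts trivially on $X_{\bfq}$ while $\diag_{1,2}(u)$ acts as conjugation by $\bar u$; and $\diag_{2,3}(u^{-1}v)$ acts as conjugation by $\bar u^{-1}\bar v$ when $m$ is odd and trivially when $m$ is even. The crucial scalar contribution comes from $\diag_{2,3}(\varpi)^{b-a}$: the identity $\diag_{2,3}(\varpi) = (1,\varpi,\rec_L(\varpi)) = (s,\Pi,\Phi)^n$ combined with Remark \ref{rems:actions-on-X-over-F-q-bar}(2) shows this acts on $H^{n-1}_c$ as $\bigl((-1)^{n-1}q^{n(n-1)/2}\bigr)^{b-a}$. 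Using $N_{L/K}(\varpi) = \varpi^n$, $\eta(\varpi) = q^{-(n-1)/2}$, $\xi(\varpi) = (-1)^{n-1}$, together with the triviality of $\eta\circ N_{L/K}$ and $\xi$ on $\cO_L^\times$ (the former because $N_{L/K}(\cO_L^\times)\subseteq\cO_K^\times$), this scalar equals $\eta(N_{L/K}(\al\be^{-1}))\xi(\al\be^{-1})$, which factors as the prescribed product of $\Jt^m_G$- and $\Jt^m_D$-character twists on the right-hand side.

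Finally I would assemble the residual operators. For odd $m$, the two conjugations from $\diag_{1,2}(u)$ and $\diag_{2,3}(u^{-1}v)$ compose to conjugation by $\bar v$, which is exactly the $\fqn^\times$-component of the image of $(\be,y)$ under the surjection $\Jt^m_D\to\cR^\times(\fqn)$; combined with the right multiplication by the image $\bar y\in\U(\fqn)$ of $y$, this produces the full action of $\bar v\bar y\in\cR^\times(\fqn)$ defining $R'_{\psi^{-1}}(\be,y)$. The remaining right multiplication by $\bar x^{-1}\in Z(\fqn)$ acts on $\cH^{n-1}[\psi^{-1}]$ as the scalar $\psi^{-1}(\bar x^{-1}) = \psi(\bar x) = R_\psi(\al,x)$. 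The case of even $m$ is completely symmetric with the roles of $\Jt^m_G$ and $\Jt^m_D$ exchanged: $\diag_{2,3}(u^{-1}v)$ acts trivially, the conjugation by $\bar u$ from $\diag_{1,2}(u)$ combines with the right multiplication by the image of $x$ in $\U(\fqn)$ into the action of $\bar u\bar x\in\cR^\times(\fqn)$ defining $R_\psi(\al,x)$, and the leftover right multiplication by $\bar y^{-1}\in Z(\fqn)$ yields the scalar $\psi(\bar y^{-1}) = R'_{\psi^{-1}}(\be,y)$ on $\cH^{n-1}[\psi]$.

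The principal technical obstacle I anticipate is not any single deep step but rather bookkeeping: the two sides of the tensor product come from different sources—a character on one side and the geometric $\cR^\times(\fqn)$-action on the other—so one must align conventions carefully (right versus left actions on $X_{\bfq}$, the semidirect-product rule $a\cdot(\la g) = \la^{-1}a\la g$ on $\cR^\times(\fqn)$, the Part 2 normalization of $\psi$-isotypic components, and the appearance of $a\mapsto az^{-1}$ rather than $a\mapsto az$ for $Z(\fqn)$) to see that the pieces reassemble as claimed. In particular, the choice of exponent $(-1)^m$ in $\psi^{\pm 1}$ is dictated precisely by this last sign: on $\cH^{n-1}[\psi^{-1}]$, right multiplication by $\bar x^{-1}$ produces the eigenvalue $\psi(\bar x)$ that matches the character $R_\psi$.
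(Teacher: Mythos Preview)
Your proposal is correct and follows essentially the same approach as the paper: both are element-by-element verifications, using the explicit description of the $\cJ$-action from \S\ref{ss:the-action-of-J-on-X} together with the Frobenius scalar from Remark \ref{rems:actions-on-X-over-F-q-bar}(2). Your factorization $f(\al,x,\be,y)=\diag_{1,2}(\al)\cdot\diag_{2,3}(\al^{-1}\be)\cdot(x,y,1)$ simply organizes in one pass what the paper computes separately on the generators $(\varpi,1,1,1)$, $(1,1,\varpi,1)$, $(\al,1,1,1)$, $(1,1,\al,1)$ (for $\al\in\cO_L^\times$).
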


As usual, the sign is given by $\pm 1=(-1)^m$. The lemma is proved in \S\ref{ss:proof-l:pullback-of-H}.

\subsection{Completion of the proof of Theorem C}\label{ss:proof-Thm-C-completion} Combining Lemmas \ref{l:pullback-of-pi-rho} and \ref{l:pullback-of-H}, we see that the right hand side of \eqref{e:isomorphism-of-Homs} is nonzero if and only if
\begin{equation}\label{e:nonzero-hom-1}
\Hom_{\Jt^m_G} \bigl( \te^{-1}\cdot\widetilde{\pi} , (\eta\circ N_{L/K})\cdot\xi\cdot R_\psi \bigr) \neq 0
\end{equation}
and
\begin{equation}\label{e:nonzero-hom-2}
\Hom_{\Jt^m_D} \bigl( \te\cdot\widetilde{\rho} , (\eta\circ N_{L/K})^{-1}\cdot\xi^{-1}\cdot R'_{\psi^{-1}} \bigr) \neq 0.
\end{equation}
The representation $(\eta\circ N_{L/K})\cdot\xi\cdot\te\cdot R_\psi$ of $\Jt^m_G$ is trivial on the kernel of the multiplication map $\Jt^m_G\rar{}L^\times\cdot J^m_G$ and hence descends to a representation of $L^\times\cdot J^m_G$ that we will denote by $R(\te)$. Similarly, the representation $(\eta\circ N_{L/K})^{-1}\cdot\xi^{-1}\cdot\te^{-1}\cdot R'_{\psi^{-1}}$ of $\Jt^m_D$ is trivial on the kernel of the multiplication map $\Jt^m_D\rar{}L^\times\cdot J^m_D$ and hence descends to a representation of $L^\times\cdot J^m_D$ that we will denote by $R'(\te^{-1})$. We see that the \eqref{e:nonzero-hom-1} is equivalent to the nonvanishing of $\Hom_{L^\times\cdot J^m_G}(\pi\bigl\lvert_{L^\times\cdot J^m_G},R(\te))$ and \eqref{e:nonzero-hom-2} is equivalent to the nonvanishing of $\Hom_{L^\times\cdot J^m_D}(\rho\bigl\lvert_{L^\times\cdot J^m_D},R'(\te^{-1}))$.

\mbr

Now let $LLC(\sg)$ denote the representation of $G$ corresponding to $\sg$ under the local Langlands correspondence and let $JLC(\sg)$ denote the representation of $D^\times$ corresponding to $LLC(\sg)$ under the Jacquet-Langlands correspondence. The results of \cite[\S2]{SpecialCasesCorrespondences} show that $\Ind_{L^\times\cdot J^m_G}^G(R(\te))\cong (\eta\circ\det)\tens LLC(\sg)$ and $\Ind_{L^\times\cdot J^m_D}^{D^\times}(R'(\te^{-1}))\cong (\eta\circ\Nrd)^{-1}\tens JLC(\sg)^\vee$, where $\Nrd:D^\times\rar{}K^\times$ is the reduced norm and $\det:G\rar{}K^\times$ is the usual determinant. Thus properties \eqref{e:nonzero-hom-1}--\eqref{e:nonzero-hom-2} hold if and only if $\pi\cong(\eta\circ\det)\tens LLC(\sg)$ and $\rho\cong(\eta\circ\Nrd)^{-1}\tens JLC(\sg)^\vee$, in which case both $\Hom$ spaces are $1$-dimensional. This finishes the proof.

\subsection{Proof of Proposition \ref{p:nonzero-homs} (a)}\label{ss:proof-p:nonzero-homs} Assume that $\Hom_{\cJ}\bigl(\pi\tens\rho\tens\sg\bigl\lvert_{\cJ},\cH^{n-1}\bigr)\neq 0$ and choose any character $\te:L^\times\to\qls$ such that $\te\circ\rec_L^{-1}$ is a quotient of $\sg\bigl\lvert_{\cW_L^{ab}}$.

\subsubsection{Case 1}\label{sss:proof-a-odd-case} Suppose that $m$ is odd. If $\al\in U^m_L$, then the element
\[
(\al^{-1},1,\rec_L(\al)) = (\al^{-1},\al^{-1},1)\cdot (1,\al,\rec_L(\al)) \in\cJ_1
\]
acts trivially on $X_{\bfq}$, while the element $(\al^{-1},1,1)$ acts on $X_{\bfq}$ as multiplication by the element of $Z(\fqn)$ coming from $\al$ via the identification $U^m_L/U^{m+1}_L\cong\fqn\cong Z(\fqn)$. Hence $(1,1,\rec_L(\al))$ acts on $X_{\bfq}$ as the inverse of the latter.

\mbr

The assumption that $\Hom_{\cJ}\bigl(\pi\tens\rho\tens\sg\bigl\lvert_{\cJ},\cH^{n-1}\bigr)\neq 0$ and the definition of $\te$ already imply that $\te\bigl\lvert_{U^{m+1}_L}\equiv 1$ and that $\te\bigl\lvert_{U^m_L}$ comes from a character $\psi:\fqn\to\qls$ with trivial $\Gal(\fqn/\bF_q)$-stabilizer. \emph{A fortiori}, $\te$ has trivial $\Gal(L/K)$-stabilizer, so $\sg_\te$ is irreducible. By construction, $\Hom_{\cW_{L/K}}(\sg,\sg_\te)\neq 0$, so $\sg\cong\sg_\te$.

\subsubsection{Case 2}\label{sss:proof-a-even-case} Now let $m$ be even. Then we use the factorization
\[
(1,1,\rec_L(\al)) = (1,\al^{-1},1) \cdot (1,\al,\rec_L(\al))
\]
to conclude that if $\al\in U^m_L$, then the element $(1,1,\rec_L(\al))$ acts on $X_{\bfq}$ as multiplication by the element of $Z(\fqn)$ coming from $\al$ via the identification $U^m_L/U^{m+1}_L\cong\fqn\cong Z(\fqn)$. The rest of the argument is the same as in the case where $m$ is odd.

\subsection{Proof of Proposition \ref{p:nonzero-homs}(b)} Fix $\te$ and $\psi$ as in the statement of the proposition and suppose that $\sg=\sg_\te$. Let us construct the isomorphism \eqref{e:isomorphism-of-Homs}. As an intermediate step, define $\cH^{n-1}_\sg=\bigoplus\cH^{n-1}[\ga\cdot\psi^{\pm 1}]$, where $\ga$ ranges over $\Gal(\fqn/\bF_q)$ and $\ga\cdot\psi^{\pm 1}(x)=\psi(\ga(x))^{\pm 1}$, the sign being $\pm 1=(-1)^m$, as in the proposition.

\begin{lem}\label{l:induce-from-J-1-to-J}
$\cH^{n-1}_\sg\cong\Ind_{\cJ_1}^{\cJ}(\cH^{n-1}[\psi^{\pm 1}])$ as representations of $\cJ$.
\end{lem}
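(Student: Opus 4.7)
My plan for the proof of Lemma \ref{l:induce-from-J-1-to-J} is as follows.

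First, I would identify the quotient $\cJ/\cJ_1$. From the definitions, the elements of $\diag_{1,2}(L^\times)$ and $J^m_G\times J^m_D\times\{1\}$ have trivial $\cW_K$-component, while those of $\diag_{2,3}(L^\times)$ have $\cW_K$-component in $\rec_L(L^\times)=\cW_L^{ab}$, so $\cJ_1\subset G\times D^\times\times\cW_L^{ab}$. Combined with $\cJ=W\cdot\cJ_0$ and $\cJ_0\subset\cJ_1$, this gives $W\cap\cJ_1=j(\cW_L^{ab})$ and hence $\cJ/\cJ_1\cong\cW_{L/K}/\cW_L^{ab}\cong\Gal(L/K)$, cyclic of order $n$ generated by the image of $\Phi_0:=(s,\Pi,\Phi)$.

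Second, I would check that every isotypic component $\cH^{n-1}[\psi']$ is stable under $\cJ_1$. The key point is that the subgroup $Z\subset\cJ_1$ that realizes the $Z(\fqn)$-action on $X_{\bfq}$ lies in the center of the image of $\cJ_1$ in $\Aut(X_{\bfq})$. Indeed, by \S\ref{ss:the-action-of-J-on-X} this image is generated by $\U(\fqn)$ (acting on the right) and $\fqn^\times$ (acting by conjugation, via the reduction of $\diag_{1,2}(\cO_L^\times)$); the first centralizes $Z(\fqn)$ since $Z(\fqn)$ is the center of $\U(\fqn)$, and the second does too because the relation $e_n\cdot a=a^{q^n}\cdot e_n=a\cdot e_n$ for $a\in\fqn$ shows $e_n$ commutes with $\fqn$ inside $\cR^\times(\fqn)$.

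Third, I would determine the permutation action of $\Phi_0$ on the characters $\psi'$. The relevant copy of $U^m_L/U^{m+1}_L\cong\fqn\cong Z(\fqn)$ is embedded in $\cJ_1$ through $J^m_D$ if $m$ is odd and through $J^m_G$ if $m$ is even, and in both cases conjugation by $\Pi$ in $D^\times$ (respectively by $s$ in $G$) restricts to the Frobenius $\vp\in\Gal(L/K)$ on $L^\times$. Hence $\Phi_0$ conjugates $Z$ by the automorphism of $\fqn$ corresponding to $\vp_q\in\Gal(\fqn/\bF_q)$, and for $v\in\cH^{n-1}[\psi']$ and a lift $z\in Z$ of $\bar z\in Z(\fqn)$ one computes
\[
z\cdot(\Phi_0\cdot v)=\Phi_0\cdot(\Phi_0^{-1}z\Phi_0)\cdot v=(\psi'\circ\vp_q^{-1})(\bar z)\,(\Phi_0\cdot v),
\]
showing that $\Phi_0$ sends $\cH^{n-1}[\psi']$ into $\cH^{n-1}[\psi'\circ\vp_q^{-1}]$. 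Since $\te$ is primitive of conductor $m+1$, the induced character $\psi$ (and therefore $\psi^{\pm 1}$) has trivial $\Gal(\fqn/\bF_q)$-stabilizer, so its orbit under $\cJ/\cJ_1$ is precisely the $n$-element set $\{\ga\cdot\psi^{\pm 1}\}_{\ga\in\Gal(\fqn/\bF_q)}$, matching $[\cJ:\cJ_1]=n$.

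Finally, I would invoke the elementary induction criterion: if a group $\Ga$ acts on $V=\bigoplus_i V_i$ through a normal subgroup $\Ga_1$ of finite index, with the quotient $\Ga/\Ga_1$ permuting the $V_i$ freely and transitively and with $\Ga_1$ stabilizing each $V_i$ individually, then $V\cong\Ind_{\Ga_1}^{\Ga}V_{i_0}$. Taking $\Ga=\cJ$, $\Ga_1=\cJ_1$, $V=\cH^{n-1}_\sg$ and $V_{i_0}=\cH^{n-1}[\psi^{\pm 1}]$ yields the lemma. The main obstacle in carrying out this plan is the bookkeeping around which subgroup of $\cJ_1$ (contained in $J^m_G$ versus $J^m_D$) realizes $Z(\fqn)$ in the two parities of $m$ and how this interacts with the sign $\pm 1=(-1)^m$; the rest of the argument is essentially formal.
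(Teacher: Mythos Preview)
Your approach is essentially the paper's: both identify $\cJ/\cJ_1\cong\Gal(L/K)$ as cyclic of order $n$ generated by the image of $\Phi_0$, check that conjugation by $\Phi_0$ on the embedded copy of $U^m_L$ is the Frobenius $\vp$ (you happen to embed $U^m_L$ through the opposite factor from the paper's choice --- $J^m_D$ for odd $m$ versus $J^m_G$ --- but either works), and deduce that $\Phi_0$ permutes the summands $\cH^{n-1}[\ga\cdot\psi^{\pm 1}]$ freely and transitively, whence the induction statement.

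One small inaccuracy in your step 2: the image of $\cJ_1$ in $\Aut(X_{\bfq})$ is \emph{not} generated by $\U(\fqn)$ and $\fqn^\times$ alone, because $\diag_{2,3}(\varpi)=\Phi_0^n\in\cJ_1$ acts as $\id_{X_0}\tens\vp_q^{-n}=\Fr_{q^n}$. This does not harm your conclusion, since $\Fr_{q^n}$ commutes with every $\fqn$-morphism of $X$ (in particular with the $Z(\fqn)$-action), and in any case the $\cJ_1$-stability of each $\cH^{n-1}[\psi']$ was already recorded in Lemma~\ref{l:new}(a), making your step 2 redundant.
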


This lemma is proved in \S\ref{ss:proof-l:induce-from-J-1-to-J}.
Frobenius reciprocity yields a natural isomorphism
\begin{equation}\label{e:new-isomorphism}
\Hom_{\cJ}\bigl(\pi\tens\rho\tens\sg\bigl\lvert_{\cJ},\cH^{n-1}_\sg\bigr) \cong \Hom_{\cJ_1}\bigl(\pi\tens\rho\tens\sg\bigl\lvert_{\cJ_1},\cH^{n-1}[\psi^{\pm 1}]\bigr).
\end{equation}
The argument of \S\ref{sss:proof-a-odd-case} (resp. \S\ref{sss:proof-a-even-case}) when $m$ is odd (resp. even) shows that the left hand side of \eqref{e:new-isomorphism} is equal to the left hand side of \eqref{e:isomorphism-of-Homs}. The right hand side of \eqref{e:new-isomorphism} can be naturally identified with the right hand side of \eqref{e:isomorphism-of-Homs} because $\sg\bigl\lvert_{\cW_L^{ab}}$ is isomorphic to the direct sum of $\Gal(L/K)$-conjugates of $\te\circ\rec_L^{-1}$, which have pairwise distinct restrictions to $U^m_L$.
\qed

\subsection{Proof of Lemma \ref{l:induce-from-J-1-to-J}}\label{ss:proof-l:induce-from-J-1-to-J} Let $\Phi\in\cW_{L/K}$ be the element such that $j_2(\Phi)=\Pi\in N_{D^\times}(L^\times)$ and write $\Phi_0=(j_1(\Phi),j_2(\Phi),\Phi)\in W$ (cf.~\S\ref{sss:the-action-of-Frobenius}). The quotient group $\cJ/\cJ_1$ is cyclic of order $n$ and is generated by the image of $\Phi_0$.

\mbr

Let us view $U^m_L$ as a subgroup of $\cJ_1$ via the embedding $U^m_L\into J^m_G$ (resp. $U^m_L\into J^m_D$) when $m$ is odd (resp. even). In each case, the induced action of $U^m_L$ on $X_{\bfq}$ is the inverse of the action induced by the multiplication action of $Z(\fqn)$ via the identification $U^m_L/U^{m+1}_L\cong Z(\fqn)$. Moreover, the conjugation action of $\Phi_0$ on $U^m_L$ coincides with the action of the Frobenius $\vp\in\Gal(L/K)$. This proves the lemma.

\subsection{Proof of Lemma \ref{l:pullback-of-H}}\label{ss:proof-l:pullback-of-H} Note that $(\eta\circ N_{L/K})\cdot\xi$ is the unramified character of $L^\times$ that takes value $(-1)^{n-1}\cdot q^{-n(n-1)/2}$ on $\varpi$. On the other hand, by Remark \ref{rems:actions-on-X-over-F-q-bar}(2), the action of $\id_{X_0}\tens\vp_q^{-n}$ on $\cH^{n-1}$ induced by its action on $X_{\bfq}=X_0\tens_{\bF_q}\bfq$ (see \S\ref{sss:the-action-of-Frobenius}) is given by the scalar $(-1)^{n-1}\cdot q^{n(n-1)/2}$. To obtain Lemma \ref{l:pullback-of-H} it remains to combine the following observations.

\subsubsection{} The element
\[
f\bigl((\varpi,1),(1,1)\bigr) = (\varpi,1,\rec_L(\varpi^{-1})) = (\varpi,\varpi,1)\cdot(1,\varpi^{-1},\rec_L(\varpi^{-1})) \in \cJ
\]
acts on $X_{\bfq}$ via $\id_{X_0}\tens\vp_q^{n}$ because $(\varpi,\varpi,1)$ acts trivially and $(1,\varpi^{-1},\rec_L(\varpi^{-1}))=\Phi_0^{-n}$, where $\Phi_0=(j_1(\Phi),j_2(\Phi),\Phi)\in W$ (cf.~\S\ref{sss:the-action-of-Frobenius}).

\subsubsection{} Similarly, $f\bigl((1,1),(\varpi,1)\bigr) = (1,\varpi,\rec_L(\varpi)) \in \cJ$ acts on $X_{\bfq}$ via $\id_{X_0}\tens\vp_q^{-n}$.

\subsubsection{} Suppose that $\al\in\cO_L^\times$. Then the element
\[
f\bigl( (\al,1),(1,1) \bigr) = (\al,1,\rec_L(\al^{-1})) = (\al,\al,1) \cdot (1,\al^{-1},\rec_L(\al^{-1})) \in \cJ
\]
acts trivially on $X_{\bfq}$ if $m$ is odd, and acts on $X_{\bfq}$ via $x\mapsto\overline{\al}^{-1}x\overline{\al}$, where $\overline{\al}$ denotes the image of $\al$ in $\cO_L^\times/U^1_L\cong\fqn^\times$, if $m$ is even.

\subsubsection{} Similarly, $f\bigl((1,1),(\al,1)\bigr) = (1,\al,\rec_L(\al)) \in \cJ$ acts on $X_{\bfq}$ via $x\mapsto\overline{\al}^{-1}x\overline{\al}$ if $m$ is odd, and acts trivially on $X_{\bfq}$ if $m$ is even.

\section{Proof of Theorem A}\label{s:proof-Thm-A}


Theorem A claimed the existence of an open affinoid subset $V\subset\mathcal{M}_{H,\infty,C}$ for which $\overline{V}$ realizes the Langlands correspondences in its middle cohomology.  We now give the proof of Theorem A.

In Part 1, we constructed an affinoid $\mathcal{Z}\subset \mathcal{M}_{H,\infty,C}$ and described its reduction $\overline{\mathcal{Z}}$ in Thm. \ref{existenceofaffinoid}.  The stabilizer of $\mathcal{Z}$ in $GL_n(K)\times D^\times\times \cW_K$ was a certain open subgroup $\mathcal{J}$.  As for $\overline{\mathcal{Z}}$, it is a union of (uncountably many) copies of $X^{\perf}_{\overline{\FF}_q}$, the perfection of an affine variety $X_{\overline{\FF}_q}$.  Cor. \ref{from-X-to-Z} gave the cohomology of $\overline{\mathcal{Z}}$ in terms of $X_{\bfq}$:
\[ H^\bullet_c(\overline{Z},\overline{\Q}_\ell)
=\bigoplus_\psi H^\bullet_c(X_{\overline{\mathbb{F}}_q},\overline{\Q}_\ell)\otimes (\psi\circ\chi),\]
where $\psi$ runs over characters of $1+\gp_K^m$.  Here $\chi\from GL_n(K)\times D^\times\times \cW_K\to K^\times$ is the determinant map of \S\ref{main-result-part-1}.  Thus representations of $\mathcal{J}$ appearing in $H^{n-1}_c(\overline{\Z},\overline{\Q}_\ell)$ are exactly the twists of representations appearing in $H^{n-1}_c(X_{\overline{\FF}_q},\overline{\Q}_\ell)$ by characters of $1+\gp_K^m$.

The desired affinoid $V$ is the union of translates of $\mathcal{Z}$ by $GL_n(K)\times D^\times\times \cW_K$.  Thus the cohomology of $\overline{V}$ is induced from that of $\overline{\mathcal{Z}}$ from $\mathcal{J}$ to $GL_n(K)\times D^\times\times \cW_K$.  This shows that an irreducible representation $\pi\otimes\rho\otimes\sigma$ of $GL_n(K)\times D^\times\times \cW_K$ appears in $H^{n-1}_c(\overline{V},\overline{\Q}_\ell)$ if and only if its restriction to $\mathcal{J}$ appears in $H^{n-1}(\overline{\mathcal{Z}},\overline{\Q}_\ell)$.  This happens exactly when (up to a twist) $\pi\otimes\rho\otimes\sigma\vert_{\mathcal{J}}$ appears in $H^{n-1}(X_{\overline{\FF}_q},\overline{\Q}_\ell)$.  By Theorem C, this happens if and only if $\pi$ corresponds to $\rho^\vee$ and $\sigma\left(\frac{n-1}{2}\right)$ under the local Langlands correspondence, and (up to a twist) there is a primitive character $\theta\from L^\times\to\overline{\Q}_\ell^\times$ of conductor $m+1$ such that $\sigma=\Ind_{L/K}\theta$.   In this case, $\pi\otimes\rho\otimes\sigma\vert_{\mathcal{J}}$ appears with multiplicity 1 in $H^{n-1}(X\otimes\overline{\FF}_q,\overline{\Q}_\ell)$, so that $\pi\otimes\rho\otimes\sigma$ appears with multiplicity 1 in $H^{n-1}(\overline{V},\overline{\Q}_\ell)$.  This completes the proof of Theorem A.

\bibliographystyle{alpha}
\bibliography{MaximalVarieties}

\end{document}